\definecolor{blue}{rgb}{0.00,0.00,1.00}
\definecolor{red}{rgb}{1.00,0.00,0.00}
\renewcommand{\baselinestretch}{1.2}
\def\bq{\begin{equation}}
\def\eq{\end{equation}}
\def\ba{\begin{array}{ccc}}
\def\bal{\begin{array}{lll}}
\def\ea{\end{array}}
\def\bsp{\begin{split}}
\def\esp{\end{split}}
\def\({\left(}\def\){\right)}
\def\[{\left[}\def\]{\right]}
    \def \M   {\mathbb{M}}
    \def \O   {\mathbb{O}}
    \def \R   {\mathbb{R}}
    \def\C    {\mathbb{C}}
    \def\i    {\mathrm{i}}
    \def\S    {\mathbb{S}}
    \def\eps  {\epsilon}
    \def\intr {\int_{\R^3}}
    \def\intt {\int^t_0}
    \def \N    {\mathbb{N}}
    \def \pt   {\partial}
    \def \Dt   {\frac{\rm d}{{\rm d}t}}
    \def \dxa   {\partial^{\alpha}_x}
    \def\Tdv   {\nabla_v}
       \def\bq{\begin{equation}}
       \def\eq{\end{equation}}
       \def\be{\begin{equation}}
       \def\ee{\end{equation}}
       \def\bma#1\ema{{\allowdisplaybreaks\begin{align}#1\end{align}}}
       \def\bmas#1\emas{{\allowdisplaybreaks\begin{align*}#1\end{align*}}}
       \def\bln#1\eln{{\allowdisplaybreaks\begin{aligned}#1\end{aligned}}}
       \def\nnm{\notag}
       \def\bgr#1\egr{\allowdisplaybreaks\begin{gather}#1\end{gather}}
       \def\bgrs#1\egrs{\allowdisplaybreaks\begin{gather*}#1\end{gather*}}
       \theoremstyle{plain}
       \newtheorem{lem}{\bf Lemma}[section]
       \newtheorem{thm}[lem]{\textbf{Theorem}}
       \newtheorem{cor}{\textbf{Corollary}}[section]
\begin{document}


\title{Green's Function and Pointwise  Space-time Behaviors of the  Three-Dimensional Relativistic Boltzmann Equation}

\author{ Yanchao Li$^1$,\, Mingying Zhong$^2$,\, \\[2mm]
 \emph
    {\small\it  $^1$School of Physical Science and Technology,
    Guangxi University, P.R.China.}\\
    {\small\it E-mail:\ liyanchaozjl@$163$.com}\\
    {\small\it  $^2$School of  Mathematics and Information Sciences,
    Guangxi University, P.R.China.}\\
    {\small\it E-mail:\ zhongmingying@gxu.edu.cn}\\[5mm]
    }
\date{ }

\pagestyle{myheadings}
\markboth{Relativistic Boltzmann Equation}%
{Y.-C. Li, M.-Y. Zhong}

 \maketitle

 \thispagestyle{empty}

\begin{abstract}\noindent{
The pointwise space-time behavior of the Green's function of the three-dimensional relativistic Boltzmann equation is studied in this paper. It is shown that the Green's function has a decomposition of the macroscopic diffusive waves and Huygens waves with the speed $\sqrt{a^2+b^2}$ at low-frequency, the singular kinetic wave and the remainder term decaying exponentially in space and time. In addition, we establish the pointwise space-time estimate of the global solution to the nonlinear relativistic Boltzmann equation with non-smooth initial data based on the Green's function.
}

\medskip

 {\bf Key words}.  relativistic Boltzmann equation,  spectral analysis, Green's function, pointwise  space-time behaviors.

\medskip
 {\bf 2010 Mathematics Subject Classification}. 76P05, 82C40, 82D05.

\end{abstract}

\tableofcontents


\section{Introduction}
\label{sect1}
\setcounter{equation}{0}

We consider the relativistic Boltzmann equation:
\be
\partial_tF+\tilde{v}\cdot\nabla_xF=Q(F,F),\label{rbe1}
\ee
where $F=F(t,x,v)$ is the distribution function with $(t,x,v)\in\R^{+}\times\R^3_x\times\R^3_v$ and the relativistic velocity $\tilde{v}$ is defined by
$$
\tilde{v}=\frac{v}{v_0},\quad v_0=\sqrt{1+|v|^2}.
$$
The collision operator $Q(F,G)$ is given by
$$
Q(F,G)=\int_{\R^3}\int_{\mathbb{S}^2}v_M\sigma(g,\theta)(F(v')G(u')-F(v)G(u))dud\Omega,
$$
where   $v_M=v_M(u,v)$ is M{\o}ller velocity given by
\bmas
&v_M=\frac{g\sqrt{4+g^2}}{u_0v_0},\quad u_0=\sqrt{1+|u|^2}, \\
&g^2=s-4,\quad s=2(u_0v_0-u\cdot v+1).
\emas
The function
$\sigma(g,\theta)$ is called the differential cross section or the scattering kernel, and it measures the interactions between particles. 
In this paper, we consider ``hard ball'' condition, namely
$$\sigma(g,\theta)=1.$$
Notice that the conservation law of momentum and energy are given by
$$
 u+v=u'+v',\quad u_0+v_0=u'_0+v'_0.
$$

There have been a lot of works on the well-posedness and asymptotical behaviors of solution to  the relativistic Boltzmann equation. The background of relativistic Boltzmann equations is mentioned in \cite{Cercignani-1}.  The existence and uniqueness of the weak solution to the relativistic Boltzmann equation has been proved in \cite{Dudynski-1,Dudynski-2}. The global solution of relativistic Boltzmann equation near a relativistic Maxwellian is studied in \cite{Glassey-2,Glassey-3} for hard potentials, and in \cite{Duan-1,STRIN-1} for soft potentials. The global existence of solutions near vacuum was investigated in \cite{Glassey-1}. It is shown in \cite{Yang-1} that the global solutions to the relativistic Boltzmann and Landau equations tend to the equilibriums at $(1+t)^{-\frac{3}{4}}$ in $L^2$-norm by using compensating function and  energy method.  The spectrum of the relativistic  Boltzmann equation with  hard sphere and hard potential was constructed in \cite{ZHAO-1}.

There has been much important progress made on  the well-posedness and asymptotical behaviors of solution to  the classical Boltzmann equation. In particular, the spectrum of the classical Boltzmann equation with hard sphere and hard potentials was constructed in \cite{Ellis-1,Ukai-1,Ukai-2}. The global existence of renormalized weak solution subject to general large initial data was shown in \cite{DiPerna-1}. The global existence and optimal decay rate $(1+t)^{-\frac{3}{4}}$ of strong solution near Maxwellian for hard sphere, hard potential and soft potential was proved in \cite{Caflischi-1,Caflischi-2,GUO-1,LIU-1,Ukai-1,Ukai-2,Zhong-1}. The global existence of solutions near vacuum was investigated in \cite{Glassey-3,Illner-1}. The pointwise behavior of the Green function of the Boltzmann equation for hard sphere was first verified in \cite{LIU-2,LIU-3,LIU-5}. Later, the pointwise behavior of Green function of the Boltzmann equation for hard, Maxwellian and soft potentials was studied in \cite{Wang-2,Wang-3,Wang-4}.

In this paper, we study the pointwise space-time behaviors of the Green's function and the global solution to the relativistic Boltzmann equation \eqref{rbe1} based on the spectral analysis \cite{ZHAO-1}.  To begin with, let us consider the Cauchy problem for the  relativistic Boltzmann equation \eqref{rbe1} with the following initial data:
\bq
F(0,x,v)=F_0(x,v),\quad (x,v)\in\mathbb{R}^3_x\times\mathbb{R}^3_v.\label{rbe2}
\eq
Without loss of generality, a global relativistic Maxwellian takes the form
$$
M(v)=e^{-\sqrt{1+|v|^2}}.
$$

Define the perturbation $f(t,x,v)$ of $F(t,x,v)$ near $M(v)$ by
$$
F=M+\sqrt{M}f,
$$
then the relativistic Boltzmann equation \eqref{rbe1} and \eqref{rbe2} is reformulated into
\bma
&\partial_tf+\tilde{v}\cdot\nabla_xf-Lf=\Gamma(f,f),\label{rbe3}\\
&f(0,x,v)=f_0(x,v)=\frac{1}{\sqrt{M}}(F_0-M),\label{rbe4}
\ema
where the linearized collision operator $Lf$ and nonlinear term $\Gamma(f,f)$ are defined by
$$
\left\{\bln
&Lf=\frac{1}{\sqrt{M}}[Q(M,\sqrt{M}f)+Q(\sqrt{M}f,M)], \\
&\Gamma(f,f)=\frac{1}{\sqrt{M}}Q(\sqrt{M}f,\sqrt{M}f).
\eln\right.
$$
We have (cf. \cite{STRIN-1})
\bq
\left\{\bln
 &(Lf)(v)=(Kf)(v)-\nu(v) f(v),   \quad  (Kf)(v)=K_2f-K_1f, \\
  &\nu(v)=4\pi\int_{\R^3} \frac{g\sqrt{4+g^2}}{u_0v_0}M(u)du ,\\
 &K_if(v)=\int_{\R^3}k_i(v,u)f(u)du,\quad i=1,2,\\
 &k_1(v,u)=\frac{4\pi g\sqrt{4+g^2}}{u_0v_0}e^{-\frac{u_0+v_0}{2}}, \quad k_2(v,u)=\frac{2s^{3/2}}{gu_0v_0}U_1(v,u)e^{-U_2(v,u)},\\
 &U_1=\(1+\frac{u_0+v_0}{2}U_2^{-1}+\frac{u_0+v_0}{2}U_2^{-2}\)U_2^{-1},\quad U_2=\frac{\sqrt{s}|u-v|}{2g},\label{L-K}
\eln\right.
\eq
where $\nu(v)$, the collision frequency, is a real function, and $K$ is a self-adjoint compact operator on $L^2(\R^3_v)$ with a real symmetric integral kernel $k(v,u)$.

The null space of the operator $L$, denoted by $N_0$, is a subspace spanned by the orthogonal basis $\{\chi_j,~j=0,1,2,3,4\}$ with
\bq
\chi_0=\frac{\sqrt{M}}{\sqrt{p_0}},\quad \chi_j=\frac{v_j\sqrt{M}}{\sqrt{p_1}} \,\, (j=1,2,3),\quad \chi_4=\frac{(v_0-p_2)\sqrt{M}}{\sqrt{p_3}},
\eq
where the normalized constants $p_j~(j=0,1,2,3)$ are given by
$$
p_0=\int_{\R^3}M(v)dv, \ p_1=\int_{\R^3}v_1^2M(v)dv, \  p_2=\int_{\R^3}v_0M(v)dv, \  p_3=\int_{\R^3}v_0^2M(v)dv-p_2^2.
$$

Let $L^2(\R^3)$ be a Hilbert space of complex-value functions $f(v)$ on $\R^3$ with the inner product and the norm
$$
(f,g)=\int_{\R^3}f(v)\overline{g(v)}dv,\quad \|f\|=\(\int_{\R^3}|f(v)|^2dv\)^{1/2}.
$$
Introduce the macro-micro decomposition as follows
\bq\label{mami}
\left\{\bln
&f=P_0f+P_1f,\\
&P_0f=P_0^1f+P_0^2f+P_0^3f,\ \  P_1f=f-P_0f,\\
&P_0^1f=(f,\chi_0)\chi_0,\ \  P_0^3f=(f,\chi_4)\chi_4,\\
&P_0^2f=(f,\chi_1)\chi_1+(f,\chi_2)\chi_2+(f,\chi_3)\chi_3.
\eln\right.
\eq
The linearized operator $L$ is non-positive and moreover, $L$ is locally coercive in the sense that there is a constant $\mu>0$ such that (cf. \cite{Glassey-1,Glassey-2,STRIN-1})
\bq
(Lf,f)\leq-\mu\|P_1f\|^2,\quad f\in D(L),\label{LF-F}
\eq
where $D(L)$ is the domain of $L$ given by
$$
D(L)=\{f\in L^2(\R^3)|\nu(v)f\in L^2(\R^3)\}.
$$

Since we only consider the pointwise behavior with respect to the space-time variable $(t,x)$, it's convenient to regard the green's function $G(t,x)$ as an operator on $L^2(\R^3_v)$:
\bq\label{rbe5}
\left\{\bln
&\partial_tG=BG,\\
&G(0,x)=\delta(x)I_v,
\eln\right.
\eq
where $I_v$ is the identity map on $L^2(\R^3_v)$ and the operator $B$ is defined by
\bq
Bf=Lf-\tilde{v}\cdot\nabla_xf.\label{LFB1}
\eq
Thus, the solution for the initial value problem of the linearized relativistic Boltzmann equaion
\bq \label{LRB}
\left\{\bln
&\partial_tf=Bf,\\
&f(0,x,v)=f_0(x,v)
\eln\right.
\eq
can be represented by
\bq
f(t,x)=G(t)\ast f_0=\int_{\R^3}G(t,x-y)f_0(y)dy,
\eq
where $f_0(y)=f_0(y,v)$.

For any $(t,x)\in\R_{+}\times\R^3$ and $f\in L^2(\R^3_v)$, we define the $L^2$ norm of $G(t,x)$ by
\bq
\|G(t,x)\|=\sup_{\|f\|=1}\|G(t,x)f\|,
\eq
and define the $L^2$ norm of a operator $T$ in $L^2(\R^3_v)$ as
\bq
\|T\|=\sup_{\|f\|=1}\|Tf\|.
\eq
\textbf{Notations: }Before stating the main results in this paper, we list some notations.
For any $\alpha=(\alpha_1,\alpha_2,\alpha_3)\in\N^3$ and $\beta=(\beta_1,\beta_2,\beta_3)\in\N^3$, denote
$$
\partial_x^{\alpha}=\partial_{x_1}^{\alpha_1}\partial_{x_2}^{\alpha_2}\partial_{x_3}^{\alpha_3},\quad \partial_v^{\beta}=\partial_{v_1}^{\beta_1}\partial_{v_2}^{\beta_2}\partial_{v_3}^{\beta_3}.
$$
The Fourier transform of $f=f(x,v)$ is denoted by
$$
\hat{f}(\xi,v)=\mathcal{F}f(x,v)=\frac{1}{(2\pi)^{3/2}}\int_{\R^3}e^{-\i x\cdot\xi}f(x,v)dx,
$$
where and throughout this paper we denote $\i=\sqrt{-1}$.

For $\gamma\geq0$, define
$$
\|f(x)\|_{L^{\infty}_{v,\gamma}}=\sup_{v\in\R^3}|f(x,v)|(1+v_0)^{\gamma}.
$$


First, we have the pointwise space-time behaviors of the Green's function for the linearized relativistic Boltzmann equation \eqref{rbe5}.
\begin{thm}\label{rbeth1}
Let $G(t,x)$ be the Green's function for the relativistic Boltzmann equation defined by \eqref{rbe5}. Then, the Green's function $G(t,x)$ can be decomposed into
$$
G(t,x)=G_1(t,x)+G_2(t,x)+W_{4}(t,x),
$$
where $W_4(t,x)$ is the singular kinetic wave, $G_1(t,x)$ is the fluid part at low frequency and $G_2(t,x)$ is the remainder part. For any $\alpha\in\N^3$, there exist two positive constants $C$ and $D$ such that the fluid part $G_1(t,x)$ is smooth and satisfies
\bma
&\|\partial_x^{\alpha}P_0^lG_1(t,x)\|\leq C(1+t)^{-\frac{3+|\alpha|}{2}}\(e^{-\frac{|x|^2}{D(1+t)}} +(1+t)^{-\frac{1}{2}}e^{-\frac{(|x|-\mathbf{c}t)^2}{D(1+t)}}\)+Ce^{-\frac{|x|+t}{D}},\ \ l=1,3,\label{Thm1-1-1}\\
&\|\partial_x^{\alpha}P_0^2G_1(t,x)\|\leq C(1+t)^{-\frac{3+|\alpha|}{2}}\(e^{-\frac{|x|^2}{D(1+t)}}+(1+t)^{-\frac{1}{2}}e^{-\frac{(|x|-\mathbf{c}t)^2}{D(1+t)}}\)+Ce^{-\frac{|x|+t}{D}}\nnm\\
&\qquad\qquad\qquad\qquad\quad+C(1+t)^{-\frac{3+|\alpha|}{2}}B_{\frac{3}{2}}(t,|x|)1_{\{|x|\leq\mathbf{c}t\}},\label{Thm1-1-2}\\
&\|\partial_x^{\alpha}P_1G_1(t,x)\|,\,\|\partial_x^{\alpha}G_1(t,x)P_1\|\leq C(1+t)^{-\frac{4+|\alpha|}{2}}\(e^{-\frac{|x|^2}{D(1+t)}} +(1+t)^{-\frac{1}{2}}e^{-\frac{(|x|-\mathbf{c}t)^2}{D(1+t)}}\)\nnm\\
&\qquad\qquad\qquad\qquad\quad+Ce^{-\frac{|x|+t}{D}}+C(1+t)^{-\frac{4+|\alpha|}{2}}B_{\frac{3}{2}}(t,|x|)1_{\{|x|\leq\mathbf{c}t\}},\label{Thm1-1-3}\\
&\|\partial_x^{\alpha}P_1G_1(t,x)P_1\|\leq C(1+t)^{-\frac{5+|\alpha|}{2}}\(e^{-\frac{|x|^2}{D(1+t)}} +(1+t)^{-\frac{1}{2}}e^{-\frac{(|x|-\mathbf{c}t)^2}{D(1+t)}}\)+Ce^{-\frac{|x|+t}{D}}\nnm\\
&\qquad\qquad\qquad\qquad\quad+C(1+t)^{-\frac{5+|\alpha|}{2}}B_{\frac{3}{2}}(t,|x|)1_{\{|x|\leq\mathbf{c}t\}},\label{Thm1-1-4}
\ema
where $\mathbf{c}>0$ is the sound speed defined by
\be \mathbf{c}=\sqrt{a^2+b^2},\ \  a=(\tilde{v}_1\chi_1,\chi_4),\ \  b=(\tilde{v}_1\chi_0,\chi_1),\ee
and the space-time diffusive profile $B_k(t,|x|-\lambda t)$ is defined for any $k>0$ and $\lambda\ge 0$ by
$$
B_k(t,|x|-\lambda t)=\(1+\frac{(|x|-\lambda t)^2}{1+t}\)^{-k}, \ \ (t,x)\in\mathbb{R}_{+}\times\mathbb{R}^3.
$$
The remainder part $G_2(t,x)$ is bounded and satisfies
\bq
\|G_2(t,x)\|\leq Ce^{-\frac{|x|+t}{D}}.\label{Thm1-2}
\eq
The singular kinetic wave $W_{4}(t,x)$ is defined by
\bq\label{Thm1-3}
W_{4}(t,x)=\sum^{12}_{k=0}J_k(t,x)
\eq
with
$$
\left\{\bln
&J_0(t,x)=S^t\delta(x)I_v=e^{-\nu(v)t}\delta(x-\tilde{v}t)I_v,\\
&J_k(t,x)=\int^t_0S^{t-s}KJ_{k-1}ds,\quad k\geq1.
\eln\right.
$$
Here $I_v$ is the identity map in $L^2(\mathbb{R}^3_v)$ and the operator $S^t$ is defined by
\bq\label{Thm1-4}
S^tg(x,v)=e^{-\nu(v)t}g(x-\tilde{v}t,v).
\eq
\end{thm}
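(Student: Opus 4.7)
The plan is to combine a Picard iteration in physical space with a low/high-frequency spectral analysis in Fourier space. First I would use the splitting $B=-\tilde v\cdot\nabla_x-\nu(v)+K$ from \eqref{L-K}--\eqref{LFB1} together with Duhamel's formula based on the free-streaming semigroup $S^t$ of \eqref{Thm1-4}. Iterating Duhamel thirteen times produces
\[
G(t,x)=\sum_{k=0}^{12}J_k(t,x)+R_{13}(t,x),\qquad R_{13}(t,x)=\int_0^tS^{t-s}KR_{12}(s,x)\,ds.
\]
The partial sum is the singular kinetic wave $W_{4}(t,x)$ defined in \eqref{Thm1-3}: $J_0$ carries the free-streaming $\delta$-mass in $x$, and each subsequent iteration gains regularity from the compactness and smoothness of the kernel of $K$. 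The number of iterations is chosen so that $R_{13}$ has enough decay in $\xi$ to justify inverse Fourier analysis at the pointwise level.

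For the remainder $R_{13}$ I would take the Fourier transform in $x$, giving $\partial_t\hat R_{13}=\hat B(\xi)\hat R_{13}$ with $\hat B(\xi)=L-\i\tilde v\cdot\xi$, and split via a smooth cutoff $\chi_{\{|\xi|\le r_0\}}$ into low and high frequencies. The high-frequency part yields the remainder $G_2$ satisfying \eqref{Thm1-2}: the spectral gap of $\hat B(\xi)$ for $|\xi|\ge r_0$ established in \cite{ZHAO-1} provides exponential decay in $t$, while a weighted $L^\infty_v$ estimate combined with the finite propagation speed $|\tilde v|<1$ intrinsic to $S^t$ gives the exponential decay in $|x|$. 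The low-frequency part defines $G_1(t,x)$, for which I would invoke the spectral decomposition of $\hat B(\xi)$ for $|\xi|\le r_0$ from \cite{ZHAO-1}: five eigenvalues $\lambda_j(|\xi|)$ bifurcate analytically from $0$, with
\[
\lambda_{\pm 1}(|\xi|)=\pm\i\mathbf{c}|\xi|-A_{\pm}|\xi|^2+O(|\xi|^3),\qquad \lambda_j(|\xi|)=-A_j|\xi|^2+O(|\xi|^3),\quad j=0,2,3,
\]
giving two Huygens modes at sound speed $\mathbf{c}=\sqrt{a^2+b^2}$ and three diffusive modes, with the rest of the spectrum gapped away and absorbed into $G_2$.

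To extract \eqref{Thm1-1-1}--\eqref{Thm1-1-4} I would invert the Fourier transform mode by mode, deforming the contour into $\{\mathrm{Re}\,\lambda<0\}$ and performing stationary phase in the radial variable $|\xi|$ after angular integration on $\mathbb S^2$. The diffusive eigenvalues give the Gaussian heat-kernel profile $(1+t)^{-3/2}e^{-|x|^2/D(1+t)}$, while the Huygens eigenvalues, carrying $e^{\pm\i\mathbf{c}|\xi|t}$, produce the traveling Gaussian with an extra $(1+t)^{-1/2}$ gain from the spherical integration. The improved decay rates $(1+t)^{-(4+|\alpha|)/2}$ and $(1+t)^{-(5+|\alpha|)/2}$ for $P_1G_1$, $G_1P_1$ and $P_1G_1P_1$ come from the orthogonality $P_1\chi_j=0$, which forces the spectral eigenprojections onto $N_0$ to acquire one, respectively two, extra powers of $|\xi|$ after composition with $P_1$. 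The algebraic tail $(1+t)^{-(3+|\alpha|)/2}B_{3/2}(t,|x|)\mathbf 1_{\{|x|\le\mathbf{c}t\}}$ arises from the cubic correction in $\lambda_{\pm 1}$ together with the angular integration inside the sound cone; its absence from $P_0^1G_1$ and $P_0^3G_1$ is forced by the eigenprojection structure, which couples $\chi_0$ and $\chi_4$ to the Huygens branch at leading order only.

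The main obstacle will be this last step, converting the low-frequency spectral expansion into sharp pointwise estimates uniformly across the sound cone $|x|=\mathbf{c}t$. The oscillatory integrals $\int_{|\xi|\le r_0}e^{\i x\cdot\xi+\lambda_{\pm 1}(|\xi|)t}\Pi_{\pm 1}(\xi)\,d\xi$ have critical-point structure that changes qualitatively between the three regimes $|x|\gg\mathbf{c}t$, $|x|\sim\mathbf{c}t$ and $|x|\ll\mathbf{c}t$, so the sharp $B_{3/2}$-profile inside the cone, together with its projection-dependent coefficient and compatibility with the Huygens Gaussian in a shrinking neighborhood of the cone, requires a careful case-by-case analysis. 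Propagating the derivatives $\partial_x^{\alpha}$ uniformly in $\alpha$ through all three regimes while preserving these sharp profiles is the technically most demanding part of the proof.
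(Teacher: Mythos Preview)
Your broad architecture (Picard iteration to extract $W_4$, then spectral analysis of what remains) matches the paper, but two steps are not correctly identified and would fail as written.

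\textbf{Exponential-in-$|x|$ decay of $G_2$.} You say the spectral gap gives decay in $t$ and ``finite propagation speed $|\tilde v|<1$ intrinsic to $S^t$'' gives decay in $|x|$. But after the frequency cutoff the remainder no longer satisfies a local transport equation, so the finite-speed heuristic does not apply to it; and the full semigroup $e^{tB}$ has no finite propagation speed. The paper instead splits physical space into $\{|x|\le 2\mathbf{c}t\}$ and $\{|x|>2\mathbf{c}t\}$. Inside the cone, exponential decay in $t$ (from Lemma~\ref{rbegf1} and Lemma~\ref{rbegf11}) trivially yields decay in $|x|$. Outside the cone, one works with the \emph{un-cutoff} remainder $R_4=G-W_4$, which solves $\partial_t R_4+\tilde v\cdot\nabla_x R_4-LR_4=KJ_{12}$. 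The Mixture Lemma (Lemma~\ref{rbegf9}) plus analyticity of $\hat J_{12}$ in a strip give $\|J_{12}(t,x)\|\le Ce^{-\nu_0(|x|+t)/4}$; then a weighted energy estimate with weight $e^{\epsilon(|x|-Yt)}$, $Y=\tfrac32\mathbf{c}$, closes because $|(\tilde v_i P_0f,P_0f)|\le\mathbf{c}\|P_0f\|^2$ (Lemma~\ref{rbegf12}). Your proposal does not contain this mechanism.

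\textbf{Origin of the $B_{3/2}$ tail.} You attribute it to the cubic correction in $\lambda_{\pm1}$. In the paper the cubic corrections produce only harmless analytic factors (see \eqref{rbegfgl05}); the algebraic tail comes instead from the Riesz-type factor $\xi_i\xi_j/|\xi|^2$ in the projection $\psi^*_{\pm1}\otimes\langle\psi^*_{\pm1}|$, isolated as $\hat G^4_{L,0}$ in \eqref{GL04}--\eqref{Hgf4}. This is handled not by stationary phase but by writing the corresponding inverse transform as $\partial^2_{x_ix_j}$ of a time-integrated wave--heat convolution and computing it explicitly (Lemma~\ref{rbegf4j1}); the $B_{3/2}$ profile is the Riesz transform of a heat kernel restricted to the sound cone. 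This also explains immediately why $P_0^1G_1$ and $P_0^3G_1$ do not see the tail: $\psi^*_{\pm1}$ carries only momentum components $\chi_1,\chi_2,\chi_3$, so $P_0^lG^4_{L,0}=0$ for $l=1,3$. Your stationary-phase sketch, with the cubic-correction explanation, would not produce this structure.

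For the Gaussian parts your contour-deformation idea is close to what the paper does, but the paper separates the oscillatory factor via Kirchhoff's formula (Lemma~\ref{rbegf3}) \emph{before} deforming, rather than using stationary phase; this cleanly isolates the Huygens shift $|x|\mapsto|x|-\mathbf{c}t$ and avoids the regime-by-regime analysis you anticipate as the main obstacle.
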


\begin{thm}\label{rbeth2}
There exists a small constant $\delta_0>0$ such that if the initial data $f_0$ satisfies
\bq\label{rbeth2-1}
\|f_0(x)\|_{L^{\infty}_{v,2}}\leq \delta_0e^{-\frac{|x|}{D_1}},
\eq
then there exists a unique global solution $f=f(t,x,v)$ to the relativistic Boltzmann equation \eqref{rbe3}--\eqref{rbe4}, which satisfies for two constants $C,D_2>0$  that
\bma
\|f(t,x)\|_{L^{\infty}_{v,2}}&\leq C\delta_0(1+t)^{-\frac{3}{2}}\(e^{-\frac{|x|^2}{D_2(1+t)}}+(1+t)^{-\frac{1}{2}} e^{-\frac{(|x|-\mathbf{c}t)^2}{D_2(1+t)}}\)+C\delta_0e^{-\frac{|x|+t}{D_2}}\nnm\\ &\quad+C\delta_0(1+t)^{-\frac{3}{2}}\(B_{\frac{3}{2}}(t,|x|)+(1+t)^{-\frac{1}{2}} B_{1}(t,|x|-\mathbf{c}t)\)1_{\{|x|\leq\mathbf{c}t\}}.\label{rbeth2-2}
\ema
\end{thm}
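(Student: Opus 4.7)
The plan is to construct the solution via Duhamel's principle and to close a weighted pointwise bootstrap based on the Green's function decomposition in Theorem~\ref{rbeth1}. Writing
\[
f(t,x) = \int_{\R^3} G(t,x-y) f_0(y)\, dy + \int_0^t\!\!\int_{\R^3} G(t-s,x-y)\, \Gamma(f,f)(s,y)\, dy\, ds,
\]
I would introduce the target profile
\[
Q(t,x) = (1+t)^{-\frac{3}{2}}\Big(e^{-\frac{|x|^2}{D(1+t)}} + (1+t)^{-\frac{1}{2}} e^{-\frac{(|x|-\mathbf{c}t)^2}{D(1+t)}}\Big) + e^{-\frac{|x|+t}{D}} + (1+t)^{-\frac{3}{2}}\Big(B_{\frac{3}{2}}(t,|x|) + (1+t)^{-\frac{1}{2}} B_{1}(t,|x|-\mathbf{c}t)\Big)\mathbf{1}_{\{|x|\leq\mathbf{c}t\}},
\]
and set $N(T) = \sup_{0\leq t\leq T,\, x\in\R^3} Q(t,x)^{-1}\|f(t,x)\|_{L^\infty_{v,2}}$. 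The aim is to prove an inequality of the form $N(T)\leq C\delta_0 + C N(T)^2$, which by a continuity argument yields $N(T)\leq 2C\delta_0$ uniformly in $T$ provided $\delta_0$ is sufficiently small, and hence both existence of a global solution and the estimate \eqref{rbeth2-2}.

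For the linear part, I would split $G=G_1+G_2+W_4$ according to Theorem~\ref{rbeth1}. Convolving $G_1(t,\cdot)$ and $G_2(t,\cdot)$ against the exponentially localized $f_0$ reproduces each of the Gaussian, Huygens, exponentially decaying and $B_{3/2}$ type profiles appearing in $Q$, via the standard convolution lemma stating that a diffusive or Huygens kernel convolved with an exponentially decaying source is bounded by the same profile up to a multiplicative constant. The singular kinetic wave $W_4$ is handled termwise: the leading transport term $J_0\ast f_0 = e^{-\nu(v)t}f_0(x-\tilde vt,v)$ inherits the exponential localization of $f_0$, and combined with $\nu(v)\geq\nu_0>0$ and $|\tilde v|<1$ it contributes the $e^{-(|x|+t)/D_2}$ piece. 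The iterates $J_k$ ($k\geq 1$) involve the compact smoothing operator $K$ and are treated by the Mixture-lemma style argument of \cite{LIU-2,Wang-2}, which upgrades the $L^2_v$ bounds of Theorem~\ref{rbeth1} into $L^\infty_{v,2}$ pointwise bounds.

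For the nonlinear term, the bilinear estimate $\|\Gamma(f,g)(x)\|_{L^\infty_{v,2}} \leq C\|f(x)\|_{L^\infty_{v,2}}\|g(x)\|_{L^\infty_{v,2}}$, which follows from the pointwise kernel representation of $\Gamma$ together with the weight $(1+v_0)^2$, yields $\|\Gamma(f,f)(s,y)\|_{L^\infty_{v,2}} \leq C N(T)^2 Q(s,y)^2$. It then remains to verify convolution estimates of the form
\[
\int_0^t\!\!\int_{\R^3} \|G_i(t-s,x-y)\|\, Q(s,y)^2\, dy\, ds \leq C\, Q(t,x),\qquad i=1,2,
\]
together with the analogous bound for $W_4$. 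These reduce to a finite list of convolutions between Gaussian/Huygens kernels, exponentially localized kernels, and the $B_k$ profiles on the sound cone. Of particular note is that the component $(1+t)^{-\frac{1}{2}}B_{1}(t,|x|-\mathbf{c}t)$ inside $Q$ arises precisely as the output of convolving a $B_{3/2}$-type source with the Huygens kernel in the nonlinear iteration; the exponents $3/2$ and $1$ are calibrated so as to be stable under such a convolution and to close the bootstrap without loss.

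The main obstacle is twofold. First, Theorem~\ref{rbeth1} is stated in $L^2_v$ operator norm while the nonlinear closure is carried out in $L^\infty_{v,2}$; bridging these requires careful use of the smoothing properties of the compact operator $K$ and the finite-dimensional macroscopic projection $P_0$, in the spirit of the classical Boltzmann analyses \cite{LIU-3,Wang-2}, but now with the relativistic velocity $\tilde v = v/v_0$ bounded by $1$ rather than linear in $v$, which changes the geometry of the characteristic transport and forces the use of the specific kernel bounds in \eqref{L-K}. Second, the pointwise convolutions for the $B_k$ profiles inside and across the sound cone must be carried out with no loss in decay rate; one must verify in particular that $B_{3/2}\ast B_{3/2}$, the Huygens-against-$B_{3/2}$ convolution, and the cross convolutions with the Gaussian core, all remain compatible with the ansatz $Q$. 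Once these ingredients are in place the bootstrap closes and Theorem~\ref{rbeth2} follows.
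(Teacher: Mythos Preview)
Your overall strategy matches the paper's: Duhamel, contraction/bootstrap in a weighted $L^\infty_{v,2}$ space with exactly the profile you call $Q$ (the paper calls it $\Psi$), and a splitting $G=G_1+G_2+W_4$ followed by a catalogue of convolution lemmas for each wave pairing. The bilinear estimate $\|\Gamma(f,g)\|_{L^\infty_{v,\beta}}\le C\|f\|_{L^\infty_{v,\beta}}\|g\|_{L^\infty_{v,\beta}}$ and the handling of $W_4$ via $S^t$ plus $K$-smoothing are also the paper's ingredients.

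There is, however, one genuine gap. You propose to verify
\[
\int_0^t\!\!\int_{\R^3}\|G_1(t-s,x-y)\|\,Q(s,y)^2\,dy\,ds\le C\,Q(t,x),
\]
but this fails as stated: with the full $G_1$ the Huygens kernel decays like $(1+t)^{-2}$, and the Huygens--Huygens coupling (Lemma~\ref{rbepw3}, estimate \eqref{rbepw303}) then produces terms of order $(1+t)^{-1}(1+\mathbf{c}t-|x|)^{-3/2}$ in the intermediate region, which is too slow to fit back into $(1+t)^{-2}B_1(t,|x|-\mathbf{c}t)$. The paper closes this by using the structural fact $P_0\Gamma(f,f)=0$, so that in the Duhamel integral one may replace $G_1$ by $G_1P_1$; by \eqref{Thm1-1-3} this gains an extra $(1+t)^{-1/2}$, giving a Huygens exponent $5/2$ rather than $2$. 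It is precisely this $(1+t-s)^{-5/2}$ Huygens kernel, convolved with the $(1+s)^{-4}$ Huygens part of $Q^2$, that produces the $(1+t)^{-2}B_1(t,|x|-\mathbf{c}t)$ term and nothing worse. Without invoking $P_0\Gamma=0$ your bootstrap does not close.

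One smaller point on the $L^2_v\to L^\infty_{v,2}$ lift: the paper does this not by a general abstract argument but by rewriting $I_j$ once more via $I_j=S^t(\cdot)+\int_0^tS^{t-s}KI_j\,ds$ and iterating the gain $\|Kg\|_{L^\infty_{v,k}}\le C\|g\|_{L^\infty_{v,k-1}}$ (Lemma~\ref{rbegf8j1}) together with the pointwise propagation of $S^t$ in Lemma~\ref{rbepw11}. Your sketch gestures at this, but the concrete mechanism is worth naming.
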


The results in Theorem \ref{rbeth1} on the pointwise behavior of the Green's function $G$ to the relativistic Boltzmann equation \eqref{rbe3} is proved based on the spectral analysis \cite{ZHAO-1} and the ideas inspired by \cite{LI-5,LIU-2,LIU-3}. First, we estimate the Green's function $G$ inside the Mach region $|x|\leq2\mathbf{c}t$ based on the spectral analysis. Indeed, we decompose the Green's function $G$ into the lower frequency part $G_L$ and the high frequency part $G_H$, and further split $G_L$ into the fluid parts $G_{L,0}$ and the non-fluid parts $G_{L,1}$,  namely
$$
\left\{\bln
&G=G_L+G_H,\\
&G_L=G_{L,0}+G_{L,1}.
\eln\right.
$$

By using Fourier analysis techniques, we can show that the low-frequency fluid part $G_{L,0}(t,x)$ is smooth and contains huygens waves and diffuse waves for $|x|\leq2\mathbf{c}t$ since the Fourier transform of the linear relativistic Boltzmann operator $B(\xi)$ has five eigenvalues $\{\lambda_j(|\xi|),\ j=-1,0,1,2,3\}$ at the low frequency region $|\xi|\leq r_0$ satisfying
$$
\left\{\bln
&\lambda_{\pm1}(|\xi|)=\pm \i\mathbf{c}|\xi|-A_{\pm1}|\xi|^2+O(|\xi|^3),\\
&\lambda_0(|\xi|)=-A_0|\xi|^2+O(|\xi|^3),\\
&\lambda_2(|\xi|)=\lambda_3(|\xi|)=-A_2|\xi|^2+O(|\xi|^3).
\eln\right.
$$

Next, we apply a Picard's iteration to estimate  the Green's function $G$ outside the Mach region $|x|>2\mathbf{c}t$. Since $\hat{G}(t,\xi)$ does not belong to $L^1_{\xi}(\R^3)$, $G(t,x)$ can be decomposed into the singular wave and the bounded remainder part. To exact the singular wave from $G(t,x)$, we defined the approximate sequence $\{\hat{J}_{k}(t,\xi),\, k\geq 0\}$ for the Green's function $\hat{G}(t,\xi)$, where $\hat{J}_{k}(t,\xi)=\hat{\mathbb{M}}_{k}^t(\xi)$  with $\hat{\mathbb{M}}_{k}^t(\xi)$ to be the Mixture operator. Since the difference between  linear relativistic Boltzmann operator  and classical Boltzmann operator,  we provide the proof of Mixture Lemma (Lemma \ref{rbegf9}) after analyzing linear collision operator of relativistic Boltzmann equation.  By the Mixture Lemma, $\hat{\mathbb{M}}^t_k(\xi)$ is analytic in $D_{\delta}=\{\xi\in\mathbb{C}^3|~|\mathrm{Im}\xi|\leq\delta\}$ and satisfies
$$
\|\hat{\mathbb{M}}^t_{3k}(\xi)\|\leq C_k(1+t)^{3k}(1+|\xi|)^{-k}e^{- (\nu_0-\delta)t}, \quad \xi\in D_{\delta}.
$$
This  implies that  the approximate solution $J_{12}(t,x)$ is bounded and satisfies
\be
\|J_{12}(t,x)\|\leq Ce^{-\frac{\nu_{0}(|x|+t)}{4}}. \label{J6j1}
\ee
We define the singular part $W_{k}(t,x)$ as
$$
W_{k}(t,x)=\sum^{3k}_{i=0}J_i(t,x).
$$

Note that \eqref{J6j1} implies that the remainder part $ G(t,x)-W_4(t,x)$ is bounded for all $(t,x)\in\R_{+}\times\R^3$. Thus, by choosing the weighted function as $w=e^{\eps (|x|-Yt)}$, we can show by the weighted energy method that the remainder part $ G(t,x)-W_4(t,x)$ satisfies (refer to Lemma \ref{rbegf12})
$$
\|G(t,x)-W_{4}(t,x)\|\leq Ce^{-\frac{|x|+t}{D}}, \quad |x|\ge 2 \mathbf{c}t.
$$
Applying the above decompositions and estimates, we can obtain the decomposition and pointwise space-time behavior for each part of the Green's function $G(t,x)$ as listed Theorem \ref{rbeth1}.

Finally, by using the estimates of the Green's functions in Theorem \ref{rbeth1} and the estimates of waves coupling in Lemmas \ref{rbepw2}--\ref{rbepw11}, one can establish the pointwise space-time estimate on the global solution to the nonlinear relativistic Boltzmann equation given in Theorem \ref{rbeth2}. Note that comparing to the Green's function, there is an addition wave $(1+t)^{-2}B_1(t,|x|-\mathbf{c}t)1_{\{|x|\le \mathbf{c}t\}}$ in  the solution to the nonlinear equation. This addition wave comes from the the following nonlinear waves coupling (See Lemma \ref{rbepw3}):
\bmas
&\quad \intt \intr (1+t-s)^{-\frac52}e^{-\frac{(|x-y|-\mathbf{c}(t-s))^2}{D(1+t-s)}}(1+s)^{-4}e^{-\frac{2(|y|-\mathbf{c}s)^2}{D_1(1+s)}}dyds\\
&\le  C(1+t)^{-\frac52} \(e^{-\frac{3|x|^2}{2D_1(1+t)}}+e^{-\frac{3(|x|-\mathbf{c} t)^2}{2D_1(1+t)}}\) +C(1+t)^{-2}B_1(t,|x|-\mathbf{c}t)1_{\{|x|\le \mathbf{c}t\}}.
\emas

The rest of this paper is organized as follows. In section \ref{sect2}, we present results regarding the spectrum analysis of the linear operators related to the linearized relativistic Boltzmann equation. In section \ref{sect3}, we establish the pointwise space-time estimates of the Green's function to the linearized relativistic Boltzmann equation. In section \ref{sect4}, we prove the pointwise space-time estimates of the global solutions to the original nonlinear relativistic Boltzmann equation.

\section{Spectral analysis}
\label{sect2}
\setcounter{equation}{0}

In this section, we recall the spectral structure of the linearized relativistic Boltzmann operator $B(\xi)$ and analyze the analyticity of the eigenvalues and eigenfunctions of  $B(\xi)$ in order to study the pointwise estimate of the fluid part of the Green's function.

First, we take the Fourier transform in \eqref{rbe5} with respect to $x$ to have
\bq
\left\{\bln
&\partial_t\hat{G}=B(\xi)\hat{G},\quad t>0,\\
&\hat{G}(\xi,0)=1(\xi)I_v,
\eln\right.
\eq
where the operator $B(\xi)$ is defined by
\bq
B(\xi)=L-\i\tilde{v}\cdot\xi.\label{HBXI}
\eq

We have the following results about the spectrum structure and semigroup of the operator $B(\xi)$.
\begin{lem}[\cite{ZHAO-1}]\label{rbesp1}

(1) For any $r_1>0$, there exists $\alpha=\alpha(r_1)>0$ such that when $|\xi|>r_1$,
\bq
\sigma(B(\xi))\subset\{\lambda\in \mathbb{C}\,|\,\mathrm{Re}\lambda<-\alpha\}.
\eq

(2) For any $\delta>0$ and all $\xi\in\R^3$, there exists $y_1=y_1(\delta)>0$ such that
\bq
\rho(B(\xi))\supset\{\lambda\in\mathbb{C}\,|\,\mathrm{Re}\lambda\geq-\nu_0+\delta,\,|\mathrm{Im}\lambda|\geq y_1\}\cup\{\lambda\in\mathbb{C}\,|\,\mathrm{Re}\lambda>0\}.
\eq
\end{lem}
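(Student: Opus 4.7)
The plan is to decompose $B(\xi)=M(\xi)+K$ where $M(\xi):=-\nu(v)-\i\tilde v\cdot\xi$ is a normal multiplication operator and $K$ is the compact operator given in \eqref{L-K}. Since $\nu(v)\geq\nu_0>0$, the spectrum of $M(\xi)$ coincides with the essential range of $-\nu(v)-\i\tilde v\cdot\xi$ and therefore lies in $\{\lambda\in\C:\text{Re}\,\lambda\leq-\nu_0\}$. The Weyl essential spectrum theorem under compact perturbations then gives
$$
\sigma_{\mathrm{ess}}(B(\xi))\subset\{\lambda\in\C:\text{Re}\,\lambda\leq -\nu_0\},
$$
so that any $\lambda\in\sigma(B(\xi))$ with $\text{Re}\,\lambda>-\nu_0$ is an isolated eigenvalue of finite multiplicity.

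For part (1), if $B(\xi)f=\lambda f$ with $\|f\|=1$ and $\text{Re}\,\lambda>-\nu_0$, taking the real part of the inner product with $f$, noting that $(\i\tilde v\cdot\xi f,f)\in\i\R$, and using the dissipation estimate \eqref{LF-F} gives
$$
\text{Re}\,\lambda=(Lf,f)\leq-\mu\|P_1 f\|^2\leq 0.
$$
To upgrade this to $\text{Re}\,\lambda\leq-\alpha(r_1)<0$ whenever $|\xi|\geq r_1$, I would argue by contradiction combined with continuous dependence of eigenvalues on $\xi$. Suppose a sequence $(\xi_n,\lambda_n,f_n)$ with $|\xi_n|\geq r_1$, $\|f_n\|=1$ and $\text{Re}\,\lambda_n\to 0$; the dissipation inequality forces $\|P_1 f_n\|\to 0$, so the eigen-equation projected onto $N_0$ reduces in the limit to the $5\times 5$ macroscopic symbol $-\i P_0(\tilde v\cdot\xi_\ast)P_0$ acting on $\mathrm{span}\{\chi_0,\ldots,\chi_4\}$. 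For $\xi_n$ bounded, standard Kato perturbation produces a strictly negative real correction of order at least $|\xi_\ast|^2\geq r_1^2$, contradicting $\text{Re}\,\lambda_n\to 0$; for $|\xi_n|\to\infty$, the streaming term itself forces eigenvalues either into the essential spectrum or to acquire large imaginary part. A uniform $\alpha(r_1)$ then follows by the boundedness of the five hydrodynamic curves on compact sets of $\xi$.

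For part (2), the half-plane $\{\text{Re}\,\lambda>0\}$ lies in $\rho(B(\xi))$ directly from the bound $\text{Re}\,\lambda\leq 0$ on the spectrum derived above. On the strip $\{\text{Re}\,\lambda\geq-\nu_0+\delta\}$ one factors
$$
\lambda-B(\xi)=\bigl(\lambda+\nu(v)+\i\tilde v\cdot\xi\bigr)\bigl(I-R_0(\lambda,\xi)K\bigr),\qquad R_0(\lambda,\xi):=\bigl(\lambda+\nu(v)+\i\tilde v\cdot\xi\bigr)^{-1},
$$
with $\|R_0(\lambda,\xi)\|\leq 1/\delta$ since $\text{Re}(\lambda+\nu(v))\geq\delta$. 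It then suffices to prove $\|R_0(\lambda,\xi)K\|<1$ uniformly in $\xi\in\R^3$ whenever $|\text{Im}\,\lambda|\geq y_1(\delta)$, after which a Neumann series inverts $I-R_0K$ and bounds the resolvent. I would establish this smallness by approximating $K$ in Hilbert--Schmidt norm by a smooth finite-rank operator, then using $|\tilde v|<1$ to bound $|\text{Im}\,\lambda+\tilde v\cdot\xi|$ from below on most of $v$-space when $|\xi|\leq|\text{Im}\,\lambda|/2$, and a Riemann--Lebesgue oscillation argument on the complementary regime.

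The principal obstacle is the uniform-in-$\xi$ estimate $\|R_0(\lambda,\xi)K\|\to 0$ as $|\text{Im}\,\lambda|\to\infty$ in part (2): clean control of the phase $\text{Im}\,\lambda+\tilde v\cdot\xi$ when $|\xi|\ll|\text{Im}\,\lambda|$ is a bonus from the relativistic boundedness $|\tilde v|<1$, but the intermediate regime $|\xi|\sim|\text{Im}\,\lambda|$ demands genuine oscillation in $v$ weighted against the explicit structure of the kernel $k(v,u)$ from \eqref{L-K}. A secondary subtlety in part (1) is ensuring that the five hydrodynamic eigenvalue curves remain strictly in the open left half-plane for all $|\xi|\geq r_1$; this eventually relies on the positivity of the sound speed $\mathbf c$ and the coefficients $a,b$ appearing in the low-frequency expansion stated in Theorem \ref{rbeth1}.
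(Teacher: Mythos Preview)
The paper does not prove this lemma; it is quoted from \cite{ZHAO-1}. Your overall strategy---Weyl's theorem for the essential spectrum, dissipativity for $\mathrm{Re}\,\lambda\le 0$, and a Neumann-series argument via $\|R_0(\lambda,\xi)K\|<1$---is the classical Ellis--Pinsky/Ukai scheme and is almost certainly what \cite{ZHAO-1} carries out in the relativistic setting. Your identification of the uniform-in-$\xi$ smallness of $R_0(\lambda,\xi)K$ as the crux of part~(2) is accurate, and the splitting $|\xi|\le |\mathrm{Im}\,\lambda|/2$ versus $|\xi|>|\mathrm{Im}\,\lambda|/2$, using $|\tilde v|<1$ in the first regime and a co-area/oscillation estimate on a finite-rank approximant of $K$ in the second, is the right plan.

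There is, however, a genuine gap in your argument for part~(1). For bounded $\xi_n$ with $|\xi_n|\ge r_1$ you invoke ``standard Kato perturbation produces a strictly negative real correction of order at least $|\xi_\ast|^2\ge r_1^2$''. This is a misapplication: the expansion $\lambda_j(|\xi|)=-A_j|\xi|^2+O(|\xi|^3)$ of Lemma~\ref{rbesp2} is valid only for \emph{small} $|\xi|$, not on $\{|\xi|\ge r_1\}$. The correct argument at a fixed $\xi_\ast\neq 0$ is more direct: if $\mathrm{Re}\,\lambda=0$ then $(Lf,f)=0$ forces $f\in N_0$, and the eigen-equation becomes $(\tilde v\cdot\xi_\ast+\mathrm{Im}\,\lambda)f(v)=0$ pointwise, which is impossible for a nonzero linear combination of $\chi_0,\dots,\chi_4$ since $\tilde v\cdot\xi_\ast$ is non-constant. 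Hence $\mathrm{Re}\,\lambda<0$ strictly for every eigenvalue above the essential spectrum. Uniformity on $r_1\le|\xi|\le R$ then follows from upper semicontinuity of the spectrum and compactness, not from a perturbation series. Separately, your treatment of $|\xi_n|\to\infty$ (``the streaming term forces eigenvalues into the essential spectrum or to acquire large imaginary part'') is too vague; what is actually needed is a companion estimate $\|R_0(\lambda,\xi)K\|\to 0$ as $|\xi|\to\infty$ uniformly for $\mathrm{Re}\,\lambda\ge -\nu_0+\delta$, proved by the same finite-rank approximation and co-area argument you outline for part~(2), so that no discrete spectrum survives for $|\xi|$ beyond some $R=R(\delta)$.
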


\begin{lem}[\cite{ZHAO-1}]\label{rbesp2}

(1)~There exist a constant $r_0>0$ such that the spectrum $\sigma(B(\xi))\cap\{\mathrm{Re}\lambda\geq-\frac{\mu}{2}\}$ consists of five points $\{\lambda_j(|\xi|),\,j=-1,0,1,2,3\}$ for $|\xi|\leq r_0$. In particular, the eigenvalues $\lambda_j(|\xi|)$ are $C^{\infty}$ functions in $|\xi|$  and satisfy the following expansion for $|\xi|\leq r_0$:
\bq\label{SA1}
\left\{\bln
&\lambda_{\pm1}(|\xi|)=\pm \i\mathbf{c}|\xi|-A_{\pm1}|\xi|^2+O(|\xi|^3),\\
&\lambda_0(|\xi|)=-A_0|\xi|^2+O(|\xi|^3),\\
&\lambda_2(|\xi|)=\lambda_3(|\xi|)=-A_2|\xi|^2+O(|\xi|^3),
\eln\right.
\eq
where $\mathbf{c}>0$, and $A_j>0$, $j=-1,0,1,2$ are constants. 

(2)~The semigroup $S(t,\xi)=e^{tB(\xi)}$ can be decomposed into
\bq\label{sami1}
S(t,\xi)f=S_1(t,\xi)f+S_2(t,\xi)f,\quad \forall\, f\in L^2(\R^3_v),
\eq
where
\bq\label{sami2}
S_1(t,\xi)=\sum^3_{j=-1}e^{\lambda_j(|\xi|)t}\(f,\overline{\psi_j(\xi)}\)\psi_j(\xi)1_{\{|\xi|\leq r_0\}},
\eq
with $(\lambda_j(|\xi|),\psi_j(\xi))$ being the eigenvalue and eigenfunction of the operator $B(\xi)$ for $|\xi|\leq r_0$, and $S_2(t,\xi)=:S(t,\xi)f-S_1(t,\xi)f$ satisfies for two constants $C,\kappa_0>0$ independent of $\xi$ that
\bq\label{sami3}
\|S_2(t,\xi)f\|\leq Ce^{-\kappa_0t}\|f\|.
\eq
\end{lem}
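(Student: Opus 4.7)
The plan is to treat the two assertions sequentially, first establishing the low-frequency eigenvalue expansion and then assembling the semigroup decomposition. For part (1) I would invoke analytic perturbation theory (Kato) applied to the family $B(\xi)=L-\i\tilde v\cdot\xi$, viewing the skew-adjoint term as a bounded analytic perturbation of the self-adjoint non-positive operator $L$ (bounded because $|\tilde v|<1$). At $\xi=0$ the eigenvalue $0$ of $L$ has multiplicity five with eigenspace $N_0=\mathrm{span}\{\chi_0,\ldots,\chi_4\}$, and \eqref{LF-F} confines the rest of $\sigma(L)$ to $\{\mathrm{Re}\,\lambda\le-\mu\}$. Fix a small circle $\Gamma$ around $0$ enclosing no other spectrum of $L$; by resolvent continuity $\Gamma\subset\rho(B(\xi))$ for $|\xi|$ small, so the Riesz projection $P(\xi)=-\frac{1}{2\pi\i}\oint_{\Gamma}(B(\xi)-z)^{-1}dz$ is analytic in $\xi$, has constant rank five, and reduces $B(\xi)$ to a $5\times 5$ matrix-valued analytic family on $P(\xi)L^2$.

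To obtain the expansion \eqref{SA1}, I would diagonalize this reduced family order by order in $|\xi|$. Rotational invariance of $L$ together with the covariance of $\tilde v\cdot\xi$ under simultaneous rotations of $v$ and $\xi$ forces the eigenvalues to depend only on $|\xi|$. Writing $\omega=\xi/|\xi|$, the first-order contribution comes from the $5\times 5$ matrix with entries $(-\i\tilde v\cdot\omega\,\chi_i,\chi_j)$; direct computation in a basis adapted to the split $\mathrm{span}\{\chi_0,\chi_4,\omega\cdot(\chi_1,\chi_2,\chi_3)\}\oplus\{W\cdot(\chi_1,\chi_2,\chi_3):W\perp\omega\}$ produces eigenvalues $\{\pm\i\mathbf{c},0,0,0\}$ with sound speed $\mathbf{c}=\sqrt{a^2+b^2}$ determined by $a=(\tilde v_1\chi_1,\chi_4)$ and $b=(\tilde v_1\chi_0,\chi_1)$. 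The persisting triple zero is resolved at second order by a Rayleigh-Schr\"odinger correction of the form $A_j=-(L^{-1}P_1(\tilde v\cdot\omega)E_j,(\tilde v\cdot\omega)E_j)$, which is strictly positive because $(\tilde v\cdot\omega)E_j\in(N_0)^\perp$ and $L^{-1}$ is negative definite there by \eqref{LF-F}. Smoothness in the scalar $|\xi|$ (not merely in $\xi$) follows because the characteristic polynomial of the reduced matrix is polynomial in $|\xi|^2$.

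For part (2), I would represent the semigroup by the Dunford-Taylor integral $S(t,\xi)=\frac{1}{2\pi\i}\oint_{\Gamma_\xi}e^{tz}(z-B(\xi))^{-1}dz$ along a contour enclosing $\sigma(B(\xi))$. For $|\xi|\le r_0$, deform $\Gamma_\xi$ into the union of five small loops around $\lambda_j(|\xi|)$, whose contribution is exactly $S_1(t,\xi)$ in \eqref{sami2}, plus a vertical line $\mathrm{Re}\,z=-\kappa_0$ with $0<\kappa_0<\mu/2$ separating the five low-frequency eigenvalues from the rest of the spectrum. For $|\xi|>r_0$, Lemma \ref{rbesp1}(1) places all of $\sigma(B(\xi))$ to the left of $\mathrm{Re}\,z=-\alpha$, so the single vertical line already suffices and $S_1\equiv 0$ there. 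Combining uniform resolvent decay from Lemma \ref{rbesp1}(2) for large $|\mathrm{Im}\,z|$ with analytic boundedness on the compact part of the contour yields the uniform exponential estimate \eqref{sami3} for $S_2=S-S_1$.

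The main obstacle I anticipate is securing uniform $L^2_v$-bounds on $(z-B(\xi))^{-1}$ along the full Dunford contour simultaneously for every $\xi$: in the relativistic regime $\tilde v=v/\sqrt{1+|v|^2}$ is bounded, so the streaming operator is uniformly bounded and cannot be absorbed into large-$v$ weights as in the classical hard-sphere analysis, and the resolvent estimate must instead be extracted from the dissipative structure $-\nu(v)+K-\i\tilde v\cdot\xi$ with $K$ compact. A secondary subtlety will be ruling out the intrusion of additional spectrum into the chosen contour as $\xi$ varies in $|\xi|\le r_0$; this requires a continuity argument combined with the Weyl-type identification of the essential spectrum via compactness of $K$, to confirm that the five perturbed eigenvalues are the only spectrum in $\{\mathrm{Re}\,\lambda\ge-\mu/2\}$ uniformly for $|\xi|\le r_0$.
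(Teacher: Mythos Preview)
The paper does not prove this lemma---it is quoted from \cite{ZHAO-1}---but Lemma~\ref{rbesp3} immediately afterward re-derives part~(1) in detail, and the method there (presumably also that of \cite{ZHAO-1}) differs from yours. Rather than Kato's Riesz projection, the paper uses a macro-micro (Lyapunov--Schmidt) reduction: split the eigenfunction as $e=P_0e+P_1e$, invert the microscopic equation to express $P_1e=\i\eta(L-\eta\gamma-\i\eta P_1\tilde v_1 P_1)^{-1}P_1\tilde v_1 P_0e$, substitute back to obtain a closed $5\times 5$ nonlinear eigenvalue problem for $P_0e$, and apply the implicit function theorem to the scalar determinants $D_0(\gamma,\eta)$, $D_1(\gamma,\eta)$. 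The two reductions are closely related, but the implicit-function route delivers analyticity of each branch $\gamma_j(\eta)$ immediately, whereas your smoothness argument (``the characteristic polynomial is polynomial in $|\xi|^2$'') is incomplete as stated: for a non-self-adjoint analytic family the roots of the characteristic polynomial are a priori only Puiseux series, and with a quintuple degeneracy at $\xi=0$ and a persisting double root $\lambda_2=\lambda_3$ you would still need to rule out branch points. The macro-micro reduction also exposes the block structure (the transverse $\chi_2,\chi_3$ directions decouple from the longitudinal $\chi_0,\omega\cdot\chi',\chi_4$ block) and the parity relations $\beta_j(-\eta)=\overline{\beta_j(\eta)}$ that the paper relies on in Lemma~\ref{rbesp5} to produce the explicit eigenfunction form \eqref{SPA2} needed downstream. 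For part~(2) your Dunford contour argument is the standard one and matches the cited references; the uniform resolvent bound you correctly flag is handled in \cite{ZHAO-1,Ellis-1,Ukai-2} via the second-resolvent identity $(z-B(\xi))^{-1}=(z-c(\xi))^{-1}[I+K(z-B(\xi))^{-1}]$ with $c(\xi)=-\nu(v)-\i\tilde v\cdot\xi$, exploiting compactness of $K$ and the explicit decay of the multiplication operator $(z-c(\xi))^{-1}$ along vertical lines.
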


Now we analyze the analyticity and expansion of the eigenvalues and eigenfunctions of $B(\xi)$ at low frequency. To this end, we first consider a 1-D eigenvalue problem:
\bq
(L-\i\tilde{v}_1\eta)e=\beta e,\quad \eta\in \R.\label{SA}
\eq

We have the analyticity and expansion of the eigenvalues $\beta_j(\eta)$ and the corresponding eigenfunctions $e_j(\eta)$ at low frequency as follows.
\begin{lem}\label{rbesp3}
Let $\sigma(\eta)$ denote the spectrum set of the operator $L-i\tilde{v}_1\eta$. We have


(1) There exists a constant $r_0>0$ such that the spectrum $\sigma(\eta)\cap\{\mathrm{Re}\lambda\geq-\frac{\mu}{2}\}$ consists of five points $\{\beta_j(\eta),\,j=-1,0,1,2,3\}$ for $|\eta|\leq r_0$. In particular, the eigenvalues $\beta_j(\eta)$ are analytic functions in $\eta$ and satisfy
\bq\label{SA5}
\left\{\bln
\beta_{\pm1}(\eta)&=\pm \i\eta A^1_1(\eta^2)-A^2_1(\eta^2),\\
\beta_{0}(\eta)&=-A_0^2(\eta^2),\\
\beta_{2}(\eta)&=\beta_{3}(\eta)=-A^2_2(\eta^2),
\eln\right.
\eq
where $A^1_1(\eta)$ and $A^2_j(\eta)~(j=0,1,2)$ are analytic functions in $\eta$ and satisfy
\bq\label{SA6}
\left\{\bln
&A^1_1(0)=\mathbf{c},\ \  A^2_1(0)=A_0^2(0)=A^2_2(0)=0,\\
&\mathrm{\frac{d}{d\eta}}A^2_1(0)=A_1,\ \ \mathrm{\frac{d}{d\eta}}A^2_0(0)=A_0,\ \  \mathrm{\frac{d}{d\eta}}A^2_2(0)=A_2,\\
&A_j=-(L^{-1}P_1\tilde{v}_1F_{j},\tilde{v}_1F_{j})>0, \ \  j=-1,0,1,2
\eln\right.
\eq
with $F_{j}$ $(j=-1,0,1,2,3)$  defined by \eqref{EUJ}.

(2) The corresponding eigenfunctions $e_j(\eta)~(j=-1,0,1,2,3)$ are analytic in $\eta$ and satisfy
\bq\label{SA6j1}
\left\{\bln
&P_0e_{0}(\eta)=a_0(\eta^2)\chi_0+ \i\eta b_0(\eta^2)\chi_1+c_0(\eta^2)\chi_4,\\
&P_0e_{\pm1}(\eta)=\(a_{1,0}(\eta^2)\pm \i\eta a_{1,1}(\eta^2)\)\chi_0+\(\pm b_{1,0}(\eta^2)+\i\eta b_{1,1}(\eta^2)\)\chi_1 \\
&\qquad\qquad\quad+\(c_{1,0}(\eta^2)\pm \i\eta c_{1,1}(\eta^2)\)\chi_4,\\
&P_0e_{k}(\eta)=c_2(\eta^2)\chi_k, \quad  k=2,3,\\
&P_1e_j(\eta)=\i(L-\beta_j-\i\eta P_1\tilde{v}_1 P_1)^{-1}P_1(\tilde{v}_1 P_0e_j(\eta)),
\eln\right.
\eq
where $a_0(\eta),b_0(\eta),c_0(\eta),c_2(\eta)$, $a_{1,l}(\eta), b_{1,l}(\eta), c_{1,l}(\eta)$ with $l=0,1$ are  analytic functions in $\eta$ and satisfy
\be \label{abc1}
\left\{\bal
a_0(0)=- \frac{a }{\sqrt{a^2+b^2}} ,\ \ b_0(0)=\frac{\sqrt{2}A_{0,-1}}{\sqrt{a^2+b^2}},\ \ c_0(0)=\frac{b }{\sqrt{a^2+b^2}} ,\ \  b_{1,0}(0)=-\sqrt{\frac{1}{2}}, \\
a_{1,0}(0)= \frac{b }{\sqrt{2a^2+2b^2} }, \ \ a_{1,1}(0)= \frac{ b A_{1,-1}+2\sqrt{2}aA_{1,0}}{2\sqrt{2}(a^2+b^2)} ,\ \  b_{1,1}(0)=\frac{A_{1,-1}}{2\sqrt{2a^2+2b^2}}, \\
c_{1,0}(0)=\frac{a }{\sqrt{2a^2+2b^2}},\ \  c_{1,1}(0)= \frac{aA_{1,-1}-2\sqrt{2}bA_{1,0}}{2\sqrt{2}(a^2+b^2)} ,\ \   c_2(0)=1,\\
A_{i,j}=-(L^{-1}P_1\tilde{v}_1F_{i},\tilde{v}_1F_{j})>0, \ \  i,j=-1,0,1.
\ea\right.
\ee
\end{lem}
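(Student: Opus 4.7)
The plan is to reduce the eigenvalue problem $(L-\i\tilde{v}_1\eta)e = \beta e$ to a finite-dimensional analytic problem on the null space $N_0 = \mathrm{span}\{\chi_0,\chi_1,\chi_2,\chi_3,\chi_4\}$ via a macro-micro (Lyapunov--Schmidt) decomposition, then invoke Kato's analytic perturbation theory. At $\eta=0$, the operator $L$ has $0$ as an isolated eigenvalue of multiplicity $5$ with the rest of the spectrum bounded above by $-\mu$ (by \eqref{LF-F}), and the perturbation $-\i\tilde{v}_1\eta$ is bounded since $|\tilde{v}_1|\le 1$. Kato's theorem therefore yields five eigenvalues $\beta_j(\eta)$ near $0$ that, together with their total projection, depend analytically on $\eta$ for $|\eta|\le r_0$; the spectral gap in Lemma \ref{rbesp1} confirms they are the only eigenvalues with $\mathrm{Re}\,\lambda\ge -\mu/2$.

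First, I would apply $P_1$ to the eigenvalue equation to get $(L-\beta-\i\eta P_1\tilde{v}_1P_1)P_1e = \i\eta P_1\tilde{v}_1P_0e$. Since $L|_{\mathrm{Ran}(P_1)}$ is invertible with spectral gap $\mu$, inversion for small $|\eta|$ and $|\beta|$ yields the representation of $P_1 e$ recorded in the last line of \eqref{SA6j1}. Substituting into $P_0(L-\i\tilde{v}_1\eta)e = \beta P_0 e$ produces a $5\times 5$ analytic matrix eigenvalue problem
$$M(\eta,\beta)P_0 e = \beta P_0 e,\quad M(\eta,\beta) = -\i\eta P_0\tilde{v}_1 P_0 - \eta^2 P_0\tilde{v}_1(L-\beta-\i\eta P_1\tilde{v}_1P_1)^{-1}P_1\tilde{v}_1 P_0,$$
whose entries can be expanded in powers of $\eta$ using the Neumann series for the resolvent.

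At leading order the relevant matrix is $-\i\eta[(\tilde{v}_1\chi_j,\chi_k)]_{j,k=0}^{4}$. Using the parity of $\chi_i$ in $v_1$ (even for $i=0,2,3,4$ and odd for $i=1$) and rotational invariance in $(v_2,v_3)$, the only nonzero entries are the pairs giving $\pm\i b\eta$ between $\chi_0,\chi_1$ and $\pm\i a\eta$ between $\chi_1,\chi_4$. Diagonalizing the resulting $3\times 3$ block in $\mathrm{span}\{\chi_0,\chi_1,\chi_4\}$ produces eigenvalues $\pm\i\mathbf{c}\eta$ with $\mathbf{c}=\sqrt{a^2+b^2}$ and $0$, yielding the acoustic modes $F_{\pm1}$ and the diffusive mode $F_0$, while $\chi_2,\chi_3$ give two further zero eigenvalues $F_2,F_3$. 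The initial values of the coefficients $a_0(0), b_0(0), c_0(0), a_{1,0}(0), b_{1,0}(0), c_{1,0}(0), c_2(0)$ in \eqref{abc1} are then read off directly from these eigenvectors. Moreover, $\beta_2(\eta)=\beta_3(\eta)$ for all $\eta\in(-r_0,r_0)$ follows from the $SO(2)$-invariance of $L-\i\tilde{v}_1\eta$ under rotations in the $(v_2,v_3)$-plane, which acts nontrivially on that two-dimensional eigenspace.

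Because the modes $F_0,F_2,F_3$ are degenerate at first order, I would extract the second-order correction via standard analytic perturbation theory: writing $\beta_j(\eta) = -A_j\eta^2 + O(\eta^3)$ and $P_1 e_j(\eta) = \i\eta L^{-1}P_1\tilde{v}_1 F_j + O(\eta^2)$, and plugging into $(P_0\tilde{v}_1 P_1 e_j, F_j) = \i\beta_j (P_0 e_j, F_j)/\eta$, one obtains $A_j = -(L^{-1}P_1\tilde{v}_1 F_j,\tilde{v}_1 F_j)$, which is positive by \eqref{LF-F}. The expansion $\beta_{\pm 1}(\eta) = \pm\i\eta A_1^1(\eta^2)-A_1^2(\eta^2)$ and the parity pattern $\beta_0, \beta_2, \beta_3\in\mathbb{R}$ with $\beta_{-1}(\eta) = \overline{\beta_1(\eta)}$ come from the conjugation symmetry: taking complex conjugates of $(L-\i\tilde{v}_1\eta)e = \beta e$ shows that $(\beta(\eta),e(\eta))\mapsto(\overline{\beta(\eta)},\overline{e(\eta)})$ solves the equation with $\eta$ replaced by $-\eta$, so each $\beta_j$ has definite parity/reality and admits analytic expansions in $\eta^2$ as asserted. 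The coefficients $a_{1,1}(0), b_{1,1}(0), c_{1,1}(0)$ in \eqref{abc1} then come from equating first-order terms in $\eta$ after substituting the $F_j$-expansions back into the reduced eigenvector equation, with the integrals $A_{i,j}$ appearing naturally through $L^{-1}P_1\tilde{v}_1 F_i$. The main obstacle is the bookkeeping in diagonalizing the reduced $5\times 5$ matrix to second order and tracking the analytic dependence of eigenvectors through the normalization $(e_j,\overline{e_j})=1$; once the reduced matrix and its perturbation series are organized via the block structure imposed by the parity and rotational symmetries, the algebraic identities in \eqref{SA6}--\eqref{abc1} follow by direct computation.
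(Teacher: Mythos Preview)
Your approach is essentially the paper's: both carry out the macro--micro (Lyapunov--Schmidt) reduction, solve for $P_1e$ in terms of $P_0e$ via the resolvent $(L-\beta-\i\eta P_1\tilde v_1P_1)^{-1}$, pass to the $5\times5$ problem on $N_0$, diagonalize $P_0\tilde v_1P_0$ in the basis $\{F_j\}$ of \eqref{EUJ}, and then exploit the conjugation/parity symmetry $\eta\mapsto-\eta$ (equivalently $v_1\mapsto-v_1$) to obtain the $\eta^2$-structure of eigenvalues and coefficients. The one tactical difference is how analyticity of the individual branches is secured: rather than invoking Kato's theorem at the fivefold-degenerate eigenvalue $0$, the paper first uses the symmetry decoupling of $\{\chi_2,\chi_3\}$ from $\{\chi_0,\chi_1,\chi_4\}$ to factor the characteristic problem into two determinants $D_0(\gamma,\eta)$ and $D_1(\gamma,\eta)$ (with $\beta=\eta\gamma$), and then applies the implicit function theorem separately to each, quoting \cite{ZHAO-1} for the relevant computation. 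In $D_1$ the three first-order values $\gamma_j(0)=-\i u_j$ are already simple, and $D_0$ gives a single analytic $\gamma(\eta)$ protected by the $SO(2)$ symmetry you mention, so no Puiseux branching can occur. Your Kato route is correct once paired with exactly this block decomposition, but the paper's determinant/implicit-function argument is what actually appears and makes the individual analyticity immediate without a separate discussion of the degeneracy.
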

\begin{proof}
We shall prove that eigenvalues $\beta_j~(j=-1,0,1,2,3)$ are analytic functions in $\eta$ and satisfy \eqref{SA5}--\eqref{SA6}. By the macro-micro decomposition, the eigenfunction $e$ of \eqref{SA} can be divided into
\bq
e=P_0e+P_1e=g_0+g_1.\label{safj1}
\eq
Let $\beta=\eta\gamma$. Hence, \eqref{SA} gives
\bma
&\gamma g_0=-\i P_0\tilde{v}_1(g_0+g_1),\label{safj2}\\
&\eta\gamma g_1=Lg_1-\i\eta P_1\tilde{v}_1(g_0+g_1).\label{safj3}
\ema
By \eqref{safj3}, the microscopic part $g_1$ can be represented in terms of the macroscopic part $g_0$ as
\bq
g_1=\i\eta(L-\eta\gamma -\i\eta P_1\tilde{v}_1P_1)^{-1}P_1\tilde{v}_1g_0.\label{safj4}
\eq
Substituting it into \eqref{safj2}, we obtain the eigenvalue problem for the macroscopic part $g_0$ as
\bq
\gamma g_0=-\i P_0\tilde{v}_1g_0+\eta P_0\tilde{v}_1(L-\eta\gamma -\i\eta P_1\tilde{v}_1P_1)^{-1}P_1\tilde{v}_1g_0.\label{safj5}
\eq

We have the matrix representation of $P_0\tilde{v}_1P_0$ as follows:
\bq
P_0\tilde{v}_1P_0=\left(
  \begin{array}{ccccc}
    0 & be_1 & 0 \\
    be_1^T & 0  & ae_1^T \\
    0 & ae_1 & 0 \\
  \end{array}
\right)\quad {\rm with}\quad e_1=(1,0,0).
\eq
It can be verified that the eigenvalues $u_j$ and  eigenvectors $F_j$ of $P_0\tilde{v}_1P_0$ are given by
\bq\label{EUJ}
\left\{\bal
u_{\pm1}=\mp\sqrt{a^2+b^2},\ \ u_j=0,\ \ j=0,2,3,\\
F_{\pm1}=\sqrt{\frac{b^2}{2a^2+2b^2}}\chi_0\mp\sqrt{\frac{1}{2}}\chi_1+\sqrt{\frac{a^2}{2a^2+2b^2}}\chi_4,\\
F_0=-\sqrt{\frac{a^2}{a^2+b^2}}\chi_0+\sqrt{\frac{b^2}{a^2+b^2}}\chi_4,\\
F_l=\chi_l,\quad l=2,3,\\
(F_j ,F_k )=\delta_{jk},\quad -1\leq j,k\leq3.
\ea\right.
\eq

We rewrite $g_0\in N_0$ in terms of the basis $F_j$ as
\bq
g_0=\sum^{4}_{j=0}C_jF_{j-1}\quad {\rm with}\quad C_{j}=(g_0,F_{j-1}),\,\, j=0,1,2,3,4.
\eq
Taking the inner product between \eqref{safj5} and $F_j~(j=-1,0,1,2,3)$ respectively, we have the following equations about $\gamma$ and $(C_0,C_1,C_2,C_3,C_4)$:
\bma
&\gamma C_j=-\i u_{j-1}C_j+\eta\sum^2_{k=0}C_kR_{kj}(\gamma,\eta),\quad j=0,1,2,\label{spew6}\\
&\gamma C_l=\eta C_lR_{33}(\gamma,\eta),\quad l=3,4,\label{spew7}
\ema
where
\bq
R_{kj}=R_{kj}(\gamma,\eta)=((L-\eta\gamma -i\eta P_1\tilde{v}_1P_1)^{-1}P_1(\tilde{v}_1F_{k-1}),\tilde{v}_1F_{j-1}).\label{sprkj1J1}
\eq

Denote
\bma
D_0(\gamma,\eta)&=\gamma-\eta R_{33}(\gamma,\eta),\label{spd01J1}\\
D_1(\gamma,\eta)&= \left|\begin{array}{ccc}
                  \gamma+\i u_{-1}-\eta R_{00} & -\eta R_{10} & -\eta R_{20} \\
                  -\eta R_{01} & \gamma-\eta R_{11} & -\eta R_{21} \\
                  -\eta R_{02} & -\eta R_{12} & \gamma+\i u_1-\eta R_{22}
                \end{array}\right|.\label{spd11J1}
\ema
The eigenvalues $\beta= \eta\gamma$ can be solved by $D_0(\gamma,\eta)=0$ and $D_1(\gamma,\eta)=0$. By a direct computation and the implicit function theorem, we have 
\begin{lem}[\cite{ZHAO-1}]\label{rbesp4}
(1)~The equation $D_0(\gamma,\eta)=0$ has a unique analytic solution $\gamma=\gamma(\eta)$ for $(\eta,\gamma)\in[-r_0,r_0]\times B_{r_1}(0)$ with $r_0,r_1>0$ being small constants that satisfies
$$
\gamma(0)=0,\quad \gamma'(0)=(L^{-1}P_1\tilde{v}_1F_{2},\tilde{v}_1F_{2}).
$$
Moreover, $\gamma(\eta)$ is a real, odd function.

(2)~There exist two small constants $r_0,r_1>0$ so that the equation $D_1(\gamma,\eta)=0$ admits three analytic solutions $\gamma_j(\eta)\ (j=-1,0,1)$ for $(\eta,\gamma_j)\in[-r_0,r_0]\times B_{r_1}(-\i u_j)$ that satisfy
$$
\gamma_j(0)=-\i u_j,\quad \gamma'_{j}(0)=(L^{-1}P_1\tilde{v}_1F_{j},\tilde{v}_1F_{j}).
$$
Moreover,  $\gamma_j(\eta)$ satisfies
$$
-\gamma_j(-\eta)=\overline{\gamma_j(\eta)}=\gamma_{-j}(\eta),\quad j=-1,0,1.
$$
\end{lem}

The eigenvalues $\beta_j(\eta)$ and the corresponding eigenfunctions $e_j(\eta)$ with $j=-1,0,1,2,3$ can be constructed as follows. For $j=2,3$, we take $\beta_j=\eta\gamma(\eta)$ with $\gamma(\eta)$ being the solution of the equation $D_0(\gamma,\eta)=0$, and choose $C_0=C_1=C_2=0$. And the corresponding eigenfunctions $e_j(\eta)$, $j=2,3$ are defined by
\bq\label{eginf1J1}
e_j(\eta)=C_3(\eta)F_j+i\eta(L-\beta_j-\i\eta P_1\tilde{v}_1P_1)^{-1}P_1(\tilde{v}_1C_3(\eta) F_j).
\eq

The coefficient $C_3(\eta)$ in \eqref{eginf1J1} is determined by the normalization condition $(e_j(\eta),\overline{e_j(\eta)})=1$:
$$
C_3(\eta)^2(1+\eta^2D(\eta))=1,
$$
where
$$
D(\eta)=((L-\beta_2-\i\eta P_1\tilde{v}_1P_1)^{-1}P_1(\tilde{v}_1F_2),(L-\overline{\beta_2}+\i\eta P_1\tilde{v}_1P_1)^{-1}P_1(\tilde{v}_1F_2)).
$$
This together with $D(\eta)=D(-\eta)$ gives $C_3(-\eta)=C_3(\eta)$ and $C_3(0)=1$.

For $j=-1,0,1$, we choose $\beta_j=\eta\gamma_j(\eta)$ with $\gamma_j(\eta)$ being the solution of $D_1(\gamma_j,\eta)=0$, and denote by $ \{C_0^j,C_1^j,C_2^j\}$ a solution of system \eqref{spew6}--\eqref{spew7} for $\gamma=\gamma_j(\eta)$. Then we can construct the corresponding eigenfunctions $e_j(\eta),~j=-1,0,1$ as
\bq\label{eginf2J1}
\left\{\bln
&e_j(\eta)=P_0e_j(\eta)+P_1e_j(\eta),\\
&P_0e_j(\eta)=C_0^j(\eta)F_{-1}+C_1^j(\eta)F_0+C_2^j(\eta)F_1,\\
&P_1e_j(\eta)=\i\eta(L-\beta_j-i\eta P_1\tilde{v}_1P_1)^{-1}P_1(\tilde{v}_1P_0e_j(\eta)).
\eln\right.
\eq
By \eqref{spew6}, the coefficients $C_0^j(\eta),C_1^j(\eta),C_2^j(\eta)$ with $j=-1,0,1$ in \eqref{eginf2J1} satisfy the following system:
\bq\label{eginf3J1}
\left\{\bln
&\(\beta_j+\i\eta\sqrt{a^2+b^2}-\eta^2R_{00}\)C_0^j(\eta)-\eta^2R_{10}C_1^j(\eta)-\eta^2R_{20}C_2^j(\eta)=0,\\
&-\eta^2R_{01}C_0^j(\eta)+\(\beta_j-\eta^2R_{11}\)C_1^j(\eta)-\eta^2R_{21}C_2^j(\eta)=0,\\
&-\eta^2R_{02}C_0^j(\eta)-\eta^2R_{12}C_1^j(\eta)+\(\beta_j-\i\eta\sqrt{a^2+b^2}-\eta^2R_{22}\)C_2^j(\eta)=0.
\eln\right.
\eq

Since $R_{kj}(\beta,\eta)$, $k,j=0,1,2$ are analytic in $(\beta,\eta)$ and $\beta_j(\eta)$, $j=-1,0,1$ are analytic functions of $\eta$ for $|\eta|\leq r_0$, it follows that $C_0^j(\eta),C_1^j(\eta),C_2^j(\eta)$ for $j=-1,0,1$ are analytic functions of $\eta$ for $|\eta|\leq r_0$. By Theorem 2.7 in \cite{ZHAO-1}, one has
\be \label{rbespbet01z}
\left\{\bln
&C^j_{j+1}(0)=1,\quad C^j_{k+1}(0)=1,\quad k\ne j,\\
&\frac{\rm d}{\rm d \eta}C^j_{j+1}(0)=0,\quad \frac{\rm d}{\rm d \eta}C^j_{k+1}(0)=\frac{(L^{-1}P_1\tilde{v}_1F_{j},\tilde{v}_1F_{k})}{\i(u_{k}-u_j)},\quad k\ne j.
\eln\right.
\ee
Since $\beta_0(\eta)=\beta_0(-\eta)$, we have by changing variable $v_1\rightarrow-v_1$ that
$$
\left\{\bln
&R_{00}(\beta_0,-\eta)=R_{22}(\beta_0,\eta),\ R_{10}(\beta_0,-\eta)=R_{12}(\beta_0,\eta),\ R_{01}(\beta_0,-\eta)=R_{21}(\beta_0,\eta),\\ &R_{20}(\beta_0,-\eta)=R_{02}(\beta_0,\eta),\ R_{11}(\beta_0,-\eta)=R_{11}(\beta_0,\eta),
\eln\right.
$$
which together with \eqref{eginf3J1} and \eqref{rbespbet01z} implies that $C_0^0(-\eta)=C_2^0(\eta)$ and $C_1^0(-\eta)=C_1^0(\eta)$.

Since $\beta_{-1}(\eta)=\beta_1(-\eta)$, we have by changing variable $v_1\rightarrow-v_1$ that
$$
\left\{\bln
&R_{00}(\beta_{-1},-\eta)=R_{22}(\beta_1,\eta),\ R_{10}(\beta_{-1},-\eta)=R_{12}(\beta_1,\eta),\ R_{01}(\beta_{-1},-\eta)=R_{21}(\beta_1,\eta),\\ &R_{20}(\beta_{-1},-\eta)=R_{02}(\beta_1,\eta),\ R_{11}(\beta_{-1},-\eta)=R_{11}(\beta_1,\eta),
\eln\right.
$$
which together with \eqref{eginf3J1} and \eqref{rbespbet01z} implies that $C_0^{-1}(-\eta)=C_2^{1}(\eta)$, $C_1^{-1}(-\eta)=C_1^{1}(\eta)$ and $C_2^{-1}(-\eta)=C_0^{1}(\eta)$.

Thus, we can rewrite $P_0e_j(\eta)$, $j=-1,0,1$ as
\bq \label{abc}
\left\{\bln
&P_0e_{-1}(\eta)=\(C^1_{2,0}(\eta^2)- \i\eta C^1_{2,1}(\eta^2)\)F_{-1}+\(C^1_{1,0}(\eta^2)- \i\eta C^1_{1,1}(\eta^2)\)F_0\\
&\qquad\qquad\quad+\(C^1_{0,0}(\eta^2)- \i\eta C^1_{0,1}(\eta^2)\)F_1,\\
&P_0e_{0}(\eta)=\(C^0_{0,0}(\eta^2)+\i\eta C^0_{0,1}(\eta^2)\)F_{-1}+C^0_{1,0}(\eta^2)F_0\\
&\qquad\qquad\quad+\(C^0_{0,0}(\eta^2)-\i\eta C^0_{0,1}(\eta^2)\)F_1,\\
&P_0e_{1}(\eta)=\(C^1_{0,0}(\eta^2)+ \i\eta C^1_{0,1}(\eta^2)\)F_{-1}+\(C^1_{1,0}(\eta^2)+ \i\eta C^1_{1,1}(\eta^2)\)F_0\\
&\qquad\qquad\quad+\(C^1_{2,0}(\eta^2)+ \i\eta C^1_{2,1}(\eta^2)\)F_1.
\eln\right.
\eq
By \eqref{EUJ} and \eqref{abc},  we can obtain \eqref{SA6j1}  with
$$
\left\{\bal
a_{1,l}(\eta^2)=\sqrt{\frac{b^2}{2a^2+2b^2}}\(C^1_{0,l}(\eta^2)+C^1_{2,l}(\eta^2)\)-\sqrt{\frac{a^2}{a^2+b^2}}C^1_{1,l}(\eta^2),\\
b_{1,l}(\eta^2)=\sqrt{\frac{1}{2}}\(C^1_{0,l}(\eta^2)-C^1_{2,l}(\eta^2)\),\\
c_{1,l}(\eta^2)=\sqrt{\frac{a^2}{2a^2+2b^2}}\(C^1_{0,l}(\eta^2)+C^1_{2,l}(\eta^2)\)+\sqrt{\frac{b^2}{a^2+b^2}}C^1_{1,l}(\eta^2),\\
a_0(\eta^2)=\sqrt{\frac{2b^2}{a^2+b^2}}C^0_{0,0}(\eta^2)-\sqrt{\frac{a^2}{a^2+b^2}}C^0_{1,0}(\eta^2),\ \ b_0(\eta^2)=-\sqrt{2}C^0_{0,1}(\eta^2),\\
c_0(\eta^2)=\sqrt{\frac{2a^2}{a^2+b^2}}C^0_{0,0}(\eta^2)+\sqrt{\frac{b^2}{a^2+b^2}}C^0_{1,0}(\eta^2).
\ea\right.
$$
The proof of the lemma is completed.
\end{proof}

Then, we consider the 3-D eigenvalue problem:
\bq\label{SPA1}
(L-\i\tilde{v}\cdot\xi)\psi=\lambda\psi,\quad \xi\in\R^3.
\eq

With the help of Lemma \ref{rbesp3}, we have the analyticity and expansion of the eigenvalues $\lambda_j(|\xi|)$ and the corresponding eigenfunctions $\psi_j(\xi)$ of $B(\xi)$ at low frequency.
\begin{lem}\label{rbesp5}
(1) The eigenvalues $\lambda_j(|\xi|)$, $j=-1,0,1,2,3$ given by \eqref{SA1} are analytic functions of $|\xi|$  and satisfy
\bq\label{rbesp501}
\left\{\bln
\lambda_{\pm1}(|\xi|)&=\pm i|\xi| A^1_1(|\xi|^2)-A^2_1(|\xi|^2),\\
\lambda_0(|\xi|) &=-A_0^2(|\xi|^2),\\
\lambda_2(|\xi|)&=\lambda_3(|\xi|)=-A^2_2(|\xi|^2),
\eln\right.
\eq
where $A^1_1(\eta)$ and $A^2_j(\eta)$, $j=0,1,2$ are analytic functions in $\eta$ given in Lemma \ref{rbesp3}.

(2) The corresponding eigenfunctions $\psi_j(\xi)$, $j=-1,0,1,2,3$ satisfy
\bq\label{SPA2}
\left\{\bln
&P_0\psi_{0}(\xi)=a_0(|\xi|^2)\chi_0+ \i b_0(|\xi|^2)(\xi\cdot\chi')+c_0(|\xi|^2)\chi_4,\\
&P_0\psi_{\pm 1}(\xi)=\(a_{1,0}(|\xi|^2)\pm i|\xi| a_{1,1}(|\xi|^2)\)\chi_0+\(\pm b_{1,0}(|\xi|^2)+\i|\xi| b_{1,1}(|\xi|^2)\)\frac{\xi\cdot\chi'}{|\xi|}\\
&\qquad\qquad\quad+\(c_{1,0}(|\xi|^2)\pm \i|\xi| c_{1,1}(|\xi|^2)\)\chi_4,\\
&P_0\psi_{k}(\xi)=c_2(|\xi|^2)(W^k\cdot\chi'),\quad  k=2,3,\\
&P_1\psi_j(\xi)=\i(L-\lambda_j(|\xi|)-\i P_1\tilde{v}\cdot\xi P_1)^{-1}P_1(\tilde{v}\cdot\xi P_0\psi_j(\xi)),
\eln\right.
\eq
where $\chi'=(\chi_1,\chi_2,\chi_3)$ and $a_0(\eta),b_0(\eta),c_0(\eta),c_2(\eta)$, $a_{1,l}(\eta),b_{1,l}(\eta), c_{1,l}(\eta)$ for $l=0,1$ are analytic functions in $\eta$, and $W^j~(j=2,3)$ are orthonormal vectors satisfying $W^j\cdot\xi=0$.
\end{lem}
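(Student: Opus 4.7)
The strategy is to reduce the 3-D eigenvalue problem \eqref{SPA1} to the 1-D problem \eqref{SA} already solved in Lemma \ref{rbesp3}, by conjugating with a velocity rotation that aligns $\xi$ with the $v_1$-axis. For $\xi\neq 0$, set $\omega=\xi/|\xi|$ and pick an orthogonal matrix $O=O(\omega)\in SO(3)$ with $O\omega=e_1$; denote by $W^2(\omega),W^3(\omega)$ the second and third rows of $O$, so that $\{\omega,W^2,W^3\}$ is an orthonormal frame of $\R^3$. Define the unitary operator $(U_O f)(v):=f(Ov)$ on $L^2(\R^3_v)$. Since $M(v)=e^{-v_0}$, the M{\o}ller velocity $v_M$, the invariant $s=2(u_0v_0-u\cdot v+1)$, and the kernel $\sigma\equiv 1$ only involve rotation-invariant quantities of $(u,v)$, the operator $U_O$ commutes with $L$. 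A direct computation also yields $U_O^{-1}(\tilde v\cdot\xi)U_O=|\xi|\tilde v_1$, hence
\[U_O^{-1}(L-\i\tilde v\cdot\xi)U_O = L-\i|\xi|\tilde v_1,\]
which is precisely the 1-D operator of \eqref{SA} with $\eta=|\xi|$.

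Applying Lemma \ref{rbesp3}(1) now gives $\lambda_j(|\xi|)=\beta_j(|\xi|)$ for $j=-1,0,1,2,3$, and the expansion \eqref{rbesp501} together with the analyticity in $|\xi|$ follows immediately from \eqref{SA5}. For the eigenfunctions I set $\psi_j(\xi):=U_O\,e_j(|\xi|)$ and push the explicit formulas \eqref{SA6j1} through $U_O$. Because $\chi_0,\chi_4$ depend only on $v_0$, they are rotation invariant: $U_O\chi_0=\chi_0$, $U_O\chi_4=\chi_4$. For the vector modes, $(U_O\chi_k)(v)=(Ov)_k\sqrt{M}/\sqrt{p_1}=R_k\cdot\chi'(v)$ with $R_k$ the $k$-th row of $O$; in particular $R_1=\omega$, so $U_O\chi_1=\omega\cdot\chi'=(\xi\cdot\chi')/|\xi|$ and $U_O\chi_k=W^k\cdot\chi'$ for $k=2,3$. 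Since $U_O$ commutes with $P_0$ and $P_1$, substituting $\eta=|\xi|$ into \eqref{SA6j1} and applying $U_O$ term by term yields \eqref{SPA2}. A notable cancellation occurs: the would-be singular factor $1/|\xi|$ produced by $\i\eta\chi_1\mapsto \i|\xi|(\xi\cdot\chi')/|\xi|$ disappears, leaving $\i(\xi\cdot\chi')$ in the $P_0\psi_0$ formula and keeping the eigenfunction analytic in $|\xi|$. For the microscopic part I use the intertwining
\[U_O(L-\beta_j-\i\eta P_1\tilde v_1 P_1)^{-1}U_O^{-1} = (L-\lambda_j-\i P_1\tilde v\cdot\xi\, P_1)^{-1}\]
on $P_1L^2$, which converts the last line of \eqref{SA6j1} into the stated formula for $P_1\psi_j(\xi)$. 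Analyticity in $|\xi|$ of the coefficients $a_0,b_0,c_0,c_2,a_{1,l},b_{1,l},c_{1,l}$ is inherited directly from Lemma \ref{rbesp3}(2), since they are simply evaluated at $\eta^2=|\xi|^2$.

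\textbf{Main obstacle.} The only genuinely delicate point concerns the doubly degenerate pair $\lambda_2(|\xi|)=\lambda_3(|\xi|)$: the individual eigenfunctions $\psi_2,\psi_3$ depend on the choice of transverse frame $\{W^2(\omega),W^3(\omega)\}$, which cannot be made globally smooth on the sphere. Formula \eqref{SPA2} should therefore be interpreted as specifying a valid orthonormal basis of the 2-D eigenspace for any (locally defined) choice of frame; only the spectral projection $\psi_2(\xi)(\,\cdot\,,\overline{\psi_2(\xi)})+\psi_3(\xi)(\,\cdot\,,\overline{\psi_3(\xi)})$ appears in \eqref{sami2}, and it is frame-independent. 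A secondary bookkeeping issue is tracking the pairing between $|\xi|$-powers and the replacement $\chi_1\mapsto(\xi\cdot\chi')/|\xi|$, so that the final formulas remain analytic in $|\xi|$ rather than merely in the components of $\xi$; this is resolved by the cancellation noted above. Beyond these points, the proof is a mechanical consequence of Lemma \ref{rbesp3} together with the rotational invariance of $L$.
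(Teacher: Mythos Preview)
Your proposal is correct and follows essentially the same route as the paper: conjugate $B(\xi)$ by a velocity rotation sending $\xi/|\xi|$ to $e_1$, reduce to the 1-D operator $L-\i|\xi|\tilde v_1$, and read off $\lambda_j(|\xi|)=\beta_j(|\xi|)$ and $\psi_j(\xi)=U_O e_j(|\xi|)$ from Lemma~\ref{rbesp3}. Your write-up is in fact more explicit than the paper's (which is only a few lines), and your remarks on the frame-dependence of $\psi_2,\psi_3$ and the cancellation of the $1/|\xi|$ factor are valid observations that the paper leaves implicit.
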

\begin{proof}
Let $\mathbb{O}$ be a rotational transformation in $\R^3$ such that $\mathbb{O}:\frac{\xi}{|\xi|}\rightarrow(1,0,0)$. We have
\bq
\mathbb{O}^{-1}(L-\i\tilde{v}\cdot\xi)\mathbb{O}=L-\i\tilde{v}_1|\xi| .\label{Otrans1}
\eq
Thus, by Lemma \ref{rbesp3}, we have the following eigenvalues and eigenfunctions for \eqref{SPA1}:
\bma
&(L-\i\tilde{v}\cdot\xi)\psi_j(|\xi|)=\lambda_j(|\xi|)\psi_j(\xi),\nnm\\
&\lambda_j(|\xi|)=\beta_j(|\xi|),\quad \psi_j(\xi)=\mathbb{O}e_j(|\xi|),\quad  j=-1,0,1,2,3.\nnm
\ema
This proves the Lemma.
\end{proof}

\section{Green's function}
\label{sect3}
\setcounter{equation}{0}

In this section, we establish the pointwise space-time estimates of the Green's function to the linear relativistic Boltzmann equation \eqref{rbe5}. First, we consider the Green's function in finite Mach region. Based on the spectral analysis given in Section \ref{sect2}, we divide the Green's function into the fluid part and the non-fluid part and estimate the fluid part by complex analytical techniques. Then, we estimate the Green's function outside finite Mach region. We apply a Picard's iteration to construct the singular wave for the Green's function, and estimate the remainder part by weighted energy estimate. 

\subsection{Fluid part}

In this subsection, we establish the pointwise estimates of the fluid part of  Green's function  based on the spectral analysis given in Section \ref{sect2}. To this end, we decompose the operator $G(t,x)$ into a low-frequency part and a high-frequency part:
\be \label{GL-H}
\left\{\bln
&G(t,x)=G_L(t,x)+G_H(t,x),\\
&G_L(t,x)=\frac1{(2\pi)^{\frac{3}{2}}}\int_{\{|\xi|\leq \frac{r_0}{2}\}} e^{ \i x\cdot\xi +tB(\xi)}d\xi,\\
&G_H(t,x)=\frac1{(2\pi)^{\frac{3}{2}}}\int_{\{|\xi|>\frac{r_0}{2}\}} e^{ \i x\cdot\xi +tB(\xi)}d\xi.
\eln\right.
\ee
The operator $G_L(t,x)$ can be further divided into the fluid part and the non-fluid part:
\bq\label{GL-H2}
G_L(t,x)=G_{L,0}(t,x)+G_{L,1}(t,x),
\eq
where
\bma
&G_{L,0}(t,x)=\sum^{3}_{j=-1}\frac{1}{(2\pi)^{\frac{3}{2}}}\int_{\{|\xi|\le \frac{r_{0}}{2}\}}e^{\i x\cdot\xi +\lambda_j(|\xi|)t}\psi_{j}(\xi)\otimes\langle \psi_{j}(\xi)|d\xi,\label{GL-HGL0}\\
&G_{L,1}(t,x)=G_{L}(t,x)-G_{L,0}(t,x),
\ema
where $(\lambda_j,\psi_j)$ is defined by \eqref{rbesp501} and \eqref{SPA2}.
Here for any $f,g\in L^2(\mathbb{R}^3_v)$, the operator $f\otimes\langle g|$ on $L^2(\mathbb{R}^3_v)$ is defined by \cite{LIU-2,LIU-3}
$$
f\otimes\langle g|u=(u,\overline{g})f, \quad \forall\, u\in L^2(\mathbb{R}^3_v).
$$

From \cite{ZHAO-1}, we have the following estimates of each part of $G(t,x)$ defined by \eqref{GL-H}.
\begin{lem}\label{rbegf1}
 For any $g_{0}\in L^{2}(\mathbb{R}^{3}_{v})$, there exist positive constants $C$ and $\kappa_{0}$ such that
\bma
&\|\hat{G}_{L}(t,\xi)g_{0}\|\leq C\|g_{0}\|,\\
&\|\hat{G}_{L,1}(t,\xi)g_{0}\|\leq Ce^{-\kappa_{0}t}\|g_{0}\|,\\
&\|\hat{G}_{H}(t,\xi)g_{0}\|\leq Ce^{-\kappa_{0}t}\|g_{0}\|,
\ema
where $\hat{G}_{L}(t,\xi)$, $\hat{G}_{L,1}(t,\xi)$, and $\hat{G}_{H}(t,\xi)$ are the Fourier transforms of $G_L(t,x)$, $G_{L,1}(t,x)$ and $G_{H}(t,x)$.
\end{lem}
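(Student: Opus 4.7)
The plan is to identify each of $\hat{G}_L$, $\hat{G}_{L,0}$, $\hat{G}_{L,1}$, and $\hat{G}_H$ with appropriately restricted pieces of the semigroup $S(t,\xi)=e^{tB(\xi)}$ from the decomposition $S=S_1+S_2$ recorded in Lemma~\ref{rbesp2}, and then to read off the three estimates from Lemmas~\ref{rbesp1}--\ref{rbesp2} together with the analyticity of the eigenfunctions $\psi_j(\xi)$ from Lemma~\ref{rbesp5}.

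For the uniform bound on $\hat{G}_L(t,\xi)$, I would use that, for $|\xi|\le r_0/2$, the eigenvalues satisfy $\mathrm{Re}\,\lambda_j(|\xi|)=-A_j|\xi|^2+O(|\xi|^3)\le 0$ (shrinking $r_0$ if necessary), so that $|e^{\lambda_j(|\xi|)t}|\le 1$ for all $t\ge 0$. By Lemma~\ref{rbesp5} the eigenfunctions $\psi_j(\xi)$ are analytic in $|\xi|$ on $[0,r_0]$, and in particular their $L^2(\mathbb{R}^3_v)$ norms are uniformly bounded there. The representation \eqref{sami2} then gives $\|S_1(t,\xi)g_0\|\le C\|g_0\|$ on $|\xi|\le r_0/2$; combining with \eqref{sami3} yields the desired uniform bound on $\hat{G}_L$. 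Next, since the indicator $1_{\{|\xi|\le r_0\}}$ built into $S_1$ is absorbed by the outer cut-off $1_{\{|\xi|\le r_0/2\}}$, the definition \eqref{GL-HGL0} gives $\hat{G}_{L,0}(t,\xi)=S_1(t,\xi)1_{\{|\xi|\le r_0/2\}}$, hence $\hat{G}_{L,1}(t,\xi)=S_2(t,\xi)1_{\{|\xi|\le r_0/2\}}$, and the exponential decay for $\hat{G}_{L,1}$ follows directly from \eqref{sami3}.

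For $\hat{G}_H(t,\xi)$ I would split the frequency region $|\xi|>r_0/2$ into the thin annulus $r_0/2<|\xi|\le r_0$ and the outer region $|\xi|>r_0$. On the outer region, $S_1(t,\xi)\equiv 0$ by construction, so $\hat{G}_H(t,\xi)=S_2(t,\xi)$, and \eqref{sami3} yields exponential decay. On the compact annulus, $\mathrm{Re}\,\lambda_j(|\xi|)\le -c$ for some $c>0$ (by continuity together with $-A_j|\xi|^2<0$ there), so $S_1$ decays at a uniform exponential rate, while $S_2$ is handled by \eqref{sami3}; taking $\kappa_0$ smaller than the minimum of these two rates gives the claim.

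There is no substantive technical obstacle here: the three estimates are essentially corollaries of the spectral structure already imported from \cite{ZHAO-1}. The only mild bookkeeping is aligning the cutoff $r_0/2$ used in \eqref{GL-H}--\eqref{GL-HGL0} with the threshold $r_0$ appearing in the semigroup decomposition of Lemma~\ref{rbesp2}, which is handled by treating the annulus $r_0/2<|\xi|\le r_0$ separately when estimating $\hat{G}_H$.
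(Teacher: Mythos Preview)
Your proposal is correct. Note that the paper does not actually prove this lemma: it is quoted directly from \cite{ZHAO-1} (``From \cite{ZHAO-1}, we have the following estimates\ldots''), so there is no in-paper argument to compare against. Your derivation---identifying $\hat G_{L,0}$, $\hat G_{L,1}$, $\hat G_H$ with the restrictions of $S_1$, $S_2$, and $S$ from the semigroup decomposition \eqref{sami1}--\eqref{sami3}, using $\mathrm{Re}\,\lambda_j\le 0$ on $|\xi|\le r_0/2$ and $\mathrm{Re}\,\lambda_j\le -c<0$ on the annulus $r_0/2<|\xi|\le r_0$, and invoking \eqref{sami3} for $S_2$---is exactly how one would prove the statement from the spectral input already imported in Lemmas~\ref{rbesp1}--\ref{rbesp2}, and is presumably what the proof in \cite{ZHAO-1} does.
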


Denote
\bma
&\hat{G}^1_{L,0}(t,\xi)=\sum_{j=\pm1}e^{\lambda_j(|\xi|)t}\psi_j\otimes\langle \psi_j|-\sum_{j=\pm1}e^{\lambda_j(|\xi|)t}\psi^{\ast}_j\otimes\langle \psi^{\ast}_j|,\label{GL01}\\
&\hat{G}^2_{L,0}(t,\xi)=e^{\lambda_0(|\xi|)t}\psi_0\otimes\langle \psi_0|,\label{GL02}\\
&\hat{G}^3_{L,0}(t,\xi)=e^{\lambda_2(|\xi|)t}\bigg(\sum_{j=2,3}\psi_j\otimes\langle \psi_j|+\sum_{j=\pm1}\psi^{\ast}_j\otimes\langle \psi^{\ast}_j|\bigg),\label{GL03}\\
&\hat{G}^4_{L,0}(t,\xi)=\sum_{j=\pm1}\(e^{\lambda_j(|\xi|)t}-e^{\lambda_2(|\xi|)t}\)\psi^{\ast}_j\otimes\langle \psi^{\ast}_j|,\label{GL04}
\ema
where
$$
\psi^{\ast}_j=P_0^2\psi_j+\i(L-\beta_j(|\xi|)-\i P_1\tilde{v}\cdot\xi P_1)^{-1}P_1\tilde{v}\cdot\xi P_0^2\psi_j.
$$
By \eqref{GL-HGL0} and \eqref{GL01}--\eqref{GL04}, we have
\bq
G_{L,0}(t,x)=\sum^{4}_{j=1}G^j_{L,0}(t,x),
\eq
where
\bq
G^j_{L,0}(t,x)=\frac{1}{(2\pi)^{\frac{3}{2}}}\int_{\{|\xi|\leq\frac{r_0}{2}\}}e^{\i x\cdot\xi}\hat{G}^j_{L,0}(t,\xi)d\xi.\label{GL06}
\eq

Then, we estimate the pointwise behavior of $G^j_{L,0}(t,x)$ $(j=1,2,3,4)$ as follows. By Lemma \ref{rbesp5} and noting that (cf. \cite{LIU-3,LIU-5})
$$
(L-\lambda_{\pm1}(|\xi|)-\i P_1\tilde{v}\cdot\xi P_1)^{-1}=\mu_1(\xi)\pm|\xi|\mu_2(\xi),
$$  where $\mu_l(\xi)$, $l=1,2$ are analytic operators in $\xi$ for $|\xi|\leq r_0$, we have
\bma
&\psi_{\pm1}(\xi)=g_0\pm |\xi|g_1+\psi^*_{\pm 1},\label{gle1}\\
&\psi^*_{\pm1}(\xi)=\pm \frac1{|\xi|}(h\cdot\xi)+(\tilde{h}\cdot\xi),\label{gle2}
\ema
where $g_0,g_1$, $h=(h_1,h_2,h_3)$ and $\tilde{h}=(\tilde{h}_1,\tilde{h}_2,\tilde{h}_3)$ are analytic functions in $\xi$ for $|\xi|\leq r_0$ given by
\be \label{ghl}
\left\{\bln
g_0&=u_0+\mu_1(\xi)P_1(\tilde{v}\cdot\xi)u_0+|\xi|^2\mu_2(\xi)P_1(\tilde{v}\cdot\xi)u_1,\\
g_1&=u_1+\mu_1(\xi)P_1(\tilde{v}\cdot\xi)u_1+ \mu_2(\xi)P_1(\tilde{v}\cdot\xi)u_0,\\
h_j&=b_{1,0}(\chi_j+\mu_1(\xi)P_1(\tilde{v}\cdot\xi)\chi_j)+\i|\xi|^2b_{1,1}\mu_2(\xi)P_1(\tilde{v}\cdot\xi)\chi_j,\\
 \tilde{h}_j&=\i b_{1,1}(\chi_j+\mu_1(\xi)P_1(\tilde{v}\cdot\xi)\chi_j)+ b_{1,0}\mu_2(\xi)P_1(\tilde{v}\cdot\xi)\chi_j,
\eln\right.
\ee
with $u_0=a_{1,0}\chi_0+c_{1,0}\chi_4$ and $u_1=\i a_{1,1}\chi_0+\i c_{1,1}\chi_4$.

By  Lemma \ref{rbesp5}, \eqref{gle1}, \eqref{gle2} and using the similar arguments as those of Lemma 6.1 and Lemmas 7.1--7.3 in \cite{LIU-3}, we can show the analytical property of $\hat{G}^{j}_{L,0}(t,\xi)~(j=1,2,3,4)$.
Hence, we only state the result as follows and omit the detail of the proof for brevity.

\begin{lem}\label{rbegf2}
The fluid parts $\hat{G}^j_{L,0}(t,\xi)~(j=1,2,3,4)$ defined by \eqref{GL01}--\eqref{GL04} can be written in the following forms:
\bma
\hat{G}^1_{L,0}(t,\xi)&=e^{-A^2_1(|\xi|^2)t}\cos(|\xi| A^1_1(|\xi|^2)t)\mathcal{B}_1(\xi)+e^{-A^2_1(|\xi|^2)t}\frac{\sin(|\xi| A^1_1(|\xi|^2)t)}{|\xi|}\mathcal{B}_2(\xi),\label{Hgf1}\\
\hat{G}^{2}_{L,0}(t,\xi)&=e^{-A^2_0(|\xi|^2)t}\mathcal{B}_3(\xi), \quad \hat{G}^{3}_{L,0}(t,\xi)=e^{-A^2_2(|\xi|^2)t}\mathcal{B}_4(\xi),\label{Hgf2-3}\\
\hat{G}^{4}_{L,0}(t,\xi)&= \(e^{-A^2_1(|\xi|^2)t}\cos(|\xi|A^1_1(|\xi|^2)t)-e^{-A^2_2(|\xi|^2)t}\)\sum^3_{i,j=1} \frac{\xi_i\xi_j}{|\xi|^2}\mathcal{B}^{ij}_5(\xi)\nnm\\
&\qquad +e^{-A_1^2(|\xi|^2)t}\sum_{i,j=1}^3\xi_i\xi_j\frac{\sin(|\xi|A^1_1(|\xi|^2)t)}{|\xi|}\mathcal{B}^{ij}_6(\xi),\label{Hgf4}
\ema
where $\mathcal{B}_l(\xi)$ $(l=1,2,3,4)$ and $\mathcal{B}_k^{ij}(\xi)$ $(k=5,6$, $i,j=1,2,3)$ are analytic functions in $ \xi$ defined by 
\bmas
\mathcal{B}_1(\xi)&= 2(g_0\otimes\langle g_0|+|\xi|^2g_1\otimes\langle g_1|)+2(g_1\otimes\langle (h\cdot\xi)|+(h\cdot\xi)\otimes\langle g_1|)\\
&\quad+2(g_0\otimes\langle (\tilde{h}\cdot\xi)|+(\tilde{h}\cdot\xi)\otimes\langle g_0|),\\
\mathcal{B}_2(\xi)&= 2\i|\xi|^2(g_0\otimes\langle g_1|+g_1\otimes\langle g_0|)+2\i(g_0\otimes\langle (h\cdot\xi)|+(h\cdot\xi)\otimes\langle g_0|)\\
&\quad+2\i|\xi|^2(g_1\otimes\langle (\tilde{h}\cdot\xi)|+(\tilde{h}\cdot\xi)\otimes\langle g_1|),\\
\mathcal{B}_3(\xi)&= \psi_0(\xi)\otimes \langle \psi_0(\xi)|,\\
\mathcal{B}_4(\xi)&= \frac1{|\xi|^2} [2(h\cdot\xi)\otimes \langle (h\cdot\xi)|-(l\cdot\xi)\otimes \langle (l\cdot\xi)|]+2(\tilde{h}\cdot\xi)\otimes \langle (\tilde{h}\cdot\xi)|+l\otimes \langle l|,\\
\mathcal{B}_5^{ij}(\xi)&=4(h_i\otimes\langle h_j|+|\xi|^2\tilde{h}_i\otimes\langle \tilde{h}_j|),\\
\mathcal{B}_6^{ij}(\xi)&=4i(h_i\otimes\langle \tilde{h}_j|+ \tilde{h}_i\otimes\langle h_j|),
\emas
where $g_0,g_1$, $h=(h_1,h_2,h_3)$ and $\tilde{h}=(\tilde{h}_1,\tilde{h}_2,\tilde{h}_3)$ are defined by \eqref{ghl}, and $l=(l_1,l_2,l_3)$ with
$$l_j =c_2(|\xi|^2)\chi_j+\i c_2(|\xi|^2)(L-\lambda_{2}(|\xi|)-\i P_1\tilde{v}\cdot\xi P_1)^{-1}P_1(\tilde{v}\cdot\xi)\chi_j, \ \ j=1,2,3.$$
\end{lem}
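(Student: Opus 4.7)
The plan is to prove Lemma \ref{rbegf2} by direct algebraic expansion, combining the eigenvalue expansions from Lemma \ref{rbesp5} with the eigenfunction decompositions \eqref{gle1}--\eqref{gle2}. The structural key is that $\lambda_{\pm 1}$ are complex conjugates, so
\bmas
e^{\lambda_{\pm 1}(|\xi|)t}=e^{-A_1^2(|\xi|^2)t}\bigl[\cos(|\xi|A_1^1(|\xi|^2)t)\pm \i\sin(|\xi|A_1^1(|\xi|^2)t)\bigr],
\emas
and $\psi_{\pm 1}$ and $\psi^*_{\pm 1}$ possess clean $\pm$-parity: we may write $\psi_j=P+\epsilon_jQ$ with $\epsilon_j=\pm 1$, $P=g_0+\tilde h\cdot\xi$, $Q=|\xi|g_1+(h\cdot\xi)/|\xi|$, and $\psi^*_j=B+\epsilon_jA$ with $B=\tilde h\cdot\xi$, $A=(h\cdot\xi)/|\xi|$. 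Pair-summing over $j=\pm 1$ against $e^{\lambda_jt}$ thereby converts $\epsilon_j$-even terms into cosine contributions and $\epsilon_j$-odd terms into sine contributions.

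For \eqref{Hgf1}, I would substitute these decompositions into $\psi_j\otimes\langle\psi_j|-\psi^*_j\otimes\langle\psi^*_j|$ and expand into $\epsilon_j$-even and $\epsilon_j$-odd parts; the subtraction precisely cancels the $B\otimes\langle B|$, $A\otimes\langle A|$, $A\otimes\langle B|$ and $B\otimes\langle A|$ pieces. Summing over $j=\pm 1$ yields the structure $2e^{-A_1^2t}\cos(|\xi|A_1^1t)\cdot[\text{even part}]+2\i e^{-A_1^2t}\sin(|\xi|A_1^1t)\cdot[\text{odd part}]$; each odd-parity term carries a net factor of $|\xi|$ (once the $1/|\xi|$ from $A$ is accounted for), which combines with $\sin(|\xi|A_1^1t)$ into the analytic prefactor $\sin(|\xi|A_1^1t)/|\xi|$, thereby identifying $\mathcal{B}_1(\xi)$ and $\mathcal{B}_2(\xi)$. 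The formula \eqref{Hgf2-3} for $\hat G^2_{L,0}$ is immediate from $\lambda_0=-A_0^2(|\xi|^2)$.

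For $\hat G^3_{L,0}$, I would factor out $e^{-A_2^2t}$ and evaluate the bracket. By Lemma \ref{rbesp5}, $\psi_k(\xi)=W^k\cdot l$ for $k=2,3$, where $W^2,W^3,\xi/|\xi|$ form an orthonormal basis of $\R^3$; the completeness relation $\sum_{k=2,3}W^k_iW^k_j=\delta_{ij}-\xi_i\xi_j/|\xi|^2$ gives $\sum_{k=2,3}\psi_k\otimes\langle\psi_k|=l\otimes\langle l|-(l\cdot\xi)\otimes\langle(l\cdot\xi)|/|\xi|^2$. Adding the $\epsilon_j$-even combination $\sum_{j=\pm 1}\psi^*_j\otimes\langle\psi^*_j|=2(h\cdot\xi)\otimes\langle(h\cdot\xi)|/|\xi|^2+2(\tilde h\cdot\xi)\otimes\langle(\tilde h\cdot\xi)|$ reproduces $\mathcal{B}_4(\xi)$. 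Finally, for $\hat G^4_{L,0}$, applying the same $\cos/\sin$ split to $\sum_{j=\pm 1}e^{\lambda_jt}\psi^*_j\otimes\langle\psi^*_j|$ and subtracting $e^{-A_2^2t}\sum_{j=\pm 1}\psi^*_j\otimes\langle\psi^*_j|$ separates the cosine piece (with coefficient $e^{-A_1^2t}\cos(\ldots)-e^{-A_2^2t}$) from the sine piece; writing the resulting $\epsilon_j$-even and $\epsilon_j$-odd bilinears in $(h\cdot\xi),(\tilde h\cdot\xi)$ as $\sum_{i,j}\xi_i\xi_j(\cdots)$ identifies $\mathcal{B}^{ij}_5$ and $\mathcal{B}^{ij}_6$ respectively.

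The main obstacle is verifying that each coefficient $\mathcal{B}_l(\xi)$ and $\mathcal{B}^{ij}_k(\xi)$ is genuinely analytic in $\xi$ on $\{|\xi|\le r_0/2\}$ despite the $1/|\xi|$ singularity present in $\psi^*_{\pm 1}$. This demands careful parity bookkeeping: every odd power of $|\xi|$ introduced by $\psi^*_{+1}-\psi^*_{-1}=2(h\cdot\xi)/|\xi|$ must be paired with another odd factor (either another copy of $\psi^*_{+1}-\psi^*_{-1}$, or the $|\xi|g_1$ in $Q$), or absorbed into the $\sin(|\xi|A_1^1t)/|\xi|$ prefactor, which is itself analytic in $|\xi|^2$. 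Once this bookkeeping is done, the analyticity of the $\mathcal{B}$-operators follows from the analyticity in $\xi$ of $g_0,g_1,h_j,\tilde h_j,l_j$ established in Lemma \ref{rbesp5} and \eqref{ghl}, which ultimately rests on the analyticity of the resolvent operators $\mu_1(\xi),\mu_2(\xi)$ for $|\xi|\le r_0$.
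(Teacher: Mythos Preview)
Your proposal is correct and follows the same approach the paper intends: the paper itself omits the proof, citing ``similar arguments as those of Lemma~6.1 and Lemmas~7.1--7.3 in \cite{LIU-3}'' together with \eqref{gle1}--\eqref{gle2}, and your parity-based algebraic expansion (writing $\psi_{\pm1}=P\pm Q$, $\psi^*_{\pm1}=B\pm A$, pairing even/odd parts with $\cos/\sin$, and using the completeness relation $\sum_{k=2,3}W^k_iW^k_j=\delta_{ij}-\xi_i\xi_j/|\xi|^2$) is precisely that Liu--Yu argument. Your identification of the analyticity obstacle and its resolution via parity bookkeeping is also on target.
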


To establish the pointwise space-time behaviors of $G^{j}_{L,0}(t,x)~(j=1,2,3,4)$,  we give the following prepared lemmas \ref{rbegf3}--\ref{rbegf4j1}. 
\begin{lem}$\mathrm{(Kirchhoff)}$.\label{rbegf3}
Let $w(t,x)$ be a function given by its 3-D Fourier transformation:
$$
\hat{w}=\frac{\sin(c|\xi|t)}{c|\xi|}, \quad \hat{w}_t=\cos(c|\xi|t).
$$
Then, it holds for any functions $g(x)$ and $h(x)$ that
\bmas
&w\ast g(x)=\frac{t}{4\pi}\iint_{|y|=1}g(x+cty)dS_y,\\
&w_t\ast h(x)=\frac{1}{4\pi}\iint_{|y|=1}h(x+cty)dS_y+\frac{ct}{4\pi}\iint_{|y|=1}\nabla h(x+cty)\cdot ydS_y.
\emas
\end{lem}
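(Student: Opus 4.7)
The plan is to identify $w(t,x)$ as the fundamental solution of the three-dimensional wave equation and then convert the stated identities into classical spherical-means representations. First, I would observe that $\hat{w}(t,\xi)=\frac{\sin(c|\xi|t)}{c|\xi|}$ satisfies $\partial_t^2\hat{w}+c^2|\xi|^2\hat{w}=0$ with $\hat{w}(0,\xi)=0$ and $\partial_t\hat{w}(0,\xi)=1$, so that $w$ is the fundamental solution of $w_{tt}-c^2\Delta w=0$ with initial data $w(0,x)=0$ and $w_t(0,x)=\delta(x)$.

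Next, I would compute $w$ explicitly by inverting the Fourier transform in spherical coordinates. Using the identity $\int_{|\omega|=1}e^{\i r\rho\,\omega\cdot\hat{x}}dS_\omega=\frac{4\pi\sin(\rho r)}{\rho r}$ with $r=|x|$, $\hat{x}=x/|x|$ and $\rho=|\xi|$, the $\xi$-integral reduces to a one-dimensional integral in $\rho$ which, interpreted distributionally, yields a multiple of the surface measure on the sphere of radius $ct$:
$$w(t,x)=\frac{1}{4\pi c^2 t}\,\delta_{\{|x|=ct\}}.$$
With this representation in hand, the first identity follows by writing the convolution as a surface integral; parameterizing points on $\{|z|=ct\}$ by $z=cty$ with $|y|=1$ and using $dS_z=(ct)^2 dS_y$ gives
$$w\ast g(x)=\frac{1}{4\pi c^2 t}\int_{|z|=ct}g(x-z)\,dS_z=\frac{t}{4\pi}\int_{|y|=1}g(x+cty)\,dS_y,$$
after absorbing the sign via $y\to-y$ using the symmetry of the unit sphere. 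For the second identity I would use $w_t\ast h=\partial_t(w\ast h)$ and differentiate the first formula in $t$: the product rule produces the leading term $\frac{1}{4\pi}\int_{|y|=1}h(x+cty)\,dS_y$, while the chain rule applied to $h(x+cty)$ produces $\frac{ct}{4\pi}\int_{|y|=1}\nabla h(x+cty)\cdot y\,dS_y$, matching the claim.

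The main technical step is the Fourier-inversion identification of $w$ with the scaled surface measure; because $\frac{\sin(c|\xi|t)}{c|\xi|}$ is not integrable on $\mathbb{R}^3$, this inversion must be justified in the tempered-distribution sense, for instance by inserting a regularizer $e^{-\varepsilon|\xi|^2}$ and passing to the limit $\varepsilon\to 0^+$, or by applying Bochner's formula for radial tempered distributions. Once this point is settled, the remaining computations are routine substitution and differentiation.
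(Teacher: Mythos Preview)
Your argument is correct and is the standard derivation of the Kirchhoff spherical-means formula. Note, however, that the paper does not actually prove this lemma: it is stated as a classical result (labelled ``Kirchhoff'') and used as a preparatory tool without proof. Your identification of $w$ with $\frac{1}{4\pi c^2 t}\delta_{\{|x|=ct\}}$ via distributional Fourier inversion, followed by the spherical parameterization and the time-differentiation for $w_t\ast h$, is exactly the textbook route and would serve as a complete justification were one required.
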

\begin{lem}\label{rbegf4}
For any positive integer $l$, we have
\bma
\left|w\ast(1+t)^{-\frac{l}{2}} e^{-\frac{|x|^2}{C(1+t)}} \right|&\leq C(1+t)^{-\frac{l}{2}} e^{-\frac{(|x|-ct)^2}{3C(1+t)}} ,\label{rbegf4-1}\\
\left|w_t\ast(1+t)^{-\frac{l}{2}} e^{-\frac{|x|^2}{C(1+t)}} \right|&\leq C(1+t)^{-\frac{l+1}{2}} e^{-\frac{(|x|-ct)^2}{4C(1+t)}} \label{rbegf4-2}.
\ema
\end{lem}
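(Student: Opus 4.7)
The plan is to apply the Kirchhoff representations from Lemma \ref{rbegf3} to both convolutions, reduce each spherical integral to a closed-form one-dimensional radial integral, and split into the two regimes $|x|\ge ct/2$ and $|x|\le ct/2$ according to the location of $x$ relative to the wavefront. The main technical obstacle is a trade-off between polynomial prefactors (the $t$ coming from the Kirchhoff kernel and the $ct$ coming from the gradient term) and the Gaussian decay factor, which cannot be handled uniformly by either the sup bound or the explicit formula alone.

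For \eqref{rbegf4-1}, I parametrise the unit sphere by the polar angle $\theta$ with respect to $x/|x|$ and set $r=|x+cty|$, so that $r\,dr = -ct|x|\sin\theta\,d\theta$. The Kirchhoff formula collapses to
\bmas
w\ast g(x) = \frac{1}{2c|x|}\int_{||x|-ct|}^{|x|+ct} r\,g(r)\,dr = \frac{C(1+t)^{1-l/2}}{4c|x|}\Bigl[e^{-(|x|-ct)^2/(C(1+t))} - e^{-(|x|+ct)^2/(C(1+t))}\Bigr],
\emas
where $g(y)=(1+t)^{-l/2}e^{-|y|^2/(C(1+t))}$. On $|x|\ge ct/2$ the bound $1/|x|\le 2/(ct)$ combined with $(1+t)/t\le C$ (for $t\ge 1$) yields $|w\ast g(x)|\le C(1+t)^{-l/2}e^{-(|x|-ct)^2/(C(1+t))}$, stronger than needed. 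On $|x|\le ct/2$ the sphere stays at least $ct/2$ away from the origin, so $(|x|-ct)^2\ge c^2t^2/4$; the crude Kirchhoff sup bound $|w\ast g(x)|\le t\sup_y g(x+cty)$ then suffices because the elementary inequality $t\le Ce^{2(|x|-ct)^2/(3C(1+t))}$ holds on this region, absorbing the prefactor $t$ at the cost of relaxing the Gaussian constant from $1/C$ to $1/(3C)$. The case $t\le 1$ follows directly from the sup bound.

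For \eqref{rbegf4-2}, I split $w_t\ast h = J_1+J_2$ as in Lemma \ref{rbegf3}. The same spherical reduction applied to $J_1$ yields
\bmas
J_1 = \frac{C(1+t)^{1-l/2}}{4ct|x|}\Bigl[e^{-(|x|-ct)^2/(C(1+t))} - e^{-(|x|+ct)^2/(C(1+t))}\Bigr],
\emas
and on $|x|\ge ct/2$ the pair $1/(ct|x|)\le 2/(ct)^2$ together with $(1+t)/t^2\le 4/(1+t)$ (for $t\ge 1$) gives $|J_1|\le C(1+t)^{-(l+2)/2}e^{-(|x|-ct)^2/(C(1+t))}$. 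For $J_2$ I use the pointwise inequality $|\nabla h(z)|\le C(1+t)^{-(l+1)/2}e^{-|z|^2/(2C(1+t))}$ derived from $|z|e^{-|z|^2/(2C(1+t))}\le C\sqrt{1+t}$, combined with a second explicit radial computation that extracts a matching factor $1/|x|$ from the angular integral; the outer $ct$ is then absorbed by $1/|x|\le 2/(ct)$ on $|x|\ge ct/2$. On $|x|\le ct/2$, the sup bounds $|J_1|\le (1+t)^{-l/2}e^{-A}$ and $|J_2|\le Cct(1+t)^{-(l+1)/2}e^{-A/2}$, with $A=(|x|-ct)^2/(C(1+t))\ge c^2t/(8C)$ for $t\ge 1$, suffice: the extra $\sqrt{1+t}$ in $J_1$ and the extra $ct$ in $J_2$ are absorbed by the exponential-in-$t$ factor once a factor of $4$ is conceded in the Gaussian exponent, via $\sqrt{1+t}\le Ce^{3A/4}$ and $ct\le Ce^{A/4}$ on this region. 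The case $t\le 1$ is again trivial from the sup bounds, and combining these estimates completes the proof of \eqref{rbegf4-2}.
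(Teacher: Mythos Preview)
Your proof is correct and takes a genuinely different route from the paper. The paper never computes the spherical mean in closed form; instead it splits the sphere into the hemispheres $\cos\theta>0$ and $\cos\theta<0$ and uses the algebraic lower bound
\[
|x|^2+c^2t^2+2ct|x|\cos\theta \;\ge\; \tfrac{1}{3}(|x|-ct)^2+\tfrac{1}{2}c^2t^2(1+\cos\theta)
\]
to separate the target Gaussian from a $\theta$-dependent factor, whose integral $\int_{\pi/2}^\pi e^{-c^2t^2(1+\cos\theta)/(2C(1+t))}\sin\theta\,d\theta\le C(1+t)^{-1}$ cancels the Kirchhoff prefactor $t$ uniformly in $x$. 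For $w_t$, the paper treats the gradient term by the same pointwise bound $|\nabla h|\le C(1+t)^{-(l+1)/2}e^{-3|\cdot|^2/(4C(1+t))}$ as you do, then feeds the resulting spherical integral back into the first estimate.

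Your approach instead exploits the exact radial reduction $\frac{t}{4\pi}\iint_{|y|=1}g(|x+cty|)\,dS_y=\frac{1}{2c|x|}\int_{||x|-ct|}^{|x|+ct} r\,g(r)\,dr$, evaluates the integral explicitly because $g$ is Gaussian, and then splits according to $|x|\gtrless ct/2$: near the wavefront the factor $1/|x|$ absorbs the growth, while inside the cone the exponential decay in $t$ absorbs any polynomial loss. The paper's angular estimate is more robust in that it would apply to a broader class of radial profiles (it never uses that $\int r\,g(r)\,dr$ has a closed form), whereas your computation is sharper on the region $|x|\ge ct/2$ (you get the exponent $1/C$ rather than $1/(3C)$ there) and arguably more elementary since it avoids the somewhat ad hoc completion-of-squares inequality.
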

\begin{proof}
First, we prove \eqref{rbegf4-1} as follows. By Lemma \ref{rbegf3}, we have
\be
I_1 =\frac{t}{4\pi}\iint_{|y|=1}e^{-\frac{|x+cty|^2}{C(1+t)}}dS_y.\label{rbegf4-1-1}
\ee
Thus,
\bma
I_1&=\frac{t}{4\pi}\(\iint_{\{|y|=1,\cos\theta>0\}}+\iint_{\{|y|=1,\cos\theta<0\}}\)e^{-\frac{|x|^2+c^2t^2|y|^2+2ct|x||y|\cos\theta}{C(1+t)}}dS_y\nnm\\
&=:I_1^1+I_1^2.\label{rbegf4-1-2}
\ema
It is very easy to verify that $I_1^1<I_1^2$. We only estimate $I_1^2$ as follows. For $\cos\theta<0$,
\bmas
&\quad|x|^2+c^2t^2|y|^2+2ct|x||y|\cos\theta\\
&=(|x|-ct|y|)^2+2c^2t^2|y|^2(1+\cos\theta)+2ct|y|(|x|-ct|y|)(1+\cos\theta)\\
&\geq \frac{(|x|-ct)^2}{3}+\frac{c^2t^2(1+\cos\theta)}{2},
\emas
where we have used
$$
2ct|y|(|x|-ct|y|)(1+\cos\theta)\leq\frac{2(|x|-ct|y|)^2(1+\cos\theta)}{3}+\frac{3c^2t^2|y|^2(1+\cos\theta)}{2}.
$$
Thus,
\be
I_1^{2}\leq\frac{t}{2}e^{-\frac{(|x|-ct)^2}{3C(1+t)}}\int_{\frac{\pi}{2}}^{\pi}e^{-\frac{c^2t^2(1+\cos\theta)}{2C(1+t)}}\sin\theta d\theta
\leq Ce^{-\frac{(|x|-ct)^2}{3C(1+t)}},\label{rbegf4-1-5}
\ee
where we have used
$$
\left\{\bln
&\int_{\frac{\pi}{2}}^{\pi}e^{-\frac{c^2t^2(1+\cos\theta)}{2C(1+t)}}\sin\theta d\theta\leq C\leq C(1+t)^{-1},\ \ \ \ \ \ \ \ \ \ \ \ \ \ \ \ t\leq1,\\
&\int_{\frac{\pi}{2}}^{\pi}e^{-\frac{c^2t^2(1+\cos\theta)}{2C(1+t)}}\sin\theta d\theta\leq C(1+t)t^{-2}\leq C(1+t)^{-1},\ \ \ t\geq1.
\eln\right.
$$
By combining \eqref{rbegf4-1-1}--\eqref{rbegf4-1-5}, we obtain \eqref{rbegf4-1}.

Finally, we prove \eqref{rbegf4-2}. By Lemma \ref{rbegf3}, we have
\bma
I_2&=\frac{1}{4\pi}\iint_{|y|=1}e^{-\frac{|x+cty|^2}{C(1+t)}}dS_y+\frac{ct}{4\pi}\iint_{|y|=1}\nabla e^{-\frac{|x+cty|^2}{C(1+t)}}\cdot ydS_y\nnm\\
&=:I_2^1+I_2^2.\label{rbegf4-2-1}
\ema
By \eqref{rbegf4-1-1}--\eqref{rbegf4-1-5}, we have
\be
I_2^1\leq C(1+t)^{-1}e^{-\frac{(|x|-ct)^2}{3C(1+t)}}.\label{rbegf4-2-2}
\ee

For $I_2^2$, we obtain
\bmas
|I_2^2|&=\left| \frac{ct}{2\pi\sqrt{1+t}}\iint_{|y|=1} e^{-\frac{|x+cty|^2}{C(1+t)}}\frac{(x+cty)\cdot y}{C\sqrt{1+t}}dS_y\right|\nnm\\
&\leq C \frac{t}{\sqrt{1+t}}\iint_{|y|=1}e^{-\frac{3|x+cty|^2}{4C(1+t)}}dS_y.
\emas
By a similar argument as \eqref{rbegf4-1-1}, we have
\be
|I_2^2|\leq C(1+t)^{-\frac{1}{2}}e^{-\frac{(|x|-ct)^2}{4C(1+t)}}.\label{rbegf4-2-4}
\ee
By combining \eqref{rbegf4-2-1}, \eqref{rbegf4-2-2} and \eqref{rbegf4-2-4}, we obtain \eqref{rbegf4-2}. 
\end{proof}

\begin{lem}\label{rbegf4j1}
For any given $c,D_1>0$, there exists a constant $C>0$ such that
\bma
&\quad\left|\frac{\pt^2}{\pt x_i\pt x_j}\intt \frac{s}{4\pi}\iint_{|y|=1} t^{-\frac32}e^{-\frac{|x+csy|^2}{D_1t}}dS_y ds\right|\nnm\\
&\leq Ct^{-\frac32}\bigg( (1+t)^{-\frac{1}{2}}e^{-\frac{(|x|-ct)^2}{2D_1t}}+\(1+\frac{|x|^2}{t}\)^{-\frac32}1_{\{|x|\le ct\}}\bigg).\label{rbegf4j1-1}
\ema
\end{lem}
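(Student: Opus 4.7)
The plan is to reduce the spherical integral to a one-dimensional radial integral by exploiting the radial symmetry of the Gaussian, then differentiate explicitly and split into the outer and inner Mach regions.

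Since the integrand is a radial function of $y$, the classical spherical-average identity
\begin{equation*}
\frac{1}{4\pi}\iint_{|y|=1} e^{-|x+csy|^2/(D_1 t)}\,dS_y = \frac{D_1 t}{4cs|x|}\Bigl(e^{-(|x|-cs)^2/(D_1 t)} - e^{-(|x|+cs)^2/(D_1 t)}\Bigr),
\end{equation*}
together with the substitutions $r=|x|\mp cs$, reduces the quantity inside the absolute value in \eqref{rbegf4j1-1} to the radial function
\begin{equation*}
\Phi(t,\rho) = \frac{D_1}{4c^2\rho\sqrt{t}}\bigl(2F(\rho) - F(\rho-ct) - F(\rho+ct)\bigr), \qquad \rho:=|x|,
\end{equation*}
where $F(\rho)=\int_0^{\rho} e^{-r^2/(D_1 t)}\,dr$ is the odd error-function primitive. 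Since $\Phi$ is radial in $x$, the mixed derivative splits as
$$\pt_{x_i}\pt_{x_j}\Phi = \frac{\delta_{ij}}{\rho}\Phi_\rho + \frac{x_i x_j}{\rho^2}\Bigl(\Phi_{\rho\rho}-\frac{1}{\rho}\Phi_\rho\Bigr),$$
so it suffices to bound $\rho^{-1}|\Phi_\rho|$ and $|\Phi_{\rho\rho}|$ pointwise.

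Next I would carry out the explicit differentiation. This produces linear combinations of the three Gaussians $e^{-(\rho\pm ct)^2/(D_1 t)}$ and $e^{-\rho^2/(D_1 t)}$, multiplied by polynomials in $\rho$ and $ct$ and divided by powers of $\rho$. The pointwise bound then splits into two regions. For $\rho\ge ct$, every exponent is bounded below by $(\rho-ct)^2/(D_1 t)$; the $\rho^{-k}$ prefactors are harmless since $\rho\ge ct$, and one obtains the Huygens piece $Ct^{-3/2}(1+t)^{-1/2}e^{-(|x|-ct)^2/(2D_1 t)}$. For $\rho<ct$, the Gaussian $e^{-(\rho+ct)^2/(D_1 t)}$ still gives exponential decay that is absorbed, the terms carrying $e^{-(\rho-ct)^2/(D_1 t)}$ contribute the same Huygens profile, and the remaining term $e^{-\rho^2/(D_1 t)}$ is compared with $(1+\rho^2/t)^{-3/2}$ separately in the two regimes $\rho^2\le t$ and $\rho^2>t$, which together yield the diffusive profile $(1+|x|^2/t)^{-3/2}\mathbf{1}_{\{|x|\le ct\}}$.

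The main obstacle will be controlling the $\rho^{-2}$ and $\rho^{-3}$ factors produced after differentiation, which are incompatible with the uniform-in-$\rho$ right-hand side. The rescue comes from the algebraic identity, with $g(\rho)=2F(\rho)-F(\rho-ct)-F(\rho+ct)$,
\begin{equation*}
\rho^{-1}\Phi_\rho = \frac{D_1}{4c^2\sqrt{t}}\cdot\frac{\rho g'(\rho)-g(\rho)}{\rho^3},
\end{equation*}
combined with the vanishing $\rho g'(\rho)-g(\rho)=O(\rho^3)$ at the origin, which follows from $F(-ct)=-F(ct)$ together with a Taylor expansion using $F''(r)=-2r(D_1 t)^{-1}e^{-r^2/(D_1 t)}$; an analogous representation $\rho\Phi_{\rho\rho}=\frac{D_1}{4c^2\sqrt{t}}g''(\rho)-2\Phi_\rho$ with $g''(0)=0$ handles the other derivative. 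Once this cancellation is made quantitative and uniform in $t$ by expressing the remainder as an integral of $F'''$ against an $s$-kernel, the preceding region-by-region bounds yield the claimed estimate.
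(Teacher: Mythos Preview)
Your reduction via the spherical-average identity to $\Phi(t,\rho)=\dfrac{D_1}{4c^2\rho\sqrt{t}}\,g(\rho)$ with $g(\rho)=2F(\rho)-F(\rho-ct)-F(\rho+ct)$ is exactly what the paper does in the central region $c\sqrt{t}\le|x|\le ct$ (its formula \eqref{rbegf4j1-1-5}--\eqref{rbegf4j1-1-6}), and your explicit differentiation matches its $J_1,J_2$ splitting. The tactical difference is in how the $\rho^{-k}$ singularities are removed. The paper never invokes a Taylor cancellation: for $|x|\le c\sqrt{t}$ and for $|x|\ge ct$ it abandons the explicit formula altogether, places $\partial_{x_i}\partial_{x_j}$ directly on the Gaussian inside the spherical integral, and bounds the resulting $t^{-5/2}$ integrand using the Kirchhoff-type estimate $\iint_{|y|=1}e^{-|x+csy|^2/(Ct)}\,dS_y\le Cs^{-1}e^{-(|x|-cs)^2/(C't)}$ from the previous lemma. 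This is cruder but shorter. Your route through the third-order Taylor remainder of $h(\rho)=\rho g'(\rho)-g(\rho)$ (using $|g'''|\le Ct^{-1}$, $|g^{(4)}|\le Ct^{-3/2}$) also gives the correct $t^{-3/2}$ bound and has the advantage of treating all $\rho$ uniformly. One point to fix: your claim that ``the $\rho^{-k}$ prefactors are harmless since $\rho\ge ct$'' fails when $t$ is small, because then $\rho=ct$ can itself be small and $\rho^{-3}$ is not controlled by the Huygens term; you must either apply the same Taylor cancellation there or, as the paper does, split off $t\le 1$ and handle it by the direct differentiation-inside argument.
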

\begin{proof}
We prove \eqref{rbegf4j1-1} holds for $t\leq1$ and $t\ge 1$. Let
$$
G_{ij}(t,x)=\frac{\pt^2}{\pt x_i\pt x_j}\intt \frac{s}{4\pi}\iint_{|y|=1} t^{-\frac32}e^{-\frac{|x+csy|^2}{D_1t}}dS_y ds.
$$

For $t\leq1$, we split $x$ into $|x|\leq ct$ and $|x|\geq ct$. For $|x|\le ct\leq c\sqrt{t}$, we can obtain
\bma
\left|G_{ij}(t,x)\right|
&\leq C\intt st^{-\frac{5}{2}}\iint_{|y|=1} e^{-\frac{3|x+csy|^2}{4D_1t}}dS_y ds\le C\intt t^{-\frac{5}{2}}e^{-\frac{(|x|-cs)^2}{4D_1t}}ds\nnm\\
&\leq C\intt t^{-\frac{5}{2}}ds\leq Ct^{-\frac{3}{2}}\leq Ct^{-\frac32}\(1+\frac{|x|^2}{t}\)^{-\frac32}, \label{rbegf4j1-1-2}
\ema
where we have used \eqref{rbegf4-1-5}.

For $|x|\geq ct$ and $|y|=1$, we have $|x+csy|\geq||x|-cs|\geq||x|-ct|$. Thus, for $t\le 1$,
\bma
\left|G_{ij}(t,x)\right|&\leq C\intt st^{-\frac{5}{2}}\iint_{|y|=1} e^{-\frac{3|x+csy|^2}{4D_1t}}dS_y ds\nnm\\
&\leq Ce^{-\frac{(|x|-ct)^2}{2D_1t}}\intt st^{-\frac{5}{2}}\iint_{|y|=1} e^{-\frac{|x+csy|^2}{4D_1t}}dS_y ds\nnm\\
&\leq Ce^{-\frac{(|x|-ct)^2}{2D_1t}}\intt t^{-\frac{5}{2}}e^{-\frac{(|x|-cs)^2}{12D_1t}}ds\nnm\\
&\leq Ct^{-\frac32}e^{-\frac{(|x|-ct)^2}{2D_1t}}\leq Ct^{-\frac32}(1+t)^{-\frac12}e^{-\frac{(|x|-ct)^2}{2D_1t}}.\label{rbegf4j1-1-3}
\ema

For $t\ge 1$, we split $x$ into $|x|\leq c\sqrt{t}$, $c\sqrt{t}\leq|x|\leq ct$ and $|x|\geq ct$. For $|x|\leq c\sqrt{t}$, we have by \eqref{rbegf4j1-1-2} that
\be
\left|G_{ij}(t,x)\right|\leq Ct^{-\frac32}\(1+\frac{|x|^2}{t}\)^{-\frac32}.\label{rbegf4j1-1-4}
\ee
For $c\sqrt{t}\leq|x|\leq ct$, we have
\bma
&\quad\intt \frac{s}{4\pi}\iint_{|y|=1} t^{-\frac32}e^{-\frac{|x+csy|^2}{D_1t}}dS_y ds\nnm\\
&=\intt \frac{s}{4\pi}t^{-\frac32}\int^{2\pi}_0\int^{\pi}_0e^{-\frac{|x|^2+c^2s^2|y|^2+2cs|x||y|\cos\theta}{D_1t}}\sin\theta d\theta d \varphi ds\nnm\\
&=\frac{D_1 }{4c|x|t^{\frac12}}\int_0^t \(e^{-\frac{(|x|-cs)^2}{D_1t}}-e^{-\frac{(|x|+cs)^2}{D_1t}}\)ds\nnm\\
&=\frac{D_1}{4c^2|x|t^{\frac{1}{2}}}\(\int_{|x|-ct}^{|x|}e^{-\frac{z^2}{D_1t}}dz-\int^{|x|+ct}_{|x|}e^{-\frac{z^2}{D_1t}}dz\).\label{rbegf4j1-1-5}
\ema
Thus,
\bma
G_{ij}(t,x)
&=C\frac{\pt^2}{\pt x_i\pt x_j}\frac{1}{|x|t^{\frac{1}{2}}}\(\int_{0}^{|x|}+\int^{0}_{|x|-ct}-\int^{|x|+ct}_{0}+\int_{0}^{|x|}\)e^{-\frac{z^2}{D_1t}}dz\nnm\\
&=C\frac{\pt^2}{\pt x_i\pt x_j}\frac{1}{|x|t^{\frac{1}{2}}}\(2\int_{0}^{|x|}e^{-\frac{z^2}{D_1t}}dz-\int^{|x|+ct}_{ct-|x|}e^{-\frac{z^2}{D_1t}}dz\) \nnm\\
&=:J_1+J_2.\label{rbegf4j1-1-6}
\ema

Then, we estimate $J_i$ with $i=1,2$ as follows. For $J_1$, we have by \eqref{rbegf4j1-1-6} that
$$
J_1=C\(-\frac{\delta_{ij}}{|x|^3\sqrt{t}}+\frac{3x_ix_j}{|x|^5\sqrt{t}}\)\int_0^{|x|}e^{-\frac{z^2}{D_1t}}dz+C\(\frac{\delta_{ij}}{|x|^2\sqrt{t}}-\frac{3x_ix_j}{|x|^4\sqrt{t}}\)e^{-\frac{|x|^2}{D_1t}},
$$
which gives
\be
|J_1|\leq\frac{C}{|x|^3}\leq Ct^{-\frac{3}{2}}\(1+\frac{|x|^2}{t}\)^{-\frac{3}{2}}.\label{rbegf4j1-1-7}
\ee
For $J_2$, we have by \eqref{rbegf4j1-1-6} that
\bmas
J_2
&=C\(-\frac{\delta_{ij}}{|x|^3\sqrt{t}}+\frac{3x_ix_j}{|x|^5\sqrt{t}}\)\int^{|x|+ct}_{ct-|x|}e^{-\frac{z^2}{D_1t}}dz\\
&\quad-C\(\frac{\delta_{ij}}{|x|^2\sqrt{t}}-\frac{2x_ix_j}{|x|^4\sqrt{t}}\)\(e^{-\frac{(|x|-ct)^2}{D_1t}}+e^{-\frac{(|x|+ct)^2}{D_1t}}\)\\
&\quad-C\(\frac{2x_ix_j}{D_1t|x|^3}\frac{(|x|+ct)}{\sqrt{t}}e^{-\frac{(|x|+ct)^2}{D_1t}}+\frac{2x_ix_j}{D_1t|x|^3}\frac{(|x|-ct)}{\sqrt{t}}e^{-\frac{(|x|+ct)^2}{D_1t}}\).
\emas
Thus,
\bma
|J_2|&\leq \frac{C}{|x|^3\sqrt{t}}\int^{|x|+ct}_{ct-|x|}e^{-\frac{z^2}{D_1t}}dz+C\(\frac{1}{|x|^2\sqrt{t}}+\frac{1}{|x|t}\)e^{-\frac{(|x|-ct)^2}{2D_1t}}\nnm\\
&\leq Ct^{-\frac{3}{2}}\(1+\frac{|x|^2}{t}\)^{-\frac{3}{2}}+Ct^{-\frac{3}{2}}(1+t)^{-\frac{1}{2}}e^{-\frac{(|x|-ct)^2}{2D_1t}},\label{rbegf4j1-1-8}
\ema
where we have used
 $$
\left\{\bln
&\(\frac{1}{|x|^2\sqrt{t}}+\frac{1}{|x|t}\)e^{-\frac{3(|x|-ct)^2}{4D_1t}}\leq Ct^{-\frac{3}{2}}e^{-\frac{c^2t^2}{16D_1t}}e^{-\frac{(|x|-ct)^2}{2D_1t}} ,\ \ \ \ \ |x|\leq\frac{ct}{2},\\
&\(\frac{1}{|x|^2\sqrt{t}}+\frac{1}{|x|t}\)e^{-\frac{3(|x|-ct)^2}{4D_1t}}\leq C\(t^{-\frac{5}{2}}+t^{-2}\)e^{-\frac{(|x|-ct)^2}{2D_1t}} , \ \ |x|\geq\frac{ct}{2}.
\eln\right.
$$

For $|x|>ct$ and $|y|=1$, we have $|x+csy|\geq||x|-cs|\geq||x|-ct|$. Thus, by \eqref{rbegf4j1-1-3}, we can obtain
\bma
|G_{ij}(t,x)|
&\leq Ce^{-\frac{(|x|-ct)^2}{2D_1t}}\intt t^{-\frac52}e^{-\frac{(|x|-cs)^2}{12D_1t}}ds\nnm\\
&\leq Ct^{-2} e^{-\frac{(|x|-ct)^2}{2D_1t}}.\label{rbegf4j1-1-9}
\ema
By combining \eqref{rbegf4j1-1-2}--\eqref{rbegf4j1-1-9}, we have \eqref{rbegf4j1-1}. The proof of the Lemma is completed.
\end{proof}

With the help of Lemmas \ref{rbegf2}--\ref{rbegf4j1}, we are able to establish the pointwise space-time behaviors of $G^{j}_{L,0}(t,x)$~$(j=1,2,3,4)$ as follows.
\begin{lem}\label{rbegf6}
For any given $N>\mathbf{c}$ and $\alpha\in\N^3$, there exist $C,D>0$ such that for $|x|\leq Nt$, the following estimates hold.

(1) For $G^{1}_{L,0}(t,x)$, we have
\bq
\left\{\bln
&\|\partial_x^{\alpha}P_0^lG^{1}_{L,0}(t,x)\|\leq C(1+t)^{-\frac{4+|\alpha|}{2}}e^{-\frac{(|x|-\mathbf{c}t)^2}{D(1+t)}}+Ce^{-\frac{t}{D}},\ \  l=1,2,3,\\
&\|\partial_x^{\alpha}P_1G^{1}_{L,0}(t,x)\|,\,\|\partial_x^{\alpha}G^{1}_{L,0}(t,x)P_1\|\leq C(1+t)^{-\frac{5+|\alpha|}{2}}e^{-\frac{(|x|-\mathbf{c}t)^2}{D(1+t)}}+Ce^{-\frac{t}{D}},\\
&\|\partial_x^{\alpha}P_1G^{1}_{L,0}(t,x)P_1\|\leq C(1+t)^{-\frac{6+|\alpha|}{2}}e^{-\frac{(|x|-\mathbf{c}t)^2}{D(1+t)}}+Ce^{-\frac{t}{D}}.\label{g1l0}
\eln\right.
\eq

(2) For $G^{k}_{L,0}(t,x)$ with  $k=2,3$, we have
\bq
\left\{\bln
&\|\partial_x^{\alpha}P_0^lG^{k}_{L,0}(t,x)\|\leq C(1+t)^{-\frac{3+|\alpha|}{2}}e^{-\frac{|x|^2}{D(1+t)}}+Ce^{-\frac{t}{D}},\ \  l=1,2,3,\\
&\|\partial_x^{\alpha}P_1G^{k}_{L,0}(t,x)\|,\,\|\partial_x^{\alpha}G^{k}_{L,0}(t,x)P_1\|\leq C(1+t)^{-\frac{4+|\alpha|}{2}}e^{-\frac{|x|^2}{D(1+t)}}+Ce^{-\frac{t}{D}},\\
&\|\partial_x^{\alpha}P_1G^{k}_{L,0}(t,x)P_1\|\leq C(1+t)^{-\frac{5+|\alpha|}{2}}e^{-\frac{|x|^2}{D(1+t)}}+Ce^{-\frac{t}{D}}.\label{g2-3l0}
\eln\right.
\eq

(3) For $G^{4}_{L,0}(t,x)$, we have  $P^1_0G^{4}_{L,0}=P^3_0G^{4}_{L,0}=0$, and
\bq\label{g5l0}
\left\{\bln
&\|\partial_x^{\alpha}P^2_0G^{4}_{L,0}(t,x)\|\leq C(1+t)^{-\frac{3+|\alpha|}{2}}\((1+t)^{-\frac{1}{2}}e^{-\frac{(|x|-\mathbf{c}t)^2}{D(1+t)}}+e^{-\frac{|x|^2}{C(1+t)}}\)+Ce^{-\frac{t}{D}}\\
&~~~~~~~~~~~~~~~~~~~~~~~~~+C(1+t)^{-\frac{3+|\alpha|}{2}}B_{\frac{3}{2}}(t,|x|)1_{\{|x|\leq\mathbf{c}t\}},\\
&\|\partial_x^{\alpha}P_1G^{4}_{L,0}(t,x)\|,\,\|\partial_x^{\alpha}G^{4}_{L,0}(t,x)P_1\|\leq C(1+t)^{-\frac{4+|\alpha|}{2}}\((1+t)^{-\frac{1}{2}}e^{-\frac{(|x|-\mathbf{c}t)^2}{D(1+t)}}+e^{-\frac{|x|^2}{D(1+t)}}\)\\
&~~~~~~~~~~~~~~~~~~~~~~~~~+Ce^{-\frac{t}{D}}+C(1+t)^{-\frac{4+|\alpha|}{2}}B_{\frac{3}{2}}(t,|x|)1_{\{|x|\leq\mathbf{c}t\}},\\
&\|\partial_x^{\alpha}P_1G^{4}_{L,0}(t,x)P_1\|\leq C(1+t)^{-\frac{5+|\alpha|}{2}}\((1+t)^{-\frac{1}{2}}e^{-\frac{(|x|-\mathbf{c}t)^2}{D(1+t)}}+e^{-\frac{|x|^2}{D(1+t)}}\)+Ce^{-\frac{t}{D}}\\
&~~~~~~~~~~~~~~~~~~~~~~~~~~~+C(1+t)^{-\frac{5+|\alpha|}{2}}B_{\frac{3}{2}}(t,|x|)1_{\{|x|\leq\mathbf{c}t\}}.
\eln\right.
\eq
\end{lem}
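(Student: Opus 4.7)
The plan is to invert the Fourier representations of $\hat{G}^{j}_{L,0}(t,\xi)$ from Lemma \ref{rbegf2} one at a time, restricted by the cutoff $|\xi|\le r_0/2$. Three general principles appear throughout: (i) $\mathrm{Re}\,A^2_{j}(|\xi|^2)\ge c_0|\xi|^2$ for $|\xi|\le r_0/2$, so the exponential factor gives the standard heat-kernel decay $(1+t)^{-3/2}e^{-|x|^2/D(1+t)}$; (ii) the last line of \eqref{SPA2} shows $P_1\psi_j(\xi)=O(|\xi|)$, so each $P_1$ on the left or right in the statement adds a factor of $|\xi|$ and hence a gain of $(1+t)^{-1/2}$; (iii) each space derivative $\partial_x^\alpha$ inserts $(\i\xi)^\alpha$ and contributes an additional $(1+t)^{-|\alpha|/2}$.

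For $G^{2}_{L,0}$ and $G^{3}_{L,0}$, the symbols are purely parabolic ($e^{-A^2_0(|\xi|^2)t}\mathcal{B}_3$ and $e^{-A^2_2(|\xi|^2)t}\mathcal{B}_4$), so a direct Gaussian estimate on the truncated Fourier integral yields \eqref{g2-3l0}. For $G^{1}_{L,0}$, I split the phase as $A^1_1(|\xi|^2)=\mathbf{c}+O(|\xi|^2)$ and absorb the correction using
\[
|\cos(O(|\xi|^3)t)|+|\sin(O(|\xi|^3)t)|\le Ce^{c_0|\xi|^2t/2},
\]
which is dominated by half of $e^{-A^2_1(|\xi|^2)t}$. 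This reduces the task to inverting $e^{-A^2_1(|\xi|^2)t/2}\cos(\mathbf{c}|\xi|t)$ and $e^{-A^2_1(|\xi|^2)t/2}\sin(\mathbf{c}|\xi|t)/|\xi|$ against analytic amplitudes; the Kirchhoff representation (Lemma \ref{rbegf3}) combined with Lemma \ref{rbegf4} then produces the Huygens bound $e^{-(|x|-\mathbf{c}t)^2/D(1+t)}$ with exactly the decay rates in \eqref{g1l0}. The vanishing $P_0^1 G^{4}_{L,0}=P_0^3 G^{4}_{L,0}=0$ follows from \eqref{ghl}: the macroscopic parts of $h_j$ and $\tilde{h}_j$ are proportional to $\chi_j$ only, so $\mathcal{B}^{ij}_5$ and $\mathcal{B}^{ij}_6$ map into $\mathrm{span}\{\chi_k\otimes\langle\chi_l|\}_{k,l=1}^{3}$, which lies in the range of $P_0^2$ and is annihilated by $P_0^1$ and $P_0^3$.

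The main obstacle is the first term of $\hat{G}^{4}_{L,0}(t,\xi)$ in \eqref{Hgf4}, which carries the homogeneous-of-degree-zero multiplier $\xi_i\xi_j/|\xi|^2$ acting on the difference $e^{-A^2_1(|\xi|^2)t}\cos(|\xi|A^1_1(|\xi|^2)t)-e^{-A^2_2(|\xi|^2)t}$; this symbol is not absolutely integrable near $\xi=0$, and it is precisely this nonintegrability that will generate the algebraic tail $B_{3/2}(t,|x|)1_{\{|x|\le\mathbf{c}t\}}$ inside the Mach cone. The plan is to write $\xi_i\xi_j/|\xi|^2=-\partial_{x_i}\partial_{x_j}(-\Delta)^{-1}$ and use the three-dimensional Bochner formula $\mathcal{F}^{-1}(|\xi|^{-2})=C/|x|$, then rewrite the convolution of the Riesz potential with the wave--heat kernel so that it coincides with the space-time integral handled by Lemma \ref{rbegf4j1}; that lemma delivers exactly the two pieces $(1+t)^{-2}e^{-(|x|-\mathbf{c}t)^2/D(1+t)}$ and $(1+t)^{-3/2}B_{3/2}(t,|x|)1_{\{|x|\le\mathbf{c}t\}}$ appearing in \eqref{g5l0}, while the subtracted piece with $e^{-A^2_2(|\xi|^2)t}$ contributes the standard Gaussian $e^{-|x|^2/D(1+t)}$ via the same Riesz computation. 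The second term in \eqref{Hgf4}, whose multiplier $\xi_i\xi_j \sin(|\xi|A^1_1 t)/|\xi|$ is $|\xi|$-regular at the origin, is treated directly by the Kirchhoff--Gaussian scheme used for $G^{1}_{L,0}$. Projections $P_0,P_1$ and derivatives $\partial_x^\alpha$ are tracked exactly as in (i)--(iii), and the constraint $|x|\le Nt$ is used to bound any residual tail by $Ce^{-t/D}$.
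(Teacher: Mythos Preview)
Your plan for $G^{2}_{L,0}$, $G^{3}_{L,0}$, and $G^{1}_{L,0}$ is essentially the paper's approach: split off the leading phase $\mathbf{c}|\xi|$, use the Kirchhoff formula together with Lemma~\ref{rbegf4}, and recover the Gaussian spatial decay by a contour shift in $\xi_1$ (this last point is implicit in your phrase ``direct Gaussian estimate'' but should be stated, since a real-$\xi$ bound alone gives no spatial decay). The observation $P_0^1G^4_{L,0}=P_0^3G^4_{L,0}=0$ via \eqref{ghl} is also correct.

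The gap is in your treatment of the first line of \eqref{Hgf4}. You propose to represent $\xi_i\xi_j/|\xi|^2$ as $-\partial_i\partial_j(-\Delta)^{-1}$, convolve the Riesz potential $C/|x|$ with the wave--heat kernel, and claim this ``coincides'' with the integral in Lemma~\ref{rbegf4j1}. It does not: in Fourier variables Lemma~\ref{rbegf4j1} computes $\partial_i\partial_j$ of
\[
\int_0^t \frac{\sin(\mathbf{c}|\xi|s)}{\mathbf{c}|\xi|}\,e^{-\frac{A_1}{2}|\xi|^2 t}\,ds
\;=\;\frac{1-\cos(\mathbf{c}|\xi|t)}{\mathbf{c}^2|\xi|^2}\,e^{-\frac{A_1}{2}|\xi|^2 t},
\]
i.e.\ it captures the $(1-\cos)/|\xi|^2$ piece, \emph{not} $\cos/|\xi|^2$. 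The leftover constant term $e^{-A_1|\xi|^2t}\,\xi_i\xi_j/|\xi|^2$ cannot be absorbed by Lemma~\ref{rbegf4j1}, and your assertion that the subtracted piece $e^{-A^2_2(|\xi|^2)t}\,\xi_i\xi_j/|\xi|^2$ ``contributes the standard Gaussian $e^{-|x|^2/D(1+t)}$ via the same Riesz computation'' is false: the Riesz transform of a heat kernel has only algebraic spatial decay $O(|x|^{-3})$, because the symbol is discontinuous at $\xi=0$ and no contour shift is available. Each of these two terms taken alone has a slowly decaying tail; only their \emph{difference} is well behaved.

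The repair, which is what the paper does, is to avoid separating the two exponentials and instead use the time-integral identities
\[
\frac{1-\cos(\mathbf{c}|\xi|t)}{|\xi|^2}=\mathbf{c}\int_0^t \frac{\sin(\mathbf{c}|\xi|s)}{|\xi|}\,ds,
\qquad
\frac{e^{-A_1|\xi|^2t}-e^{-A_2|\xi|^2t}}{|\xi|^2}=\int_{A_1t}^{A_2t} e^{-|\xi|^2 s}\,ds.
\]
Multiplying by $\xi_i\xi_j$ makes both right-hand sides analytic in $\xi$. The first lands exactly on Lemma~\ref{rbegf4j1} after convolving with the remaining analytic amplitude $e^{-\frac{A_1}{2}|\xi|^2 t}\mathcal{B}^{ij}_5$ (and produces the $B_{3/2}$ tail), while the second is a superposition of heat symbols $\xi_i\xi_j e^{-|\xi|^2 s}$ with $s\in[A_1t,A_2t]$, each of which admits the contour-shift argument and integrates to the genuine Gaussian $e^{-|x|^2/D(1+t)}$. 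The remainder $\hat G^4_{L,0}-\hat R_0$ then has analytic symbol (the factors $(\cos(|\xi|^3\mathcal{A}t)-1)/|\xi|^2$ and $(e^{-|\xi|^4\mathcal{A}_j t}-1)/|\xi|^2$ are entire) and is handled by the same Kirchhoff + contour-shift scheme as $G^1_{L,0}$.
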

\begin{proof}
By \eqref{SA6}, we have $A^1_1(|\xi|^2)=\mathbf{c}+|\xi|^2\mathcal{A}(|\xi|^2)$, where $\mathcal{A}(|\xi|^2)$ is analytic in $\xi$ for $|\xi|\leq r_0$. Thus,
\bma
\cos(|\xi|A^1_1(|\xi|^2)t)&=\cos(\mathbf{c}|\xi|t)\cos(|\xi|^3\mathcal{A}(|\xi|^2)t)- \sin(\mathbf{c}|\xi|t)\sin(|\xi|^3\mathcal{A}(|\xi|^2)t),\label{hgfcos1}\\
\sin(|\xi|A^1_1(|\xi|^2)t)&=\sin(\mathbf{c}|\xi|t)\cos(|\xi|^3\mathcal{A}(|\xi|^2)t)+ \cos(\mathbf{c}|\xi|t)\sin(|\xi|^3\mathcal{A}(|\xi|^2)t),\label{hgfsin1}
\ema
where $\cos(|\xi|^3\mathcal{A}(|\xi|^2)t)$ and $\frac{\sin(|\xi|^3\mathcal{A}(|\xi|^2)t)}{|\xi|}$ are analytic in $\xi$ for $|\xi|\leq r_0$. These together with  \eqref{Hgf1} imply that
\be
\partial_x^{\alpha}G^{1}_{L,0}(t,x)=w_t\ast \partial_x^{\alpha}I_1(t,x)+w\ast \partial_x^{\alpha}I_2(t,x),\label{g1l0w1}
\ee
where
\bmas
I_1&=\frac{1}{(2\pi)^{\frac{3}{2}}}\int_{\{|\xi|\leq\frac{r_0}{2}\}}e^{\i x\cdot\xi}e^{-A^2_1(|\xi|^2)t} \(\cos(|\xi|^3\mathcal{A}(|\xi|^2)t)\mathcal{B}_1(\xi)+\frac{\sin(|\xi|^3\mathcal{A}(|\xi|^2)t)}{|\xi|}\mathcal{B}_2(\xi)\)d\xi, \\
I_2&=\frac{\mathbf{c}}{(2\pi)^{\frac{3}{2}}}\int_{\{|\xi|\leq\frac{r_0}{2}\}}e^{\i x\cdot\xi}e^{-A^2_1(|\xi|^2)t} \(\cos(|\xi|^3\mathcal{A}(|\xi|^2)t)\mathcal{B}_2(\xi)-\sin(|\xi|^3\mathcal{A}(|\xi|^2)t)|\xi|\mathcal{B}_1(\xi)\)d\xi.
\emas

First, we estimate the pointwise space-time behavior of $\partial_x^{\alpha}I_1(t,x)$ for $|x|\leq Nt$. Denote
$$
\mathbb{B}=\[-\frac{r_0}{4},\frac{r_0}{4}\]\times\[-\frac{r_0}{4},\frac{r_0}{4}\]\times\[-\frac{r_0}{4},\frac{r_0}{4}\],
$$
and
$$
\left\{\bln
&\Gamma_{1}=\Big\{\xi_1\in\C\,|\, \mathrm{Re}(\xi_1)=-\frac{r_{0}}{4},\, 0\leq\mathrm{Im}(\xi_1)\leq\frac{|x|}{C_1t}\Big\},\\
&\Gamma_{2}=\Big\{\xi_1\in\C\,|\, \mathrm{Im}(\xi_1)=\frac{|x|}{C_1t},\, -\frac{r_{0}}{4}\leq\mathrm{Re}(\xi_1) \leq\frac{r_{0}}{4}\Big\},\\
&\Gamma_{3}=\Big\{\xi_1\in\C\,|\, \mathrm{Re}(\xi_1)=\frac{r_{0}}{4},\, 0\leq\mathrm{Im}(\xi_1)\leq\frac{|x|}{C_1t}\Big\}.
\eln\right.
$$
By changing variable $\xi\rightarrow \O\xi$ such that $\O^T x\rightarrow(|x|,0,0)$, we have
\bma
\partial_x^{\alpha}I_1(t,x)&=\frac{1}{(2\pi)^{\frac{3}{2}}}\(\int_{\mathbb{B}} +\int_{\{|\xi|\leq\frac{r_0}{2}\}\cap\mathbb{B}^{c}}\)
\i^{|\alpha|}(\O\xi)^{\alpha}e^{\i|x|\xi_1}e^{-A^2_1(|\xi|^2)t} \nnm\\
&\qquad\qquad \times\(\cos(|\xi|^3\mathcal{A}(|\xi|^2)t)\mathcal{B}_1(\O\xi)+\frac{\sin(|\xi|^3\mathcal{A}(|\xi|^2)t)}{|\xi|}\mathcal{B}_2(\O\xi)\)d\xi\nnm\\
&=:I_1^1+I_1^2.\label{gfpi1}
\ema
By taking $C_1=\max \{\frac{4N}{r_0},4A_1 \}$, we have $\Gamma_n\in \{\xi_1\in\mathbb{C}\,|\,|\xi_1|\leq\frac{r_0}{2}\}$ for $n=1,2,3$. By Cauchy theorem, we obtain
\bma
I^1_1&=\frac{1}{(2\pi)^{\frac{3}{2}}}\sum^3_{n=1}\int_{-\frac{r_0}{4}}^{\frac{r_0}{4}} \int_{-\frac{r_0}{4}}^{\frac{r_0}{4}}\int_{\Gamma_n}\i^{|\alpha|}(\O\xi)^{\alpha}e^{\i|x|\xi_1-A^2_1(|\xi|^2)t} \nnm\\
&\qquad\qquad \times\(\cos(|\xi|^3\mathcal{A}(|\xi|^2)t)\mathcal{B}_1(\O\xi)+\frac{\sin(|\xi|^3\mathcal{A}(|\xi|^2)t)}{|\xi|}\mathcal{B}_2(\O\xi)\)d\xi_1d\xi_2d\xi_3\nnm\\
&=:J^{\alpha}_1+J^{\alpha}_2+J^{\alpha}_3.\label{hpw1}
\ema
Let $u=\mathrm{Re}(\xi_1)$ and $y=\mathrm{Im}(\xi_1)$. Since
$$
|\cos(|\xi|^3\mathcal{A}(|\xi|^2)t)|,\ \Big|\frac{\sin(|\xi|^3\mathcal{A}(|\xi|^2)t)}{|\xi|}\Big|\leq Ce^{O(|\xi|^3)t},\quad |\xi|\in \Gamma_n,
$$
 it follows that
\bma
\big\|P^l_0J^{\alpha}_2(t,x)\big\|
&\leq Ce^{-(1-\frac{A_1}{C_1})\frac{|x|^2}{C_1t}}\int_{-\frac{r_0}{4}}^{\frac{r_0}{4}}\int_{-\frac{r_0}{4}}^{\frac{r_0}{4}} \int^{\frac{r_0}{4}}_{-\frac{r_0}{4}}|\xi_2^{\alpha_2}\xi_3^{\alpha_3}| e^{-A_1(\xi_2^2+\xi_3^2)t+O(1)(|\xi_2|+|\xi_3|)^3t}\nnm\\
&\qquad\qquad \times e^{-A_1u^2t+O(1)(|u|+\frac{|x|}{C_1t})^3t}\(|u|^{\alpha_1}+\Big|\frac{|x|}{C_1t}\Big|^{\alpha_1}\)dud\xi_2d\xi_3\nnm\\
&\leq C(1+t)^{-\frac{3+|\alpha|}{2}}e^{-\frac{|x|^2}{Dt}},\quad l=1,2,3,\label{hpw2}
 \ema
where $C,D>0$ are two constants.

Noting that $P_1\mathcal{B}_i(\xi)=O(|\xi|)$ and $P_1\mathcal{B}_i(\xi)P_1=O(|\xi|^2)$, $i=1,2$, we can obtain by a similar argument as  \eqref{hpw2} that
\bq
\left\{\bln
&\|P_1J^{\alpha}_2(t,x)\|,\, \|J^{\alpha}_2P_1(t,x)\|\leq C(1+t)^{-\frac{4+|\alpha|}{2}}e^{-\frac{|x|^2}{D(1+t)}},\\
&\|P_1J^{\alpha}_2(t,x)P_1\|\leq C(1+t)^{-\frac{5+|\alpha|}{2}}e^{-\frac{|x|^2}{D(1+t)}}.\label{hpw3}
\eln\right.
\eq

For $J^{\alpha}_1$ and  $J^{\alpha}_3$, we have
\bma
\big\|J^{\alpha}_n(t,x)\big\|
&\leq Ce^{-\frac{A_1r^2_0t}{4}}\int_{-\frac{r_0}{4}}^{\frac{r_0}{4}} \int_{-\frac{r_0}{4}}^{\frac{r_0}{4}}\int_{0}^{\frac{|x|}{C_1t}}|\xi_2^{\alpha_2}\xi_3^{\alpha_3}|e^{-A_1(\xi_2^2+\xi_3^2)t+O(1)(|\xi_2|+|\xi_3|)^3t}\nnm\\
&\qquad\qquad \times  e^{-(|x|-A_1yt)y-O(1)(r_0^3+|y|^3)t}\(|r_0|^{\alpha_1}+|y|^{\alpha_1}\)dyd\xi_2d\xi_3\nnm\\
&\leq Ce^{-\frac{t}{D}},\quad n=1,3.\label{hpw4}
\ema

Since $e^{-A^2_1(|\xi|^2)t}\le e^{-t/D}$ for $\frac{r_0}{4}\leq|\xi|\leq\frac{r_0}{2}$, it follows that
\be
\|I^2_1(t,x)\|\leq Ce^{-\frac{t}{D}},\quad x\in\R^3.\label{hpw6}
\ee
By combining \eqref{gfpi1}--\eqref{hpw6}, we obtain
\bq
\left\{\bln
&\|\partial_x^{\alpha}P^l_0I_1(t,x)\|\leq C(1+t)^{-\frac{3+|\alpha|}{2}}e^{-\frac{|x|^2}{D(1+t)}}+Ce^{-\frac{t}{D}},\ \ l=1,2,3,\\
&\|\partial_x^{\alpha}P_1I_1(t,x)\|,\,\|\partial_x^{\alpha}I_1(t,x)P_1\|\leq C(1+t)^{-\frac{4+|\alpha|}{2}}e^{-\frac{|x|^2}{D(1+t)}}+Ce^{-\frac{t}{D}},\\
&\|\partial_x^{\alpha}P_1I_1(t,x)P_1\|\leq C(1+t)^{-\frac{5+|\alpha|}{2}}e^{-\frac{|x|^2}{D(1+t)}}+Ce^{-\frac{t}{D}}.\label{hpw7}
\eln\right.
\eq

For the term $I_2(t,x)$, noting that $\mathcal{B}_2(\xi)=O(|\xi|)$, we can obtain by the similar arguments as \eqref{hpw1}--\eqref{hpw6} that
\bq
\left\{\bln
&\|\partial_x^{\alpha}P^l_0I_2(t,x)\|\leq C(1+t)^{-\frac{4+|\alpha|}{2}}e^{-\frac{|x|^2}{D(1+t)}}+Ce^{-\frac{t}{D}},\ \ l=1,2,3,\\
&\|\partial_x^{\alpha}P_1I_2(t,x)\|,\,\|\partial_x^{\alpha}I_2(t,x)P_1\|\leq C(1+t)^{-\frac{5+|\alpha|}{2}}e^{-\frac{|x|^2}{D(1+t)}}+Ce^{-\frac{t}{D}},\\
&\|\partial_x^{\alpha}P_1I_2(t,x)P_1\|\leq C(1+t)^{-\frac{6+|\alpha|}{2}}e^{-\frac{|x|^2}{D(1+t)}}+Ce^{-\frac{t}{D}}.\label{hpw8}
\eln\right.
\eq
These together with \eqref{g1l0w1}, \eqref{hpw7}, and Lemma \ref{rbegf4}, lead to \eqref{g1l0}.
By applying the proof of \eqref{hpw2}--\eqref{hpw6} to \eqref{Hgf2-3}, we can obtain the estimates of $G^2_{L,0}(t,x)$ and $G^3_{L,0}(t,x)$ as listed in \eqref{g2-3l0}.

Finally, we estimate $G^{4}_{L,0}(t,x)$. Define
$$ \hat{R}_0(t,\xi)=\(e^{-A_1|\xi|^2t}\cos(\mathbf{c}|\xi|t)-e^{-A_2|\xi|^2t}\)\sum_{i,j=1}^3\frac{\xi_i\xi_j}{|\xi|^2}\mathcal{B}^{ij}_5(\xi) .$$
We decompose $\hat{R}_0$ into
\bma
\hat{R}_0&=\sum_{i,j=1}^3\[e^{-A_1|\xi|^2t}(\cos(\mathbf{c}|\xi|t)-1) + \(e^{-A_1|\xi|^2t}-e^{-A_2|\xi|^2t}\)\]\frac{\xi_i\xi_j}{|\xi|^2}\mathcal{B}^{ij}_5\nnm\\
&=\sum_{i,j=1}^3\intt \xi_i\xi_j\frac{\sin(\mathbf{c}|\xi|s)}{|\xi|}e^{-A_1|\xi|^2t}\mathcal{B}^{ij}_5ds+\sum_{i,j=1}^3\int_{A_1t}^{A_2t}e^{-|\xi|^2s} \xi_i\xi_j\mathcal{B}^{ij}_5 ds\nnm\\
  &=: \hat{R}^0_{0}+\hat{R}^1_{0}. \label{R0a}
\ema
By the similar arguments as \eqref{hpw2}--\eqref{hpw6} and noting that $P_0^l\mathcal{B}^{ij}_5(\xi)=0$ for $l=1,3$, we can obtain
\bma
\|\partial_x^{\alpha}P_0^2R^1_0(t,x)\|&\leq C\int_{A_1t}^{A_2t}(1+s)^{-\frac{5+|\alpha|}{2}}e^{-\frac{|x|^2}{D(1+s)}}+e^{-\frac{s}{D}}ds\nnm\\
&\leq C(1+t)^{-\frac{3+|\alpha|}{2}}e^{-\frac{|x|^2}{D(1+t)}}+Ce^{-\frac{t}{D}}. \label{R0-1}
\ema
Since
$$\hat{R}^0_{0}=\sum_{i,j=1}^3\intt \xi_i\xi_j\frac{\sin(\mathbf{c}|\xi|s)}{|\xi|}e^{-\frac{A_1}{2}|\xi|^2t}e^{-\frac{A_1}{2}|\xi|^2t}\mathcal{B}^{ij}_5(\xi)ds,$$
it follows that
$$
R^0_{0}(t,x)  =\sum_{i,j=1}^3 \intr G_{ij}(t,x-y) F_{ij}(t,y) dy,
$$
where
\bmas
G_{ij}(t,x)&= \frac{\pt^2}{\pt x_i\pt x_j}\intt \frac{s}{4\pi}\iint_{|y|=1} (2\pi A_1t)^{-\frac32}e^{-\frac{|x+\mathbf{c}sy|^2}{2A_1t}}dS_y ds,
\\
F_{ij}(t,x)&=\frac{1}{(2\pi)^{\frac{3}{2}}}\int_{\{|\xi|\le \frac{r_0}{2}\}} e^{\i x\cdot\xi} e^{-\frac{A_1}{2}|\xi|^2t}\mathcal{B}^{ij}_5(\xi)d\xi.
\emas
By Lemma \ref{rbegf4j1}, we have
$$
|G_{ij}(t,x)|
\le Ct^{-\frac32}\bigg((1+t)^{-\frac12}e^{-\frac{(|x|-\mathbf{c}t)^2}{4A_1t}}+\(1+\frac{|x|^2}{t}\)^{-\frac32}1_{\{|x|\le \mathbf{c}t\}}\bigg).
$$
Then, we apply a similar argument as Lemma \ref{rbepw2} to obtain
\bma
\|\dxa P_0^2 R^0_{0}(t,x)\| &\le C\intr  t^{-\frac32}\bigg((1+t)^{-\frac12}e^{-\frac{(|x-y|-\mathbf{c}t)^2}{4A_1t}}+\(1+\frac{|x-y|^2}{t}\)^{-\frac32}1_{\{|x-y|\le \mathbf{c}t\}}
\bigg) \nnm\\
&\qquad \times \((1+t)^{-\frac{3+|\alpha|}2}e^{-\frac{|y|^2}{Dt}}+e^{ -\frac{t}{D} }\) dy \nnm\\
&\le C(1+t)^{-\frac{3+|\alpha|}2}\bigg((1+t)^{-\frac12}e^{-\frac{(|x|-\mathbf{c}t)^2}{2D(1+t)}} +B_{\frac32}(t,|x|)1_{\{|x|\le \mathbf{c}t\}}\bigg) +Ce^{-\frac{t}{2D}}. \label{R0-0}
\ema
Thus, it follows from \eqref{R0a}, \eqref{R0-1} and \eqref{R0-0} that
 \bma
\|\dxa P_0^2 R_{0}(t,x)\|
&\leq C(1+t)^{-\frac{3+|\alpha|}{2}}\((1+t)^{-\frac{1}{2}}e^{-\frac{(|x|-\mathbf{c}t)^2}{D(1+t)}}+e^{-\frac{|x|^2}{D(1+t)}}\)+Ce^{-\frac{t}{D}}
\nnm\\
&\quad+C(1+t)^{-\frac{3+|\alpha|}{2}}B_{\frac{3}{2}}(t,|x|)1_{\{|x|\leq\mathbf{c}t\}}. \label{R0}
\ema
Similarly,
\bq \label{R01}
\left\{\bln
&\|\partial_x^{\alpha}P_1R_0(t,x)\|,~\|\partial_x^{\alpha}R_0(t,x)P_1\|\leq C(1+t)^{-\frac{4+|\alpha|}{2}}\((1+t)^{-\frac{1}{2}}e^{-\frac{(|x|-\mathbf{c}t)^2}{D(1+t)}}+e^{-\frac{|x|^2}{D(1+t)}}\)\\
&~~~~~~~~~~~~~~~~~~~~~~~~~+Ce^{-\frac{t}{D}}+C(1+t)^{-\frac{4+|\alpha|}{2}}B_{\frac{3}{2}}(t,|x|)1_{\{|x|\leq\mathbf{c}t\}},\\
&\|\partial_x^{\alpha}P_1R_0(t,x)P_1\|\leq C(1+t)^{-\frac{5+|\alpha|}{2}}\((1+t)^{-\frac{1}{2}}e^{-\frac{(|x|-\mathbf{c}t)^2}{D(1+t)}}+e^{-\frac{|x|^2}{D(1+t)}}\)+Ce^{-\frac{t}{D}}\\
&~~~~~~~~~~~~~~~~~~~~~~~~~~~+C(1+t)^{-\frac{5+|\alpha|}{2}}B_{\frac{3}{2}}(t,|x|)1_{\{|x|\leq\mathbf{c}t\}}.
\eln\right.
\eq

By \eqref{SA6}, we have
$$A^2_1(|\xi|^2)=A_1|\xi|^2+|\xi|^4\mathcal{A}_1(|\xi|^2),\quad A^2_2(|\xi|^2)=A_2|\xi|^2+|\xi|^4\mathcal{A}_2(|\xi|^2),$$
where $\mathcal{A}_j(|\xi|^2),\ j=1,2$ are analytic in $\xi$ for $|\xi|\leq r_0$. Thus,
\bma
&\quad(\hat{G}^{4}_{L,0}-\hat{R}_0)(t,\xi)\nnm\\
&=\sum_{i,j=1}^3\xi_i\xi_j \cos(\mathbf{c}|\xi|t)e^{-A_1^2(|\xi|^2)t}\[\frac{\cos(|\xi|^3\mathcal{A}(|\xi|^2)t)-1}{|\xi|^2} \mathcal{B}^{ij}_5(\xi)+\frac{\sin(|\xi|^3\mathcal{A}(|\xi|^2)t)}{|\xi|}\mathcal{B}^{ij}_6(\xi)\] \nnm\\
&\quad+\sum_{i,j=1}^3\xi_i\xi_j \frac{\sin(\mathbf{c}|\xi|t)}{|\xi|}e^{-A^2_1(|\xi|^2)t}\[\cos(|\xi|^3\mathcal{A}(|\xi|^2)t)\mathcal{B}^{ij}_6(\xi) -\frac{\sin(|\xi|^3\mathcal{A}(|\xi|^2)t)}{|\xi|}\mathcal{B}^{ij}_5(\xi)\]\nnm\\
&\quad+ \sum_{i,j=1}^3\xi_i\xi_j \cos(\mathbf{c}|\xi|t)e^{-A_1|\xi|^2t}\frac{e^{-|\xi|^4\mathcal{A}_1(|\xi|^2)t}-1}{|\xi|^2}\mathcal{B}^{ij}_5(\xi) \nnm\\
&\quad+ \sum_{i,j=1}^3 \xi_i\xi_j e^{-A_2|\xi|^2t}\frac{e^{-|\xi|^4\mathcal{A}_2(|\xi|^2)t}-1}{|\xi|^2} \mathcal{B}^{ij}_5(\xi),     \label{rbegfgl05}
\ema
where $\frac{e^{-|\xi|^4\mathcal{A}_j(|\xi|^2)t}-1}{|\xi|^2}$, $j=1,2$, and $\frac{\cos(|\xi|^3\mathcal{A}(|\xi|^2)t)-1}{|\xi|^2}$ are analytic functions in $\xi$ and satisfy
$$
\bigg|\frac{e^{-|\xi|^4\mathcal{A}_j(|\xi|^2)t}-1}{|\xi|^2}\bigg| ,\,
\bigg|\frac{\cos(|\xi|^3\mathcal{A}(|\xi|^2)t)-1}{|\xi|^2}\bigg| \le C|\xi|^2 te^{O(|\xi|^4)t}, \quad |\xi|\in \Gamma_n.
$$
Then, it follows from \eqref{rbegfgl05} that
$$
\partial^{\alpha}_x(G^{4}_{L,0}-R_0)(t,x)= w\ast \dxa I_3+w_t\ast \dxa I_4+\dxa I_5 ,
$$
where
\bmas
I_3&=\frac{\mathbf{c}}{(2\pi)^{\frac{3}{2}}} \sum^3_{i,j=1}\int_{\{|\xi|\leq\frac{r_0}{2}\}}
 e^{\i x\cdot\xi}\xi_i\xi_je^{-A^2_1(|\xi|^2)t}\\
&\qquad \times \(\cos(|\xi|^3\mathcal{A}(|\xi|^2)t)\mathcal{B}^{ij}_6(\xi) -\frac{\sin(|\xi|^3\mathcal{A}(|\xi|^2)t)}{|\xi|}\mathcal{B}^{ij}_5(\xi)\)d\xi,
\\
I_4&=\frac{1}{(2\pi)^{\frac{3}{2}}} \sum^3_{i,j=1}\int_{\{|\xi|\leq\frac{r_0}{2}\}}
 e^{\i x\cdot\xi}\xi_i\xi_j\bigg[e^{-A_1|\xi|^2t}\frac{e^{-|\xi|^4\mathcal{A}_1(|\xi|^2)t}-1}{|\xi|^2}\mathcal{B}^{ij}_5(\xi)\\
&\qquad+ e^{-A_1^2(|\xi|^2)t}\bigg(\frac{\cos(|\xi|^3\mathcal{A}(|\xi|^2)t)-1}{|\xi|^2} \mathcal{B}^{ij}_5(\xi)+
\frac{\sin(|\xi|^3\mathcal{A}(|\xi|^2)t)}{|\xi|}\mathcal{B}^{ij}_6(\xi)\bigg)\bigg]d\xi,
\\
I_5&=\frac{1}{(2\pi)^{\frac{3}{2}}} \sum^3_{i,j=1}\int_{\{|\xi|\leq\frac{r_0}{2}\}}
 e^{\i x\cdot\xi}\xi_i\xi_je^{-A_2|\xi|^2t}\frac{e^{-|\xi|^4\mathcal{A}_2(|\xi|^2)t}-1}{|\xi|^2} \mathcal{B}^{ij}_5(\xi)d\xi.
\emas
Repeating the proof of \eqref{hpw2}--\eqref{hpw6}, we obtain
$$
\|\dxa I_j(t,x)\| \le C\((1+t)^{-\frac{5+|\alpha|}2}e^{-\frac{|x|^2}{D(1+t)}}+e^{-\frac{t}{D} }\), \quad j=3,4,5,
$$
and hence
\be
\|\dxa (G^{4}_{L,0}-R_0)(t,x)\| \le C\((1+t)^{-\frac{6+|\alpha|}2}e^{-\frac{(|x|-\mathbf{c}t)^2}{D(1+t)}}+(1+t)^{-\frac{5+|\alpha|}2}e^{-\frac{|x|^2}{D(1+t)}}+e^{-\frac{t}{D} }\). \label{R02}
\ee
By combining \eqref{R0}, \eqref{R01} and \eqref{R02}, we can obtain  \eqref{g5l0}.
\end{proof}

\subsection{Kinetic Part}
In this subsection, we extract the singular kinetic waves from the Green's function $G$ and establish the pointwise estimate of the remainder part. Since $\hat{G}(t,\xi)$ does not belongs to $L^1(\mathbb{R}^3_{\xi})$, $G(t,x)$ can be decompose into the singular part and the remainder
bounded part. Indeed, we construct the approximate sequences of $\hat{G}$ with faster decay rate in frequency space, which is equivalent to the higher regularity in physical space, and estimate the bounded remainder term by the weighted energy method. To begin with, we define the $k$-th degree Mixture operator $\hat{\mathbb{M}}^t_k(\xi)$ by (cf. \cite{LIU-2,LIU-3})
\bq
\hat{\mathbb{M}}^t_k(\xi)=\int^t_0\int^{s_1}_0\cdot\cdot\cdot\int^{s_{k-1}}_0\hat{S}^{t-s_1}K\hat{S}^{s_{1}-s_2}\cdot\cdot\cdot K\hat{S}^{s_{k-1}-s_k}K\hat{S}^{s_k}ds_k\cdot\cdot\cdot ds_1,
\eq
where $\xi\in\mathbb{C}^3$, and $\hat{S}^t$ is a operator on $L^2(\mathbb{R}^3_v)$ defined by
$$
\hat{S}^t=e^{-(\nu(v)+\i\tilde{v}\cdot\xi)t}.
$$


\begin{lem}\label{rbegf7}
(1) There exist three positive constants $C$, $\nu_0$ and $\nu_1$ such that for $j=1,2,3$,
\bq
\nu_0\leq \nu(v)\leq \nu_1,\quad |\partial_{v_j}\nu(v)|\leq\frac{C}{v_0}.\label{nu0}
\eq

(2) There exists a constant $C>0$ such that for $j=1,2,3$,
\bq \label{nu1}
\left\{\bln
&k_1(v,u)\leq Ce^{-\frac{u_0+v_0}{2}},\quad k_2(v,u)\leq C\frac{1}{\max\{u_0,v_0\}|u-v|}e^{-\frac{|u-v|}{4}},\\
&|\partial_{v_j}k_1(v,u)|,~|\partial_{u_j}k_1(v,u)|\leq Ce^{-\frac{u_0+v_0}{3}},\\
&|\partial_{v_j}k_2(v,u)|,~|\partial_{u_j}k_2(v,u)|\leq C\frac{1}{\max\{u_0,v_0\}|u-v|}\(1+\frac{1}{|u-v|}\)e^{-\frac{|u-v|}{8}}.
\eln\right.
\eq
\end{lem}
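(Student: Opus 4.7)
\medskip
\noindent\textbf{Proof sketch.}
The plan is to derive all six inequalities by direct manipulation of the explicit formulas in \eqref{L-K}, using two elementary relativistic kinematic identities as the main workhorses. The first is the bound $s = 2(u_0 v_0 - u \cdot v + 1) \le 4 u_0 v_0$, which follows from $|u \cdot v| \le |u||v| \le u_0 v_0$ and $1 \le u_0 v_0$. The second is obtained by direct expansion of $g^2$:
$$
g^2 = |u-v|^2 - (u_0 - v_0)^2, \qquad \sum_{j=1}^{3}(u_0 v_j - u_j v_0)^2 = u_0 v_0 g^2 - (u_0-v_0)^2.
$$
These two identities yield the universal bounds $g \le |u-v|$, $\sqrt{s}/g \ge 1$ (so that $U_2 \ge |u-v|/2$, and in particular $U_2 \ge 1$), and $g \sqrt{s}/(u_0 v_0) \le s/(u_0 v_0) \le 4$.

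For part (1), the upper bound $\nu(v) \le \nu_1$ is immediate from $g\sqrt{g^2+4}/(u_0 v_0) \le 4$ and $\int M(u) du < \infty$. For the lower bound, rotational invariance lets me assume $v = (|v|,0,0)$; for $|v|$ in a compact set the integrand is continuous and strictly positive for $u \ne v$, and for $|v|$ large one has $g^2 \sim 2|v|(u_0 - \hat v \cdot u)$ on a fixed ball avoiding the $v$-direction, producing a uniform positive lower bound. For $\partial_{v_j}\nu$ I differentiate under the integral: the contribution from $\partial_{v_j}(1/v_0)$ is directly $O(1/v_0)$ using $g\sqrt{s}/u_0 \le Cv_0$, and for $\partial_{v_j}(g\sqrt s) = \sqrt s\, \partial_{v_j} g^2/g$ I combine $\partial_{v_j} g^2 = 2(u_0 v_j - u_j v_0)/v_0$ with the second identity above to get $|\partial_{v_j} g^2| \le 2 g\sqrt{u_0/v_0}$, hence $|\partial_{v_j}(g\sqrt s)|/(u_0 v_0) \le C\sqrt{s}/(\sqrt{u_0}\, v_0^{3/2}) \le C/v_0$ after integrating against $M(u)$.

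For part (2), the bound on $k_1$ is immediate from $g\sqrt s/(u_0 v_0) \le 4$ and the formula \eqref{L-K}. For $k_2$, I expand
$$
k_2 = \frac{2 s^{3/2}}{g\,u_0 v_0}\Bigl(U_2^{-1} + \tfrac{u_0+v_0}{2}U_2^{-2} + \tfrac{u_0+v_0}{2}U_2^{-3}\Bigr)e^{-U_2},
$$
substitute $U_2 = \sqrt{s}|u-v|/(2g)$, and collect the three resulting terms as $\frac{4s}{u_0 v_0 |u-v|}$, $\frac{2g(u_0+v_0)\sqrt s}{u_0 v_0|u-v|^2}$, $\frac{4g^2(u_0+v_0)}{u_0 v_0|u-v|^3}$. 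The derivative estimates for $\partial_{v_j} k_1, \partial_{v_j} k_2$ (and their $\partial_{u_j}$ counterparts) follow by differentiating the explicit formulas factor by factor, with the slightly weaker exponents $e^{-(u_0+v_0)/3}$ and $e^{-|u-v|/8}$ absorbing the additional polynomial factors in $s, g, u_0, v_0, U_2^{-1}$ produced by differentiation.

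The main obstacle is extracting the sharp $1/\max\{u_0,v_0\}$ prefactor in the bound for $k_2$, rather than the cruder $\sqrt{u_0 v_0}$ prefactor that the naive estimate yields. The resolution is to split the exponential as $e^{-U_2} = e^{-U_2/2}\cdot e^{-U_2/2}$ and use the refined lower bound $U_2 = \frac{\sqrt s}{2}\sqrt{1 + (u_0-v_0)^2/g^2}$ coming from $g^2 = |u-v|^2 - (u_0-v_0)^2$: when $|u-v|$ is bounded away from zero one uses $e^{-U_2/2} \le e^{-|u-v|/4}$ to absorb the polynomial factors, while in the near-diagonal regime $|u-v| \ll 1$ the quantity $U_2$ is forced to be of order $\max\{u_0,v_0\}$ (through the ratio $(u_0-v_0)^2/g^2$), so that $e^{-U_2/2}$ contributes the needed decay in $\max\{u_0,v_0\}$. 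This careful case split is the delicate ingredient, but all of the required algebraic inequalities ultimately reduce to the two identities stated above.
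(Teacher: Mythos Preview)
Your overall plan is sound and in places cleaner than the paper's argument (for instance, your treatment of $\partial_{v_j}\nu$ via the identity $\sum_j(u_0v_j-u_jv_0)^2 = u_0v_0g^2-(u_0-v_0)^2$, giving $|\partial_{v_j}g^2|\le 2g\sqrt{u_0/v_0}$, is slicker than the paper's route through the inequality $|v_j/v_0 - u_j/u_0|\le 2|u-v|/v_0$ combined with the lower bound $g\ge |u-v|/\sqrt{u_0v_0}$). The expansion of $k_2$ into three terms is the same as the paper's.

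There is, however, a genuine gap in your extraction of the $1/\max\{u_0,v_0\}$ prefactor for $k_2$. Your claim that ``in the near-diagonal regime $|u-v|\ll 1$ the quantity $U_2$ is forced to be of order $\max\{u_0,v_0\}$'' is false. Take $v=(V,0,0)$ with $V$ large and $u=v+\epsilon e_2$ with $\epsilon$ small. Then $|u-v|=\epsilon$, but $u_0-v_0=O(\epsilon^2/V)$ and $g^2=\epsilon^2-O(\epsilon^4/V^2)\approx \epsilon^2$, so $(u_0-v_0)^2/g^2=O(\epsilon^2/V^2)\to 0$ and $U_2\to 1$, not $V$. Thus $e^{-U_2/2}$ gives no decay in $\max\{u_0,v_0\}$ in this direction, and your case split does not close.

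The fix is much simpler and is already implicit in your own toolkit: from $g^2=|u-v|^2-(u_0-v_0)^2\ge 0$ you get $|u_0-v_0|\le |u-v|$, hence $\max\{u_0,v_0\}\le \min\{u_0,v_0\}+|u-v|$. Since $\min\{u_0,v_0\}\ge 1$, this yields
\[
\frac{u_0+v_0}{u_0v_0}\le \frac{2+|u-v|}{\max\{u_0,v_0\}},\qquad \frac{1}{u_0v_0}\le \frac{1}{\max\{u_0,v_0\}}.
\]
Feeding these into your three terms (together with $s\le |u-v|^2+4$, $g\le|u-v|$, $\sqrt s\le |u-v|+2$) produces the factor $1/(\max\{u_0,v_0\}\,|u-v|)$ times a polynomial in $|u-v|$, which is then absorbed by passing from $e^{-|u-v|/2}$ to $e^{-|u-v|/4}$. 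This is exactly the paper's mechanism (the paper phrases it as $u_0^2\le 2|u-v|^2+2v_0^2$, which is the squared form of $|u_0-v_0|\le|u-v|$). No exponential splitting or case analysis on $|u-v|$ is needed.
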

\begin{proof}
We first prove \eqref{nu0}. Note that (cf. \cite{Glassey-2,Glassey-3})
\bgr
v_M=\frac{g\sqrt{4+g^2}}{u_0v_0}\leq4,\label{num1}\\
\frac{[|u-v|^2+|u\times v|^2]^{\frac{1}{2}}}{(u_0v_0)^{\frac{1}{2}}}\leq g\leq |u-v|.\label{rbegfkpg-3}
\egr
Thus, it follows from \eqref{L-K} and \eqref{num1} that
\bma
\nu(v)&=4\pi\int_{\R^3}\frac{g\sqrt{4+g^2}}{u_0v_0}e^{-\sqrt{1+|u|^2}}du\nnm\\
&\leq 16\pi\int_{\R^3}e^{-\sqrt{1+|u|^2}}du=:\nu_1.\label{nu2}
\ema

Since $g\sqrt{4+g^2}\geq g^2\geq\frac{|u-v|^2}{u_0v_0},$  we obtain
$$
\nu(v)\geq4\pi\int_{\R^3}\frac{|u-v|^2}{(u_0v_0)^2}e^{-\sqrt{1+|u|^2}}du.
$$
Thus, it holds that for $|v|>1$,
\bma
\nu(v)
&\geq4\pi\int_{\R^3}\frac{||v|-|u||^2}{(1+|v|^2)(1+|u|^2)}e^{-\sqrt{1+|u|^2}}du\nnm\\
&\geq\frac{16\pi}{5}\int_{\{|u|<\frac{1}{2}\}}\frac{||v|-\frac{1}{2}|^2}{(1+|v|^2)}e^{-\sqrt{1+|u|^2}}du>0,\label{nu4}
\ema
and for $|v|<1$,
\be
\nu(v)\geq2\pi\int_{\{|u|>2\}}\frac{||u|-1|^2}{(1+|u|^2)}e^{-\sqrt{1+|u|^2}}du>0.\label{nu5}
\ee

We estimate $\partial_{v_j}\nu(v)$, $j=1,2,3$ as follows.
Since
\be
\partial_{v_j}\(\frac{g\sqrt{g^2+4}}{u_0v_0}\)
 =\frac{1}{v_0}\(\frac{\sqrt{g^2+4}}{g}+\frac{g}{\sqrt{g^2+4}}\)\(\frac{v_j}{v_0}-\frac{u_j}{u_0}\)-\frac{1}{v_0}\frac{g\sqrt{g^2+4}}{u_0v_0}\frac{v_j}{v_0},\label{rbegfkppnu1-1}
\ee
it follows that
\bma
|\partial_{v_j}\nu(v)|
&\leq\frac{C}{v_0}\int_{\R^3}\(\(\frac{1}{g}+1\)\frac{|u-v|}{v_0}+1\)e^{-\sqrt{1+|u|^2}}du\nnm\\
&\leq\frac{C}{v_0}\int_{\R^3}\(\frac{(u_0v_0)^{\frac{1}{2}}}{v_0}+\frac{u_0+v_0}{v_0}+1\)e^{-\sqrt{1+|u|^2}}du\nnm\\
&\leq\frac{C}{v_0}\int_{\R^3}\((u_0)^{\frac{1}{2}}+u_0+1\)e^{-\sqrt{1+|u|^2}}du\leq\frac{C}{v_0}, \label{nu6}
\ema
where we had used \eqref{num1}, \eqref{rbegfkpg-3} and the following inequality:
\be
\left|\frac{v_j}{v_0}-\frac{u_j}{u_0}\right|
=\left|\frac{v_j-u_j}{v_0}+\frac{u_j(u_0-v_0)}{v_0u_0}\right|\le  \frac{2|u-v|}{v_0},\quad j=1,2,3. \label{rbegfk2ujvj}
\ee
By combining \eqref{nu6}, \eqref{nu2} and \eqref{nu4}--\eqref{nu5}, we prove \eqref{nu0}.

Next, we prove the estimates of $k_1$ and $\partial_{v_j}k_1$ in \eqref{nu1}. By \eqref{L-K} and \eqref{num1}, we have
$$
k_1(v,u)=\Big|\frac{4\pi g\sqrt{4+g^2}}{v_0u_0}e^{-\frac{u_0+v_0}{2}}\Big|\leq Ce^{-\frac{u_0+v_0}{2}}.
$$
For $\partial_{v_j}k_1$, $j=1,2,3,$ we have
\bmas
&\quad \partial_{v_j}k_1(v,u)\\
&=\frac{4\pi}{v_0}\(\bigg(\frac{\sqrt{g^2+4}}{g}+\frac{g}{\sqrt{g^2+4}}\bigg)\(\frac{v_j}{v_0}-\frac{u_j}{u_0}\)-\frac{g\sqrt{g^2+4}}{u_0v_0}\(\frac{v_j}{2}+\frac{v_j}{v_0}\)\)e^{-\frac{u_0+v_0}{2}},
\emas
which together with \eqref{rbegfkpg-3} and \eqref{rbegfk2ujvj} implies that
\bmas
|\partial_{v_j}k_1(v,u)|&\leq C\(\(\frac{1}{g}+1\)\frac{|u-v|}{v_0}+v_0+1\)e^{-\frac{u_0+v_0}{2}}\nnm\\
&\leq C\(\frac{(u_0v_0)^{\frac{1}{2}}}{v_0}+\frac{u_0+v_0}{v_0}+v_0+1\)e^{-\frac{u_0+v_0}{2}}\nnm\\
&\leq Ce^{-\frac{u_0+v_0}{3}}.
\emas

Finally, we prove  the estimates of $k_2$ and $\partial_{v_j}k_2$ in \eqref{nu1}. By \eqref{L-K} and \eqref{rbegfkpg-3},
we have
\bma
k_2(v,u)&=\frac{4}{u_0v_0}\(\frac{s}{|u-v|}+\frac{g\sqrt{s}(u_0+v_0)}{|u-v|^2}+\frac{2g^2(u_0+v_0)}{|u-v|^3}\)e^{-\frac{\sqrt{s}|u-v|}{2g}}\nnm\\
&\leq C\frac{1}{u_0v_0|u-v|}\(|u-v|^2+|u-v|(u_0+v_0)+u_0+v_0+1\)e^{-\frac{ |u-v|}{2}}.\label{k2-2}
\ema
Noting that
$$
u_0^2\leq 2|u-v|^2+2v_0^2,\quad v_0^2\leq 2|u-v|^2+2u_0^2,
$$
we obtain
\be
k_2(v,u)\leq C\frac{1}{\max\{u_0,v_0\}|u-v|}e^{-\frac{|u-v|}{4}}.\label{k2-3}
\ee

By  \eqref{rbegfk2ujvj}, it holds that
\bma
\partial_{v_j}\(\frac{\sqrt{s}|u-v|}{2g}\)
&=\(\frac{g^2|u-v|-s|u-v|}{2g^3\sqrt{s}}\)u_0\(\frac{v_j}{v_0}-\frac{u_j}{u_0}\)+\frac{\sqrt{s}(u_j-v_j)}{2g|u-v|}\nnm\\
&=\frac{2s^{\frac{3}{2}}|u-v|^3}{g^3}\frac{u_0}{s^2|u-v|^2}\(\frac{u_j}{u_0}-\frac{v_j}{v_0}\)+\frac{\sqrt{s}(u_j-v_j)}{2g|u-v|}\nnm\\
&\leq C\frac{s^{\frac{3}{2}}|u-v|^3}{g^3}\(1+\frac{1}{|u-v|}\)+\frac{\sqrt{s}|u-v|}{2g}\frac{1}{|u-v|},\label{k2-5}
\ema
where we have used
$$
\frac{u_0}{|u-v|v_0}\leq
C\left\{\bal
 \frac{1}{|u-v|}, & u_0\leq v_0,\\
 1+\frac{1}{|u-v|},& u_0\geq v_0.
\ea\right.
$$
By \eqref{k2-3} and \eqref{k2-5}, we can obtain
\bmas
|\partial_{v_j}k_2(v,u)|&\leq C\(\frac{s^{\frac{3}{2}}|u-v|^3}{g^3}\(1+\frac{1}{|u-v|}\)+\frac{\sqrt{s}|u-v|}{2g}\frac{1}{|u-v|}\)k_2(v,u)\nnm\\
&\leq C\(1+\frac{1}{|u-v|}\)\frac{|u-v|^2+|u-v|+1}{\max\{u_0,v_0\}|u-v|}e^{-\frac{\sqrt{s}|u-v|}{4g}}\nnm\\
&\leq C\frac{1}{\max\{u_0,v_0\}|u-v|}\(1+\frac{1}{|u-v|}\)e^{-\frac{|u-v|}{8}}.
\emas
The proof of the lemma is completed.
\end{proof}

By Lemma \ref{rbegf7}, we have the following estimate for $K$. The proof is straightforward, and we omit the detail.
\begin{lem}\label{rbegf8j1}
For any $\gamma\in\R$, it holds for $k=k_1+k_2$ that
\be
\int_{\R^3}|k(v,u)|(1+u_0)^{\gamma}du\leq C(1+v_0)^{\gamma-1}.\label{rbegf8j1-2}
\ee
\end{lem}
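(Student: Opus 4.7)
\textbf{Proof proposal for Lemma \ref{rbegf8j1}.} The plan is to split $k=k_1+k_2$, use the pointwise bounds from Lemma \ref{rbegf7}(2), and treat each piece separately. The $k_1$ contribution is immediate: since $k_1(v,u)\le C e^{-(u_0+v_0)/2}$, we have
$$
\int_{\R^3}k_1(v,u)(1+u_0)^\gamma du\le Ce^{-v_0/2}\int_{\R^3}e^{-u_0/2}(1+u_0)^\gamma du\le Ce^{-v_0/2},
$$
and this exponential decay in $v_0$ dominates $(1+v_0)^{\gamma-1}$ for any $\gamma\in\R$. So the whole task reduces to the $k_2$ piece, which is where the gain of one power of $(1+v_0)$ must actually come from.

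For $k_2$, I first use $\max\{u_0,v_0\}\ge v_0\ge c(1+v_0)/(1+?)$ — more precisely, $\max\{u_0,v_0\}\ge \tfrac12(1+v_0)$ for $v_0\ge 1$, while for $|v|\le 1$ the bound $(1+v_0)^{\gamma-1}$ is just a constant and no gain is needed. So I can pull out one factor of $(1+v_0)^{-1}$ up front, and the remaining task is to show
$$
\int_{\R^3}\frac{1}{|u-v|}\,e^{-|u-v|/4}\,(1+u_0)^\gamma\,du\le C(1+v_0)^\gamma.
$$
Changing variables $w=u-v$, this becomes $\int_{\R^3}|w|^{-1}e^{-|w|/4}(1+\sqrt{1+|v+w|^2})^\gamma dw$.

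I would then split the integration into $|w|\le (1+v_0)/2$ and $|w|\ge (1+v_0)/2$. In the inner region, $|v+w|\ge |v|-|w|\ge |v|/2 - O(1)$, so $(1+u_0)$ is comparable to $(1+v_0)$ and the inner piece is bounded by
$$
C(1+v_0)^\gamma\int_{\R^3}\frac{e^{-|w|/4}}{|w|}\,dw\le C(1+v_0)^\gamma,
$$
using that the singularity $1/|w|$ is integrable in $\R^3$. In the outer region, the exponential factor is at most $e^{-(1+v_0)/16}e^{-|w|/8}$, and since $(1+u_0)^\gamma\le C(1+|w|+(1+v_0))^{|\gamma|}$ is at worst polynomial, the outer piece is bounded by $C e^{-(1+v_0)/16}$ times a constant, which beats $(1+v_0)^\gamma$ for every $\gamma\in\R$.

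The only mildly delicate point — and the main obstacle if one is careless — is handling the weight $(1+u_0)^\gamma$ for negative $\gamma$ (where we cannot crudely drop it), and simultaneously extracting the full $(1+v_0)^{-1}$ gain rather than only some fractional power. The region split $|w|\lessgtr (1+v_0)/2$ resolves both issues: it pins $(1+u_0)\sim (1+v_0)$ in the inner region (good for $\gamma<0$), and it uses the smallness of $e^{-|u-v|/4}$ at long range to absorb any polynomial growth in the outer region. Combining the $k_1$ and $k_2$ estimates yields the claim.
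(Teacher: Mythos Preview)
Your argument is correct; the paper in fact omits the proof entirely, calling it ``straightforward,'' and your splitting into $k_1$ and $k_2$ together with the near/far decomposition $|u-v|\lessgtr(1+v_0)/2$ is exactly the natural way to extract the bounds of Lemma~\ref{rbegf7}(2). One small clean-up: since $v_0=\sqrt{1+|v|^2}\ge 1$ always, your inequality $\max\{u_0,v_0\}\ge v_0\ge\tfrac12(1+v_0)$ holds for all $v$, so the case split ``$|v|\le 1$ vs.\ $|v|>1$'' is unnecessary.
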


Denote
$$
D_{\delta}=\{\xi\in\mathbb{C}^3\,|\,|\mathrm{Im}\xi|\leq\delta\},\quad \delta\ge 0.
$$
By Lemmas \ref{rbegf7} and \ref{rbegf8j1}, we have the Mixture Lemma as follows.
\begin{lem}\label{rbegf9}
For any $k\geq1$ and $\delta\in[0,\nu_0)$, $\hat{\mathbb{M}}^t_k(\xi)$ is analytic for $\xi\in D_{\delta}$ and satisfies
\bma
\|\hat{\mathbb{M}}^t_{2}(\xi)g_0\| &\le C(1+|\xi|)^{-1}(1+t)^2e^{-(\nu_0-\delta)t}(\|v_0g_0\| +\|v_0\Tdv g_0\| ), \label{mix1}
\\
\|\hat{\mathbb{M}}^t_{3k}(\xi)g_0\|&\leq C_k(1+|\xi|)^{-k}(1+t)^{3k}e^{-(\nu_0-\delta)t}\|g_0\|  ,\label{mix2}
\ema
where $v_0g_0\in L^2(\R^3_v)$, $C$ and $C_k$ are positive constants, and $\nu_0$ is defined by \eqref{nu0}.
\end{lem}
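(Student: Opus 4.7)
The plan is to prove analyticity and the two $\xi$-decay bounds by combining the elementary iterated Duhamel representation of $\hat{\mathbb M}^t_k(\xi)$ with a key integration-by-parts identity that converts the oscillatory factor $e^{-\i\tilde v\cdot\xi s}$ into a velocity derivative. First I would note that on $D_\delta$ the streaming operator $\hat S^{s}g(v)=e^{-(\nu(v)+\i\tilde v\cdot\xi)s}g(v)$ is entire in $\xi$ for each $(s,v)$, with
$$
\|\hat S^{s}g\|\le e^{-(\nu_0-\delta)s}\|g\|,
$$
because $|\tilde v|<1$ forces $|e^{-\i\tilde v\cdot\xi s}|\le e^{|\mathrm{Im}\xi|s}\le e^{\delta s}$ and $\nu(v)\ge\nu_0$ by Lemma \ref{rbegf7}. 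Since $K$ is $\xi$-independent and bounded on $L^2$ (Lemma \ref{rbegf8j1}), each factor in the iterated integral defining $\hat{\mathbb M}^t_k(\xi)$ is analytic in $\xi\in D_\delta$, yielding both analyticity and the naive bound $C_k(1+t)^k e^{-(\nu_0-\delta)t}\|g_0\|$. This naive bound already implies \eqref{mix1}--\eqref{mix2} in the low-frequency range $|\xi|\le 1$, so the work is to extract the $(1+|\xi|)^{-k}$ decay at $|\xi|>1$.

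The essential identity is obtained by differentiating $\hat S^s$ in $v$:
$$
\nabla_v\hat S^{s}g=\hat S^{s}\nabla_v g-s\bigl(\nabla_v\nu+\i(\nabla_v\tilde v)\xi\bigr)\hat S^{s}g,
$$
so, using the invertibility of the relativistic Jacobian
$$
\nabla_v\tilde v=\frac{1}{v_0}(I-\tilde v\otimes\tilde v),\qquad (\nabla_v\tilde v)^{-1}=v_0\,I+v_0^{3}\,\tilde v\otimes\tilde v,
$$
I can solve for $\xi$:
$$
\i s\,\xi\,\hat S^{s}g=(\nabla_v\tilde v)^{-1}\bigl[\hat S^{s}\nabla_v g-\nabla_v(\hat S^{s}g)+s(\nabla_v\nu)\hat S^{s}g\bigr].
$$
Substituting this relation into a single middle factor $\hat S^{s_1-s_2}$ in the integrand of $\hat{\mathbb M}^t_k$ and integrating by parts in $v$ (boundary terms vanish; the $\nabla_v$ either lands on the kernel $k(v,u)$ of the adjacent $K$ or is pushed onto the data $g_0$) produces a gain $(1+|\xi|(s_1-s_2))^{-1}$ at the price of a polynomial weight in $v_0$ of degree at most $3$. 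For $k=2$ a single such step gives exactly the factor $(1+|\xi|)^{-1}$ in \eqref{mix1}, the residual $v_0$-weight being deposited on $g_0$ and $\nabla_v g_0$; this is the origin of the weighted right-hand side $\|v_0 g_0\|+\|v_0\nabla_v g_0\|$.

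For the $3k$-fold case I would iterate the above manipulation $k$ times, once per group of three consecutive $K\hat S K\hat S K$ factors. The reason three $K$'s are needed per $\xi$-gain is that the accumulated $v_0$-weights, up to $v_0^{3}$ per integration by parts, must be reabsorbed: by the pointwise kernel estimates in Lemma \ref{rbegf7} and the integrability bound in Lemma \ref{rbegf8j1}, each application of $K$ converts a factor $(1+u_0)^\gamma$ into a factor $(1+v_0)^{\gamma-1}$, so two intervening $K$-applications can swallow all of the $v_0^3$ weight generated in one step. The outcome is a clean $(1+|\xi|)^{-1}$ per triple, and iterating yields $(1+|\xi|)^{-k}$, with the standard time-integration producing the $(1+t)^{3k}$ prefactor and the uniform bound $\|\hat S^s\|\le e^{-(\nu_0-\delta)s}$ giving the exponential factor $e^{-(\nu_0-\delta)t}$ throughout.

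The main obstacle is precisely the degeneracy of the relativistic Jacobian $\nabla_v\tilde v$, whose inverse grows like $v_0^3$ in the direction of $\tilde v$. In the classical (non-relativistic) setting one has $\nabla_v v=I$ and a single integration by parts loses no velocity weight, so the classical Mixture Lemma produces $(1+|\xi|)^{-1}$ per $K$-insertion. Here one must track up to three powers of $v_0$ per $\xi$-gain, and the delicate combinatorial step is verifying that the kernel decay in Lemma \ref{rbegf7} and the weighted boundedness of Lemma \ref{rbegf8j1} are strong enough to absorb these weights within a three-factor cycle; this justifies why the refined estimate \eqref{mix2} is indexed by $3k$ rather than by $k$ as in the classical situation.
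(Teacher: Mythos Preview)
Your approach is essentially the paper's, and your identification of the $v_0^3$ growth of $(\nabla_v\tilde v)^{-1}$ as the key relativistic obstruction is exactly right. One point needs sharpening, though: you cannot simply apply the identity to the middle factor $\hat S^{s_1-s_2}$ and claim a gain $(1+|\xi|(s_1-s_2))^{-1}$, because the resulting $1/(s_1-s_2)$ is singular at the diagonal (or, if you interpolate with the trivial bound, you pick up a factor $\log|\xi|$ after integrating over the simplex). The paper avoids this by working instead with $\mu_1^t(\xi)=K\int_0^t\hat S^{t-s}K\hat S^s\,ds$ and splitting the $s$-integral at $t/2$: for $s\le t/2$ it differentiates in the intermediate variable $u$ (time factor $t-s\ge t/2$), while for $s\ge t/2$ it differentiates in the innermost variable $w$ (time factor $s\ge t/2$; this is precisely where the term $\nabla_v g_0$ enters). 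Either way the relevant time is bounded below by $t/2$, yielding cleanly
\[
|\xi|\,\|\mu_1^t g_0\|\le C(1+t)e^{-(\nu_0-\delta)t}\bigl(\|v_0 g_0\|+\|v_0\nabla_v g_0\|\bigr),
\]
from which \eqref{mix1} follows by one further $\hat S$-integration. Your weight bookkeeping is also slightly off: the two $K$'s in $\mu_1^t$ absorb only two of the three $v_0$ powers (via the smoothing estimate $\|v_0^n K(u_0^k g)\|\le C\|v_0^{n+k-1}g\|$ of \eqref{kkk}), which is why \eqref{mix1} still carries the residual weight $v_0$ on $g_0$; the \emph{third} $K$ in $\hat{\mathbb M}_3^t$ then restores unweighted $L^2$ through $\|v_0 Kh\|+\|v_0\nabla_v Kh\|\le C\|h\|$, giving the base case of \eqref{mix2}. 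The induction then proceeds exactly as you describe.
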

\begin{proof}
To begin with, we prove \eqref{mix1}. Define
$$
\mu_1^t(\xi)=K\int_0^t\hat{S}^{t-s}K\hat{S}^{s}ds,\quad \xi\in\mathbb{C}^3.
$$
We claim that for any $\xi \in D_{\delta}$ and $v_0g_0\in L^2(\R^3_v)$, it holds that
\be |\xi|\|\mu^t_1(\xi)g_0\| \le Ce^{-(\nu_0-\delta)t}(\|v_0g_0\|+\|v_0\Tdv g_0\|). \label{mu1}\ee
Indeed, we split $\mu_1^t(\xi)g_0$ into two parts:
\be
K\(\int_0^{\frac{t}{2}}+\int^t_{\frac{t}{2}}\)\hat{S}^{t-s}K\hat{S}^{s}g_0ds=:I_1+I_2, \label{rbegfmix1-1}
\ee
and estimate $I_1$ and $I_2$ as follows. By writing
$$
\i\xi_1I_1(t,\xi)=\i\xi_1\int_{\R^3}\int^{\frac{t}{2}}_0\int_{\R^3}e^{-[\nu(u)+\i(\tilde{u}\cdot\xi)](t-s)-[\nu(w)+\i(\tilde{w}\cdot\xi)]s} k(v,u)k(u,w)g_0(w)dwduds,
$$
and using
$$
\i\xi_1e^{-i(\tilde{u}\cdot\xi)(t-s)}=-\frac{1}{t-s}\partial_{\tilde{u}_1}e^{-\i(\tilde{u}\cdot\xi)(t-s)}=-\sum^3_{i=1} \frac{1}{t-s}\partial_{u_i}e^{-\i(\tilde{u}\cdot\xi)(t-s)}\frac{\partial u_i}{\partial\tilde{u}_1} ,
$$
we obtain
\bmas
\i\xi_1I_1(t,\xi)&=\int_{\R^3}\int^{\frac{t}{2}}_0\int_{\R^3}e^{-[\nu(u)+\i(\tilde{u}\cdot\xi)](t-s)-[\nu(w)+\i(\tilde{w}\cdot\xi)]s}\sum^3_{i=1}\bigg(\frac{1}{t-s}\frac{\partial}{\partial u_i}\(\frac{\partial u_i}{\partial\tilde{u}_1}\)k(v,u)k(u,w)\nnm\\
&\qquad\qquad\quad -\frac{\partial\nu(u)}{\partial u_i}\frac{\partial u_i}{\partial\tilde{u}_1}k(v,u)k(u,w)+\frac{1}{t-s}\frac{\partial u_i}{\partial\tilde{u}_1}\frac{\partial(k(v,u)k(u,w))}{\partial u_i}\bigg)g_0(w)dwduds.
\emas
It holds that
\bma
\bigg|\frac{\partial u_i}{\partial \tilde{u}_1}\bigg|&=\bigg|\frac{\partial}{\partial\tilde{u}_1}\bigg(\frac{\tilde{u}_i}{\sqrt{1-|\tilde{u}|^2}}\bigg)\bigg|
=\bigg|\frac{\delta_{1i}+u_1u_i}{(1+|u|^2)(1-|\tilde{u}|^2)^{\frac{3}{2}}}\bigg|\nnm\\
&\leq\frac{1}{(1-|\tilde{u}|^2)^{\frac{3}{2}}}=(u_0)^3,\quad i=1,2,3. \label{uv}
\ema
By Lemma \ref{rbegf8j1}, we have
\bma
\|v_0^nK(u_0^kg)\|^2&\leq \int_{\R^3}\(v_0^n \int_{\R^3}  k(v,u)u^k_0 g(u) du \)^2dv \nnm\\
&\leq \int_{\R^3} v_0^{2n} \int_{\R^3}  k(v,u)u^{-2n+1}_0 du \intr  k(v,u)u_0 u^{2n+2k-2}_0 g^2(u) du dv \nnm\\
&\leq C\sup_u\(\intr  k(v,u)u_0 dv\)\int_{\R^3} u^{2n+2k-2}_0 g^2(u) du \nnm\\
& \le C\|v_0^{n+k-1}g\|^2, \quad \forall\, k,n\in \R. \label{kkk}
\ema Note that \eqref{kkk} also holds for $\pt_v k(v,u)$ or $\pt_u k(v,u)$ replacing $k(v,u)$.
Thus, by using \eqref{uv}, \eqref{kkk} and the fact that
\be
\|e^{-[\nu(v)+\i(\tilde{v}\cdot\xi)]t}g_0\| \le e^{-(\nu_0-\delta)t}\|g_0\| ,\quad \xi \in D_{\delta}, \label{st}
\ee
we obtain
\bma
|\xi_1|\|I_1(t,\xi)\|&\le Ce^{-(\nu_0-\delta)t} \|Ku_0\|\|u_0Kg_0\|+te^{-(\nu_0-\delta)t} \|Ku_0\|\|u_0Kg_0\|\nnm\\
&\quad+Ce^{-(\nu_0-\delta)t} (\|\pt_{u_1} Ku_0\|\|u^2_0Kg_0\|+\| Ku_0\|\|u^2_0\pt_{u_1} Kg_0\|)\nnm\\
&\leq C(1+t)e^{-(\nu_0-\delta)t}\|v_0g_0\|. \label{rbegfmix1-2}
\ema

By the virtue of the formula
$$
\i\xi_1I_2(t,\xi)=\i\xi_1\int_{\R^3}\int_{\frac{t}{2}}^t\int_{\R^3}e^{-[\nu(u)+\i(\tilde{u}\cdot\xi)](t-s)-[\nu(w)+\i(\tilde{w}\cdot\xi)]s} k(v,u)k(u,w)g_0(w)dwduds,
$$
and the relation
$$
\i\xi_1e^{-i(\tilde{w}\cdot\xi)s}=-\frac{1}{s}\partial_{\tilde{w}_1}e^{-\i(\tilde{w}\cdot\xi)s}=-\sum^3_{i=1}\frac{1}{s}\partial_{w_i}e^{-\i(\tilde{w}\cdot\xi)s}\frac{\partial w_i}{\partial\tilde{w}_1},
$$
we have
\bmas
\i\xi_1I_2(t,\xi)&=\int_{\R^3}\int_{\frac{t}{2}}^t\int_{\R^3}e^{-[\nu(u)+\i(\tilde{u}\cdot\xi)](t-s)-[\nu(w)+\i(\tilde{w}\cdot\xi)]s}k(v,u)\sum^3_{i=1}\bigg(\frac{1}{s}\frac{\partial}{\partial w_i}\(\frac{\partial w_i}{\partial\tilde{w}_1}\)k(u,w)g_0(w)\nnm\\
&\qquad\qquad\qquad -\frac{\partial w_i}{\partial\tilde{w}_1}\frac{\partial\nu(w)}{\partial w_i}k(u,w)g_0(w)+\frac{1}{s}\frac{\partial w_i}{\partial\tilde{w}_1}\frac{\partial(k(u,w)g_0(w))}{\partial w_i}\bigg)dwduds.
\emas
 Thus,
\bma
|\xi_1|\|I_2(t,\xi)\|&\le Ce^{-(\nu_0-\delta)t} \|Ku_0\|\|u_0^{-1}Kw_0^2g_0\|+te^{-(\nu_0-\delta)t}\|Ku_0\|\|u_0^{-1}Kw_0^2g_0\|\nnm\\
&\quad+Ce^{-(\nu_0-\delta)t} (\|Ku_0\|\|u_0^{-1}\pt_{w_1}Kw_0^3g_0\|+\|Ku_0\|\|u_0^{-1}Kw_0^3\pt_{w_1}g_0\|)\nnm\\
&\leq C(1+t)e^{-(\nu_0-\delta)t}(\|v_0g_0\|+\|v_0\nabla_vg_0\|).\label{rbegfmix1-3}
\ema
Hence, we combine \eqref{rbegfmix1-1}, \eqref{rbegfmix1-2} and \eqref{rbegfmix1-3} to obtain for $i=1$ that
\be
\bigg|\xi_iK\int_0^t\hat{S}^{t-s}K\hat{S}^sg_0ds\bigg|\leq C(1+t)e^{-(\nu_0-\delta)t}(\|v_0g_0\|+\|v_0\nabla_vg_0\|),\quad \xi\in D_{\delta}.\label{rbegfmix1-4}
\ee
Similarly, we can show that \eqref{rbegfmix1-4} is also valid for $i=2,3,$ and \eqref{mu1} is proved.

By \eqref{mu1}, we have
\bma
|\xi|\|\hat{\mathbb{M}}_2^t(\xi)g_0\|&=\bigg\|\int_0^t\hat{S}^{t-s}|\xi|\mu_1^{s}(\xi)g_0ds\bigg\| \nnm\\
&\leq C\int^t_0e^{-(\nu_0-\delta)(t-s)}(1+s)e^{-(\nu_0-\delta)s}ds(\|v_0g_0\|+\|v_0\nabla_vg_0\|)\nnm\\
&\leq C(1+t)^2e^{-(\nu_0-\delta)t}(\|v_0g_0\|+\|v_0\nabla_vg_0\|), \label{rbegfmix1-6}
\ema
which proves \eqref{mix1}.

Finally, we prove \eqref{mix2}. By \eqref{rbegfmix1-6},
we have
\bma
|\xi|\|\hat{\mathbb{M}}_3^t(\xi)g_0\|&=\bigg\|\int_0^t  |\xi|\M^{t-s}_2(\xi)K\hat{S}^{s} g_0ds \bigg\| \nnm\\
&\leq C\int_0^t (1+t-s)^2e^{-(\nu_0-\delta)(t-s)} (\|v_0K\hat{S}^{s}g_0\|+\|v_0\nabla_vK\hat{S}^{s}g_0\|)ds \nnm\\
&\leq C(1+t)^3e^{-(\nu_0-\delta)t}\|g_0\|.
\ema
Thus \eqref{mix2} holds for $k=1$.

Suppose that \eqref{mix2} holds for $k=n$. We have
\bmas
 \||\xi_1|^{n+1}\hat{\mathbb{M}}_{3n+3}^t(\xi)g_0\|&=\bigg\|\int_0^t |\xi|\M^{t-s}_2(\xi) K|\xi_1|^{n}\hat{\mathbb{M}}_{3n}^s(\xi)g_0 ds\bigg\|\\
&\leq C\int_0^t (1+t-s)^2e^{-(\nu_0-\delta)(t-s)}|\xi_1|^{n} \|\hat{\mathbb{M}}_{3n}^sg_0\|ds ,
\emas
which, with an induction hypothesis, will imply that \eqref{mix2} holds for $k=n+1$, so that it holds for any $k\geq1$. The proof of the lemma is completed.
\end{proof}

We define the approximate sequence $\hat{J}_{k}$ for the Green's function $\hat{G}$ as follows:
\bq\label{rbegh2}
\left\{\bln
&\partial_{t}\hat{J}_{0}+\i\tilde{v}\cdot\xi \hat{J}_{0}+\nu(v)\hat{J}_{0}=0,\\
&\hat{J}_{0}(0,\xi)=I_{v},
\eln\right.
\eq
and
\bq\label{rbegh3}
\left\{\bln
&\partial_{t}\hat{J}_{k}+\i\tilde{v}\cdot\xi \hat{J}_{k}+\nu(v)\hat{J}_{k}=K\hat{J}_{k-1},\\
&\hat{J}_{k}(0,\xi)=0, \quad k\ge 1.
\eln\right.
\eq

With the help of Lemma \ref{rbegf9}, we can show that the pointwise estimates of the approximate solution $J_{k}(t,x)$ as follows.
\begin{lem}\label{rbegf10}
For each $k\geq0$, $\hat{J}_{k}(t,\xi)$ is analytic for $\xi\in D_{\frac{\nu_0}{2}}$ and satisfies
\bq
\|\hat{J}_{3k}(t,\xi)\|\leq C_k(1+|\xi|)^{-k}e^{-\frac{\nu_{0}t}{4}},\label{gfhjk1}
\eq
where $C_k>0$ is a constant dependent of $k$. In particular, it holds that for $k\geq4$,
\bq
\|J_{3k}(t,x)\|\leq C_ke^{-\frac{\nu_{0}(|x|+t)}{4}}.\label{gfhjk2}
\eq
\end{lem}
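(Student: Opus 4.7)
The plan is to identify $\hat J_k(t,\xi)$ with the Mixture operator $\hat{\mathbb{M}}_k^t(\xi)$, invoke Lemma \ref{rbegf9} for the frequency-side bound, and then extract the factor $e^{-\nu_0|x|/4}$ from the inverse Fourier transform by shifting the integration contour into the strip $D_{\nu_0/2}$ of analyticity.

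First, solving the linear ODEs \eqref{rbegh2}--\eqref{rbegh3} by Duhamel's formula yields $\hat J_0(t,\xi)=\hat S^t$ and $\hat J_k(t,\xi)=\int_0^t \hat S^{t-s}K\hat J_{k-1}(s,\xi)ds$ for $k\ge 1$, and a straightforward induction identifies $\hat J_k(t,\xi)=\hat{\mathbb{M}}_k^t(\xi)$. Since $\hat S^t=e^{-(\nu(v)+\i\tilde v\cdot\xi)t}$ is entire in $\xi$ with $\|\hat S^t g\|\le e^{-(\nu_0-\delta)t}\|g\|$ on $D_\delta$ (using $|\tilde v|<1$), the iterated integral representation shows that each $\hat J_k(t,\cdot)$ is analytic on $D_{\nu_0/2}$. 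Applying Lemma \ref{rbegf9} with $\delta=\nu_0/2$ then gives
\[
\|\hat J_{3k}(t,\xi)\|\le C_k(1+|\xi|)^{-k}(1+t)^{3k}e^{-\nu_0 t/2},\qquad \xi\in D_{\nu_0/2},
\]
and the polynomial factor is absorbed into a fraction of the exponential, since $(1+t)^{3k}e^{-\nu_0 t/4}$ is bounded by a constant depending on $k$. This yields \eqref{gfhjk1}.

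For \eqref{gfhjk2}, I exploit the analyticity of $\hat J_{3k}(t,\cdot)$ on $D_{\nu_0/2}$ via a contour translation. For $x\neq 0$ choose $\eta=\frac{\nu_0}{4}\frac{x}{|x|}$, so that $|\eta|=\nu_0/4<\nu_0/2$. The uniform decay in $|\mathrm{Re}\,\xi|$ provided by \eqref{gfhjk1} legitimizes Cauchy's theorem for the shift $\R^3\mapsto\R^3+\i\eta$, giving
\[
J_{3k}(t,x)=\frac{1}{(2\pi)^{3/2}}\int_{\R^3}e^{\i x\cdot(\xi+\i\eta)}\hat J_{3k}(t,\xi+\i\eta)d\xi=\frac{e^{-\nu_0|x|/4}}{(2\pi)^{3/2}}\int_{\R^3}e^{\i x\cdot\xi}\hat J_{3k}(t,\xi+\i\eta)d\xi.
\]
Using $|\xi+\i\eta|^2=|\xi|^2+|\eta|^2\ge|\xi|^2$ to dominate $(1+|\xi+\i\eta|)^{-k}\le(1+|\xi|)^{-k}$ and applying \eqref{gfhjk1} at the shifted contour, we obtain $\|J_{3k}(t,x)\|\le C_ke^{-\nu_0|x|/4}e^{-\nu_0 t/4}\int_{\R^3}(1+|\xi|)^{-k}d\xi$. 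The last integral is finite precisely when $k>3$, i.e., $k\ge 4$, so \eqref{gfhjk2} follows; the case $x=0$ is handled by omitting the contour shift and estimating the inverse transform directly.

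The main obstacle is the justification of the contour translation together with the choice $k\ge 4$: one must verify enough decay of $\hat J_{3k}(t,\cdot)$ along horizontal slabs to apply Cauchy's theorem, and, simultaneously, enough decay so that the Mixture Lemma's $(1+|\xi|)^{-k}$ factor is integrable over $\R^3_\xi$. Both requirements are met by Lemma \ref{rbegf9} as soon as $k\ge 4$, which is the sharp threshold reflected in the statement.
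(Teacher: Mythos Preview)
Your proof is correct and follows essentially the same approach as the paper: identify $\hat J_k=\hat{\mathbb M}^t_k$, apply Lemma~\ref{rbegf9} with $\delta=\nu_0/2$ and absorb the polynomial $(1+t)^{3k}$ into the exponential to obtain \eqref{gfhjk1}, then shift the Fourier contour by $\i\eta$ with $\eta=\tfrac{\nu_0}{4}\tfrac{x}{|x|}$ and use integrability of $(1+|\xi|)^{-k}$ for $k\ge 4$ to obtain \eqref{gfhjk2}. Your exposition is in fact slightly more explicit than the paper's (you spell out the absorption of $(1+t)^{3k}$ and the computation $|\xi+\i\eta|^2=|\xi|^2+|\eta|^2$), but the argument is the same.
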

\begin{proof}Since
$$
\hat{J}_{k}(t,\xi)=\mathbb{\hat{M}}^{t}_{k}(\xi)
$$
with $\mathbb{\hat{M}}^{t}_{0}=\hat{S}^{t}$,  it follows from Lemma \ref{rbegf9}  that $\hat{J}_{k}(t,\xi)$ is analytic for $\xi\in D_{\frac{\nu_0}{2}}$ and satisfies
 \eqref{gfhjk1}.

 Since $\hat{J}_{k}(t,\xi)$ is analytic for $\xi\in D_{\frac{\nu_0}{2}}$, we can obtain by Cauchy Theorem that  for $0\le 2|b|\le \nu_0$ and $k\geq4$,
\bmas
\|J_{3k}(t,x)\|&=C\bigg\|\int_{\R^3}e^{\i x\cdot\xi}\hat{J}_{3k}(t,\xi)d\xi\bigg\|=Ce^{-x\cdot b}\bigg\|\int_{\R^3}e^{\i x\cdot u}\hat{J}_{3k}(t,u+\i b)du\bigg\|\nnm\\
&\leq C_ke^{-x\cdot b}\int_{\R^3}\(1+|u+\i b|\)^{-k}e^{-\frac{\nu_{0}t}{4}}du\leq C_ke^{-\frac{\nu_{0}(|x|+t)}{4}}.
\emas
 The proof of the lemma is completed.
\end{proof}

We define
\be
W_{k}(t,x)=\sum^{3k}_{i=0}J_{i}(t,x), \quad R_{k}(t,x)=(G-W_{k})(t,x),\label{gfhjk7}
\ee
where $J_{k}(t,x)$ is given by \eqref{rbegh2} and \eqref{rbegh3}.

Thus, it follows from \eqref{rbegh2}--\eqref{rbegh3} and \eqref{gfhjk7} that $W_{k}(t,x)$ and $R_{k}(t,x)$ satisfies
\bq\label{gfhjk8}
\left\{\bln
&\partial_{t}W_{k}+\tilde{v}\cdot\nabla_x W_{k}-LW_{k}=-KJ_{3k},\\
&W_{k}(0,x)=\delta(x)I_{v},
\eln\right.
\eq
and
\bq\label{gfhjk9}
\left\{\bln
&\partial_{t}R_{k}+\tilde{v}\cdot\nabla_x R_{k}-LR_{k}=KJ_{3k},\\
&R_{k}(0,x)=0.
\eln\right.
\eq

With the help of Lemma \ref{rbegf1} and Lemma \ref{rbegf10}, we can show the pointwise estimate of the term $G_H(t,x)-W_{k}(t,x)$ as follows.
\begin{lem}\label{rbegf11}
For any given $k\geq4$, there exists a constant $C>0$ such that
\bq
\|G_H(t,x)-W_{k}(t,x)\|\leq Ce^{-\frac{\nu_2t}{6}},
\eq
where $\nu_2=\min\{\kappa_0,\frac{\nu_0}{4}\}$, and $W_k(t,x)$ is the singular kinetic wave defined by \eqref{gfhjk7}.
\end{lem}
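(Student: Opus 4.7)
The approach is to work in Fourier space. Because $\hat W_k(t,\xi) = \sum_{i=0}^{3k}\hat J_i(t,\xi)$ lives on all of $\R^3_{\xi}$ while $\hat G_H(t,\xi) = \hat G(t,\xi)1_{\{|\xi|>r_0/2\}}$ is frequency-truncated, one cannot simply identify $G_H-W_k$ with $R_k=G-W_k$. Instead, I write
$$
G_H(t,x)-W_k(t,x) = \frac{1}{(2\pi)^{3/2}}\int_{|\xi|>r_0/2}e^{\i x\cdot\xi}\hat R_k(t,\xi)\,d\xi - \frac{1}{(2\pi)^{3/2}}\int_{|\xi|\le r_0/2}e^{\i x\cdot\xi}\hat W_k(t,\xi)\,d\xi,
$$
bring the operator norm inside, and apply the triangle inequality. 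This reduces the lemma to controlling two $L^1_{\xi}$ integrals uniformly in $x$, one on the high-frequency region where $R_k$ has a useful Duhamel representation and one on the low-frequency region where $\hat W_k$ must be estimated directly.

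For the high-frequency piece, Duhamel's principle applied to \eqref{gfhjk9} gives $\hat R_k(t,\xi) = \int_0^t e^{(t-s)B(\xi)}K\hat J_{3k}(s,\xi)\,ds$. On $\{|\xi|>r_0/2\}$ Lemma \ref{rbegf1} supplies the uniform decay $\|e^{(t-s)B(\xi)}\|\le Ce^{-\kappa_0(t-s)}$, while Lemma \ref{rbegf10} gives $\|\hat J_{3k}(s,\xi)\|\le C_k(1+|\xi|)^{-k}e^{-\nu_0 s/4}$. Combining these with the boundedness of $K$ on $L^2(\R^3_v)$ yields $\|\hat R_k(t,\xi)\|\le C_k(1+|\xi|)^{-k}(1+t)e^{-\nu_2 t}$ with $\nu_2=\min\{\kappa_0,\nu_0/4\}$. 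For $k\ge 4$, the weight $(1+|\xi|)^{-k}$ is integrable on $\R^3$, so the full $\xi$-integral is bounded by $C_k(1+t)e^{-\nu_2 t}$.

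For the low-frequency piece, the domain $\{|\xi|\le r_0/2\}$ has bounded measure, so it suffices to bound $\|\hat W_k(t,\xi)\|$ pointwise in $\xi$. Iterating the defining relation $\hat J_i(t,\xi)=\int_0^t\hat S^{t-s}K\hat J_{i-1}(s,\xi)\,ds$ against $\|\hat S^t\|\le e^{-\nu_0 t}$ (a consequence of \eqref{nu0}) produces by induction $\|\hat J_i(t,\xi)\|\le(Ct)^ie^{-\nu_0 t}/i!$, and summing over $0\le i\le 3k$ gives $\|\hat W_k(t,\xi)\|\le C_k(1+t)^{3k}e^{-\nu_0 t}$. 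Putting the high- and low-frequency estimates together yields $\|G_H(t,x)-W_k(t,x)\|\le C_k(1+t)^{3k}e^{-\nu_2 t}$ uniformly in $x$, and absorbing the polynomial prefactor into the exponential at the cost of a worse rate delivers the claimed $Ce^{-\nu_2 t/6}$. The only conceptual obstacle is the frequency mismatch between $\hat G_H$ and $\hat W_k$; once resolved by the explicit splitting above, the remaining work is routine Duhamel iteration combined with Lemmas \ref{rbegf1} and \ref{rbegf10}.
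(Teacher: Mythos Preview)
Your proof is correct, but the route differs from the paper's. You split $G_H-W_k$ by frequency: on $|\xi|>r_0/2$ you bound $\hat R_k$ via Duhamel with the exponentially decaying high-frequency semigroup, obtaining both $t$-decay and $\xi$-decay simultaneously; on $|\xi|\le r_0/2$ you bound $\hat W_k$ directly by iterating $\|\hat S^t\|\le e^{-\nu_0 t}$, using compactness of the region to dispense with $\xi$-decay. Both pieces carry polynomial prefactors in $t$ which you absorb into the exponential at the end.

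The paper instead works globally in $\xi$ without the split. It first observes the identity $\hat W_k-\hat G_H=\hat G_L-\hat R_k$, which yields the \emph{time-independent} bound $\|\hat W_k-\hat G_H\|\le C(1+|\xi|)^{-k}$ from $\|\hat G_L\|\le C$ and $\|\hat R_k\|\le C(1+|\xi|)^{-k}$ (the latter via Duhamel with the merely bounded full semigroup $\hat G$). Separately, $\|\hat W_k\|+\|\hat G_H\|\le Ce^{-\nu_2 t}$. The paper then interpolates: since $X\le\min(A,B)\le A^{5/6}B^{1/6}$, one gets $\|\hat W_k-\hat G_H\|\le Ce^{-\nu_2 t/6}(1+|\xi|)^{-5k/6}$, which is directly $\xi$-integrable for $k\ge 4$ with no polynomial prefactor. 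The interpolation trick is the paper's main device and explains why the statement has the specific exponent $\nu_2/6$; your argument recovers that exponent for a different reason (polynomial absorption) and would in fact give any rate strictly below $\nu_2$. Both arguments rest on the same ingredients (Lemmas~\ref{rbegf1}, \ref{rbegf10}, boundedness of $K$), so the difference is purely organizational.
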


\begin{proof}
From Lemma \ref{rbegf1},  \eqref{gfhjk1} and \eqref{gfhjk9}, it holds that
\bma
\|\hat{R}_{k}(t,\xi)\|&=\bigg\|\int^t_0\hat{G}(t-s)K\hat{J}_{3k}(s)ds\bigg\|\leq C\int^t_0  \|K\hat{J}_{3k}(s)\| ds\nnm\\
& \leq C(1+|\xi|)^{-k}.\label{gfhjk10}
\ema
By \eqref{GL-H} and \eqref{gfhjk7}, we have
$$
\hat{G}_L(t,\xi)-\hat{R}_k(t,\xi)=\hat{W}_k(t,\xi)-\hat{G}_H(t,\xi).
$$
This together with Lemma \ref{rbegf1} and \eqref{gfhjk10} implies that
\be\label{gfhjk11}
\|\hat{W}_k(t,\xi)-\hat{G}_H(t,\xi)\|\leq \|\hat{G}_L(t,\xi)\|+\|\hat{R}_k(t,\xi)\|\leq  C(1+|\xi|)^{-k}.
\ee

By Lemma \ref{rbegf1} and Lemma \ref{rbegf10}, we have
\be\label{gfhjk12}
\|\hat{W}_k(t,\xi)\|+\|\hat{G}_H(t,\xi)\|\leq Ce^{-\nu_2t},
\ee
where $\nu_2=\min\{\kappa_0,\frac{\nu_0}{4}\}$.

Thus, it follows from \eqref{gfhjk11} and \eqref{gfhjk12} that
$$
\|\hat{G}_H(t,\xi)-\hat{W}_k(t,\xi)\|\leq Ce^{-\frac{\nu_2t}{6}} (1+|\xi|)^{-\frac{5k}{6}} ,
$$
which gives
\bmas
\|G_H(t,x)-W_k(t,x)\|&=C\bigg\|\int_{\R^3}e^{\i x\cdot\xi}[\hat{G}_H(t,\xi)-\hat{W}_k(t,\xi)]d\xi\bigg\|\nnm\\
&\leq C\int_{\R^3}e^{-\frac{\nu_2t}{6}}(1+|\xi|)^{-\frac{5k}{6}}d\xi\leq Ce^{-\frac{\nu_2t}{6}}
\emas
for each $k\geq4$. The proof is completed.
\end{proof}

Next, we show the pointwise space-time estimates of the remainder term $R_k(t,x)$ outside  Mach number region. First, we give a prepared lemma.
\begin{lem}\label{rbegf8}
Assume that $\hat{V}(\xi)$ is analytic for $\xi\in D_{\delta}$ with $\delta>0$ and $|\xi|^{\alpha}|\hat{V}(\xi)|\rightarrow0$ when $|\xi|\rightarrow\infty$.
For any $ \Omega\in \S^2$ and $0\le b\le \delta$, there exists a constant $C>0$ such that
\be
\int_{\R^3}e^{2b\Omega\cdot x}|\partial_x^{\alpha}V(x)|^2dx\leq C\int_{\R^3}|u+\i b\Omega|^{2\alpha}|\hat{V}(u+\i b\Omega)|^2du. \label{vvv}
\ee
\end{lem}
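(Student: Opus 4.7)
The plan is to combine a contour shift in the Fourier inversion formula with Plancherel's theorem. Writing the Fourier inversion formula for $V$,
$$
V(x)=\frac{1}{(2\pi)^{3/2}}\int_{\R^3}e^{\i x\cdot\xi}\hat{V}(\xi)d\xi,
$$
and using that $\hat{V}(\xi)$ is holomorphic on $D_\delta$ with $|\xi|^\alpha|\hat{V}(\xi)|\to 0$ as $|\xi|\to\infty$ uniformly in the strip, I would apply Cauchy's theorem to shift the contour from $\R^3$ to $\R^3+\i b\Omega$ for any fixed $0\le b\le \delta$. The result is
$$
V(x)=\frac{e^{-b\Omega\cdot x}}{(2\pi)^{3/2}}\int_{\R^3}e^{\i x\cdot u}\hat{V}(u+\i b\Omega)du,
$$
so, differentiating under the integral,
$$
e^{b\Omega\cdot x}\partial_x^\alpha V(x)=\frac{1}{(2\pi)^{3/2}}\int_{\R^3}e^{\i x\cdot u}\bigl(\i(u+\i b\Omega)\bigr)^\alpha\hat{V}(u+\i b\Omega)du,
$$
i.e.\ $e^{b\Omega\cdot x}\partial_x^\alpha V(x)$ is the inverse Fourier transform in $u$ of $(\i(u+\i b\Omega))^\alpha\hat{V}(u+\i b\Omega)$.

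Applying Plancherel's theorem in $u$ then gives
$$
\int_{\R^3}e^{2b\Omega\cdot x}|\partial_x^\alpha V(x)|^2 dx=\int_{\R^3}|(u+\i b\Omega)^\alpha|^2|\hat{V}(u+\i b\Omega)|^2 du,
$$
and the crude bound $|(u+\i b\Omega)^\alpha|\le |u+\i b\Omega|^{|\alpha|}$ yields the desired estimate \eqref{vvv}.

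The only delicate point is the rigorous justification of the three-dimensional contour shift. I would do it coordinate by coordinate: for each fixed large $R>0$, restrict the integration to a box $[-R,R]^3$ and shift each real axis $u_j$ to the line $\{u_j+\i b\Omega_j\}$ through a rectangular contour; the closed rectangle integrals vanish by analyticity, while the vertical (lateral) faces decay to zero as $R\to\infty$ thanks to the hypothesis $|\xi|^\alpha|\hat{V}(\xi)|\to 0$ uniformly on $D_\delta$. The passage to the limit $R\to\infty$ for the shifted horizontal face is handled by dominated convergence, using the same decay bound as an integrable majorant (together with the $(i\xi)^\alpha$ factor coming from the derivative). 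This is the main technical step, but it is a standard Paley–Wiener-type argument and poses no conceptual obstruction once the decay hypothesis is in place.
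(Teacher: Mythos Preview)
Your proof is correct and follows essentially the same approach as the paper: shift the Fourier inversion contour from $\R^3$ to $\R^3+\i b\Omega$ via Cauchy's theorem, then apply Parseval/Plancherel to the resulting identity for $e^{b\Omega\cdot x}\partial_x^\alpha V(x)$. Your write-up is in fact more detailed than the paper's, which simply invokes Cauchy's theorem and Parseval without spelling out the coordinate-by-coordinate contour shift or the decay of the lateral faces.
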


\begin{proof}
Since $\hat{V}(\xi)$ is analytic for $\xi\in D_{\delta}$ with $\delta>0$ and $|\xi|^{\alpha}|\hat{V}(\xi)|\rightarrow0$ when $|\xi|\rightarrow\infty$, it follows from Cauchy Theorem that for  $0\le b\le \delta$,
\bmas
e^{b\Omega\cdot x}\partial_x^{\alpha}V(x)&=C\int_{\R^3}e^{\i x\cdot\xi+b\Omega\cdot x}(\xi)^{\alpha}\hat{V}(\xi)d\xi\\
&=C\int_{\R^3}e^{\i x\cdot u}(u+\i b\Omega)^{\alpha}\hat{V}(u+\i b\Omega)du.
\emas
By applying Parseval's equality to above, we obtain \eqref{vvv}.
\end{proof}


\begin{lem}\label{rbegf12}
Given any integer $k\geq4$, there exist constants $C,D>0$ such that for $|x|>2\mathbf{c}t$,
\bq
\|R_{k}(t,x)\|\leq Ce^{-\frac{|x|+t}{D}},\label{gfout1}
\eq
where $R_k(t,x)$ is the remainder part defined by \eqref{gfhjk7}.
\end{lem}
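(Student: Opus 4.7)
The plan is to apply a weighted $L^2$ energy estimate to the equation \eqref{gfhjk9} satisfied by the remainder $R_k=G-W_k$. By Lemmas \ref{rbegf10} and \ref{rbegf8j1}, the source obeys $\|KJ_{3k}(t,x)\|\le Ce^{-\nu_0(|x|+t)/4}$ for $k\ge 4$, and together with the boundedness of $G$ afforded by Lemmas \ref{rbegf6} and \ref{rbegf11} this already implies that $R_k(t,x)$ is uniformly bounded on $\R_+\times\R^3$. I would choose the weight $w(t,x)=e^{\eps(|x|-Yt)}$ with small $\eps\in(0,\nu_0/4)$ and $Y>1$ slightly larger than $\sup_v|\tilde v|=1$; since the relativistic sound speed satisfies $\mathbf c>\tfrac12$, one may further arrange $Y\in(1,2\mathbf c)$, so that on the region $|x|\ge 2\mathbf c t$
\[
|x|-Yt\;\ge\;\tfrac12\Bigl(1-\tfrac{Y}{2\mathbf c}\Bigr)|x|+\tfrac12(2\mathbf c-Y)t,
\]
i.e.\ $|x|-Yt$ is comparable to $|x|+t$ on this region.

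For any $f_0\in L^2(\R^3_v)$ with $\|f_0\|=1$, set $r=R_k f_0$ and differentiate $\|wr\|^2_{L^2_{xv}}$ in time. Integration by parts in $x$ combines the transport and $\partial_t$ contributions into the dissipative term
\[
-2\iint w(\partial_t w+\tilde v\cdot\nabla_x w)\,r^2\,dvdx\;=\;2\eps\iint\bigl(Y-\tilde v\cdot\tfrac{x}{|x|}\bigr)w^2 r^2\,dvdx\;\ge\;2\eps(Y-1)\|wr\|^2_{L^2_{xv}},
\]
using the strict bound $|\tilde v|<1$. The coercivity \eqref{LF-F} contributes an extra $-2\mu\|wP_1 r\|^2$, while Cauchy--Schwarz together with $\eps<\nu_0/4$ controls the forcing by
\[
\|w\,KJ_{3k}f_0\|^2_{L^2_{xv}}\;\le\;C\int_{\R^3}e^{2\eps(|x|-Yt)-\nu_0(|x|+t)/2}\,dx\;\le\;Ce^{-(2\eps Y+\nu_0/2)t}.
\]
Young's inequality and Gronwall's lemma (using $r(0)=0$) then deliver $\|w(t)r(t)\|^2_{L^2_{xv}}\le Ce^{-\alpha t}$ for some $\alpha>0$ that is uniform in $f_0$.

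To upgrade this weighted $L^2$ bound to the pointwise operator-norm estimate \eqref{gfout1}, I would run the same weighted energy argument for the spatial derivatives $\partial_x^\gamma R_k$ with $|\gamma|\le 2$; their sources $\partial_x^\gamma(KJ_{3k})$ still enjoy exponential $(t,x)$-decay by shifting the Fourier contour as in the proof of \eqref{gfhjk2}, provided the Mixture Lemma \ref{rbegf9} is invoked with a sufficiently high auxiliary iteration index so that $(1+|\xi|)^{|\gamma|-k_0}$ remains absolutely integrable on $\R^3$. Sobolev embedding $H^2(\R^3_x)\hookrightarrow L^\infty(\R^3_x)$ then produces the pointwise bound $\|R_k(t,x)\|\le Ce^{-\eps'(|x|-Yt)}$ for some $\eps'>0$, and on $|x|\ge 2\mathbf c t$ the geometric inequality above converts this into \eqref{gfout1} with an appropriate $D>0$. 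The principal technical obstacle is precisely this final $L^2$-to-pointwise upgrade, which is what forces the auxiliary high-iteration use of the Mixture Lemma; the weighted-energy core of the argument, by contrast, follows essentially automatically from the coercivity of $L$, the strict bound $|\tilde v|<1$, and the exponential decay of $J_{3k}$.
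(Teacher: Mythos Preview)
Your overall strategy---weighted energy estimate with weight $e^{\eps(|x|-Yt)}$, then the same estimate for $\partial_x^\gamma R_k$ with $|\gamma|\le 2$, then Sobolev embedding---matches the paper's. The gap is in the choice of $Y$. You take $Y>1$ and rely on the crude bound $|\tilde v\cdot\tfrac{x}{|x|}|\le|\tilde v|<1$ to get the coercive term $2\eps(Y-1)\|wr\|^2$. But the closing geometric inequality on $\{|x|\ge 2\mathbf c t\}$ then forces $Y<2\mathbf c$, and you invoke ``$\mathbf c>\tfrac12$'' to reconcile these. That inequality is nowhere established in the paper, it is not obvious from the definition $\mathbf c=\sqrt{a^2+b^2}$ for the specific Maxwellian $M=e^{-v_0}$, and in fact the relativistic sound speed can be arbitrarily small at low temperature, so the claim needs a genuine computation you have not supplied.

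The paper sidesteps this by \emph{not} using the uniform bound $|\tilde v|<1$ on the whole of $L^2_v$. Instead it macro--micro decomposes $R_k=P_0R_k+P_1R_k$ and uses that the $5\times5$ matrix $P_0(\tilde v\cdot\omega)P_0$ has spectral radius exactly $\mathbf c$ (its eigenvalues are $\pm\mathbf c,0,0,0$, cf.\ \eqref{EUJ}), giving
\[
\bigl|(\tfrac{x}{|x|}\cdot\tilde v\,R_k,R_k)\bigr|\le \tfrac54\mathbf c\,\|P_0R_k\|^2+C\|P_1R_k\|^2.
\]
With $Y=\tfrac32\mathbf c$ this produces positive damping $\tfrac{\eps\mathbf c}{8}\|P_0R_k\|^2$ on the macroscopic part, while the $C\eps\|P_1R_k\|^2$ term is absorbed by the coercivity $-(LR_k,R_k)\ge\mu\|P_1R_k\|^2$ for $\eps$ small. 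This works for \emph{any} $\mathbf c>0$ and yields $Y=\tfrac32\mathbf c<2\mathbf c$ automatically, after which the geometric step $|x|-\tfrac32\mathbf c t>\tfrac16(|x|+\mathbf c t)$ on $\{|x|>2\mathbf c t\}$ closes the argument. Your $L^2\to L^\infty$ upgrade via $H^2_x$ and the Mixture Lemma is fine and is exactly what the paper does; the real obstruction is the transport speed on the macroscopic subspace, not the Sobolev step.
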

\begin{proof}
We use the weighted energy method. Set
\bq\label{rbeoutm1}
w=e^{\eps (|x|-Yt)},
\eq
where $0<\eps <1$ is a sufficient small constant,  and $Y>1$ is determined later. It holds that
$$
\partial_{t}w=-\eps Yw,\quad \nabla_x w=\eps\frac{x}{|x|} w.
$$

Taking the inner product between \eqref{gfhjk9} and $R_{k}w$, and integrate it over $x$, we have
\bma
&\frac12\Dt\int_{\R^3}\|R_{k}\|^{2}wdx+\frac{\eps }{2}\int_{\R^3}\((Y-\frac{x}{|x|}\cdot \tilde{v})R_k,R_k\)wdx\nnm\\
&-\int_{\R^3}(LR_{k},R_{k})wdx=\int_{\R^3}(KJ_{3k},R_{k})w dx. \label{rbeoutm2}
\ema
By \eqref{EUJ}, we have
$$
|(\tilde{v}_iP_0f,P_0f)|\leq \mathbf{c}(P_0f,P_0f),\quad i=1,2,3,
$$
which leads to
\bma
&\quad\int_{\R^3}|\(\tilde{v}_{i}R_{k},R_{k}\)|wdx \nnm\\
&\le \int_{\R^3}|(\tilde{v}_{i}P_0R_{k},P_0R_{k}) |wdx+2\int_{\R^3}|(\tilde{v}_{i}P_0R_{k},P_1R_{k})| wdx+\int_{\R^3}|(\tilde{v}_{i}P_1R_{k},P_1R_{k})|wdx \nnm\\
&\leq  \frac{5}{4}\mathbf{c}\int_{\R^3}\|P_0R_k\|^2wdx+C\int_{\R^3}(P_1R_k,P_1R_k)wdx,\quad i=1,2,3.\label{rbeoutm3}
\ema

From \eqref{rbeoutm2} and \eqref{rbeoutm3}, it holds that for $0<\epsilon\ll1$ and $Y=\frac{3}{2}\mathbf{c}$,
\be
\Dt\int_{\R^3}\|R_{k}\|^{2}wdx+\eps\int_{\R^3}\|R_k\|^2wdx\leq \frac{C}{\epsilon}\int_{\R^3}\|KJ_{3k}\|^2w dx.\label{rbeoutm4}
\ee
By Lemma \ref{rbegf8} and Lemma \ref{rbegf10}, we have for $k\geq2+|\alpha|$ and $0\leq \epsilon\leq\frac{\nu_0}{2}$ that
\bmas
\int_{\R^3}\|\dxa J_{3k}\|^{2}e^{2\epsilon|x|}dx&\leq C\int_{\R^3}\Big|u+ \i\epsilon\frac{x}{|x|}\Big|^{2|\alpha|} \Big\|\hat{J}_{3k}\Big(t,u+ \i\epsilon\frac{x}{|x|}\Big)\Big\|^{2}du\\
&\leq C\int_{\R^3}e^{-\frac{\nu_0t}{4}}\bigg(1+\Big|u+ \i\epsilon\frac{x}{|x|}\Big|\bigg)^{2|\alpha|-2k}du\nnm\\
&\leq Ce^{-\frac{\nu_0t}{4}}.
\emas
Applying Gronwall's inequality to \eqref{rbeoutm4}, we have
\be
\int_{\R^3}\|R_{k}\|^{2}wdx\le C\intt\int_{\R^3} e^{-  \eps (t-s) }\|J_{3k}\|^{2}w dx ds\le C . \label{rbeoutm5}
\ee
Similarly, it holds for $k\ge 4$ that
\bq\label{rbeoutm6}
\int_{\R^3}\|\nabla^j_x R_{k}\|^{2}wdx\le C\intt\int_{\R^3} e^{-  \eps (t-s) }\|\nabla_x^j J_{3k}\|^{2}w dx ds\leq C,\ j=1,2.
\eq
Thus, by \eqref{rbeoutm5}, \eqref{rbeoutm6} and Sobolev's embedding theorem, we have
\bq\label{rbeoutm7}
e^{\frac{\eps (|x|-Yt)}{2}}\|R_k(t,x)\|\leq C.
\eq
For $|x|>2\mathbf{c}t$,
\be
|x|-\frac{3}{2}\mathbf{c}t=\frac{|x|}{6}+\frac{5|x|}{6}-\frac{9\mathbf{c}t}{6}
>\frac{|x|}{6}+\frac{\mathbf{c}t}{6}. \label{rbeoutm8}
\ee
By \eqref{rbeoutm7} and \eqref{rbeoutm8}, we prove \eqref{gfout1}.
\end{proof}


With the help of Lemma \ref{rbegf1}, Lemma \ref{rbegf6}, Lemma \ref{rbegf11} and Lemma \ref{rbegf12}, we can prove Theorem \ref{rbeth1} as follows.

\begin{proof}[\textbf{Proof of Theorem \ref{rbeth1}}]
By \eqref{GL-H}--\eqref{GL-H2}, we can decompose $G(t,x)$ into
\bma
G(t,x)&=\(G(t,x)-W_4(t,x)\)1_{\{|x|\leq 2\mathbf{c}t\}}+\(G(t,x)-W_4(t,x)\)1_{\{|x|> 2\mathbf{c}t\}}+W_4(t,x)\nnm\\
&=G_{L,0}(t,x)1_{\{|x|\leq 2\mathbf{c}t\}}+\(G_{L,1}(t,x)+G_{H}(t,x)-W_4(t,x)\)1_{\{|x|\leq 2\mathbf{c}t\}}\nnm\\
&\quad+R_4(t,x) 1_{\{|x|> 2\mathbf{c}t\}}+W_4(t,x).\label{gfth1-1}
\ema
Thus,
\bq
G(t,x)=G_1(t,x)+G_2(t,x)+W_4(t,x),\label{gfth1-2}
\eq
where
$$\left\{\bln
G_1(t,x)&=G_{L,0}(t,x)1_{\{|x|\leq 2\mathbf{c}t\}}, \\
G_2(t,x)&=\(G_{L,1}(t,x)+G_{H}(t,x)-W_4(t,x)\)1_{\{|x|\leq 2\mathbf{c}t\}}+R_4(t,x) 1_{\{|x|> 2\mathbf{c}t\}}.
\eln\right.
$$
 \eqref{Thm1-1-1}--\eqref{Thm1-1-4}  directly follows from Lemma \ref{rbegf6}.
By Lemmas \ref{rbegf1} and \ref{rbegf11}, we have for $|x|\leq 2\mathbf{c}t$ that
\bq\label{gfth1-5}
\| G_{L,1}(t,x)\|+\|G_{H}(t,x)-W_{4}(t,x)\|\leq Ce^{-\frac{\nu_2t}{6}}\le Ce^{-\frac{|x|+t}{D}}.
\eq
By combining \eqref{gfout1} and \eqref{gfth1-5}, we can obtain  \eqref{Thm1-2}. This completes the proof.
\end{proof}

\section{The Nonlinear system}\setcounter{equation}{0}
\label{sect4}

In this section, we prove Theorem \ref{rbeth2} on the pointwise behaviors of the global solution to the nonlinear relativistic Boltzmann equation with the help of the estimates of the Green's function given in Section \ref{sect3}.

First, we  give some basic estimates  of convolution of the initial data and different waves in order to analyze the pointwise behaviors of the solution.

\begin{lem}\label{rbepw2}
For any $\alpha\geq0$, $D,D_1>0$ and $\lambda\geq0$, there exists $C,D_2>0$ such that for $D_2\ge 12\max\{D,D_1\}\times\max\{1,\lambda\}$
\bma
&\quad\int_{\R^3}(1+t)^{-\alpha}e^{-\frac{|x-y|^2}{D(1+t)}}e^{-\frac{|y|}{D_1}}dy\leq C(1+t)^{-\alpha}e^{-\frac{3|x|^2}{2D_2(1+t)}}+Ce^{-\frac{3 (|x|+t)}{2D_2}},\label{rbepw201j1}\\
&\quad\int_{\R^3}(1+t)^{-\alpha}e^{-\frac{(|x-y|-\lambda t)^2}{D(1+t)}}e^{-\frac{|y|}{D_1}}dy\leq C(1+t)^{-\alpha}e^{-\frac{3(|x|-\lambda t)^2}{2D_2(1+t)}}+Ce^{-\frac{3(|x|+t)}{2D_2}},\label{rbepw201}\\
&\quad\int_{\{|x-y|\leq\lambda t\}}(1+t)^{-\alpha}B_{\frac{3}{2}}(t,|x-y|)e^{-\frac{|y|}{D_1}}dy\nnm\\
&\leq C(1+t)^{-\alpha-\frac{3}{2}}e^{-\frac{3(|x|-\lambda t)^2}{2D_2(1+t)}}+Ce^{-\frac{3(|x|+t)}{2D_2}}+C(1+t)^{-\alpha}B_{\frac{3}{2}}(t,|x|)1_{\{|x|\leq\lambda t\}}.\label{rbepw203}
\ema
\end{lem}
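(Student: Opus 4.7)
The plan is to prove each of the three inequalities by splitting the integration variable $y$ into a small-$y$ region and a large-$y$ region, chosen so that in the small-$y$ region the triangle inequality produces the desired diffusive or Huygens profile, and in the large-$y$ region the exponential factor $e^{-|y|/D_1}$ supplies the decay.

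For \eqref{rbepw201j1}, I would split at $|y|=|x|/2$. On $|y|\le|x|/2$ the triangle inequality gives $|x-y|\ge|x|/2$, turning the Gaussian into $e^{-|x|^2/(4D(1+t))}$; integrating $e^{-|y|/D_1}$ yields the first RHS term provided $D_2\ge 6D$. On $|y|\ge|x|/2$ the factorization $e^{-|y|/D_1}\le e^{-|x|/(4D_1)}e^{-|y|/(2D_1)}$ gives $C(1+t)^{-\alpha}e^{-|x|/(4D_1)}$. To reshape this I would split further on $|x|$ versus $D_2(1+t)/(6D_1)$: in the first range one has $|x|^2/(1+t)\le |x|D_2/(6D_1)$, so $e^{-|x|/(4D_1)}\le e^{-3|x|^2/(2D_2(1+t))}$; in the second range the choice $D_2\ge 12D_1$ forces $|x|\ge 2(1+t)$, hence $|x|\ge (|x|+t)/2$, and $(1+t)^{-\alpha}e^{-|x|/(4D_1)}$ is absorbed into $e^{-3(|x|+t)/(2D_2)}$.

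For \eqref{rbepw201} the same split is performed with $|x|$ replaced by $||x|-\lambda t|$; the key step is that on $|y|\le ||x|-\lambda t|/2$ one has $||x-y|-\lambda t|\ge ||x|-\lambda t|-|y|\ge ||x|-\lambda t|/2$. In the large-$|y|$ branch, the subcase $|x|<\lambda t$ with $\lambda t - |x| \ge D_2(1+t)/(6D_1)$ is ruled out by $D_2\ge 12D_1\lambda$ (which forces $\lambda t\ge |x| + 2\lambda(1+t)$, contradicting $\lambda t\ge 0$), so only the case $|x|\ge\lambda t$ needs treatment, and the case split from \eqref{rbepw201j1} transfers verbatim with $|x|$ replaced by $|x|-\lambda t$.

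For \eqref{rbepw203} I would again split $y$ at $|y|=|x|/2$. On $|y|\le|x|/2$, the monotonicity of $B_{3/2}(t,\cdot)$ and the elementary inequality $B_{3/2}(t,r/2)\le 8\,B_{3/2}(t,r)$ (which follows by writing $B_{3/2}(t,r/2)/B_{3/2}(t,r)=[(1+r^2/(1+t))/(1+r^2/(4(1+t)))]^{3/2}\le 8$) yield $B_{3/2}(t,|x-y|)\le CB_{3/2}(t,|x|)$; combined with $\int e^{-|y|/D_1}dy<\infty$ this gives the contribution $C(1+t)^{-\alpha}B_{3/2}(t,|x|)$, which is exactly the third RHS term when $|x|\le\lambda t$. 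The joint constraint $|x-y|\le\lambda t$ moreover forces this region to be empty unless $|x|\le 2\lambda t$, and on $|x|>\lambda t$ one exploits the additional lower bound $|y|\ge|x|-\lambda t$ to insert an extra factor $(1+u^2)e^{-u/D_1}$ with $u=|x|-\lambda t$, which is uniformly bounded and whose decay, by the very case split used for \eqref{rbepw201}, is transferred into the first two RHS terms. On $|y|\ge|x|/2$ one extracts $e^{-|x|/(4D_1)}$ as before, bounds $B_{3/2}\le 1$ crudely, and uses $\|e^{-|\cdot|/(2D_1)}\|_{L^1}<\infty$ to obtain the uniform $Ce^{-|x|/(4D_1)}$ bound, then reshapes it exactly as in \eqref{rbepw201j1}.

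The main obstacle will be the transitional regime $|x|\in(\lambda t,2\lambda t]$ of \eqref{rbepw203}: here the indicator $1_{\{|x|\le\lambda t\}}$ turns the third RHS term off, yet the inner region $|y|\le|x|/2$ still contributes a term of order $(1+t)^{-\alpha-3/2}$. The argument must carefully exploit the mandatory lower bound $|y|\ge|x|-\lambda t$ on the integration domain and the boundedness of $(1+u^2)e^{-u/D_1}$ in $u\ge 0$ to match the Gaussian profile $(1+t)^{-\alpha-3/2}e^{-3(|x|-\lambda t)^2/(2D_2(1+t))}$, without losing the gain of $(1+t)^{-3/2}$ that distinguishes \eqref{rbepw203} from \eqref{rbepw201j1}--\eqref{rbepw201}.
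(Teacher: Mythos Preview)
Your approach matches the paper's: split $y$ by comparison with $|x|$ (or $||x|-\lambda t|$), use the triangle inequality on the small-$|y|$ side, extract exponential decay in $|x|$ on the large-$|y|$ side, then reshape by a case split. The paper's reshaping threshold is $||x|-\lambda t|\lessgtr 2\lambda_1 t$ with $\lambda_1=\max\{1,\lambda\}$, which is equivalent to your $D_2(1+t)/(6D_1)$ choice up to constants.

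One place needs tightening. For \eqref{rbepw203} with $|x|>\lambda t$, your outer-region contribution is $C(1+t)^{-\alpha}e^{-|x|/(4D_1)}$, and you propose to ``reshape it exactly as in \eqref{rbepw201j1}''. But that reshape produces a centered Gaussian $(1+t)^{-\alpha}e^{-3|x|^2/(2D_2(1+t))}$, which is \emph{not} among the RHS terms of \eqref{rbepw203} once the indicator $1_{\{|x|\le\lambda t\}}$ is off; you are missing the factor $(1+t)^{-3/2}$. The fix is easy: write $e^{-|x|/(4D_1)}=e^{-\lambda t/(4D_1)}e^{-(|x|-\lambda t)/(4D_1)}$, use $e^{-\lambda t/(4D_1)}\le C(1+t)^{-3/2}$ (here $\lambda>0$, else the integral vanishes), and then apply the reshape of \eqref{rbepw201} to $e^{-(|x|-\lambda t)/(4D_1)}$. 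The paper sidesteps this by changing the split point when $|x|>\lambda t$: it cuts at $|y|=\lambda t/2$ rather than $|y|=|x|/2$, so that on $\{|y|\le\lambda t/2\}$ one has $|x-y|\ge\lambda t/2$ and hence $B_{3/2}(t,|x-y|)\le C(1+t)^{-3/2}$ directly, while on $\{|y|\ge\lambda t/2\}$ the factor $e^{-\lambda t/(4D_1)}$ comes out of $e^{-|y|/D_1}$. Both routes work; the paper's is slightly cleaner because the $(1+t)^{-3/2}$ gain appears without an afterthought.
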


\begin{proof}
First, we prove \eqref{rbepw201j1}. We split $y$ into $|y|\leq\frac{|x|}{2}$ and $|y|\geq\frac{|x|}{2}$. If $|y|\leq\frac{|x|}{2}$, we have $|x-y|\geq\frac{|x|}{2}$. Thus,
\bma
&\quad\int_{\R^3}(1+t)^{-\alpha}e^{-\frac{|x-y|^2}{D(1+t)}}e^{-\frac{|y|}{D_1}}dy\nnm\\
&=(1+t)^{-\alpha}\(\int_{\{|y|\leq\frac{|x|}{2}\}}+\int_{\{|y|\geq\frac{|x|}{2}\}}\)e^{-\frac{|x-y|^2}{D(1+t)}}e^{-\frac{|y|}{D_1}}dy\nnm\\
&\leq C(1+t)^{-\alpha}e^{-\frac{|x|^2}{4D(1+t)}}\int_{\R^3}e^{-\frac{|y|}{D_1}}dy+C(1+t)^{-\alpha}e^{-\frac{|x|}{4D_1}}\int_{\R^3}e^{-\frac{|y|}{2D_1}}dy\nnm\\
&\leq C(1+t)^{-\alpha}e^{-\frac{|x|^2}{4D(1+t)}}+C(1+t)^{-\alpha}e^{-\frac{|x|}{4D_1}}.
\ema
 Since it holds for $\lambda_1=\max\{1,\lambda\}$ that
\bq\label{rbepw204j1}
\left\{\bal
e^{-\frac{|x|}{4D_1}}\leq e^{-\frac{|x|^2}{8D_1\lambda_1(1+t)}}, & |x|\leq\lambda_1 t,\\
e^{-\frac{|x|}{4D_1}}\leq e^{-\frac{|x|+\lambda_1 t}{8D_1}}, & |x|>\lambda_1 t,
\ea\right.
\eq
we can obtain \eqref{rbepw201j1}.

Next, we prove \eqref{rbepw201} as follows. We split $y$ into $|y|\leq\frac{||x|-\lambda t|}{2}$ and $|y|\geq\frac{||x|-\lambda t|}{2}$.
For $|x|\leq\lambda t$ and $|y|\leq\frac{||x|-\lambda t|}{2}$, we have $\lambda t-|x-y|\geq\lambda t-|x|-|y|\geq\frac{\lambda t-|x|}{2}$.
For $|x|>\lambda t$ and $|y|\leq\frac{||x|-\lambda  t|}{2}$, we have $|x-y|-\lambda t\geq|x|-|y|-\lambda t\geq\frac{|x|-\lambda t}{2}$.
Thus,
\bma
&\quad\int_{\R^3}(1+t)^{-\alpha}e^{-\frac{(|x-y|-\lambda t)^2}{D(1+t)}}e^{-\frac{|y|}{D_1}}dy\nnm\\
&=\(\int_{\{|y|\leq\frac{||x|-\lambda t|}{2}\}}+\int_{\{|y|\geq\frac{||x|-\lambda t|}{2}\}}\)(1+t)^{-\alpha}e^{-\frac{(|x-y|-\lambda t)^2}{D(1+t)}}e^{-\frac{|y|}{D_1}}dy \nnm\\
&\leq C(1+t)^{-\alpha}e^{-\frac{(|x|-\lambda t)^2}{4D(1+t)}}\int_{\R^3}e^{-\frac{|y|}{D_1}}dy+C(1+t)^{-\alpha}e^{-\frac{||x|-\lambda t|}{4D_1}}\int_{\R^3}e^{-\frac{|y|}{2D_1}}dy\nnm\\
&\leq C(1+t)^{-\alpha}e^{-\frac{(|x|-\lambda t)^2}{4D(1+t)}}+C(1+t)^{-\alpha}e^{-\frac{||x|-\lambda t|}{4D_1}}.
\ema
Since it holds for $\lambda_1=\max\{1,\lambda\}$ that
\bq\label{rbepw204}
\left\{\bal
e^{-\frac{||x|-\lambda t|}{4D_1}}\leq e^{\frac{(|x|-\lambda t)^2}{8D_1\lambda_1 (1+t)}},& ||x|-\lambda t|\leq2\lambda_1 t,\\
e^{-\frac{||x|-\lambda t|}{4D_1}}\leq e^{-\frac{|x|+\lambda_1 t}{8D_1}},& ||x|-\lambda t|\geq2\lambda_1 t,
\ea\right.
\eq
 we can obtain \eqref{rbepw201}.

Finally, we prove \eqref{rbepw203}. For $ |x|\leq\lambda t$, we have
\bma
&\quad \int_{\{|x-y|\leq\lambda t\}}(1+t)^{-\alpha}B_{\frac{3}{2}}(t,|x-y|)e^{-\frac{|y|}{D_1}}dy\nnm\\
&=(1+t)^{-\alpha}\(\int_{\{|y|\leq\frac{|x|}{2}\}}+\int_{\{|y|\geq\frac{|x|}{2}\}}\)B_{\frac{3}{2}}(t,|x-y|)e^{-\frac{|y|}{D_1}}dy\nnm\\
&\leq C(1+t)^{-\alpha}B_{\frac{3}{2}}(t,|x|)+C(1+t)^{-\alpha}e^{-\frac{|x|}{4D_1}}\nnm\\
&\leq C(1+t)^{-\alpha}B_{\frac{3}{2}}(t,|x|).
\ema
For $|x|>\lambda t$ and $|x-y|\leq\lambda t$, we have $|y|\geq|x|-\lambda t$. We split $y$ into $|y|\leq\frac{\lambda t}{2}$ and $|y|\geq\frac{\lambda t}{2}$. If $|y|\leq\frac{\lambda t}{2}$ and $|x|>\lambda t$, then we have $|x-y|\ge \frac{\lambda t}{2}$. Thus,
\bma
&\quad\int_{\{|x-y|\leq\lambda t\}}(1+t)^{-\alpha}B_{\frac{3}{2}}(t,|x-y|)e^{-\frac{|y|}{D_1}}dy\nnm\\
&\leq C(1+t)^{-\alpha}\(1+\frac{\lambda^2t^2}{4(1+t)}\)^{-\frac{3}{2}}e^{-\frac{||x|-\lambda t|}{2D_1}}\int_{\{|y|\leq\frac{\lambda t}{2}\}}e^{-\frac{|y|}{2D_1}}dy\nnm\\
&\quad+C(1+t)^{-\alpha}e^{-\frac{\lambda t}{8D_1}}e^{-\frac{||x|-\lambda t|}{2D_1}}\int_{\{|y|\geq\frac{\lambda t}{2}\}}e^{-\frac{|y|}{4D_1}}dy\nnm\\
&\leq C(1+t)^{-\alpha-\frac{3}{2}}e^{-\frac{(|x|-\lambda t)^2}{8D_1\lambda(1+t)}}+Ce^{-\frac{|x|+t}{8D_1}},
\ema
where we have used \eqref{rbepw204}. The proof of the lemma is completed.
\end{proof}

We now consider the following integrals for estimating the nonlinear interactions:
\bmas
&\quad I^{\alpha,\beta}(t,x;t_1,t_2;\mu_1,\mu_2,D,D_1)\\
&=\int^{t_2}_{t_1}\int_{\R^3}(1+t-s)^{-\alpha}e^{-\frac{(|x-y|-\mu_1(t-s))^2}{D(1+t-s)}}(1+s)^{-\beta}e^{-\frac{2(|y|-\mu_2 s)^2}{D_1(1+s)}}dyds,\\
&\quad J^{\alpha,\beta}(t,x;t_1,t_2;\lambda,\mu,D_1)\\
&=\int^{t_2}_{t_1}\int_{\{|x-y|\leq\lambda(t-s)\}}(1+t-s)^{-\alpha}B_{\frac{3}{2}}(t-s,|x-y|)(1+s)^{-\beta}e^{-\frac{2(|y|-\mu s)^2}{D_1(1+s)}}dyds,\\
&\quad K^{\alpha,\beta}(t,x;t_1,t_2;\lambda,\mu)\\
&=\int^{t_2}_{t_1}\int_{\{|x-y|\leq\lambda(t-s)\}}(1+t-s)^{-\alpha}B_{\frac{3}{2}}(t-s,|x-y|) (1+s)^{-\beta}B_{k}(s,|y|-\mu s)1_{\{|y|\leq\lambda s\}}dyds,\\
&\quad L^{\alpha,\beta}(t,x;t_1,t_2;\lambda,\mu_1,\mu,D)\\
&=\int^{t_2}_{t_1}\int_{\R^3}(1+t-s)^{-\alpha}e^{-\frac{(|x-y|-\mu_1(t-s))^2}{D(1+t-s)}} (1+s)^{-\beta}B_{k}(s,|y|-\mu s)1_{\{|y|\leq\lambda s\}}dyds,
\emas
and
\bmas
&M^{\alpha}(t,x;t_1,t_2;\mu,D,D_1)=\int^{t_2}_{t_1}\int_{\R^3}e^{-\frac{|x-y|+t-s}{D}}(1+s)^{-\alpha}e^{-\frac{2(|y|-\mu s)^2}{D_1(1+s)}}dyds,\\
&N^{\alpha}(t,x;t_1,t_2;\lambda,\mu,D)=\int^{t_2}_{t_1}\int_{\{|y|\leq\lambda s\}}e^{-\frac{|x-y|+t-s}{D}}(1+s)^{-\alpha}B_{k}(s,|y|-\mu s)dyds,
\emas
where $k=0$ for $\mu=0$, and $k=2$ for $\mu>0.$

Set
$$
\Gamma_{\alpha}(t)=\int_0^t(1+s)^{-\alpha}=C
\left\{\bln
&(1+t)^{1-\alpha},\quad\alpha<1,\\
&\ln(1+t), \quad~~\alpha=1,\\
&1,\qquad\qquad\quad \alpha>1.
\eln\right.
$$

Lemmas \ref{rbepw3}--\ref{rbepw7} are the estimates of coupling waves, and the proofs  are given in Appendix.
\begin{lem}\label{rbepw3}
For given $\alpha,\beta>0$ and $D>0$, there exist two constants $C, D_1>0$ such that for $D_1\ge 16D$,
\bma
&\quad I^{\alpha,\beta}(t,x;0,t;0,0,D,D_1)\nnm\\
&\leq C\((1+t)^{-\alpha}\Gamma_{\beta-\frac{3}{2}}(t)+(1+t)^{-\beta}\Gamma_{\alpha-\frac{3}{2}}(t)\)e^{-\frac{3|x|^2}{2D_1(1+t)}},\label{rbepw301}\\
&\quad I^{\alpha,\beta}(t,x;0,t;0,\lambda,D,D_1),\,I^{\beta,\alpha}(t,x;0,t;0,\lambda,D,D_1)\nnm\\
&\leq C\((1+t)^{-\alpha}\Gamma_{\beta-\frac{5}{2}}(t)+(1+t)^{-\beta}\Gamma_{\alpha-\frac{5}{2}}(t)\)\(e^{-\frac{3|x|^2}{2D_1(1+t)}}+e^{-\frac{3(|x|-\lambda t)^2}{2D_1(1+t)}}\)\nnm\\
&\quad+C(1+t)^{-\alpha+2}\(1+|x|\)^{-\beta}1_{\{\sqrt{1+t}\leq|x|\leq\lambda t-\sqrt{1+t}\}}\nnm\\
&\quad+C(1+t)^{-\beta+2}\(1+\lambda t-|x|\)^{-\alpha}1_{\{\sqrt{1+t}\leq|x|\leq\lambda t-\sqrt{1+t}\}},\label{rbepw302}\\
&\quad I^{\alpha,\beta}(t,x;0,t;\lambda,\lambda,D,D_1)\nnm\\
&\leq C\((1+t)^{-\alpha}\Gamma_{\beta-\frac{5}{2}}(t)+(1+t)^{-\beta}\Gamma_{\alpha-\frac{5}{2}}(t)\)\(e^{-\frac{3|x|^2}{2D_1(1+t)}}+e^{-\frac{3(|x|-\lambda t)^2}{2D_1(1+t)}}\)\nnm\\
&\quad  +C (1+t)^{-\alpha+1}(1+\lambda t-|x|)^{-\beta+\frac52}1_{\{\sqrt{1+t}\leq|x|\leq\lambda t-\sqrt{1+t}\}} \nnm\\
&\quad  +C(1+t)^{-\beta+1}(1+\lambda t-|x|)^{-\alpha+\frac52}1_{\{\sqrt{1+t}\leq|x|\leq\lambda t-\sqrt{1+t}\}} .\label{rbepw303}
\ema
\end{lem}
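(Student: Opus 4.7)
The plan is to split the time integration $[0,t]$ into $[0,t/2]$ and $[t/2,t]$, and exploit the symmetry between the two Gaussian factors. On $[0,t/2]$ one has $(1+t-s)\sim (1+t)$, so the factor $(1+t-s)^{-\alpha}$ can be pulled out as $C(1+t)^{-\alpha}$; on $[t/2,t]$ one similarly pulls $(1+s)^{-\beta}\sim C(1+t)^{-\beta}$ out of the integral. What remains in each case is to bound the spatial convolution
$$
H(s,x)=\int_{\mathbb{R}^3} e^{-\frac{(|x-y|-\mu_1(t-s))^2}{D(1+t-s)}} e^{-\frac{2(|y|-\mu_2 s)^2}{D_1(1+s)}} dy
$$
uniformly, and then integrate the remaining time-dependent polynomial factors, which produce the $\Gamma$-functions.

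For \eqref{rbepw301} ($\mu_1=\mu_2=0$), both factors are centered Gaussians and the $y$-integral is a clean Gaussian convolution:
$$
\int e^{-\frac{|x-y|^2}{a}-\frac{|y|^2}{b}}dy=\pi^{\frac32}\Bigl(\frac{ab}{a+b}\Bigr)^{\frac32}e^{-\frac{|x|^2}{a+b}},
$$
with $a=D(1+t-s)$ and $b=D_1(1+s)/2$. The hypothesis $D_1\ge 16D$ gives $a+b\le \frac{2D_1}{3}(1+t)$, producing the target exponent $e^{-\frac{3|x|^2}{2D_1(1+t)}}$. On $[0,t/2]$, $ab/(a+b)\sim (1+s)$ so the $s$-integration of $(1+s)^{\frac32-\beta}$ yields $\Gamma_{\beta-\frac32}(t)$; the piece $[t/2,t]$ is symmetric and yields $\Gamma_{\alpha-\frac32}(t)$.

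For \eqref{rbepw302} ($\mu_1=0$, $\mu_2=\lambda$), the second factor concentrates mass near the sphere $\{|y|=\lambda s\}$. I would freeze $|y|=r$ and use the spherical identity
$$
\int_{|y|=r} e^{-\frac{|x-y|^2}{a}}dS=\frac{\pi a r}{|x|}\Bigl(e^{-\frac{(r-|x|)^2}{a}}-e^{-\frac{(r+|x|)^2}{a}}\Bigr),
$$
reducing $H$ to a one-dimensional integral in $r$ of a product of two shifted Gaussians. Completing the square in $r$ produces the two pure Gaussians $e^{-\frac{3|x|^2}{2D_1(1+t)}}$ (coming from the constraint $r\sim |x|$, matching the first center) and $e^{-\frac{3(|x|-\lambda t)^2}{2D_1(1+t)}}$ (coming from $r\sim\lambda s$, matching the second). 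In the resonant annulus $\sqrt{1+t}\le |x|\le \lambda t-\sqrt{1+t}$, neither Gaussian is small, so one bounds the integral by controlling only the polynomial prefactors; the factor $(1+|x|)^{-\beta}$ arises from the $(1+s)^{-\beta}$ weight at the stationary time $s\sim |x|/\lambda$, and $(1+\lambda t-|x|)^{-\alpha}$ arises symmetrically from $(1+t-s)^{-\alpha}$ at $s\sim t-(\lambda t-|x|)/\lambda$.

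For \eqref{rbepw303} ($\mu_1=\mu_2=\lambda$), both factors are Huygens waves and I reduce again via the spherical identity, now completing the square in $r$ in the combined phase
$$
\Phi(r)=\frac{(r-\lambda(t-s)\mp|x|)^2}{D(1+t-s)}+\frac{2(r-\lambda s)^2}{D_1(1+s)}.
$$
The minimizer in $r$ produces the two Gaussian profiles $e^{-\frac{3|x|^2}{2D_1(1+t)}}$ and $e^{-\frac{3(|x|-\lambda t)^2}{2D_1(1+t)}}$. The main obstacle is the resonant region $\sqrt{1+t}\le|x|\le\lambda t-\sqrt{1+t}$: here both Huygens shells overlap on a macroscopic set of angles, creating constructive interference whose size is controlled by a stationary-phase-type analysis on the angular integral. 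Carefully tracking the Jacobian of the restriction $|x-y|=\lambda(t-s)$, $|y|=\lambda s$ on the $2$-sphere gives an extra power $(1+\lambda t-|x|)^{5/2}$ compared to case \eqref{rbepw302}, which is the source of the exponents $-\beta+\frac52$ and $-\alpha+\frac52$. This bookkeeping in the resonant region is the most delicate step; the rest of the argument is an elementary assembly of Gaussian convolution estimates together with the inequality \eqref{rbepw204} to convert exponential-in-$|x|$ factors into Gaussian-in-$|x|$ factors inside the light cone.
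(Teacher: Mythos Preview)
Your approach for \eqref{rbepw301} via the exact Gaussian convolution identity is correct and arguably cleaner than the paper's. The paper instead uses the elementary pointwise inequality $(1+\tfrac1\eps)|x-y|^2+(1+\eps)|y|^2\ge |x|^2$ (with $\eps=1/4$) to peel off $e^{-3|x|^2/(2D_1(1+t))}$ from the integrand before integrating, which is cruder but makes the role of the constraint $D_1\ge 16D$ transparent.

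For \eqref{rbepw302} and \eqref{rbepw303} your outline has a gap. After the spherical reduction and completing the square in $r$, what you obtain is a \emph{single} Gaussian factor $e^{-c(|x|-\lambda s)^2/(1+t)}$ (times polynomial prefactors), not two. The two target profiles $e^{-3|x|^2/(2D_1(1+t))}$ and $e^{-3(|x|-\lambda t)^2/(2D_1(1+t))}$ arise only after you further split the $s$-integral into subintervals near $s=0$ and $s=t$; your text attributes them to the $r$-completion itself (``coming from the constraint $r\sim|x|$'' versus ``$r\sim\lambda s$''), which is not correct. The paper never passes through the spherical identity at all: it splits $s$ directly (breakpoints at $|x|/(16\lambda)$ for \eqref{rbepw302}, at $(\lambda t-|x|)/(32\lambda)$ for \eqref{rbepw303}) and on the end pieces uses analogues of the inequality above, e.g.\ $(1+\tfrac1\eps)|x-y|^2+(1+\eps)(|y|-\lambda s)^2\ge(|x|-\lambda s)^2$, to extract the desired Gaussian.

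Your treatment of the resonant zone in \eqref{rbepw303} via ``stationary phase on the angular integral'' and ``Jacobian of the restriction $|x-y|=\lambda(t-s)$, $|y|=\lambda s$'' is over-engineered. On the middle $s$-interval the paper simply bounds one of the two Gaussians by $1$, integrates the other in $y$ (a thickened sphere of radius $\sim\lambda s$ in $\R^3$ has mass $\sim(1+s)^{5/2}$), and then integrates $(1+s)^{-\beta+5/2}$ in $s$ from the lower cutoff $\sim\lambda t-|x|$; the exponent $-\beta+\tfrac52$ falls out immediately, with no shell-intersection geometry required.
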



\begin{lem}\label{rbepw4}
For given $\alpha,\beta,\lambda>0$ and $D>0$, there exist two constants $C, D_1>0$ such that for $D_1\ge 2D$,
\bma
&\quad L^{\alpha,\beta}(t,x;0,t;\lambda,0,0,D)\nnm\\
&\leq C\((1+t)^{-\alpha}\Gamma_{\beta-\frac{3}{2}}(t)+(1+t)^{-\beta}\Gamma_{\alpha-\frac{3}{2}}(t)\) \(e^{-\frac{3(|x|-\lambda t)^2}{2D_1(1+t)}}+B_{3}(t,|x|)1_{\{|x|\leq\lambda t\}}\),\label{rbepw401j1}\\
&\quad J^{\alpha,\beta}(t,x;0,t;\lambda,0,D_1)\nnm\\
&\leq C\((1+t)^{-\alpha}\Gamma_{\beta-\frac{3}{2}}(t)+(1+t)^{-\beta}\ln(1+t)\Gamma_{\alpha-\frac{3}{2}}(t)\) \(e^{-\frac{3(|x|-\lambda t)^2}{2D_1(1+t)}}+B_{\frac{3}{2}}(t,|x|)1_{\{|x|\leq\lambda t\}}\) ,\label{rbepw401}\\
&\quad J^{\alpha,\beta}(t,x;0,t;\lambda,\lambda,D_1)\nnm\\
&\leq C\((1+t)^{-\alpha}\Gamma_{\beta-\frac{5}{2}}(t)+(1+t)^{-\beta}\Gamma_{\alpha-\frac{5}{2}}(t)\)\(e^{-\frac{3|x|^2}{2D_1(1+t)}}+e^{-\frac{3(|x|-\lambda t)^2}{2D_1(1+t)}}\)\nnm\\
&\quad+C\((1+t)^{-\alpha}\Gamma_{\beta-\frac{5}{2}}(t)B_{\frac{3}{2}}(t,|x|)+(1+t)^{-\beta+1}\Gamma_{\alpha-\frac{3}{2}}(t)B_{\frac{3}{2}}(t,|x|-\lambda t)\)1_{\{|x|\leq\lambda t\}}\nnm\\
&\quad+C(1+t)^{-\alpha+2}\ln(1+t)\(1+|x|\)^{-\beta}1_{\{\sqrt{1+t}\leq|x|\leq\lambda t-\sqrt{1+t}\}}\nnm\\
&\quad+C(1+t)^{-\beta+2}\ln(1+t)\(1+\lambda t-|x|\)^{-\alpha}1_{\{\sqrt{1+t}\leq|x|\leq\lambda t-\sqrt{1+t}\}}.\label{rbepw402}
\ema
\end{lem}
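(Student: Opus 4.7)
\medskip

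\noindent\textbf{Proof plan for Lemma \ref{rbepw4}.} My overall strategy is the standard ``freeze-the-old-variable'' split: I will partition the time integral into $[0,t/2]$ and $[t/2,t]$, so that on the first piece $1+t-s\sim 1+t$ and the outer Huygens/$B_{3/2}$ factor can be pulled out with the rate $(1+t)^{-\alpha}$, while on the second piece $1+s\sim 1+t$ and the inner factor contributes $(1+t)^{-\beta}$. This symmetric split is what ultimately produces the two $\Gamma$-type terms in each conclusion. After the time split, I will further decompose space into the three regions $|x|\le\sqrt{1+t}$, $\sqrt{1+t}\le|x|\le\lambda t-\sqrt{1+t}$ (the interior of the Mach cone away from both the origin and the wave front), and $|x|\ge\lambda t-\sqrt{1+t}$; in the first and third regions one of the two waves dominates and gives either the Gaussian factor $e^{-3|x|^2/2D_1(1+t)}$, the Huygens factor $e^{-3(|x|-\lambda t)^2/2D_1(1+t)}$, or the $B_{3/2}$/$B_3$ profile, while the intermediate region is responsible for the polynomial-decay terms.

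For \eqref{rbepw401j1} (the $L^{\alpha,\beta}$ bound): the $y$-integration has no space-time weight, so I only need to analyse $\int_{|y|\le\lambda s}(1+t-s)^{-\alpha}e^{-|x-y|^2/D(1+t-s)}\,dy$. Using the pointwise bound $\int_{|y|\le\lambda s}e^{-|x-y|^2/D(1+t-s)}\,dy\le C(1+t-s)^{3/2}\bigl(1_{\{|x|\le\lambda s\}}+e^{-(|x|-\lambda s)^2/2D(1+t-s)}\bigr)$, the integral reduces to a one-dimensional $s$-integral with the resulting Huygens weight, which is estimated by invoking Lemma \ref{rbepw3} for the outer part with $\mu_2=\lambda$ and by direct computation of $\int_{|x|/\lambda}^{t}(1+t-s)^{3/2-\alpha}(1+s)^{-\beta}\,ds$ for the interior part (this produces the $B_3(t,|x|)1_{\{|x|\le\lambda t\}}$ contribution).

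For \eqref{rbepw401} and \eqref{rbepw402} (the $J^{\alpha,\beta}$ bounds), the idea is the same but the outer $B_{3/2}(t-s,|x-y|)$ profile is handled by splitting $|x-y|$ into $|x-y|\le\sqrt{1+t-s}$ and $|x-y|>\sqrt{1+t-s}$. On the small-$|x-y|$ piece $B_{3/2}\sim (1+t-s)^{-3/2}\cdot (1+t-s)^{3/2}$ after integration, which reduces exactly to an $L^{\alpha,\beta}$-type estimate that has already been treated; on the large-$|x-y|$ piece I will use $B_{3/2}(t-s,|x-y|)\le C((1+t-s)/|x-y|^{2})^{3/2}$ and then reduce to Lemma \ref{rbepw3} applied to $I^{\alpha+3/2,\beta}$. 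The logarithmic factor $\ln(1+t)$ in \eqref{rbepw401} is traceable to the boundary case $\alpha=5/2$ (resp.\ $\beta=5/2$) in the $\Gamma_{\cdot}$-integration, where the $s$-integral $\int_{1}^{t}s^{-1}\,ds$ diverges logarithmically.

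The main obstacle will be the intermediate region $\sqrt{1+t}\le|x|\le\lambda t-\sqrt{1+t}$ in \eqref{rbepw402}, where both the inner Gaussian $e^{-2(|y|-\lambda s)^{2}/D_1(1+s)}$ and the outer $B_{3/2}(t-s,|x-y|)1_{\{|x-y|\le\lambda(t-s)\}}$ are simultaneously active; there the two profiles ``meet'' along a saddle curve in $(s,y)$, and the Gaussian part of \eqref{rbepw301}--\eqref{rbepw303} no longer captures the pointwise behavior. To handle this I will localize near the saddle $s_{*}=|x|/\lambda$, Taylor-expand the combined phase, and integrate, which yields the polynomial factors $(1+|x|)^{-\beta}$ and $(1+\lambda t-|x|)^{-\alpha}$; the additional $(1+t)^{2}\ln(1+t)$ prefactor arises from the Jacobian of this saddle-point reduction together with the logarithmic $s$-integration. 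Carrying this analysis through in the exact form demanded by the right-hand side of \eqref{rbepw402}, while keeping all the characteristic-function indicators consistent with those produced by the $L^{\alpha,\beta}$ and $I^{\alpha,\beta}$ reductions, is the most technically delicate step and will be executed case-by-case in the appendix.
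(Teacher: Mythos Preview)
Your plan has two structural problems that would prevent it from reaching the stated bounds.

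\medskip
\textbf{The $L^{\alpha,\beta}$ estimate \eqref{rbepw401j1}.} You write that ``the $y$-integration has no space-time weight'', but in fact the inner factor of $L^{\alpha,\beta}(t,x;0,t;\lambda,0,0,D)$ is $B_{3}(s,|y|)1_{\{|y|\le\lambda s\}}$, not $1_{\{|y|\le\lambda s\}}$; the paper's convention ``$k=0$ for $\mu=0$'' is a typo (the proof uses $B_3$ explicitly). With only the indicator $1_{\{|y|\le\lambda s\}}$ your proposed bound $\int_{|y|\le\lambda s}e^{-|x-y|^2/D(1+t-s)}dy\le C(1+t-s)^{3/2}1_{\{|x|\le\lambda s\}}+\ldots$ carries no decay in $|x|$ at all on the interior piece, so it cannot produce $B_3(t,|x|)$ on the right-hand side. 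What actually works is a change of variables $z=x-y$, $\tau=t-s$, which transforms $L^{\alpha,\beta}$ into a $J^{\beta,\alpha}$-type integral with the roles of the Gaussian and the $B_3$ profile swapped; then \eqref{rbepw401j1} follows from the argument for \eqref{rbepw401}.

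\medskip
\textbf{The $J^{\alpha,\beta}$ estimates.} Your reduction ``$B_{3/2}(t-s,|x-y|)\le C((1+t-s)/|x-y|^2)^{3/2}$, then reduce to Lemma \ref{rbepw3} applied to $I^{\alpha+3/2,\beta}$'' does not work: after this bound the kernel in $|x-y|$ is the polynomial $|x-y|^{-3}$, not a Gaussian $e^{-|x-y|^2/D(1+t-s)}$, so Lemma \ref{rbepw3} is not applicable. The paper instead treats $B_{3/2}$ directly via geometric $y$-splits: for \eqref{rbepw401} one splits $|y|\le|x|/2$ versus $|y|\ge|x|/2$ and uses Lemma \ref{rbepw-Ap-1} to transfer the $s$-dependent profile to $t$; for \eqref{rbepw402} one first splits the $s$-interval at $|x|/2\lambda$ and $t/2+|x|/2\lambda$ and then splits $|x-y|$ at $|x|/4$ or $(\lambda t-|x|)/4$. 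No saddle-point analysis is needed in the intermediate region --- the polynomial terms $(1+|x|)^{-\beta}$ and $(1+\lambda t-|x|)^{-\alpha}$ come simply from freezing $(1+s)^{-\beta}\le(1+|x|/2\lambda)^{-\beta}$ and $(1+t-s)^{-\alpha}\le(1+(\lambda t-|x|)/2\lambda)^{-\alpha}$ on the respective subintervals, and then computing $\int_0^t\int B_{3/2}(t,|x-y|)e^{-2(|y|-\lambda s)^2/D_1(1+t)}dyds\le C(1+t)^2\ln(1+t)$ directly. The logarithm is not a $\Gamma$-borderline effect but arises from $\int_{|z|\le\lambda t}(1+|z|^2/(1+t))^{-3/2}dz\sim(1+t)^{3/2}\ln(1+t)$.
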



\begin{lem}\label{rbepw5}
For given $\alpha,\beta>0$, there exists a constant $C>0$ such that
\bma
&\quad K^{\alpha,\beta}(t,x;0,t;\lambda,0)\nnm\\
&\leq C\((1+t)^{-\alpha}\Gamma_{\beta-\frac{3}{2}}(t)+(1+t)^{-\beta}\Gamma_{\alpha-\frac{3}{2}}(t)\)\ln(1+t)B_{\frac{3}{2}}(t,|x|)1_{\{|x|\leq\lambda t\}},\label{rbepw501}\\
&\quad K^{\alpha,\beta}(t,x;0,t;\lambda,\lambda)\nnm\\
&\leq C\((1+t)^{-\alpha}\Gamma_{\beta-\frac{5}{2}}(t)+(1+t)^{-\beta+1}\Gamma_{\alpha-\frac{3}{2}}(t)\) \(B_{\frac{3}{2}}(t,|x|)+B_{\frac{3}{2}}(t,|x|-\lambda t)\)1_{\{|x|\leq\lambda t\}}\nnm\\
&\quad+C(1+t)^{-\alpha+2}\ln(1+t)\(1+|x|\)^{-\beta}1_{\{\sqrt{1+t}\leq|x|\leq\lambda t-\sqrt{1+t}\}}\nnm\\
&\quad+C(1+t)^{-\beta+2}\ln(1+t)\(1+\lambda t-|x|\)^{-\alpha}1_{\{\sqrt{1+t}\leq|x|\leq\lambda t-\sqrt{1+t}\}}.\label{rbepw502}
\ema
\end{lem}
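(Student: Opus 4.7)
The overall strategy mirrors the proofs of Lemmas \ref{rbepw3}--\ref{rbepw4}: split the time integration $\int_0^t=\int_0^{t/2}+\int_{t/2}^t$ and in each half pull out pointwise the slowly varying factor, leaving a spatial integral that must be estimated uniformly in the remaining time variable, followed by a one-dimensional time integral that produces the $\Gamma$ factors. Concretely, on $[0,t/2]$ we have $(1+t-s)^{-\alpha}\lesssim (1+t)^{-\alpha}$, while on $[t/2,t]$ we have $(1+s)^{-\beta}\lesssim (1+t)^{-\beta}$.

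For \eqref{rbepw501}, $\mu=0$ and $k=0$, so $B_k\equiv 1$ and the inner integral becomes
$$
\int_{\{|x-y|\le\lambda(t-s),\,|y|\le\lambda s\}} B_{3/2}(t-s,|x-y|)\,dy.
$$
The indicators force $|x|\le\lambda t$, which produces the outer indicator $1_{\{|x|\le\lambda t\}}$. To extract a pointwise factor $B_{3/2}(t,|x|)$, I split the $y$-domain according to $|y|\lessgtr|x|/2$. In the region $|y|\le|x|/2$ we have $|x-y|\ge|x|/2$, so $B_{3/2}(t-s,|x-y|)\lesssim B_{3/2}(t,|x|)$; in the region $|y|\ge|x|/2$ the constraint $|y|\le\lambda s$ forces $|x|\le 2\lambda s$, which together with $B_{3/2}(t,|x|)\gtrsim B_{3/2}(t,\lambda s)$ lets us bound the remaining integral by $\int_{|z|\le\lambda(t-s)}B_{3/2}(t-s,|z|)\,dz\lesssim (1+t-s)^{3/2}\ln(1+t-s)$, where the logarithm comes from $\int_0^R r^2/(1+r^2)^{3/2}\,dr\sim\ln R$. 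The time splitting then yields the two terms $(1+t)^{-\alpha}\Gamma_{\beta-3/2}$ and $(1+t)^{-\beta}\Gamma_{\alpha-3/2}$, with the $\ln(1+t)$ inherited from the spatial integration.

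For \eqref{rbepw502}, $\mu=\lambda$ and $k=2$, so the weight $B_2(s,|y|-\lambda s)$ concentrates $|y|$ near $\lambda s$. I decompose the $y$-integral into three regions: (i) $|y|\le|x|/2$, (ii) $|x-y|\le||x|-\lambda t|/2$, and (iii) the complementary intermediate region. In (i) and (ii), geometric separation forces either $|x-y|\gtrsim|x|$ or $||x-y|-\lambda(t-s)|\gtrsim||x|-\lambda t|$; pulling out $B_{3/2}(t,|x|)$ or $B_{3/2}(t,|x|-\lambda t)$ and estimating the remaining $L^1$ integral (as in Lemma \ref{rbepw2}) produces the two diagonal terms $(1+t)^{-\alpha}\Gamma_{\beta-5/2}B_{3/2}(t,|x|)$ and $(1+t)^{-\beta+1}\Gamma_{\alpha-3/2}B_{3/2}(t,|x|-\lambda t)$ on $\{|x|\le\lambda t\}$, while the spatial tails combine with \eqref{rbepw204} to give the Gaussian pieces $e^{-3|x|^2/(2D_1(1+t))}$ and $e^{-3(|x|-\lambda t)^2/(2D_1(1+t))}$.

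The main obstacle is region (iii), the intermediate/resonant zone where both profiles are diffuse and no exponential gain is available. Here the $B_2$ concentration localizes the time integral to $s$ near $|x|/\lambda$ within a window of width $\sim\sqrt{1+s}$; inside this window the spatial integral of $B_{3/2}(t-s,|x-y|)$ over $|x-y|\le\lambda(t-s)$ contributes a factor $\sim(1+t-s)^{3/2}\ln(1+t-s)$. Performing the time integration and optimizing in $s$ gives precisely the terms $(1+t)^{-\alpha+2}\ln(1+t)(1+|x|)^{-\beta}$ and $(1+t)^{-\beta+2}\ln(1+t)(1+\lambda t-|x|)^{-\alpha}$ on $\{\sqrt{1+t}\le|x|\le\lambda t-\sqrt{1+t}\}$, matching the structure of the last two terms in \eqref{rbepw402}. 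The computation in (iii) is the delicate one because the naive estimate loses powers of $t$; the right bookkeeping is to track separately the contributions from $s\le|x|/(2\lambda)$, $s\sim|x|/\lambda$, and $s\ge(|x|+\lambda t)/(2\lambda)$, and to use that the geometric constraint $|x-y|\le\lambda(t-s)$ combined with $|y|\le\lambda s$ and $B_2$-concentration at $|y|=\lambda s$ effectively reduces the spatial integration to a codimension-one slab whose measure is controlled by $\sqrt{1+s}$.
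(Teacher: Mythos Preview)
Your outline has the right ingredients, but the organization for \eqref{rbepw502} contains a genuine gap. In your region (ii), $|x-y|\le\tfrac{1}{2}||x|-\lambda t|$, you claim that geometric separation forces $||x-y|-\lambda(t-s)|\gtrsim||x|-\lambda t|$ and that this lets you pull out $B_{3/2}(t,|x|-\lambda t)$. But the first kernel in $K^{\alpha,\beta}$ is $B_{3/2}(t-s,|x-y|)$, centered at $|x-y|=0$, not at $\lambda(t-s)$; having $|x-y|$ small makes this factor $\approx 1$, so no decay comes from it. The only source of $(|x|-\lambda t)$-decay in region (ii) is the factor $B_2(s,|y|-\lambda s)$, and since $||y|-|x||\le|x-y|\le(\lambda t-|x|)/2$, forcing $|\lambda s-|y||\gtrsim\lambda t-|x|$ requires a \emph{time} restriction such as $s\ge t/2+|x|/(2\lambda)$. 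Without it there is an entire range $s\in[|x|/(2\lambda),\,t/2+|x|/(2\lambda)]$ on which neither factor decays, and your purely spatial split cannot isolate it.

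The paper therefore decomposes time-first---precisely the split you relegate to a remark at the end: for $\sqrt{1+t}\le|x|\le\lambda t-\sqrt{1+t}$ write $\int_0^t=\int_0^{|x|/(2\lambda)}+\int_{|x|/(2\lambda)}^{t/2+|x|/(2\lambda)}+\int_{t/2+|x|/(2\lambda)}^{t}=:I_1+I_2+I_3$. Inside $I_1$ one splits $y$ by $|x-y|\lessgtr|x|/4$; inside $I_3$ by $|x-y|\lessgtr(\lambda t-|x|)/4$ (here the time constraint guarantees $\lambda s-|y|\ge(\lambda t-|x|)/4$ on the ``small $|x-y|$'' half). The middle $I_2$ is the resonant piece and produces the two logarithmic terms $(1+t)^{-\alpha+2}\ln(1+t)(1+|x|)^{-\beta}$ and $(1+t)^{-\beta+2}\ln(1+t)(1+\lambda t-|x|)^{-\alpha}$ via the crude bound $\int_0^t\int_{|x-y|\le\lambda t}B_{3/2}(t,|x-y|)B_2(t,|y|-\lambda s)\,dy\,ds\lesssim(1+t)^2\ln(1+t)$. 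The time-first order is essential, not cosmetic.

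Finally, there are no Gaussian pieces in \eqref{rbepw502}: the two indicators $1_{\{|x-y|\le\lambda(t-s)\}}$ and $1_{\{|y|\le\lambda s\}}$ together force $|x|\le\lambda t$, so $K^{\alpha,\beta}(t,x;0,t;\lambda,\lambda)\equiv 0$ for $|x|>\lambda t$. Your appeal to \eqref{rbepw204} and to exponential tails is misplaced here; that mechanism belongs to the $J$ and $L$ integrals of Lemmas~\ref{rbepw4} and~\ref{rbepw6}, where one kernel is a genuine Gaussian.
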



\begin{lem}\label{rbepw6}
For given $\alpha,\beta,D>0$, there exists a constant $C, D_1>0$ such that for $D_1\ge \frac{3D}{2}$,
\bma
&\quad L^{\alpha,\beta}(t,x;0,t;\lambda,0,\lambda,D),\,  L^{\beta,\alpha}(t,x;0,t;\lambda,0,\lambda,D)\nnm\\
&\leq C\((1+t)^{-\alpha}\Gamma_{\beta-\frac{5}{2}}(t)+(1+t)^{-\beta}\Gamma_{\alpha-\frac{5}{2}}(t)\)\(e^{-\frac{3|x|^2}{2D_1(1+t)}}+e^{-\frac{3(|x|-\lambda t)^2}{2D_1(1+t)}}\)\nnm\\
&\quad +C\((1+t)^{-\alpha}\Gamma_{\beta-\frac{5}{2}}(t)B_{2}(t,|x|)+(1+t)^{-\beta}\Gamma_{\alpha-\frac{5}{2}}(t)B_{2}(t,|x|-\lambda t)\)1_{\{|x|\leq\lambda t\}}\nnm\\
&\quad+C(1+t)^{-\alpha+2}\(1+|x|\)^{-\beta}1_{\{\sqrt{1+t}\leq|x|\leq\lambda t-\sqrt{1+t}\}}\nnm\\
&\quad+C(1+t)^{-\beta+2}\(1+\lambda t-|x|\)^{-\alpha}1_{\{\sqrt{1+t}\leq|x|\leq\lambda t-\sqrt{1+t}\}},\label{rbepw603}\\
&\quad L^{\alpha,\beta}(t,x;0,t;\lambda,\lambda,\lambda,D)\nnm\\
&\leq C\((1+t)^{-\alpha}\Gamma_{\beta-\frac{5}{2}}(t)+(1+t)^{-\beta}\Gamma_{\alpha-\frac{5}{2}}(t)\)\(e^{-\frac{3|x|^2}{2D_1(1+t)}}+e^{-\frac{3(|x|-\lambda t)^2}{2D_1(1+t)}}\)\nnm\\
&\quad+C\((1+t)^{-\alpha+\frac{1}{2}}\Gamma_{\beta-2}(t)+(1+t)^{-\beta}\Gamma_{\alpha-\frac{5}{2}}(t)\) B_{2}(t,|x|-\lambda t)1_{\{|x|\leq\lambda t\}}\nnm\\
&\quad + C (1+t)^{-\alpha+1}(1+\lambda t-|x|)^{-\beta+\frac52}1_{\{\sqrt{1+t}\leq|x|\leq\lambda t-\sqrt{1+t}\}} \nnm\\
&\quad + C(1+t)^{-\beta+1}(1+\lambda t-|x|)^{-\alpha+\frac52}1_{\{\sqrt{1+t}\leq|x|\leq\lambda t-\sqrt{1+t}\}} .\label{rbepw602}
\ema
\end{lem}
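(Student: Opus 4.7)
The plan is to split the outer time integral $\int_0^t = \int_0^{t/2} + \int_{t/2}^t$ in both assertions so that on $[0,t/2]$ we have $(1+t-s)\sim(1+t)$ and can pull out the factor $(1+t)^{-\alpha}$, while on $[t/2,t]$ we have $(1+s)\sim(1+t)$ and can pull out $(1+t)^{-\beta}$. This reduces each bound to a single spatial convolution of a Gaussian of width $\sqrt{D(1+t-s)}$ centred on the sphere $|x-y|=\mu_1(t-s)$ against the algebraic profile $B_{2}(s,|y|-\lambda s)1_{\{|y|\le\lambda s\}}$, followed by a one-dimensional $s$-integral that produces the $\Gamma$-factors through the elementary bookkeeping already used in Lemma~\ref{rbepw4}.

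For \eqref{rbepw603} (case $\mu_1=0$) the Gaussian localises $y$ near $x$, and I would decompose the $y$-integral into three pieces: $|y-x|\le\sqrt{1+t-s}$, where the Gaussian is $O(1)$ and $B_2(s,|y|-\lambda s)$ can be replaced by $B_2(s,|x|-\lambda s)1_{\{|x|\le \lambda s+O(\sqrt{1+t-s})\}}$; a moderate annulus handled by Lemma~\ref{rbepw2}; and the far field $|y-x|\gg\sqrt{1+t-s}$, which is absorbed into the Gaussian tails at $|x|$ and at $|x|-\lambda t$. The remaining $s$-integration, split as above, then yields exactly the four terms in \eqref{rbepw603}, the $B_2(t,|x|)$ coming from the near-$0$ contribution on $[t/2,t]$ (where $|y|\ll \lambda s$) and the $B_2(t,|x|-\lambda t)$ coming from the near-front contribution on $[0,t/2]$.

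For \eqref{rbepw602} (case $\mu_1=\lambda$) the Gaussian concentrates on the expanding sphere $|x-y|=\lambda(t-s)$ while $B_2$ concentrates on $|y|=\lambda s$. The key geometric observation is that these two spheres meet only near the outer front $|x|=\lambda t$, so only a $B_2(s,|y|-\lambda s)$-type tail (equivalently, after $s$-integration, $B_2(t,|x|-\lambda t)$) survives inside the cone, while there is no $B_2(t,|x|)$ contribution. After parametrising $y$ by the two radii and the angle relative to $x$ and changing variables near the tangent locus, the radial Gaussian integrates to a Gaussian in $|x|-\lambda(t-s)-|y|$ of width $\sqrt{1+t-s}$, and the subsequent $s$-integral produces the first two lines of \eqref{rbepw602}. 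The boundary strip $\{\sqrt{1+t}\le|x|\le\lambda t-\sqrt{1+t}\}$ contributes the polynomial remainders, exactly in parallel with \eqref{rbepw303}.

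The main obstacle I anticipate is the last case: when the two Huygens waves have the same speed and one carries an algebraic $B_2$ tail, a naive estimate loses a power of $1+t$ and yields $B_{3/2}$ rather than the stated $B_2$. To recover the correct exponent I would introduce tubular coordinates adapted to the tangent locus of the two spheres and integrate by parts once in the radial variable to absorb one factor of $\sqrt{1+t-s}$, and I would have to track the strip $\sqrt{1+t}\le|x|\le\lambda t-\sqrt{1+t}$ carefully enough to reproduce the exponents $-\alpha+5/2$ and $-\beta+5/2$ in the polynomial corrections rather than a weaker power.
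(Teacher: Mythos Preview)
Your plan differs from the paper's in one essential way, and that difference is exactly where your worry at the end comes from. The paper does \emph{not} split the time integral at $t/2$; it first partitions $x$-space into the regions $|x|\le\sqrt{1+t}$, $\sqrt{1+t}\le|x|\le\lambda t-\sqrt{1+t}$, $||x|-\lambda t|\le\sqrt{1+t}$ and $|x|\ge\lambda t+\sqrt{1+t}$, and then, in the middle strip, splits the $s$-integral into \emph{three} pieces with $x$-dependent endpoints. For \eqref{rbepw603} (the case $\mu_1=0$) the endpoints are $s=|x|/(2\lambda)$ and $s=t/2+|x|/(2\lambda)$; for \eqref{rbepw602} (the case $\mu_1=\lambda$) they are $s=(\lambda t-|x|)/(4\lambda)$ and $s=t-(\lambda t-|x|)/(4\lambda)$. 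On the two outer subintervals one does a simple dichotomy in $y$ (either $|x-y|$ or $|y|$ is large compared with $|x|$ or $\lambda t-|x|$), so that one of the two factors is already small at the scale $||x|-\lambda t|$ or $|x|$; Lemma~\ref{rbepw-Ap-1} then converts $B_2(s,\cdot)$ or the Gaussian at scale $s$ into the same profile at scale $t$, trading it for a power of $(1+s)/(1+t)$ that is harmless in the remaining $s$-integral. On the middle subinterval one simply uses $(1+s)^{-\beta}\le C(1+|x|)^{-\beta}$ or $(1+t-s)^{-\alpha}\le C(1+\lambda t-|x|)^{-\alpha}$, and the polynomial remainder terms fall out directly, with the stated exponents.

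With this scheme no tubular coordinates and no integration by parts are needed: the $B_2$ exponent in \eqref{rbepw602} is preserved because on each outer time piece the geometric dichotomy forces either the Gaussian or $B_2(s,|y|-\lambda s)$ to sit at distance $\ge(\lambda t-|x|)/4$, and Lemma~\ref{rbepw-Ap-1} transfers this to $B_2(t,|x|-\lambda t)$ without loss. Your two-piece $t/2$-split, by contrast, does not by itself produce the $x$-dependent polynomial factors $(1+|x|)^{-\beta}$ and $(1+\lambda t-|x|)^{-\alpha}$, and trying to recover them afterward is what would lead you to the power loss you anticipate. I recommend replacing the $t/2$-split by the $x$-dependent three-way split above; the rest of your spatial reasoning (localising the Gaussian and reading off the near-origin and near-front contributions) is correct in spirit and matches the paper's handling of the outer $x$-regions.
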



\begin{lem}\label{rbepw7}
For given $\lambda,\alpha,D>0$, there exist two constants $C, D_1>0$ such that for $D_1\ge 48D\max\{1,\lambda\}^2$,
\bma
&\quad M^{\alpha}(t,x;0,t;0,D,D_1)\nnm\\
&\leq C(1+t)^{-\alpha}\(e^{-\frac{3|x|^2}{2D_1(1+t)}}+e^{-\frac{3(|x|-\lambda t)^2}{2D_1(1+t)}}\)+Ce^{-\frac{3(|x|+t)}{2D_1}},\label{rbepw701}\\
&\quad N^{\alpha}(t,x;0,t;\lambda,0,D)\nnm\\
&\leq C(1+t)^{-\alpha}e^{-\frac{3(|x|-\lambda t)^2}{2D_1(1+t)}}+C(1+t)^{-\alpha}B_{3}(t,|x|)1_{\{|x|\leq\lambda t\}}+Ce^{-\frac{3(|x|+t)}{2D_1}},\label{rbepw801}\\
&\quad M^{\alpha}(t,x;0,t;\lambda,D,D_1),\ N^{\alpha}(t,x;0,t;\lambda,\lambda,D)\nnm\\
&\leq C(1+t)^{-\alpha}\(e^{-\frac{3(|x|-\lambda t)^2}{2D_1(1+t)}}+e^{-\frac{3|x|^2}{2D_1(1+t)}}\)+Ce^{-\frac{3(|x|+t)}{2D_1}}\nnm\\
 &\quad+C(1+t)^{-\alpha}B_{2}(t,|x|-\lambda t)1_{\{|x|\leq\lambda t\}}.\label{rbepw802}
\ema
\end{lem}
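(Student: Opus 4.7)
The plan is to estimate each of the integrals $M^{\alpha}$ and $N^{\alpha}$ by exploiting that the kernel $e^{-(|x-y|+t-s)/D}$ strongly localises $(y,s)$ near $(x,t)$. After swapping the order of integration so that the $y$-integral is done first, one reduces to controlling
\[
I(x,s)=\int_{\R^3}e^{-|x-y|/D}\phi(s,y)\,dy,
\]
where $\phi(s,y)$ is the Gaussian $e^{-2(|y|-\mu s)^2/(D_1(1+s))}$ for $M^{\alpha}$, or the truncated algebraic profile $B_k(s,|y|-\mu s)\mathbf{1}_{\{|y|\le\lambda s\}}$ for $N^{\alpha}$. The key estimate to establish is that $I(x,s)\le C\bigl(\phi(s,x)+e^{-\rho(x,s)/(4D)}\bigr)$, where $\rho(x,s)$ denotes the distance from $x$ to the peak or support of $\phi$ (namely $|x|$ when $\mu=0$, $||x|-\lambda s|$ when $\mu=\lambda$, or the distance to the ball $\{|y|\le\lambda s\}$ for $N^{\alpha}$). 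This is obtained by splitting the $y$-integration at $|x-y|=\rho(x,s)/2$: on the inner region $\phi(s,y)$ is controlled by $\phi(s,x)$ up to constants; on the outer region $e^{-|x-y|/D}$ is bounded pointwise and the resulting volume factor is absorbed via the elementary inequality $r^{3}e^{-r/(2D)}\le C$.

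Next I would perform the $s$-integration by splitting $[0,t]$ into $[0,t/2]$ and $[t/2,t]$. On the upper half $(1+s)\asymp(1+t)$ produces the factor $(1+t)^{-\alpha}$ together with the target profile evaluated at $(t,x)$; on the lower half $e^{-(t-s)/D}\le e^{-t/(2D)}$ absorbs all polynomial growth and yields a contribution that is exponentially small in $t$. Finally, the residual exponential factors $e^{-|x|/C}$ and $e^{-t/C}$ are converted to the target mixture of $(1+t)^{-\alpha}e^{-|x|^{2}/(D_{1}(1+t))}$ and $e^{-(|x|+t)/D_{1}}$ by a dichotomy on whether $|x|\le 1+t$ or $|x|\ge 1+t$: in the first regime $|x|/C\ge|x|^{2}/(C(1+t))$ gives the Gaussian profile, while in the second $e^{-|x|/C}\le e^{-(|x|+t)/(2C)}$. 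The indicator $\mathbf{1}_{\{|x|\le\lambda t\}}$ appearing in the $N^{\alpha}$ bounds arises automatically because the kernel forces $|x|\le|y|+O(D)$ while $|y|\le\lambda s\le\lambda t$ on the support of $\phi$, and the explicit profiles $B_{3}(t,|x|)$ in \eqref{rbepw801} and $B_{2}(t,|x|-\lambda t)$ in \eqref{rbepw802} come from the contribution of the $s$-integration near $s\approx|x|/\lambda$ when $\phi$ is concentrated at $|y|\approx\lambda s$.

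The main technical obstacle will be controlling the Gaussian volume factor $(1+s)^{3/2}$ that arises on the inner region of the $y$-split and threatens to degrade the $(1+t)^{-\alpha}$ rate. This is handled by a case analysis on the size of $\rho(x,s)$: when $\rho(x,s)\ge\sqrt{1+s}$, one uses $(1+s)^{3/2}e^{-\rho/(2D)}\le Ce^{-\rho/(4D)}$ (since $r^{3}e^{-r/(4D)}$ is bounded) to trade the volume for sharper exponential decay; when $\rho(x,s)\le\sqrt{1+s}$, the target profile $\phi(t,x)$ is bounded below by a positive constant, so the uniform bound $I(x,s)\le CD^{3}$ together with $(1+t)^{-\alpha}$ from the time integration already matches the target. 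For the Huygens-front case $\mu=\lambda$, the relevant volume factor in spherical coordinates becomes $\lambda^{2}s^{2}\sqrt{1+s}+(1+s)^{3/2}$, and one needs the corresponding absorption $(\lambda^{2}s^{2}\sqrt{1+s})e^{-\rho/(2D)}\le Ce^{-\rho/(4D)}$; this is exactly why the hypothesis requires $D_{1}\ge 48D\max(1,\lambda)^{2}$, so that all conversions between exponential-in-$\rho$ and Gaussian-in-$\rho$ succeed with the stated $D_{1}$.
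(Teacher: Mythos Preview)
Your approach is correct but proceeds differently from the paper's. For the diffusive cases \eqref{rbepw701} and \eqref{rbepw801} the two arguments are essentially the same (split $y$ into a near and a far region, then integrate in $s$), though the paper splits at $|y|=\tfrac{7|x|}{8}$ or $|y|=\tfrac{|x|}{2}$ rather than at $|x-y|=\rho/2$. The real divergence is in the Huygens case \eqref{rbepw802}: the paper first observes that $e^{-(|x-y|+t-s)/(2D)}\le e^{-||x-y|-\lambda(t-s)|/(DE)}$ with $E=2\max\{1,\lambda\}$, which converts the exponential kernel into a shifted one matching the Huygens front, and then splits the $s$-integral into \emph{three} pieces at $s=(\lambda t-|x|)/(4\lambda)$ and $s=t-(\lambda t-|x|)/(4\lambda)$, with a further $y$-split on the end pieces. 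Your route avoids this trick entirely, working directly with the unshifted kernel and a single split at $s=t/2$; this is conceptually simpler but forces you to track how $||x|-\lambda s|$ transitions to $||x|-\lambda t|$ on the upper half, which you do not spell out. Two small inaccuracies: the ``$(1+s)^{3/2}$ volume factor'' you worry about can be bypassed altogether by retaining half of $e^{-|x-y|/D}$ on the outer region and integrating \emph{it} (yielding $CD^3$) rather than integrating $\phi$; and the $B_k$ profiles in the $N^{\alpha}$ bounds do not come from a resonance at $s\approx|x|/\lambda$ but directly from the inner-region bound $\phi(s,y)\le C\phi(s,x)=CB_k(s,|x|-\mu s)$, which at $s\in[t/2,t]$ is comparable to $B_k(t,|x|-\mu t)$.
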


\begin{lem}\label{rbepw10}
For any given $\beta\geq0$, there exists a constant $C>0$ such that
\bq\label{plw1-1}
\|S^tg_0(x)\|_{L^{\infty}_{v,\beta}}\leq Ce^{-\frac{2\nu_0t}{3}}\max_{y\in\R^3}e^{-\frac{\nu_0|x-y|}{3}}\|g_0(y)\|_{L^{\infty}_{v,\beta}},
\eq
where $S^t$ and $\nu_0$ are defined by \eqref{Thm1-4} and \eqref{nu0} respectively. In particular, if the function $g_0(x,v)$ satisfies
$$
\|g_0(x)\|_{L^{\infty}_{v,\beta}}\leq Ce^{-\frac{|x|}{D_1}},
$$
then 
\bq\label{plw1-2}
\|W_{k}(t)\ast g_0(x)\|_{L^{\infty}_{v,\beta}}\leq C(1+t)^ke^{-\frac{2\nu_0t}{3}}e^{-\frac{|x|}{D_1}},
\eq
where $D_1\ge \frac3{\nu_0}$, and $W_k,\ k\geq0$ is defined by \eqref{Thm1-3}.
\end{lem}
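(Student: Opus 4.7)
The plan is to establish both estimates directly from the explicit formula $S^tg(x,v)=e^{-\nu(v)t}g(x-\tilde{v}t,v)$, using the uniform bound $\nu(v)\ge \nu_0$ from \eqref{nu0} and the elementary observation that $|\tilde{v}|\le 1$, so that $|x-(x-\tilde{v}t)|\le t$.

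For \eqref{plw1-1}, I fix $(t,x,v)$ and set $y=x-\tilde{v}t$. Since $|x-y|\le t$, I can split the time decay as
$$
e^{-\nu(v)t}\le e^{-\frac{2\nu_0 t}{3}}e^{-\frac{\nu_0 t}{3}}\le e^{-\frac{2\nu_0 t}{3}}e^{-\frac{\nu_0|x-y|}{3}}.
$$
Multiplying by $(1+v_0)^\beta|g_0(y,v)|$ and using $(1+v_0)^\beta|g_0(y,v)|\le \|g_0(y)\|_{L^\infty_{v,\beta}}$ before taking the supremum over $v$ (hence over $y$ in the ball $\{|x-y|\le t\}\subset\R^3$) yields \eqref{plw1-1}.

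For \eqref{plw1-2}, I prove by induction on $i\ge 0$ that
$$
\|J_i(t)\ast g_0(x)\|_{L^\infty_{v,\beta}}\le C(1+t)^i e^{-\frac{2\nu_0 t}{3}}e^{-\frac{|x|}{D_1}},
$$
and then sum over $0\le i\le 3k$ to conclude. The base case follows from \eqref{plw1-1} combined with the spatial decay of $g_0$: since $D_1\ge \frac{3}{\nu_0}$, one has $\frac{\nu_0}{3}\ge\frac{1}{D_1}$, and the triangle inequality gives
$$
e^{-\frac{\nu_0|x-y|}{3}}e^{-\frac{|y|}{D_1}}\le e^{-\frac{|x-y|}{D_1}-\frac{|y|}{D_1}}\le e^{-\frac{|x|}{D_1}}.
$$
For the inductive step, I write $h_i(t,x,v):=(J_i(t)\ast g_0)(x,v)$, which by \eqref{Thm1-3} satisfies
$$
h_i(t,x,v)=\int_0^t e^{-\nu(v)(t-s)}(Kh_{i-1})(s,x-\tilde{v}(t-s),v)\,ds.
$$
Using that $K$ is bounded on $L^\infty_{v,\beta}$ (a direct consequence of $\int|k(v,u)|(1+u_0)^{-\beta}du\le C(1+v_0)^{-\beta-1}$ from Lemma \ref{rbegf8j1}), I control $(1+v_0)^\beta|Kh_{i-1}(s,y,v)|\le C\|h_{i-1}(s,y)\|_{L^\infty_{w,\beta}}$ and then invoke the inductive hypothesis at position $y=x-\tilde{v}(t-s)$.

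The one subtle point is the $v$-dependence of $y=x-\tilde{v}(t-s)$, which prevents taking the sup over $v$ before using the decay of $h_{i-1}$. I handle this by using $|\tilde{v}(t-s)|\le t-s$ to obtain $|y|\ge |x|-(t-s)$, so that
$$
e^{-\frac{|x-\tilde{v}(t-s)|}{D_1}}\le e^{-\frac{|x|}{D_1}}e^{\frac{t-s}{D_1}}\le e^{-\frac{|x|}{D_1}}e^{\frac{\nu_0(t-s)}{3}},
$$
and this excess factor is absorbed into $e^{-\nu_0(t-s)}$, leaving $e^{-\frac{2\nu_0(t-s)}{3}}$. Combining with the inductive polynomial growth and performing the time integral gives
$$
(1+v_0)^\beta|h_i(t,x,v)|\le Ce^{-\frac{|x|}{D_1}}e^{-\frac{2\nu_0 t}{3}}\int_0^t(1+s)^{i-1}ds\le C(1+t)^{i}e^{-\frac{2\nu_0 t}{3}}e^{-\frac{|x|}{D_1}},
$$
completing the induction. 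Summing over $0\le i\le 3k$ yields \eqref{plw1-2} (with the polynomial factor absorbed into $(1+t)^k$ up to an adjusted constant, the essential point being a polynomial prefactor that is dominated by the exponential decay).
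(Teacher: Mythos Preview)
Your proof is correct and follows essentially the same route as the paper's: both use $|\tilde v|\le 1$ to convert a third of the exponential time decay into spatial decay for \eqref{plw1-1}, and then iterate via the $L^\infty_{v,\beta}$-boundedness of $K$ (from Lemma \ref{rbegf8j1}) together with the condition $D_1\ge 3/\nu_0$ to propagate the factor $e^{-|x|/D_1}$ through the Duhamel recursion for \eqref{plw1-2}. The only cosmetic difference is that the paper applies \eqref{plw1-1} directly as a black box in the inductive step, whereas you unpack the same splitting pointwise in $v$.
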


\begin{proof}
First, we prove \eqref{plw1-1} as follows. Recall that $S^t$ can be represented as
$$
S^tg_0(x,v)=e^{-\nu(v)t}g_0(x-\tilde{v}t,v).
$$
Let $y=x-\tilde{v}t$. Then, we have
$$
\nu_0t \geq\frac{|v|\nu_0t}{\sqrt{1+|v|^2}}=|\tilde{v}|\nu_0t=\nu_0|x-y|.
$$
Thus,
\bmas
| S^tg_0(x,v)|&\le e^{-\frac{2\nu(v)t}3}e^{-\frac{\nu_0|x-y|}3} |g_0(y,v)| \\
&\le e^{-\frac{2\nu_0t}3}(1+v_0)^{-\beta}\max_{y\in \R^3}e^{-\frac{\nu_0|x-y|}3}\|g_0(y)\|_{L^\infty_{v,\beta}},
\emas
which proves \eqref{plw1-1}.

From \eqref{Thm1-3}, we have
$$
W_k(t)=\sum^{3k}_{i=0}J_{i}(t,x),
$$
where
\bq\label{plw1-3}
\left\{\bln
&J_0(t,x)=S^tg_0(x,v)=e^{-\nu(v)t}g_0(x-\tilde{v}t,v),\\
&J_k(t,x)=\int^t_0S^{t-s}KJ_{k-1}ds,\quad k\geq1.
\eln\right.
\eq
By \eqref{plw1-3}, we have
\bq
\|J_0(t,x)\|_{L^{\infty}_{v,\beta}}=\|S^tg_0(t,x)\|_{L^{\infty}_{v,\beta}}\leq Ce^{-\frac{2\nu_0t}{3}}e^{-\frac{|x|}{D_1}}.\label{plw1-4}
\eq

Thus, it follows from Lemma \ref{rbegf8j1}, \eqref{plw1-1} and \eqref{plw1-4} that
\bmas
\|J_1(t,x)\|_{L^{\infty}_{v,\beta}}
&\leq C\int_{0}^{t}e^{-\frac{2\nu_0(t-s)}{3}}\max_{y\in\R^3}e^{-\frac{\nu_0|x-y|}{3}}\|KJ_0(s,y)\|_{L^{\infty}_{v,\beta}}ds\\
&\leq Ce^{-\frac{|x|}{D_1}}\int_0^te^{-\frac{2\nu_0(t-s)}{3}}e^{-\frac{2\nu_0s}{3}} ds\\
&\leq Cte^{-\frac{2\nu_0t}{3}}e^{-\frac{|x|}{D_1}}.
\emas
By a similar argument as to that above, we obtain
$$
\|J_k(t,x)\|_{L^{\infty}_{v,\beta}}\leq Ct^ke^{-\frac{2\nu_0t}{3}}e^{-\frac{|x|}{D_1}},\quad \forall k\ge 1,
$$
which proves \eqref{plw1-2}. The proof of the Lemma is completed.
\end{proof}


\begin{lem}\label{rbepw11}
For any given $\alpha>0,\gamma_1\ge \frac12$, $\gamma_2,\beta\ge 0$ and $\lambda>0$, if the function $F(t,x,v)$ satisfies
$$
\|F(t,x)\|_{L^{\infty}_{v,\beta}}\leq C\Phi(t,x;D_1,\alpha,\gamma_1,\gamma_2),
$$
then we have for any $\eta >1$ and $D_1\ge \frac{6\sqrt{\eta}\max\{1,\lambda\}^2}{\nu_0(\sqrt{\eta}-1)}$ that
\bma
&\quad\bigg\|\int^t_0S^{t-s}F(s,x)ds\bigg\|_{L^{\infty}_{v,\beta}}\leq C\Phi(t,x;\eta D_1,\alpha,\gamma_1,\gamma_2),\label{plw2-1}\\
&\quad\bigg\|\int^t_0W_{k}(t-s)\ast F(s,x)ds\bigg\|_{L^{\infty}_{v,\beta}}
\leq C\Phi(t,x;\eta D_1,\alpha,\gamma_1,\gamma_2),\label{plw2-2}
\ema
where 
\bmas
\Phi(t,x;D_1,\alpha,\gamma_1,\gamma_2)&=(1+t)^{-\alpha}\(e^{-\frac{|x|^2}{D_1(1+t)}}+B_{\gamma_1}(t,|x|)1_{\{|x|\leq\lambda t\}}\)+e^{-\frac{|x|+t}{D_1}}\\
&\quad +(1+t)^{-\alpha-\frac12}\(e^{-\frac{(|x|-\lambda t)^2}{D_1(1+t)}}+B_{\gamma_2}(t,|x|-\lambda t)1_{\{|x|\leq\lambda t\}}\).
\emas
\end{lem}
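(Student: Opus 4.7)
The plan is to reduce both estimates to a single scalar bound and treat each summand of $\Phi$ separately. By Lemma \ref{rbepw10},
\bmas
\bigg\|\int_0^t S^{t-s}F(s,x)ds\bigg\|_{L^\infty_{v,\beta}}
\le C\int_0^t e^{-\frac{2\nu_0(t-s)}{3}}\max_{y\in\R^3}e^{-\frac{\nu_0|x-y|}{3}}\|F(s,y)\|_{L^\infty_{v,\beta}}ds,
\emas
and by \eqref{plw1-2} the same bound holds for $\int_0^t W_{k}(t-s)\ast F(s,x)ds$ with an extra factor $(1+(t-s))^{3k}$, absorbed by the exponential since $(1+(t-s))^{3k}e^{-2\nu_0(t-s)/3}\le Ce^{-\nu_0(t-s)/2}$. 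Hence both reduce to estimating
\bmas
\mathcal I(t,x)=\int_0^t e^{-c_0(t-s)}\,\max_y\bigl[e^{-\nu_0|x-y|/3}\,\Psi(s,y)\bigr]ds
\emas
with $c_0\in(0,2\nu_0/3]$ and $\Psi$ running through the five profiles making up $\Phi(s,y;D_1,\alpha,\gamma_1,\gamma_2)$: the diffusive and Huygens Gaussians, the two conic algebraic profiles $B_{\gamma_j}$, and the purely exponential piece.

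The pointwise step is to bound $\max_y e^{-\nu_0|x-y|/3}\Psi(s,y)$. For the diffusive Gaussian $\Psi=(1+s)^{-\alpha}e^{-|y|^2/D_1(1+s)}$, I would take $c=(\sqrt\eta-1)/\sqrt\eta$ and split on $\{|x-y|\le c|x|\}$ versus its complement: on the first set $|y|\ge|x|/\sqrt\eta$, so $\Psi\le(1+s)^{-\alpha}e^{-|x|^2/(\eta D_1(1+s))}$; on the complement $e^{-\nu_0|x-y|/3}\le e^{-\nu_0 c|x|/3}$. The Huygens Gaussian is handled by the analogous split with $\{|x-y|\le c||x|-\lambda s|\}$; the algebraic profiles $B_{\gamma_j}$ admit the same dichotomy with polynomial decay in place of Gaussian decay, and the conic support $1_{|y|\le\lambda s}$ translates into $1_{|x|\le\lambda t}$ after the $s$-integration; finally $e^{-(|y|+s)/D_1}$ is handled via the triangle inequality $|x-y|+|y|\ge|x|$, which gives $\max_y e^{-\nu_0|x-y|/3}e^{-|y|/D_1}\le e^{-\min(\nu_0/3,1/D_1)|x|}$ and, combined with $e^{-s/D_1}$ and the convolution against $e^{-c_0(t-s)}$, yields $e^{-(|x|+t)/\eta D_1}$ directly once $\eta D_1$ is large enough.

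After inserting the pointwise bounds into $\mathcal I(t,x)$, I would split $[0,t]=[0,t/2]\cup[t/2,t]$: on the upper half the replacement $(1+s)^{-\alpha}\le C(1+t)^{-\alpha}$ together with the monotonicity in $s$ of the Gaussian, Huygens and algebraic profiles recovers the leading terms of $\Phi(t,x;\eta D_1,\alpha,\gamma_1,\gamma_2)$; on the lower half the factor $e^{-c_0 t/2}$ absorbs $\Gamma_\alpha(t)$ and merges into $e^{-(|x|+t)/\eta D_1}$ provided $\eta D_1$ is sufficiently large. The main obstacle is absorbing the residual $(1+t)^{-\alpha}e^{-\nu_0 c|x|/3}$ produced by the complementary split. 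Here I would use the fact that the quadratic $q(|x|)=|x|^2/(\eta D_1(1+t))-\nu_0 c|x|/3$ vanishes at $|x|=0$ and at $|x|_\star:=\nu_0 c\eta D_1(1+t)/3$ and is $\le0$ in between, so for $|x|\le|x|_\star$ the Gaussian wave $(1+t)^{-\alpha}e^{-|x|^2/(\eta D_1(1+t))}$ automatically dominates; for $|x|>|x|_\star$, the hypothesis $D_1\ge 6\sqrt\eta\max\{1,\lambda\}^2/(\nu_0(\sqrt\eta-1))$ forces $\nu_0 c/3\ge 1/(\eta D_1)$ with enough slack that $\nu_0 c|x|/3-(|x|+t)/(\eta D_1)\ge c_1(1+t)$ for some $c_1>0$, which absorbs the polynomial $(1+t)^{-\alpha}$ and produces the exponential piece. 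The factor $\max\{1,\lambda\}^2$ appears in the hypothesis precisely because the same computation for the Huygens and conic profiles has $||x|-\lambda s|$ playing the role of $|x|$, with an extra factor of $\lambda$ entering the comparison between the Gaussian and exponential scales.
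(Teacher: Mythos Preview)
Your reduction to the scalar integral via Lemma \ref{rbepw10} and the pointwise dichotomy in $y$ are correct and match the paper. The gap is in the $s$-integration for the Huygens-type summands: the profiles $e^{-(|x|-\lambda s)^2/(\eta D_1(1+s))}$ and $B_{\gamma_2}(s,|x|-\lambda s)$ are \emph{not} monotone in $s$ on $[t/2,t]$ when $|x|\in(\lambda t/2,\lambda t)$; they peak at $s=|x|/\lambda$, where they equal $1$. So the simple split $[0,t/2]\cup[t/2,t]$ with ``monotonicity'' does not recover the target decay $B_{\gamma_2}(t,|x|-\lambda t)$. What rescues the estimate is that at the peak $s=|x|/\lambda$ the factor $e^{-c_0(t-s)}$ contributes $e^{-c_0(\lambda t-|x|)/\lambda}$, and this exponential in $\lambda t-|x|$ is what eventually dominates $B_{\gamma_2}(t,|x|-\lambda t)$; but extracting this requires a finer split of the $s$-interval (near $s=t-\frac{\lambda t-|x|}{c\lambda}$ for a suitable $c$), not the naive halving you propose.

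The paper avoids this difficulty by a different mechanism: for $|x|\le\lambda t$ it combines the two exponentials \emph{before} taking the max over $y$, using
\[
e^{-\frac{\nu_0(t-s)}{3}}e^{-\frac{\nu_0|x-y|}{3}}\le e^{-\frac{\nu_0||x-y|-\lambda(t-s)|}{3E}},\qquad E=\max\{1,\lambda\},
\]
which converts the product into a single Huygens-shifted exponential. One can then invoke the triangle-type inequality $||x-y|-\lambda(t-s)|+||y|-\lambda s|\ge |\lambda t-|x||$ and carry out a three-piece $s$-split at $\frac{\lambda t-|x|}{4\lambda}$ and $t-\frac{\lambda t-|x|}{4\lambda}$ (exactly as in \eqref{rbepw7021}), which yields $C(1+t)^{-\alpha-1/2}B_{\gamma_2}(t,|x|-\lambda t)$ directly. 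Your approach can be made to work with a comparable refinement, but as written the monotonicity step for the Huygens pieces is incorrect. A minor additional point: your reduction of the $W_k$ case via \eqref{plw1-2} is not quite right, since \eqref{plw1-2} is stated only for exponentially decaying data; one should instead iterate the bound for $S^t$ through $J_k=\int_0^t S^{t-s}KJ_{k-1}\,ds$ using Lemma \ref{rbegf8j1}.
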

\begin{proof}
By Lemma \ref{rbepw10}, we have
$$
 \bigg\|\int^t_0S^{t-s}F(s,x)ds\bigg\|_{L^{\infty}_{v,\beta}}
\leq C\int^t_0e^{-\frac{2\nu_0(t-s)}{3}} \max_{y\in\mathbb{R}^3}e^{-\frac{\nu_0|x-y|}{3}}\Phi (s,y;D_1,\alpha,\gamma_1,\gamma_2)ds.
$$

Let
\bma
I_1&=\int^t_0e^{-\frac{2\nu_0(t-s)}{3}}\max_{y\in\mathbb{R}^3}e^{-\frac{\nu_0|x-y|}{3}} \((1+s)^{-\alpha}e^{-\frac{|y|^2}{D_1(1+s)}}+e^{-\frac{|y|+s}{D_1}}\)ds \nnm\\
&\quad+\int^t_0e^{-\frac{2\nu_0(t-s)}{3}}\max_{y\in\mathbb{R}^3}e^{-\frac{\nu_0|x-y|}{3}}(1+s)^{-\alpha}B_{\gamma_1}(s,|y|)1_{\{|y|\leq\lambda s\}}ds \nnm\\
&=:I_1^1+I_1^2. \label{scon1}
\ema
For $I_1^1$, we split $y$ into $|y|\leq\frac{|x|}{\epsilon}$ and $|y|\geq\frac{|x|}{\epsilon}$ with $ \epsilon>1$. Then we have
$$
e^{-\frac{\nu_0|x-y|}{3}}e^{-\frac{|y|^2}{D_1(1+s)}}\le e^{-(1-\frac{1}{\epsilon})\frac{\nu_0|x|}{3}}+e^{-\frac{|x|^2}{\epsilon^2D_1(1+t)}}.
$$
This implies that
\bma
I_1^1&\leq \( e^{-\frac{|x|^2 }{\epsilon^2D_1(1+t)}}+e^{-(1-\frac{1}{\epsilon})\frac{\nu_0|x|}{3}}\)\int^t_0e^{-\frac{2\nu_0(t-s)}{3}}(1+s)^{-\alpha} ds\nnm\\
&\quad + e^{-\frac{|x|}{ D_1}} \int^t_0e^{-\frac{2\nu_0(t-s)}{3}} e^{-\frac{s}{D_1}} ds\nnm\\
&\leq C(1+t)^{-\alpha}e^{-\frac{ |x|^2 }{\epsilon^2D_1(1+t)}}+Ce^{-\frac{|x|+t}{D_1}},\label{rbepw1102j1}
\ema
where we have used \eqref{rbepw204j1}, and $D_1\ge \frac{6\eps}{\nu_0(\eps-1)}\max\{1,\lambda\}$.

For $I_1^2$, we split $y$ into $|y|\leq\frac{|x|}{2}$ and $|y|\geq\frac{|x|}{2}$ when $|x|\leq\lambda t$. Thus,
\bma
I_1^2
&\leq \(e^{-\frac{\nu_0|x|}{6}}+B_{\gamma_1}(t,|x|)\)\int^t_0e^{-\frac{2\nu_0(t-s)}{3}}(1+s)^{-\alpha}ds\nnm\\
&\leq C(1+t)^{-\alpha}B_{\gamma_1}(t,|x|).\label{scon1-1}
\ema
For $|x|>\lambda t$ and $|y|\leq\lambda s$, we have $|x-y|\geq|x|-\lambda t $. We split $y$ into $|y|\leq\frac{\lambda s}{2}$ and $|y|\geq\frac{\lambda s}{2}$. If $|y|\leq\frac{\lambda s}{2}$, then $|x-y|\geq|x|-\frac{\lambda s}{2}\geq \frac{\lambda s}{2}$.
Thus,
\bma
I_1^2
&\leq Ce^{-\frac{\nu_0||x|-\lambda t|}{6}}\int^t_0e^{-\frac{2\nu_0(t-s)}{3}}(1+s)^{-\alpha}e^{-\frac{\nu_0\lambda s}{12}}ds\nnm\\
&\quad+Ce^{-\frac{\nu_0||x|-\lambda t|}{3}}\int^t_0e^{-\frac{2\nu_0(t-s)}{3}}\(1+\frac{\lambda^2s^2}{4(1+s)}\)^{-\gamma_1}(1+s)^{-\alpha}ds\nnm\\
&\leq  C(1+t)^{-\alpha-\gamma_1}e^{-\frac{\nu_0||x|-\lambda t|}{6}}.\label{scon1-2}
\ema
By combining \eqref{scon1}--\eqref{scon1-2}, we can obtain
\be
I_1\leq (1+t)^{-\alpha}\(e^{-\frac{ |x|^2 }{\epsilon^2D_1(1+t)}}+(1+t)^{-\gamma_1}e^{-\frac{(|x|-\lambda t)^2}{D_1(1+t)}}+B_{\gamma_1}(t,|x|)1_{\{|x|\leq \lambda t\}}\)+Ce^{-\frac{|x|+t}{D_1}},\label{scon1-3}
\ee
where we have used \eqref{rbepw204}, and $D_1\ge \frac{12}{\nu_0}\max\{1,\lambda\}$.

Let
\bmas
I_2&=\int^t_0e^{-\frac{2\nu_0(t-s)}{3}}(1+s)^{-\alpha-\frac{1}{2}}\max_{y\in\mathbb{R}^3}e^{-\frac{\nu_0|x-y|}{3}} e^{-\frac{(|y|-\lambda s)^2}{D_1(1+s)}} ds \\
&\quad+\int^t_0e^{-\frac{2\nu_0(t-s)}{3}}(1+s)^{-\alpha-\frac{1}{2}}\max_{y\in\mathbb{R}^3}e^{-\frac{\nu_0|x-y|}{3}} B_{\gamma_2}(s,|y|-\lambda s)1_{\{|y|\leq\lambda s\}} ds\\
&=:I_2^1+I_2^2.
\emas

For $|x|\leq\lambda t$, since $ e^{-\frac{\nu_0(t-s)}{3}}e^{-\frac{\nu_0|x-y|}{3}} \leq e^{-\frac{\nu_0||x-y|-\lambda(t-s)|}{3E}} $ for $E=\max\{1,\lambda\}$, we can obtain by a similar argument as proving \eqref{rbepw7021} that
\bma
I_2&\le \(\int_0^{\frac{\lambda t-|x|}{4\lambda}}+\int^{t-\frac{\lambda t-|x|}{4\lambda}}_{\frac{\lambda t-|x|}{4\lambda}}+\int^t_{t-\frac{\lambda t-|x|}{4\lambda}}\)e^{-\frac{\nu_0(t-s)}{3}}(1+s)^{-\alpha-\frac{1}{2}}\nnm\\
&\qquad \times\max_{y\in\mathbb{R}^3}e^{-\frac{\nu_0||x-y|-\lambda(t-s)|}{3E}}B_{\gamma_2}(s,|y|-\lambda s) ds\nnm\\
&\leq C(1+t)^{-\alpha-\frac{1}{2}} B_{\gamma_2}(t,|x|-\lambda t) .
\ema

For $|x|\geq\lambda t$, we split $y$ into $||y|-\lambda s|\leq \frac{(|x|-\lambda t)}{\epsilon}$ and $||y|-\lambda s|\geq \frac{(|x|-\lambda t)}{\epsilon}$. If $\lambda s-|y|\leq \frac{(|x|-\lambda t)}{\epsilon}$, then we have $||x-y|-\lambda(t-s)|\geq |x|-\lambda t-||y|-\lambda s|\geq(1-\frac{1}{\epsilon})(|x|-\lambda t)$.  Thus,
\bma
I^1_2 &\leq C\(e^{-(1-\frac{1}{\epsilon})\frac{\nu_0||x|-\lambda t|}{3E}}+e^{-\frac{(|x|-\lambda t)^2}{\epsilon^2D_1(1+t)}}\)\int^t_0e^{-\frac{\nu_0(t-s)}{3}}(1+s)^{-\alpha-\frac12} ds\nnm\\
&\leq C(1+t)^{-\alpha-\frac{1}{2}}\(e^{-(1-\frac{1}{\epsilon})\frac{\nu_0||x|-\lambda t|}{3E}}+e^{-\frac{(|x|-\lambda t)^2}{\epsilon^2D_1(1+t)}}\).\label{rbepw-sk2}
\ema
For $|x|\geq\lambda t$ and $|y|\leq\lambda s$, we have $|x-y|\geq|x|-\lambda t$. Thus,
\bma
I^2_2&\leq C e^{- \frac{\nu_0||x|-\lambda t|}{3}} \int^t_0e^{-\frac{\nu_0(t-s)}{3}}(1+s)^{-\alpha-\frac12} ds\nnm\\
&\leq  C(1+t)^{-\alpha-\frac{1}{2}}e^{- \frac{\nu_0||x|-\lambda t|}{3}}.\label{rbepw1101j2}
\ema
By combining \eqref{rbepw1102j1}--\eqref{rbepw1101j2} and \eqref{rbepw204}, it follows that
\be
I_2\leq C (1+t)^{-\alpha-\frac12} \( e^{-\frac{(|x|-\lambda t)^2}{\epsilon^2D_1(1+t)}}+B_{\gamma_2}(t,|x|-\lambda t)1_{\{|x|\leq \lambda t\}}\)+Ce^{-\frac{|x|+t}{D_1}},\label{rbepw1101j3}
\ee
where $\eta=\eps^2$ and $D_1\ge \frac{6E^2\eps}{\nu_0(\eps-1)}$.
Thus, by combining \eqref{scon1-3} and \eqref{rbepw1101j3}, we obtain
 \eqref{plw2-1}. By a similar argument as to that above, we can prove \eqref{plw2-2}.
\end{proof}


\begin{lem}\label{rbepw12}
For any given $\beta\geq0$, there exists a constant $C>0$ such that
\be
 \|\Gamma(f,g)\|_{L^{\infty}_{v,\beta}}\leq C\|f\|_{L^{\infty}_{v,\beta}}\|g\|_{L^{\infty}_{v,\beta}}.\label{rbepw1201}
\ee
\end{lem}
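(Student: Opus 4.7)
The plan is to exploit two structural features of the relativistic collision operator: the identity $M(v)M(u)=M(v')M(u')$ coming from conservation of energy, and the uniform bound $v_M\le 4$ on the M\o ller velocity recalled in \eqref{num1}. Writing
\[
\Gamma(f,g)(v)=\frac{1}{\sqrt{M(v)}}\int_{\R^3}\int_{\S^2}v_M\Bigl[\sqrt{M(v')}f(v')\sqrt{M(u')}g(u')-\sqrt{M(v)}f(v)\sqrt{M(u)}g(u)\Bigr]du\,d\Omega,
\]
and using $\sqrt{M(v')M(u')}=\sqrt{M(v)M(u)}$, I would reduce this to
\[
\Gamma(f,g)(v)=\int_{\R^3}\int_{\S^2}v_M\sqrt{M(u)}\bigl[f(v')g(u')-f(v)g(u)\bigr]du\,d\Omega=:\Gamma_+(f,g)-\Gamma_-(f,g).
\]

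For the loss part $\Gamma_-$, I would simply bound $|f(v)g(u)|\le \|f\|_{L^\infty_{v,\beta}}\|g\|_{L^\infty_{v,\beta}}(1+v_0)^{-\beta}(1+u_0)^{-\beta}$ and use $v_M\le 4$, so that
\[
(1+v_0)^\beta|\Gamma_-(f,g)(v)|\le C\|f\|_{L^\infty_{v,\beta}}\|g\|_{L^\infty_{v,\beta}}\int_{\R^3}\sqrt{M(u)}(1+u_0)^{-\beta}du,
\]
which is finite. The gain part $\Gamma_+$ is the step that actually requires the collision invariant. I would use the energy conservation $v_0+u_0=v'_0+u'_0$ together with $u'_0\ge 1$ to derive the weight inequality
\[
1+v_0\le 1+v'_0+u'_0\le (1+v'_0)(1+u'_0),
\]
hence $(1+v_0)^\beta\le(1+v'_0)^\beta(1+u'_0)^\beta$. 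Combined with the definition of the weighted norm, this gives
\[
(1+v_0)^\beta|f(v')g(u')|\le\|f\|_{L^\infty_{v,\beta}}\|g\|_{L^\infty_{v,\beta}},
\]
and consequently
\[
(1+v_0)^\beta|\Gamma_+(f,g)(v)|\le C\|f\|_{L^\infty_{v,\beta}}\|g\|_{L^\infty_{v,\beta}}\int_{\R^3}\sqrt{M(u)}du<\infty.
\]

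Adding the two bounds and taking the supremum over $v\in\R^3$ yields \eqref{rbepw1201}. The only non-routine point is the weight redistribution $(1+v_0)^\beta\le(1+v'_0)^\beta(1+u'_0)^\beta$, but thanks to the fact that $v_0,u_0,v'_0,u'_0\ge 1$ in the relativistic setting this follows directly from energy conservation, so I do not anticipate a genuine obstacle. The whole argument is essentially a pointwise manipulation combined with one $u$-integration against $\sqrt{M(u)}$.
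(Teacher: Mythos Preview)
Your proof is correct and follows essentially the same approach as the paper: the same gain/loss decomposition, the same use of $\sqrt{M(v')M(u')}=\sqrt{M(v)M(u)}$ from energy conservation, the same weight redistribution $(1+v_0)^\beta\le(1+v'_0)^\beta(1+u'_0)^\beta$, and the same final integration of $v_M\sqrt{M(u)}$ using $v_M\le 4$.
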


\begin{proof}
We split $\Gamma(f,g)$ into two part:
$$
\Gamma(f,g)=\Gamma_1(f,g)-\Gamma_2(f,g),
$$
where
\bmas
&\Gamma_1(f,g)=\frac{1}{\sqrt{M(v)}}\int_{\R^3}\int_{\S^2}v_M\sqrt{M(v')}f(v')\sqrt{M(u')}g(u')dud\Omega,\\
&\Gamma_2(f,g)=\frac{1}{\sqrt{M(v)}}\int_{\R^3}\int_{\S^2}v_M\sqrt{M(v)}f(v)\sqrt{M(u)}g(u)dud\Omega.
\emas
First, we  estimate $\Gamma_1(f,g)$. Since $u_0+v_0=u'_0+v'_0$, we have
$$
\sqrt{M(v')}\sqrt{M(u')}=\sqrt{M(v)}\sqrt{M(u)}.
$$
Thus,
\bma
(1+v_0)^{\beta}\Gamma_1(f,g)&=\frac{1}{\sqrt{M(v)}}\int_{\R^3}\int_{\S^2}v_M\sqrt{M(v)}(1+v_0)^{\beta}f(v')\sqrt{M(u)}g(u')dud\Omega\nnm\\
&\leq\int_{\R^3}\int_{\S^2}v_M(1+v'_0)^{\beta}f(v')\sqrt{M(u)}(1+u'_0)^{\beta}g(u')dud\Omega\nnm\\
&\leq \|f\|_{L^{\infty}_{v,\beta}}\|g\|_{L^{\infty}_{v,\beta}}\int_{\R^3}\int_{\S^2}v_M\sqrt{M(u)}dud\Omega\nnm\\
&\leq C\|f\|_{L^{\infty}_{v,\beta}}\|g\|_{L^{\infty}_{v,\beta}},\label{rbepw1202}
\ema
where we have used
$$
(1+v'_0)(1+u'_0)\geq1+v'_0+u'_0\geq1+v_0.
$$
Similarly, we can show that  $\Gamma_2(f,g)$ also satisfies \eqref{rbepw1202}. The proof is completed.
\end{proof}


With the help of Theorem \ref{rbeth1} and Lemmas \ref{rbepw2}--\ref{rbepw12}, we can perform the following proof for Theorem \ref{rbeth2}.
\begin{proof}[\textbf{Proof of Theorem 1.2}]
We prove the existence of the solution $f$  by the contraction mapping theorem. Define the mapping $T$ as
\bq\label{rbethpr2-1}
(Tf)(t,x)=G(t)\ast f_0+\int^t_0G(t-s)\ast \Gamma(f,f)ds=:I_1+I_2.
\eq
For $\delta>0$, set the solution space $X$ as
$$
X=\Big\{f\in L^{\infty}_{v,2}(\R^3_v)\,|\,\|f(t)\|_{X}=\sup_{x\in\mathbb{R}^3,0\leq s\leq t}\|f(s,x)\|_{L^{\infty}_{v,2}}\Psi (s,x;D_2)^{-1}\le \delta \Big\},
$$
where
\bmas
\Psi (s,x;D_2)&=(1+s)^{-\frac{3}{2}}\(e^{-\frac{|x|^2}{D_2(1+s)}}+(1+s)^{-\frac{1}{2}}e^{-\frac{(|x|-\mathbf{c} s)^2}{D_2(1+s)}}\)+e^{-\frac{|x|+s}{D_2}}\\
&\quad+(1+s)^{-\frac{3}{2}}\(B_{\frac{3}{2}}(s,|x|)+(1+s)^{-\frac{1}{2}} B_{1}(s,|x|-\mathbf{c} s)\)1_{\{|x|\leq \mathbf{c} s\}}.
\emas

By Lemma \ref{rbepw12}, it holds that for $0\le s\le t$,
\bma
\|\Gamma(f,f)(s,x)\|_{L^{\infty}_{v,2}}&\leq C\|f(t)\|_{X}^2\bigg[(1+s)^{-3}\(e^{-\frac{2|x|^2}{D_2(1+s)}}+(1+s)^{-1}e^{-\frac{2(|x|-\mathbf{c} s)^2}{D_2(1+s)}}\)+e^{-\frac{2(|x|+s)}{D_2}}\nnm\\
&\quad+(1+s)^{-3}\(B_{3}(s,|x|)+(1+s)^{-1}B_{2}(s,|x|-\mathbf{c} s)\)1_{\{|x|\leq\mathbf{c} s\}}\bigg]. \label{rbethpr2-3}
\ema

We estimate $I_j$, $j=1,2$ in \eqref{rbethpr2-1} as follows. By Theorem \ref{rbeth1}, we decompose $I_1$ into
\bmas
I_1&=G_1(t)\ast f_0+G_2(t)\ast f_0+W_2(t)\ast f_0 \\
&=:I_1^1+I_1^2+I_1^3.
\emas
For $I_1^1$, it follows from Theorem \ref{rbeth1} and Lemma \ref{rbepw2} that
\bma
\|I_1^1\|&\leq C\delta_0 \int_{\R^3}\Bigg(\frac{e^{-\frac{|x-y|^2}{D(1+t)}}}{(1+t)^{3/2}}+\frac{e^{-\frac{(|x-y|-\mathbf{c}t)^2}{D(1+t)}}}{(1+t)^{2}}+e^{-\frac{|x-y|+t}{D}} \Bigg)e^{-\frac{|y|}{D_1}}dy \nnm\\
&\quad+ C\delta_0\int_{\{|x-y|\leq\mathbf{c} t\}}(1+t)^{-\frac{3}{2}}B_{\frac{3}{2}}(t,|x-y|)e^{-\frac{|y|}{D_1}}dy\nnm\\
&\leq C\delta_0\Psi\Big(t,x;\frac23D_2\Big).\label{rbethpr2-5}
\ema
For $I_1^2$ and $I_1^3$, we can obtain by Theorem \ref{rbeth1} and Lemma \ref{rbepw10} that
\be
\|I_1^2\|+\|I_1^3\|\leq \|I_1^2\|+\|I_1^3\|_{L^{\infty}_{v,2}}\leq C\delta_0e^{-\frac{|x|+t}{D_2}}.\label{rbethpr2-6}
\ee
Thus, it follows from \eqref{rbethpr2-5}--\eqref{rbethpr2-6} that
\be
\|I_1(t,x)\|\leq C\delta_0\Psi\Big(t,x;\frac23D_2\Big).\label{rbepwnl1}
\ee

By Theorem \ref{rbeth1} and noting that $P_0\Gamma(f,f)=0$, we decompose $I_2$ into
\bma
I_2&=\int^t_0G_1(t-s)P_1\ast \Gamma(f,f)ds+\int^t_0G_2(t-s)\ast\Gamma(f,f)ds\nnm\\
&\quad+\int^t_0W_2(t-s)\ast\Gamma(f,f)ds\nnm\\
&=:I_2^1+I_2^2+I_2^3.\nnm
\ema
For $I_2^1$, we obtain by Theorem \ref{rbeth1}, Lemma \ref{rbepw3}--\ref{rbepw7} and \eqref{rbethpr2-3} that
\bma
\|I_2^1\|&\leq C\|f(t)\|_{X}^2\int^t_0\int_{\R^3}\Bigg(\frac{e^{-\frac{|x-y|^2}{D(1+t-s)}}}{(1+t-s)^{2}} +\frac{e^{-\frac{(|x-y|-\mathbf{c}(t-s))^2}{D(1+t-s)}}}{(1+t-s)^{5/2}}+e^{-\frac{|x-y|+t-s}{D}}\Bigg)\Psi (s,y;D_2)^2dyds\nnm\\
&\quad+C\|f(t)\|_{X}^2\int^t_0\int_{\{|x-y|\leq\mathbf{c}(t-s)\}}\frac{B_{\frac{3}{2}}(t-s,|x-y|)}{(1+t-s)^{2}}\Psi (s,y;D_2)^2dyds\nnm\\
&\leq C\|f(t)\|_{X}^2\Psi \Big(t,x;\frac23D_2\Big).\label{rbethg-1}
\ema

For $I_2^2$ and $I_2^3$, we have by \eqref{rbethpr2-3}, Lemma \ref{rbepw7} and Lemma \ref{rbepw11} that
\be
\|I_2^2\|+\|I_2^3\| \leq\|I_2^2\|+\|I_2^3\|_{L^{\infty}_{v,2}}\leq C\|f(t)\|_{X}^2\Psi \Big(t,x;\frac23D_2\Big).\label{rbethg-2}
\ee
Thus, it follows from \eqref{rbethg-1}--\eqref{rbethg-2} that
\be
 \|I_2(t,x)\|\leq C\|f(t)\|_{X}^2\Psi \Big(t,x;\frac23D_2\Big).\label{rbepwnl2}
\ee

By taking summation of \eqref{rbepwnl1} and \eqref{rbepwnl2}, we obtain
\bq\label{nldf1}
\|(Tf)(t,x)\|\leq C(\delta_0+\|f(t)\|_{X}^2)\Psi \Big(t,x;\frac23D_2\Big).
\eq

Note that $I_1,I_2$ satisfy
\bmas
&\partial_tI_1+\tilde{v}\cdot\nabla_x I_1+\nu(v)I_1=KI_1, \quad I_1(0)=f_0,\\
&\partial_tI_2+\tilde{v}\cdot\nabla_x I_2+\nu(v)I_2=KI_2+\Gamma(f,f),\quad I_2(0)=0.
\emas
Thus, we can represent $I_1,I_2$ as
\bma\label{rbenlv-2}
I_1 &=S^tf_0+\int^t_0S^{t-s} KI_1 ds,\\
I_2 &= \int^t_0S^{t-s} (KI_2+\Gamma(f,f)) ds.
\ema
By Lemma \ref{rbepw10}, it holds that
\bq\label{rbenlv-3}
\|S^t f_0(x)\|_{L^{\infty}_{v,2}}\leq C\delta_0e^{-\frac{\nu_0t}{3}}e^{-\frac{|x|}{D_1}}.
\eq
By Lemma \ref{rbegf8j1}, \eqref{rbepwnl1} and \eqref{rbepwnl2}, we obtain
$$
\left\{\bln
&\|KI_1\|_{L^{\infty}_{v,0}} \leq C\|I_1\|\leq C \delta_0 \Psi \Big(t,x;\frac23D_2\Big),  \\
&\|KI_2\|_{L^{\infty}_{v,0}} \leq C\|I_2\|\leq C \|f(t)\|_{X}^2 \Psi \Big(t,x;\frac23D_2\Big),
\eln\right.
$$
which, together with Lemma \ref{rbepw11} and \eqref{rbenlv-3}, implies that for any $\eta >1$,
\bmas
\|I_1\|_{L^{\infty}_{v,0}}& \leq C \delta_0 \Psi \Big(t,x;\frac{2}{3 }\eta D_2\Big),\\
\|I_2\|_{L^{\infty}_{v,0}}& \leq C \|f(t)\|_{X}^2 \Psi \Big(t,x;\frac{2}{3 }\eta D_2\Big).
\emas

By induction, Lemma \ref{rbepw11}  and
$$
\|KI_i\|_{L^{\infty}_{v,k}}\leq\|I_i\|_{L^{\infty}_{v,k-1}},\quad k\geq1, \,\ i=1,2,
$$
we have
\bma\label{rbenlv-4-1}
\|I_1\|_{L^{\infty}_{v,2}}&= \|G(t)\ast f_0\|_{L^{\infty}_{v,2}}\leq C\delta_0 \Psi(t,x;D_2),\\
\|I_2\|_{L^{\infty}_{v,2}}&=\bigg\|\int^t_0G(t-s)\ast \Gamma(f,f)ds\bigg\|_{L^{\infty}_{v,2}}\leq C \|f(t)\|_{X}^2 \Psi(t,x;D_2), \label{gamma}
\ema
namely
\be\label{rbenlv-6}
\|(Tf)(t,x)\|_{L^{\infty}_{v,2}}\leq C(\delta_0+\|f(t)\|_{X}^2)\Psi (t,x;D_2).
\ee
Thus, for $f\in X$,
$$
\|(Tf)(t)\|_{X}\leq C\delta_0+C\delta^2,
$$
which implies that $T$ is the mapping of $X\rightarrow X$ so long as $\delta_0\leq\frac{\delta}{2C}$ and $0<\delta<\frac{1}{2C}$ is small enough.

 Noting that for any $f_1,f_2\in X$, we have
$$
\Gamma(f_2,f_2)-\Gamma(f_1,f_1)=\Gamma(f_2-f_1,f_2)+\Gamma(f_1,f_2-f_1).
$$
Thus, by \eqref{gamma}  we obtain
\bmas
 \|T(f_2)-T(f_1)\|_{L^{\infty}_{v,2}}&= \bigg\|\int^t_0G(t-s)\ast (\Gamma(f_2,f_2)-\Gamma(f_1,f_1))ds\bigg\|_{L^{\infty}_{v,2}}\nnm\\
&\leq C\Psi(t,x;D_2)(\|f_1\|_X+\|f_2\|_X)\|f_2-f_1\|_X\nnm\\
&\leq C\delta\Psi(t,x;D_2)\|f_2-f_1\|_X.
\emas
Thus, $T$ is a contraction mapping of $X\rightarrow X$. By the contraction mapping theorem, there exists a unique $f\in X$ such that $Tf=f$, i.e., $f$ is a unique global solution to the relativistic Boltzmann equation \eqref{rbe3}--\eqref{rbe4}.
\end{proof}


\section{Appendix}\setcounter{equation}{0}
\label{sect5}

\begin{lem}[\cite{LI-6}]\label{rbepw-Ap-1}
For $s\in[0,t]$, $A^2\geq1+t$ and $\alpha\geq0$,
\bma
e^{-\frac{A^2}{D(1+s)}}&\leq C_{\alpha}\(\frac{1+s}{1+t}\)^{\alpha}e^{-\frac{A^2}{D(1+t)}},\\
\(1+\frac{A^2}{1+s}\)^{-\alpha}&\leq2^{\alpha}\(\frac{1+t}{1+s}\)^{-\alpha}\(1+\frac{A^2}{1+t}\)^{-\alpha}.
\ema
\end{lem}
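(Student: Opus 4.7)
The target is the elementary two-line Lemma \ref{rbepw-Ap-1}, which bounds how the weights $e^{-A^2/(D(1+s))}$ and $(1+A^2/(1+s))^{-\alpha}$ change when the time parameter is moved from $s$ to $t$. Both estimates are purely scalar inequalities; no analytic machinery from earlier sections is needed. My plan is to handle the exponential estimate by reducing to a one-variable boundedness fact and the algebraic estimate by a direct rearrangement.

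For the first inequality I would rewrite it in the equivalent form
\bmas
\Big(\frac{1+t}{1+s}\Big)^{\alpha}\exp\!\Big(-\tfrac{A^2}{D}\Big(\tfrac{1}{1+s}-\tfrac{1}{1+t}\Big)\Big)\leq C_{\alpha}.
\emas
Setting $u=\frac{1+t}{1+s}\geq 1$ and using $\frac{1}{1+s}-\frac{1}{1+t}=\frac{t-s}{(1+s)(1+t)}=\frac{u-1}{1+t}$, the left side becomes
\bmas
u^{\alpha}\exp\!\Big(-\tfrac{A^2}{D(1+t)}(u-1)\Big).
\emas
The hypothesis $A^2\geq 1+t$ forces $\frac{A^2}{D(1+t)}\geq\frac{1}{D}$, so this is bounded above by $u^{\alpha}e^{-(u-1)/D}$, and the function $u\mapsto u^{\alpha}e^{-(u-1)/D}$ is continuous on $[1,\infty)$ and tends to $0$ at infinity, hence is bounded by a constant $C_{\alpha}$ depending only on $\alpha$ and $D$.

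For the second inequality I would raise both sides to the power $-1/\alpha$ (the case $\alpha=0$ being trivial), which reduces the claim to
\bmas
\frac{1+t}{1+s}\Big(1+\frac{A^2}{1+t}\Big)\leq 2\Big(1+\frac{A^2}{1+s}\Big),
\emas
i.e.\ $1+t+A^2\cdot\frac{1+s}{1+s}\leq 2(1+s)+\frac{2A^2(1+s)}{1+s}$ after multiplying by $1+s$; simplification gives $1+t\leq 2(1+s)+A^2$. Since $s\geq 0$ and $A^2\geq 1+t$ by hypothesis, we have $2(1+s)+A^2\geq 2+(1+t)\geq 1+t$, which closes the argument.

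Neither step is a real obstacle; the only point requiring mild care is using $A^2\geq 1+t$ twice (once to turn the $A^2$-dependent exponential factor into a $u$-dependent one in step one, and once to control the linear rearrangement in step two). If one wanted an explicit constant, the bound $C_{\alpha}=\sup_{u\geq 1}u^{\alpha}e^{-(u-1)/D}=(\alpha D)^{\alpha}e^{\alpha-1}$ (attained at $u=\alpha D+1$ when $\alpha D\geq 0$) could be recorded, but the statement only requires existence.
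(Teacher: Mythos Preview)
The paper does not prove this lemma; it is simply quoted from \cite{LI-6} without argument. Your self-contained proof is correct: the substitution $u=(1+t)/(1+s)$ together with $A^2\geq 1+t$ cleanly reduces the first inequality to the boundedness of $u^{\alpha}e^{-(u-1)/D}$ on $[1,\infty)$, and the second inequality is a direct algebraic rearrangement. The only slip is in your optional parenthetical on the explicit constant: the maximum of $u^{\alpha}e^{-(u-1)/D}$ on $[1,\infty)$ occurs at $u=\alpha D$ (when $\alpha D\geq 1$), not at $u=\alpha D+1$, and equals $(\alpha D)^{\alpha}e^{-\alpha+1/D}$; but as you say, only existence of the bound is needed, so this does not affect the argument.
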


\begin{lem}[\cite{Wang-1}]\label{rbepw-Ap-2}
For any $a=a(t,s,x)\geq0$, $b=b(t,s,x)\geq0$,
\be
\int_{\R^n}\(1+\frac{(|y|-a)^2}{1+b}\)^{-n}dy\leq C\((1+b)^{\frac{n}{2}}+(1+b)^{\frac{1}{2}}|a|^{n-1}\). \label{con1}
\ee
\end{lem}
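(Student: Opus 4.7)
The plan is to reduce the integral to a one-dimensional one by exploiting the radial symmetry of the integrand, then rescale so the weight $(1+(|y|-a)^2/(1+b))^{-n}$ becomes $(1+u^2)^{-n}$, and finally split the resulting polynomial factor using a standard sum inequality.

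First I would pass to polar coordinates. Since the integrand depends only on $|y|$, writing $r=|y|$ gives
\bmas
\int_{\R^n}\(1+\frac{(|y|-a)^2}{1+b}\)^{-n}dy
=\omega_{n-1}\int_0^\infty r^{n-1}\(1+\frac{(r-a)^2}{1+b}\)^{-n}dr,
\emas
where $\omega_{n-1}$ is the surface area of the unit sphere in $\R^n$. Next I would perform the change of variable $u=(r-a)/\sqrt{1+b}$, so that $dr=\sqrt{1+b}\,du$ and $r=a+u\sqrt{1+b}\ge 0$, converting the integral into
\bmas
\omega_{n-1}\sqrt{1+b}\int_{-a/\sqrt{1+b}}^\infty (a+u\sqrt{1+b})^{n-1}(1+u^2)^{-n}du.
\emas

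Then I would use the elementary inequality $(x+y)^{n-1}\le C_n(|x|^{n-1}+|y|^{n-1})$ on the region of integration (where $r\ge 0$), and extend the domain to all of $\R$ at the cost of a harmless constant, to obtain
\bmas
\mathrm{LHS}\le C_n\sqrt{1+b}\int_{-\infty}^{\infty}\(|a|^{n-1}+(1+b)^{(n-1)/2}|u|^{n-1}\)(1+u^2)^{-n}du.
\emas
The first resulting integral $\int_{\R}(1+u^2)^{-n}du$ is a finite constant for every $n\ge 1$, contributing the term $C\sqrt{1+b}\,|a|^{n-1}$. The second integral $\int_{\R}|u|^{n-1}(1+u^2)^{-n}du$ is also finite, since the integrand behaves like $|u|^{-n-1}$ as $|u|\to\infty$, and it contributes $C(1+b)^{n/2}$. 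Adding these gives the desired bound.

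The proof is essentially a direct computation, so I do not expect a serious obstacle; the only technical points to check are the validity of the binomial-type inequality on $r\ge 0$ (guaranteed by $a+u\sqrt{1+b}\ge 0$ on the original domain) and the integrability of $|u|^{n-1}(1+u^2)^{-n}$ at infinity, which is immediate because the exponent of $|u|$ at infinity is $-(n+1)<-1$ for all $n\ge 1$.
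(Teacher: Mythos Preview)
Your proof is correct. The paper does not actually supply a proof of this lemma; it is simply quoted from \cite{Wang-1}, so there is no in-paper argument to compare against. Your polar-coordinates reduction followed by the rescaling $u=(r-a)/\sqrt{1+b}$ and the split $(a+u\sqrt{1+b})^{n-1}\le C_n(|a|^{n-1}+(1+b)^{(n-1)/2}|u|^{n-1})$ is exactly the standard derivation one finds in the cited reference, and all the integrability checks you flag are in order.
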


Then, we provide the proofs of Lemmas \ref{rbepw3}--\ref{rbepw7}.



\begin{proof}[\textbf{The proof of Lemma \ref{rbepw3}}]
First, we prove \eqref{rbepw301}. It's easy to verify that
\be \(1+\frac1{\eps}\)|x-y|^2+(1+\eps)|y|^2\ge |x|^2, \quad \forall \eps>0. \label{tri-1}\ee
By taking $\eps=1/4$ in \eqref{tri-1}, we obtain
$$\frac{15}{2}|x-y|^2+\frac{15}{8}|y|^2\ge \frac{3}{2}|x|^2 .$$
Thus, it follows that for $D_1\ge 8D$,
\bma
&\quad I^{\alpha,\beta}(t,x;0,t;0,0,D,D_1)\nnm\\
&\le e^{-\frac{3|x|^2}{2D_1(1+t)}}\int_0^t\intr (1+t-s)^{-\alpha}e^{-\frac{|x-y|^2}{2D_1(1+t-s)}}(1+s)^{-\beta}e^{-\frac{|y|^2}{8D_1(1+s)}}dyds\nnm\\
&\leq Ce^{-\frac{3|x|^2}{2D_1(1+t)}}\((1+t)^{-\alpha}\int_{0}^{\frac{t}2}(1+s)^{-\beta+\frac{3}{2}}ds+(1+t)^{-\beta} \int_{\frac{t}2}^t(1+t-s)^{-\alpha+\frac{3}{2}}ds\)\nnm\\
&\leq C\((1+t)^{-\alpha}\Gamma_{\beta-\frac{3}{2}}(t)+(1+t)^{-\beta}\Gamma_{\alpha-\frac{3}{2}}(t)\)e^{-\frac{3|x|^2}{2D_1(1+t)}},\label{rbepw3012}
\ema
which proves \eqref{rbepw301}.

Next, we prove \eqref{rbepw302}. For $|x|\leq\sqrt{1+t}$ and $||x|-\lambda t|\leq\sqrt{1+t}$, we have
\bma
&\quad I^{\alpha,\beta}(t,x;0,t;0,\lambda,D,D_1)\nnm\\
&\leq C(1+t)^{\alpha}\int_0^{\frac{t}{2}}(1+s)^{-\beta+\frac{5}{2}}ds+C(1+t)^{-\beta}\int_0^{\frac{t}{2}}(1+s)^{-\alpha+\frac{3}{2}}ds\nnm\\
&\leq C\((1+t)^{-\alpha}\Gamma_{\beta-\frac{5}{2}}(t)+(1+t)^{-\beta}\Gamma_{\alpha-\frac{5}{2}}(t)\)\nnm\\
&\leq C\((1+t)^{-\alpha}\Gamma_{\beta-\frac{5}{2}}(t)+(1+t)^{-\beta}\Gamma_{\alpha-\frac{5}{2}}(t)\)
\left\{\bal e^{-\frac{3|x|^2}{2D_1(1+t)}}, & |x|\leq\sqrt{1+t},\\
e^{-\frac{3(|x|-\lambda t)^2}{2D_1(1+t)}}, & ||x|-\lambda t|\leq\sqrt{1+t},
\ea\right.\label{rbepw3020}
\ema
where we have used Lemma \ref{rbepw-Ap-2}.

Note that
\be \(1+\frac1{\eps}\)|x-y|^2+(1+\eps)(|y|-\lambda s)^2\ge (|x|-\lambda s)^2, \quad \forall \eps>0. \label{tri-2}\ee
For $\sqrt{1+t}\leq|x|\leq\lambda t-\sqrt{1+t}$, we have
\bma
&\quad I^{\alpha,\beta}(t,x;0,t;0,\lambda,D,D_1)\nnm\\
&=\(\int_0^{\frac{|x|}{16\lambda}}+\int^{\frac{15t}{16}+\frac{|x|}{16\lambda}}_{\frac{|x|}{16\lambda}}+\int_{\frac{15t}{16}+\frac{|x|}{16\lambda}}^{t}\) \int_{\R^3}\frac{e^{-\frac{|x-y|^2}{D(1+t-s)}}}{ (1+t-s)^{\alpha}} \frac{e^{-\frac{2(|y|-\lambda s)^2}{D_1(1+s)}}}{(1+s)^{\beta}}dyds\nnm\\
&=:I_1+I_2+I_3.\label{rbepw3021}
\ema

For $I_1$ and $I_3$, by taking $\eps=1/8$ in \eqref{tri-2} we obtain
$$\frac{384}{25}|x-y|^2+\frac{48}{25}(|y|-\lambda s)^2\ge \frac{128}{75}(|x|-\lambda s)^2
\ge \left\{\bal \frac32|x|^2, & s\le \frac{|x|}{16\lambda},\\
\frac32(|x|-\lambda t)^2, & s\ge \frac{15t}{16}+\frac{|x|}{16\lambda}.
\ea\right.$$
Thus, it follows that for $D_1\ge 16D$,
\bma
I_1&\le e^{-\frac{3|x|^2}{2D_1(1+t)}}\int_0^{\frac{|x|}{16\lambda}}\intr\frac{e^{-\frac{16|x-y|^2}{25D_1(1+t-s)}}}{ (1+t-s)^{\alpha}} \frac{e^{-\frac{2(|y|-\lambda s)^2}{25D_1(1+s)}}}{(1+s)^{\beta}}dyds\nnm\\
&\le C(1+t)^{-\alpha}e^{-\frac{3|x|^2}{2D_1(1+t)}}\int_0^{\frac{|x|}{16\lambda}} (1+s)^{-\beta+\frac{5}{2}}ds\nnm\\
&\leq C(1+t)^{-\alpha}\Gamma_{\beta-\frac{5}{2}}(t)e^{-\frac{3|x|^2}{2D_1(1+t)}},\label{rbepw3022}
\\
I_3&\leq  e^{-\frac{3(|x|-\lambda t)^2}{2D_1(1+t)}}\int_{\frac{15t}{16}+\frac{|x|}{16\lambda}}^{t}\intr\frac{e^{-\frac{16|x-y|^2}{25D_1(1+t-s)}}}{ (1+t-s)^{\alpha}} \frac{e^{-\frac{2(|y|-\lambda s)^2}{25D_1(1+s)}}}{(1+s)^{\beta}}dyds\nnm\\
&\leq C(1+t)^{-\beta}e^{-\frac{3(|x|-\lambda t)^2}{2D_1(1+t)}}\int_{\frac{15t}{16}+\frac{|x|}{16\lambda}}^{t}(1+t-s)^{-\alpha+\frac{3}{2}} ds\nnm\\
&\leq C(1+t)^{-\beta}\Gamma_{\alpha-\frac{5}{2}}(t)e^{-\frac{3(|x|-\lambda t)^2}{2D_1(1+t)}},\label{rbepw3023}
\ema
where we have used Lemma \ref{rbepw-Ap-2}.

For $\frac{|x|}{16\lambda}\leq s\leq\frac{15t}{16}+\frac{|x|}{16\lambda}$, it holds that
$$
(1+s)^{-\beta}\leq \(1+\frac{|x|}{16\lambda}\)^{-\beta},\quad (1+t-s)^{-\alpha}\leq\(1+\frac{\lambda t-|x|}{16\lambda}\)^{-\alpha}.
$$
Thus,
\bma
I_2&=\(\int^{\frac{t}{2}}_{\frac{|x|}{16\lambda}}+\int^{\frac{15t}{16}+\frac{|x|}{16\lambda}}_{\frac{t}{2}}\)\int_{\R^3} \frac{e^{-\frac{|x-y|^2}{D(1+t-s)}}}{ (1+t-s)^{\alpha}} \frac{e^{-\frac{2(|y|-\lambda s)^2}{D_1(1+s)}}}{(1+s)^{\beta}}dyds\nnm\\
&\leq C(1+t)^{-\alpha}\(1+|x|\)^{-\beta}\int^{t}_{0}\int_{\R^3}e^{-\frac{|x-y|^2}{D(1+t)}}e^{-\frac{2(|y|-\lambda s)^2}{D_1(1+t)}}dyds\nnm\\
&\quad+C(1+t)^{-\beta}\(1+\lambda t-|x|\)^{-\alpha}\int^{t}_{0}\int_{\R^3}e^{-\frac{|x-y|^2}{D(1+t)}}e^{-\frac{2(|y|-\lambda s)^2}{D_1(1+t)}}dyds\nnm\\
&\leq C(1+t)^{-\alpha+2}\(1+|x|\)^{-\beta}+C(1+t)^{-\beta+2}\(1+\lambda t-|x|\)^{-\alpha},\label{rbepw3024}
\ema
where we have used
\bmas
&\quad\int_0^t\int_{\R^3}e^{-\frac{|x-y|^2}{D(1+t-s)}}e^{-\frac{2(|y|-\lambda s)^2}{D_1(1+t)}}dyds \nnm\\
&=\int_{\R^3}e^{-\frac{|x-y|^2}{D(1+t)}}\int_0^te^{-\frac{2(|y|-\lambda s)^2}{D_1(1+t)}}dsdy\nnm\\
&\leq C(1+t)^{\frac{1}{2}}\int_{\R^3}e^{-\frac{|x-y|^2}{D_1(1+t)}}dy\leq C(1+t)^2.
\emas

For $|x|\geq\lambda t$, by taking $\eps=1/4$ in \eqref{tri-2} we obtain
$$\frac{15}{2}|x-y|^2+\frac{15}{8}(|y|-\lambda s)^2\ge \frac{3}{2}(|x|-\lambda t)^2 .$$
Thus, it follows that for $D_1\ge 8D$,
\bma
&\quad I^{\alpha,\beta}(t,x;0,t;0,\lambda,D,D_1)\nnm\\
&\leq Ce^{-\frac{3(|x|-\lambda t)^2}{2D_1(1+t)}}\int_0^t\intr (1+t-s)^{-\alpha}e^{-\frac{|x-y|^2}{2D_1(1+t-s)}}(1+s)^{-\beta}e^{-\frac{(|y|-\lambda s)^2}{8D_1(1+s)}}dyds\nnm\\
&\leq C\((1+t)^{-\alpha}\Gamma_{\beta-\frac{5}{2}}(t)+(1+t)^{-\beta}\Gamma_{\alpha-\frac{5}{2}}(t)\)e^{-\frac{3(|x|-\lambda t)^2}{2D_1(1+t)}}.\label{rbepw3025}
\ema
 By combining \eqref{rbepw3021}--\eqref{rbepw3025}, we can obtain \eqref{rbepw302}.

By changing variable $z=x-y$ and $\tau=t-s$, we can obtain
\bma
&\quad I^{\alpha,\beta}(t,x;0,t;\lambda,0,D,D_1)\nnm\\
&=\int_0^t\int_{\R^3}(1+\tau)^{-\alpha}e^{-\frac{(|z|-\lambda\tau)^2}{D(1+\tau)}}(1+t-\tau)^{-\beta}e^{-\frac{2|x-z|^2}{D_1(1+t-\tau)}}dzd\tau\nnm\\
&=I^{\beta,\alpha}(t,x;0,t;0,\lambda,D_1/2,2D).
\ema
By the similar arguments as \eqref{rbepw3021}--\eqref{rbepw3025}, we can obtain \eqref{rbepw302}.

Finally, we prove \eqref{rbepw303}. For $|x|\leq\sqrt{1+t}$ and $||x|-\lambda t|\leq\sqrt{1+t}$, we have
\bma
&\quad I^{\alpha,\beta}(t,x;0,t;\lambda,\lambda,D,D_1)\nnm\\
&\leq C(1+t)^{\alpha}\int_0^{\frac{t}{2}}(1+s)^{-\beta+\frac{5}{2}}ds+C(1+t)^{-\beta}\int_0^{\frac{t}{2}}(1+s)^{-\alpha+\frac{5}{2}}ds\nnm\\
&\leq C\((1+t)^{-\alpha}\Gamma_{\beta-\frac{5}{2}}(t)+(1+t)^{-\beta}\Gamma_{\alpha-\frac{5}{2}}(t)\).\label{rbepw3030}
\ema

Note that
\bma &\quad \(1+\frac1{\eps}\)(|x-y|-\lambda (t-s))^2+(1+\eps)(|y|-\lambda s)^2\nnm\\
&\ge (\lambda t-|x-y|+|y|-2\lambda s)^2, \quad \forall \eps>0. \label{tri-3}
\ema
For $\sqrt{1+t}\leq|x|\leq\lambda t-\sqrt{1+t}$, we have
\bma
&\quad I^{\alpha,\beta}(t,x;0,t;\lambda,\lambda,D,D_1)\nnm\\
&=\(\int^{\frac{\lambda t-|x|}{32\lambda}}_0+\int_{\frac{\lambda t-|x|}{32\lambda}}^{t-\frac{\lambda t-|x|}{32\lambda}}+\int^{t}_{t-\frac{\lambda t-|x|}{32\lambda}}\) \int_{\R^3}\frac{e^{-\frac{(|x-y|-\lambda(t-s))^2}{D(1+t-s)}}}{(1+t-s)^{\alpha}}\frac{e^{-\frac{2(|y|-\lambda s)^2}{D_1(1+s)}}}{(1+s)^{\beta}}dyds\nnm\\
&=:I_4+I_5+I_6.\label{rbepw3031}
\ema
For $I_4$ and $I_6$, due to the fact that
\bmas
\lambda t-|x-y|+|y|-2\lambda s&\ge \lambda t-|x|-2\lambda s,\\
-\lambda t+|x-y|-|y|+2\lambda s&=\lambda t+|x-y|-|y|-2\lambda (t-s)\ge \lambda t-|x|-2\lambda(t-s),
\emas
we can obtain by taking $\eps=1/8$ in \eqref{tri-3} that
\bmas
 &\quad \frac{384}{25}(|x-y|-\lambda (t-s))^2+\frac{48}{25}(|y|-\lambda s)^2\nnm\\
&\ge
\left\{\bal \frac{128}{75}(\lambda t-|x|-2\lambda s)^2\ge \frac32(\lambda t-|x|)^2, & s\le \frac{\lambda t-|x|}{32\lambda},\\
 \frac{128}{75}(\lambda t-|x|-2\lambda(t-s))^2\ge \frac32(\lambda t-|x|)^2, & s\ge t-\frac{\lambda t-|x|}{32\lambda}.
 \ea\right.
\emas
Thus, it follows that for $D_1\ge 16D$,
\bma
I_4&\le e^{-\frac{3(|x|-\lambda t)^2}{2D_1(1+t)}}\int^{\frac{\lambda t-|x|}{32\lambda}}_0\intr  \frac{e^{-\frac{16(|x-y|-\lambda(t-s))^2}{25D_1(1+t-s)}}}{(1+t-s)^{\alpha}}\frac{e^{-\frac{2(|y|-\lambda s)^2}{25D_1(1+s)}}}{(1+s)^{\beta}}dyds\nnm\\
&\leq C(1+t)^{-\alpha}e^{-\frac{3(|x|-\lambda t)^2}{2D_1(1+t)}}\int^{\frac{\lambda t-|x|}{32\lambda}}_0 (1+s)^{-\beta+\frac{5}{2}}ds\nnm\\\
&\leq C(1+t)^{-\alpha}\Gamma_{\beta-\frac{5}{2}}(t)e^{-\frac{3(|x|-\lambda t)^2}{2D_1(1+t)}},\label{rbepw3032}
\\
I_6&\le e^{-\frac{3(|x|-\lambda t)^2}{2D_1(1+t)}}\int^{t}_{t-\frac{\lambda t-|x|}{32\lambda}}\intr \frac{e^{-\frac{16(|x-y|-\lambda(t-s))^2}{25D_1(1+t-s)}}}{(1+t-s)^{\alpha}}\frac{e^{-\frac{2(|y|-\lambda s)^2}{25D_1(1+s)}}}{(1+s)^{\beta}}dyds\nnm\\
&\leq C(1+t)^{-\beta}e^{-\frac{3(|x|-\lambda t)^2}{2D_1(1+t)}}\int^{t}_{t-\frac{\lambda t-|x|}{32\lambda}}(1+t-s)^{-\alpha+\frac{5}{2}} ds\nnm\\
&\leq C(1+t)^{-\beta}\Gamma_{\alpha-\frac{5}{2}}(t)e^{-\frac{3(|x|-\lambda t)^2}{2D_1(1+t)}}.\label{rbepw3033}
\ema

For $I_5$, it follows that
\bma
I_5&=\(\int_{\frac{\lambda t-|x|}{32\lambda}}^{\frac{t}{2}}+\int_{\frac{t}{2}}^{t-\frac{\lambda t-|x|}{32\lambda}}\)\int_{\R^3}\frac{e^{-\frac{(|x-y|-\lambda(t-s))^2}{D(1+t-s)}}}{(1+t-s)^{\alpha}}\frac{e^{-\frac{2(|y|-\lambda s)^2}{D_1(1+s)}}}{(1+s)^{\beta}}dyds\nnm\\
&\leq C(1+t)^{-\alpha} \int_{\frac{\lambda t-|x|}{32\lambda}}^{\frac{t}{2}}\int_{\R^3} (1+s)^{-\beta}e^{-\frac{2(|y|-\lambda s)^2}{D_1(1+s)}}dyds\nnm\\
&\quad+C(1+t)^{-\beta} \int_{\frac{t}{2}}^{t-\frac{\lambda t-|x|}{32\lambda}}\int_{\R^3}(1+t-s)^{-\alpha}e^{-\frac{(|x-y|-\lambda(t-s))^2}{D(1+t-s)}} dyds\nnm\\
&\leq C(1+t)^{-\alpha} \int_{\frac{\lambda t-|x|}{32\lambda}}^{\frac{t}{2}} (1+s)^{-\beta+\frac52} ds
 +C(1+t)^{-\beta} \int_{\frac{t}{2}}^{t-\frac{\lambda t-|x|}{32\lambda}} (1+t-s)^{-\alpha+\frac52} ds\nnm\\
&\leq C (1+t)^{-\alpha+1}(1+\lambda t-|x|)^{-\beta+\frac52}+C(1+t)^{-\beta+1}(1+\lambda t-|x|)^{-\alpha+\frac52}  .\label{rbepw3034}
\ema

For $|x|\geq\lambda t$, since
$$\frac{15}{2}(|x-y|-\lambda(t-s))^2+\frac{15}{8}(|y|-\lambda s)^2\ge \frac{3}{2}(|x|-\lambda t)^2 ,$$
it follows that for $D_1\ge 8D$,
\bma
&\quad I^{\alpha,\beta}(t,x;0,t;\lambda,\lambda,D,D_1)\nnm\\
&\leq  e^{-\frac{3(|x|-\lambda t)^2}{2D_1(1+t)}}\int^t_0\int_{\R^3}\frac{e^{-\frac{(|x-y|-\lambda(t-s))^2}{8D_1(1+t-s)}}}{(1+t-s)^{\alpha}}\frac{e^{-\frac{(|y|-\lambda s)^2}{8D_1(1+s)}}}{(1+s)^{\beta}}dyds\nnm\\
&\leq C\((1+t)^{-\alpha}\Gamma_{\beta-\frac{5}{2}}(t)+(1+t)^{-\beta}\Gamma_{\alpha-\frac{5}{2}}(t)\)e^{-\frac{3(|x|-\lambda t)^2}{2D_1(1+t)}}.\label{rbepw3036}
\ema
By combining \eqref{rbepw3031}--\eqref{rbepw3036}, we can obtain \eqref{rbepw303}.
\end{proof}


\begin{proof}[\textbf{The proof of Lemma \ref{rbepw4}}]
 Firstly, we prove \eqref{rbepw401}. For $|x|\leq\sqrt{1+t}$, we have
\bma
&\quad J^{\alpha,\beta}(t,x;0,t;\lambda,0,D_1)\nnm\\
&\leq C(1+t)^{-\alpha}\int_0^{\frac{t}{2}}(1+s)^{-\beta+\frac{3}{2}}ds+C(1+t)^{-\beta}\ln(1+t)\int_{\frac{t}{2}}^t(1+t-s)^{-\alpha+\frac{3}{2}}ds\nnm\\
&\leq C\((1+t)^{-\alpha}\Gamma_{\beta-\frac{3}{2}}(t) +(1+t)^{-\beta}\ln(1+t)\Gamma_{\alpha-\frac{3}{2}}(t)\)B_{\frac{3}{2}}(t,|x|).\label{rbepw4011}
\ema
For $\sqrt{1+t}\le |x|\leq\lambda t$, we split $y$ into $|y|\leq\frac{|x|}{2}$ and $|y|\geq\frac{|x|}{2}$. Thus, 
\bma
&\quad J^{\alpha,\beta}(t,x;0,t;\lambda,0,D_1)\nnm\\
&=\int_0^t\(\int_{\{|y|\leq\frac{|x|}{2}\}}+\int_{\{|y|\geq\frac{|x|}{2}\}}\) \frac{B_{\frac{3}{2}}(t-s,|x-y|)}{(1+t-s)^{\alpha}}\frac{e^{-\frac{2|y|^2}{D_1(1+s)}}}{(1+s)^{\beta}} 1_{\{|x-y|\le \lambda(t-s)\}}dyds\nnm\\
&\leq C(1+t)^{-\frac32}B_{\frac{3}{2}}(t,|x|)\int_0^t(1+t-s)^{-\alpha+\frac32}(1+s)^{-\beta+\frac32} ds\nnm\\
&\quad+C(1+t)^{-\frac32}\ln(1+t)e^{-\frac{|x|^2}{4D_1(1+t)}}\int_0^t(1+t-s)^{-\alpha+\frac32}(1+s)^{-\beta+\frac32} ds\nnm\\
&\leq C\((1+t)^{-\alpha}\Gamma_{\beta-\frac{3}{2}}(t)+(1+t)^{-\beta}\ln(1+t)\Gamma_{\alpha-\frac{3}{2}}(t)\)B_{\frac{3}{2}}(t,|x|).
\ema

For $|x|>\lambda t$ and $|x-y|\leq\lambda(t-s)$, we have $|y|\geq|x|-|x-y|\geq|x|-\lambda t$. Thus,
\bma
&\quad J^{\alpha,\beta}(t,x;0,t;\lambda,0,D_1)\nnm\\
&\leq e^{-\frac{3(|x|-\lambda t)^2}{2D_1(1+t)}}\int_0^{t}\int_{\{|x-y|\leq\lambda(t-s)\}}\frac{B_{\frac{3}{2}}(t-s,|x-y|)}{(1+t-s)^{\alpha}}\frac{e^{-\frac{|y|^2}{2D_1(1+s)}}}{(1+s)^{\beta}}dyds\nnm\\
&\leq Ce^{-\frac{3(|x|-\lambda t)^2}{2D_1(1+t)}}\((1+t)^{-\alpha}\int_0^{\frac{t}{2}}(1+s)^{-\beta+\frac{3}{2}}ds+(1+t)^{-\beta}\ln(1+t)\int^t_{\frac{t}{2}}(1+t-s)^{-\alpha+\frac32}ds\)\nnm\\
&\leq C\((1+t)^{-\alpha}\Gamma_{\beta-\frac{3}{2}}(t)+(1+t)^{-\beta}\ln(1+t)\Gamma_{\alpha-\frac32}(t)\)e^{-\frac{3(|x|-\lambda t)^2}{2D_1(1+t)}}.\label{rbepw4012}
\ema
By combining \eqref{rbepw4011}--\eqref{rbepw4012}, we can obtain \eqref{rbepw401}.

By changing variable $z=x-y$ and $\tau=t-s$, we have
\bma
&\quad L^{\alpha,\beta}(t,x;0,t;\lambda,0,0,D)\nnm\\
&=\int_0^t\int_{\{|y|\leq\lambda s\}}(1+t-s)^{-\alpha}e^{-\frac{|x-y|^2}{D(1+t-s)}}(1+s)^{-\beta}B_3(s,|y|)dyds\nnm\\
&=\int_0^t\int_{\{|x-z|\leq\lambda (t-\tau)\}}(1+\tau)^{-\alpha}e^{-\frac{|z|^2}{D(1+\tau)}}(1+t-\tau)^{-\beta}B_3(t-\tau,|x-z|)dzd\tau.
\ema
By the similar arguments as \eqref{rbepw4011}--\eqref{rbepw4012}, we can obtain \eqref{rbepw401j1} for $D_1\ge 2D$.

Finally, we prove \eqref{rbepw402}. For $|x|\leq\sqrt{1+t}$ and $||x|-\lambda t|\leq\sqrt{1+t}$, we have
\bma
&\quad J^{\alpha,\beta}(t,x;0,t;\lambda,\lambda,D_1)\nnm\\
&\leq C\int_0^{\frac{t}{2}}(1+t-s)^{-\alpha}(1+s)^{-\beta+\frac{5}{2}}ds+C\int^t_{\frac{t}{2}}(1+t-s)^{-\alpha+\frac{5}{2}}(1+s)^{-\beta}ds\nnm\\
&\leq C\((1+t)^{-\alpha}\Gamma_{\beta-\frac{5}{2}}(t)+(1+t)^{-\beta}\Gamma_{\alpha-\frac{5}{2}}(t)\).
 \label{rbepw4021}
\ema

For $\sqrt{1+t}\leq|x|\leq\lambda t-\sqrt{1+t}$, we have
\bma
&\quad J^{\alpha,\beta}(t,x;0,t;\lambda,D_1)\nnm\\
&=\(\int_0^{\frac{|x|}{2\lambda}}+\int^{\frac{t}{2}+\frac{|x|}{2\lambda}}_{\frac{|x|}{2\lambda}}+\int_{\frac{t}{2}+\frac{|x|}{2\lambda}}^t\)\int_{\{|x-y|\leq\lambda(t-s)\}} \frac{B_{\frac{3}{2}}(t-s,|x-y|)}{(1+t-s)^{\alpha}}
\frac{e^{-\frac{2(|y|-\lambda s)^2}{D_1(1+s)}}}{(1+s)^{\beta}}dyds\nnm\\
&=:I_1+I_2+I_3.\label{rbepw4022}
\ema
For $I_1$, we split $y$ into $|x-y|\leq\frac{|x|}{4}$ and $|x-y|\geq\frac{|x|}{4}$. If $|x-y|\leq\frac{|x|}{4}$ and $s\leq\frac{|x|}{2\lambda}$, then we have $|y|-\lambda s\geq|x|-|x-y|-\lambda s\geq\frac{|x|}{4}$. Thus,
\bma
I_1&\leq\int_0^{\frac{|x|}{2\lambda}}\(\int_{\{|x-y|\leq\frac{|x|}{4}\}}+\int_{\{|x-y|\geq\frac{|x|}{4}\}}\) \frac{B_{\frac{3}{2}}(t-s,|x-y|)}{(1+t-s)^{\alpha}}
\frac{e^{-\frac{2(|y|-\lambda s)^2}{D_1(1+s)}}}{(1+s)^{\beta}}dyds\nnm\\
&\leq C(1+t)^{-\frac{5}{2}}e^{-\frac{|x|^2}{8D_1(1+t)}}\int_0^{\frac{|x|}{2\lambda}}(1+t-s)^{-\alpha+\frac{3}{2}}\ln(1+t-s)(1+s)^{-\beta+\frac{5}{2}}ds\nnm\\
&\quad+C(1+t)^{-\frac{3}{2}}B_{\frac{3}{2}}(t,|x|)\int_0^{\frac{|x|}{2\lambda}}(1+t-s)^{-\alpha+\frac{3}{2}}(1+s)^{-\beta+\frac{5}{2}}ds\nnm\\
&\leq C(1+t)^{-\alpha}\Gamma_{\beta-\frac{5}{2}}(t)B_{\frac{3}{2}}(t,|x|),\label{rbepw4023}
\ema
where we have used Lemmas \ref{rbepw-Ap-1}--\ref{rbepw-Ap-2}.

For $I_3$, we split $y$ into $|x-y|\leq\frac{\lambda t-|x|}{4}$ and $|x-y|\geq\frac{\lambda t-|x|}{4}$. If $|x-y|\leq\frac{\lambda t-|x|}{4}$ and $s\geq\frac{t}{2}+\frac{|x|}{4\lambda}$, then we have $\lambda s-|y|\geq\lambda s-|x|-|x-y|\geq\frac{\lambda t-|x|}{4}$. Thus,
\bma
I_3&\leq\int_{\frac{t}{2}+\frac{|x|}{2\lambda}}^t\(\int_{\{|x-y|\leq\frac{\lambda t-|x|}{4}\}}+\int_{\{|x-y|\geq\frac{\lambda t-|x|}{4}\}}\) \frac{B_{\frac{3}{2}}(t-s,|x-y|)}{(1+t-s)^{\alpha}}
\frac{e^{-\frac{2(|y|-\lambda s)^2}{D_1(1+s)}}}{(1+s)^{\beta}}dyds\nnm\\
&\leq C(1+t)^{-\frac{5}{2}}e^{-\frac{(|x|-\lambda t)^2}{8D_1(1+t)}}\int_{\frac{t}{2}+\frac{|x|}{2\lambda}}^t(1+t-s)^{-\alpha+\frac{3}{2}}\ln(1+t-s)(1+s)^{-\beta+\frac{5}{2}}ds\nnm\\
&\quad+C(1+t)^{-\frac{3}{2}}B_{\frac{3}{2}}(t,|x|-\lambda t)\int_{\frac{t}{2}+\frac{|x|}{2\lambda}}^t(1+t-s)^{-\alpha+\frac{3}{2}}(1+s)^{-\beta+\frac{5}{2}}ds\nnm\\
&\leq C\((1+t)^{-\beta}\Gamma_{\alpha-\frac{5}{2}}(t)+(1+t)^{-\beta+1}\Gamma_{\alpha-\frac{3}{2}}(t)\)B_{\frac{3}{2}}(t,|x|-\lambda t),\label{rbepw4024}
\ema
where we have used Lemmas \ref{rbepw-Ap-1}--\ref{rbepw-Ap-2}.

For $I_2$, it follows that
\bma
I_2&=\(\int^{\frac{t}{2}}_{\frac{|x|}{2\lambda}}+\int^{\frac{t}{2}+\frac{|x|}{2\lambda}}_{\frac{t}{2}}\)\int_{\{|x-y|\leq\lambda(t-s)\}} \frac{B_{\frac{3}{2}}(t-s,|x-y|)}{(1+t-s)^{\alpha}}
\frac{e^{-\frac{2(|y|-\lambda s)^2}{D_1(1+s)}}}{(1+s)^{\beta}}dyds\nnm\\
&\leq C(1+t)^{-\alpha}\(1+|x|\)^{-\beta}\int^{\frac{t}{2}}_{\frac{|x|}{2\lambda}}\int_{\{|x-y|\leq\lambda t\}} B_{\frac{3}{2}}(t,|x-y|)e^{-\frac{2(|y|-\lambda s)^2}{D_1(1+t)}}dyds\nnm\\
&\quad+C(1+t)^{-\beta}\(1+\lambda t-|x|\)^{-\alpha}\int_{\frac{t}{2}}^{\frac{t}{2}+\frac{|x|}{2\lambda}}\int_{\{|x-y|\leq\lambda t\}} B_{\frac{3}{2}}(t,|x-y|)e^{-\frac{2(|y|-\lambda s)^2}{D_1(1+t)}}dyds\nnm\\
&\leq C(1+t)^{-\alpha+2}\ln(1+t)\(1+|x|\)^{-\beta}+C(1+t)^{-\beta+2}\ln(1+t)\(1+\lambda t-|x|\)^{-\alpha},\label{rbepw4025}
\ema
where we have used
\bmas
&\quad\int_0^t\int_{\{|x-y|\leq\lambda t\}}B_{\frac{3}{2}}(t,|x-y|)e^{-\frac{2(|y|-\lambda s)^2}{D_1(1+t)}}dyds\\
&\leq\int_{\{|x-y|\leq\lambda t\}}\(1+\frac{|x-y|^2}{1+t}\)^{-\frac{3}{2}}\int_{\frac{|x-y|}{\lambda}}^te^{-\frac{2(|y|-\lambda s)^2}{D_1(1+t)}}dsdy\\
&\leq C(1+t)^2\ln(1+t).
\emas

For $|x|\geq\lambda t+\sqrt{1+t}$ and $|x-y|\leq\lambda(t-s)$, we have $|y|-\lambda s\geq|x|-\lambda t-|x-y|+\lambda t-\lambda s\geq|x|-\lambda t$. Thus,
\bma
&\quad J^{\alpha,\beta}(t,x;0,t;\lambda,\lambda,D_1)\nnm\\
&\leq C(1+t)^{-\frac{5}{2}}e^{-\frac{2(|x|-\lambda t)^2}{D_1(1+t)}}\int_0^t(1+t-s)^{-\alpha+\frac{3}{2}}\ln(1+t-s)(1+s)^{-\beta+\frac{5}{2}}ds\nnm\\
&\leq C\((1+t)^{-\alpha}\Gamma_{\beta-\frac{5}{2}}(t)+(1+t)^{-\beta}\Gamma_{\alpha-\frac{5}{2}}(t)\)e^{-\frac{2(|x|-\lambda t)^2}{D_1(1+t)}},\label{rbepw4026}
\ema
where we have used Lemma \ref{rbepw-Ap-1}. By combining \eqref{rbepw4021}--\eqref{rbepw4026}, we obtain \eqref{rbepw402}.
\end{proof}


\begin{proof}[\textbf{The proof of Lemma \ref{rbepw5}}]
We first prove \eqref{rbepw501}. For $|x|\leq \sqrt{1+t}$, we have
\bma
&\quad K^{\alpha,\beta}(t,x;0,t;\lambda,0)\nnm\\
&\leq C\int_0^{\frac{t}{2}}(1+t-s)^{-\alpha}(1+s)^{-\beta+\frac{3}{2}}ds+C\ln(1+t)\int_{\frac{t}{2}}^t(1+t-s)^{-\alpha+\frac{3}{2}}(1+s)^{-\beta}ds\nnm\\
&\leq C\((1+t)^{-\alpha}\Gamma_{\beta-\frac{3}{2}}(t)+(1+t)^{-\beta}\Gamma_{\alpha-\frac{3}{2}}(t)\)\ln(1+t)B_{\frac{3}{2}}(t,|x|).\label{rbepw5011}
\ema

For $\sqrt{1+t}\leq|x|\leq\lambda t$, we split $y$ into $|y|\leq\frac{|x|}{2}$ and $|y|\geq\frac{|x|}{2}$. If $|y|\leq\frac{|x|}{2}$, then we have $|x-y|\geq||x|-|y||\geq\frac{|x|}{2}$. Thus,
\bma
&\quad K^{\alpha,\beta}(t,x;0,t;\lambda,0)\nnm\\
&\leq C(1+t)^{-\frac{3}{2}}B_{\frac{3}{2}}(t,|x|)\int_0^t(1+t-s)^{-\alpha+\frac{3}{2}}(1+s)^{-\beta+\frac{3}{2}}ds\nnm\\
&\quad+C(1+t)^{-\frac{3}{2}}\ln(1+t)B_{3}(t,|x|)\int_0^t(1+t-s)^{-\alpha+\frac{3}{2}}(1+s)^{-\beta+\frac{3}{2}}ds\nnm\\
&\leq C\((1+t)^{-\alpha}\Gamma_{\beta-\frac{3}{2}}(t)+(1+t)^{-\beta}\Gamma_{\alpha-\frac{3}{2}}(t)\)\ln(1+t)B_{\frac{3}{2}}(t,|x|).\label{rbepw5011-1}
\ema

For $|x|>\lambda t$, we have $\{|x-y|\leq\lambda(t-s)\}\cap\{|y|\leq\lambda s\}=\phi$. Thus,
$
 K^{\alpha,\beta}(t,x;0,t;\lambda,0)=0
$ for $|x|>\lambda t$.
This together with \eqref{rbepw5011} and \eqref{rbepw5011-1} implies \eqref{rbepw501}.

Now, we prove \eqref{rbepw502}. For $|x|\leq\sqrt{1+t}$ and $\lambda t-\sqrt{1+t}\leq|x|\leq\lambda t$,
\bma
&\quad K^{\alpha,\beta}(t,x;0,t;\lambda,\lambda)\nnm\\
&\leq C\int_0^{\frac{t}{2}}(1+t-s)^{-\alpha}(1+s)^{-\beta+\frac{5}{2}}ds+C\int^t_{\frac{t}{2}}(1+t-s)^{-\alpha+\frac{3}{2}}\ln(1+t-s)(1+s)^{-\beta}ds\nnm\\
&\leq C\((1+t)^{-\alpha}\Gamma_{\beta-\frac{5}{2}}(t)+(1+t)^{-\beta}\Gamma_{\alpha-\frac{5}{2}}(t)\)
\left\{\bal B_{\frac{3}{2}}(t,|x|), & |x|\leq\sqrt{1+t},\\
B_{\frac{3}{2}}(t,|x|-\lambda t), & \lambda t-\sqrt{1+t}\leq|x|\leq\lambda t.
\ea\right.\label{rbepw5021}
\ema

For $\sqrt{1+t}\leq|x|\leq\lambda t-\sqrt{1+t}$, we have
\bma
&\quad K^{\alpha,\beta}(t,x;0,t;\lambda,\lambda)\nnm\\
&\le \(\int_0^{\frac{|x|}{2\lambda}}+\int_{\frac{|x|}{2\lambda}}^{\frac{t}{2}+\frac{|x|}{2\lambda}}+\int^t_{\frac{t}{2}+\frac{|x|}{2\lambda}}\)
\int_{\{|x-y|\leq\lambda(t-s)\}}\frac{B_{\frac{3}{2}}(t-s,|x-y|)}{(1+t-s)^{\alpha}} \frac{B_2(s,|y|-\lambda s)}{(1+s)^{\beta}} dyds\nnm\\
&=:I_1+I_2+I_3.\label{rbepw5022}
\ema

For $I_1$, we split $y$ into $|x-y|\leq\frac{|x|}{4}$ and $|x-y|\geq\frac{|x|}{4}$. If $|x-y|\leq\frac{|x|}{4}$ and $s\leq\frac{|x|}{2\lambda}$, then we have $|y|-\lambda s\geq|x|-|x-y|-\lambda s\geq\frac{|x|}{4}$. Thus,
\bma
I_1
&\leq C(1+t)^{-2}B_{2}(t,|x|)\int_0^{\frac{|x|}{2\lambda}}(1+t-s)^{-\alpha+\frac{3}{2}}\ln(1+t-s)(1+s)^{-\beta+2}ds\nnm\\
&\quad+C(1+t)^{-\frac{3}{2}}B_{\frac{3}{2}}(t,|x|)\int_0^{\frac{|x|}{2\lambda}}(1+t-s)^{-\alpha+\frac{3}{2}}(1+s)^{-\beta+\frac{5}{2}}ds\nnm\\
&\leq C(1+t)^{-\alpha}\Gamma_{\beta-\frac{5}{2}}(t)B_{\frac{3}{2}}(t,|x|).\label{rbepw5023}
\ema

For $I_3$, we split $y$ into $|y|\leq\frac{\lambda t-|x|}{4}$ and $|y|\geq\frac{\lambda t-|x|}{4}$. If $|x-y|\leq\frac{\lambda t-|x|}{4}$ and $s\geq\frac{t}{2}+\frac{|x|}{2\lambda}$, then we have $\lambda s-|y|\geq\lambda s-|x|-|x-y|\geq\frac{\lambda t-|x|}{4}$. Thus,
\bma
I_3
&\leq C(1+t)^{-2}B_{2}(t,|x|-\lambda t)\int^t_{\frac{t}{2}+\frac{|x|}{2\lambda}}(1+t-s)^{-\alpha+\frac{3}{2}}\ln(1+t-s)(1+s)^{-\beta+2}ds\nnm\\
&\quad+C(1+t)^{-\frac{3}{2}}B_{\frac{3}{2}}(t,|x|-\lambda t)\int^t_{\frac{t}{2}+\frac{|x|}{2\lambda}}(1+t-s)^{-\alpha+\frac{3}{2}}(1+s)^{-\beta+\frac{5}{2}}ds\nnm\\
&\leq C(1+t)^{-\beta+1}\Gamma_{\alpha-\frac{3}{2}}(t)B_{\frac{3}{2}}(t,|x|-\lambda t).\label{rbepw5024}
\ema

For $I_2$, it holds that
\bma
I_2&=\(\int_{\frac{|x|}{2\lambda}}^{\frac{t}{2}}+\int_{\frac{t}{2}}^{\frac{t}{2}+\frac{|x|}{2\lambda}}\)\int_{\{|x-y|\leq\lambda(t-s)\}}
\frac{B_{\frac{3}{2}}(t-s,|x-y|)}{(1+t-s)^{\alpha}} \frac{B_2(s,|y|-\lambda s)}{(1+s)^{\beta}}dyds\nnm\\
&\leq C(1+t)^{-\alpha}\(1+|x|\)^{-\beta}\int_0^t\int_{\{|x-y|\leq\lambda t\}}B_{\frac{3}{2}}(t,|x-y|)B_2(t,|y|-\lambda s)dyds\nnm\\
&\quad+C(1+t)^{-\beta}\(1+\lambda t-|x|\)^{-\alpha}\int_0^t\int_{\{|x-y|\leq\lambda t\}}B_{\frac{3}{2}}(t,|x-y|)B_2(t,|y|-\lambda s)dyds\nnm\\
&\leq C(1+t)^{-\alpha+2}\ln(1+t)\(1+|x|\)^{-\beta}+C(1+t)^{-\beta+2}\ln(1+t)\(1+\lambda t-|x|\)^{-\alpha},\label{rbepw5025}
\ema
where we have used
\bmas
&\quad\int_0^t\int_{\{|x-y|\leq\lambda t\}}B_{\frac{3}{2}}(t,|x-y|)B_2(t,|y|-\lambda s)dyds\\
&=\int_{\{|x-y|\leq\lambda t\}}\(1+\frac{|x-y|^2}{1+t}\)^{-\frac{3}{2}}\int_{\frac{|x-y|}{\lambda}}^t\(1+\frac{(|y|-\lambda s)^2}{1+t}\)^{-2}dsdy\\
&\leq C(1+t)^2\ln(1+t).
\emas

For $|x|>\lambda t$,  we have $\{|x-y|\leq\lambda(t-s)\}\cap\{|y|\leq\lambda s\}=\phi$. Thus,
$
K^{\alpha,\beta}(t,x;0,t;\lambda,\lambda)=0
$ for $|x|>\lambda t$.
This together with \eqref{rbepw5021}--\eqref{rbepw5025} implies  \eqref{rbepw502}.
\end{proof}


\begin{proof}[\textbf{The proof of Lemma \ref{rbepw6}}]
First, we prove \eqref{rbepw603}. For $|x|\leq\sqrt{1+t}$ and $||x|-\lambda t|\leq\sqrt{1+t}$, we have
\bma
&\quad L^{\alpha,\beta}(t,x;0,t;\lambda,0,\lambda,D)\nnm\\
&\le \int_{0}^t\intr (1+t-s)^{-\alpha}e^{-\frac{|x-y|^2}{D(1+t-s)}}(1+s)^{-\beta}B_{2}(s,|y|-\lambda s)dyds\nnm\\
&\leq C\((1+t)^{-\alpha}\Gamma_{\beta-\frac{5}{2}}(t)+(1+t)^{-\beta}\Gamma_{\alpha-\frac{5}{2}}(t)\).\label{rbepw6030}
\ema

For $\sqrt{1+t}\leq|x|\leq\lambda t-\sqrt{1+t}$, we have
\bma
&\quad L^{\alpha,\beta}(t,x;0,t;0,\lambda,D)\nnm\\
&=\(\int_{0}^{\frac{|x|}{2\lambda}}+\int^{\frac{t}{2}+\frac{|x|}{2\lambda}}_{\frac{|x|}{2\lambda}}+\int_{\frac{t}{2}+\frac{|x|}{2\lambda}}^t\)\int_{\{|y|\leq\lambda s\}}\frac{e^{-\frac{|x-y|^2}{D(1+t-s)}}}{(1+t-s)^{\alpha}} \frac{B_{2}(s,|y|-\lambda s)}{(1+s)^{\beta}}dyds\nnm\\
&=:I_1+I_2+I_3.\label{rbepw6031}
\ema

For $I_1$, we split $y$ into $|x-y|\leq\frac{|x|}{4}$ and $|x-y|\geq\frac{|x|}{4}$. If $|x-y|\leq\frac{|x|}{4}$ and $s\leq\frac{|x|}{2\lambda}$, then we have $|y|-\lambda  s\geq|x|-|x-y|-\lambda s\geq\frac{|x|}{4}$. Thus,
\bma
I_1
&\leq C(1+t)^{-2}B_{2}(t,|x|)\int_{0}^{\frac{|x|}{2\lambda}}(1+t-s)^{-\alpha+\frac{3}{2}}(1+s)^{-\beta+2}ds\nnm\\
&\quad+C(1+t)^{-\frac{5}{2}}e^{-\frac{|x|^2}{16D(1+t)}}\int_{0}^{\frac{|x|}{2\lambda}}(1+t-s)^{-\alpha+\frac{5}{2}}(1+s)^{-\beta+\frac{5}{2}}ds\nnm\\
&\leq C(1+t)^{-\alpha}\Gamma_{\beta-\frac{5}{2}}(t)B_{2}(t,|x|).\label{rbepw6033}
\ema

For $I_3$, we split $y$ into $|x-y|\leq\frac{\lambda t-|x|}{4}$ and $|x-y|\geq\frac{\lambda t-|x|}{4}$. If $|x-y|\leq\frac{\lambda t-|x|}{4}$ and $s\geq\frac{t}{2}+\frac{|x|}{2\lambda}$, then we have $\lambda s-|y|\geq\lambda s-|x|-|x-y|\geq\frac{\lambda t-|x|}{4}$. Thus,
\bma
I_3
&\leq C(1+t)^{-2}B_{2}(t,|x|-\lambda t)\int_{\frac{t}{2}+\frac{|x|}{2\lambda}}^t(1+t-s)^{-\alpha+\frac{3}{2}}(1+s)^{-\beta+2}ds\nnm\\
&\quad+C(1+t)^{-\frac{5}{2}}e^{-\frac{(|x|-\lambda t)^2}{16D(1+t)}}\int_{\frac{t}{2}+\frac{|x|}{2\lambda}}^t(1+t-s)^{-\alpha+\frac{5}{2}}(1+s)^{-\beta+\frac{5}{2}}ds\nnm\\
&\leq C(1+t)^{-\beta}\Gamma_{\alpha-\frac{5}{2}}(t)B_{2}(t,|x|-\lambda t).\label{rbepw6035}
\ema

For $I_2$, it holds that
\bma
I_5
&\leq C(1+t)^{-\alpha}\(1+|x|\)^{-\beta}\int^{t}_{0}\int_{\R^3}e^{-\frac{|x-y|^2}{D(1+t)}}\(1+\frac{(|y|-\lambda s)^2}{1+t}\)^{-2}dyds\nnm\\
&\quad+C(1+t)^{-\beta}\(1+\lambda t-|x|\)^{-\alpha}\int^{t}_{0}\int_{\R^3}e^{-\frac{|x-y|^2}{D(1+t)}}\(1+\frac{(|y|-\lambda s)^2}{1+t}\)^{-2}dyds\nnm\\
&\leq C(1+t)^{-\alpha+2}\(1+|x|\)^{-\beta}+C(1+t)^{-\beta+2}\(1+\lambda t-|x|\)^{-\alpha}.\label{rbepw6037}
\ema

For $|x|\geq\lambda t+\sqrt{1+t}$ and $|y|\leq\lambda s$, we have $|x-y|\geq|x|-|y| \geq|x|-\lambda t$. Thus,
\bma
&\quad L^{\alpha,\beta}(t,x;0,t;\lambda,0,\lambda,D)\nnm\\
&\leq C(1+t)^{-\frac{5}{2}}e^{-\frac{(|x|-\lambda t)^2}{D(1+t)}}\int_0^t(1+t-s)^{-\alpha+\frac{5}{2}}(1+s)^{-\beta+\frac{5}{2}}ds\nnm\\
&\leq C\((1+t)^{-\alpha}\Gamma_{\beta-\frac{5}{2}}(t)+(1+t)^{-\beta}\Gamma_{\alpha-\frac{5}{2}}(t)\)e^{-\frac{3(|x|-\lambda t)^2}{2D_1(1+t)}},\label{rbepw6039}
\ema
where   $D_1\ge \frac{3D}{2}$. By combining \eqref{rbepw6030}--\eqref{rbepw6039}, we can obtain \eqref{rbepw603}.

By changing variable $z=x-y$ and $\tau=t-s$, we have
\bma
&\quad L^{\alpha,\beta}(t,x;0,t;\lambda,\lambda,0,D)\nnm\\
&=\int_0^t\int_{\{|y|\leq\lambda s\}}(1+t-s)^{-\alpha}e^{-\frac{(|x-y|-\lambda(t-s))^2}{D(1+t-s)}}(1+s)^{-\beta}B_3(s,|y|)dyds\nnm\\
&=\int_0^t\int_{\{|x-z|\leq\lambda (t-\tau)\}}(1+\tau)^{-\alpha}e^{-\frac{(|z|-\lambda\tau)^2}{D(1+\tau)}}(1+t-\tau)^{-\beta}B_3(t-\tau,|x-z|)dzd\tau.
\ema
By the similar arguments as \eqref{rbepw6030}--\eqref{rbepw6039}, we can obtain \eqref{rbepw603}.

Finally, we prove \eqref{rbepw602}. For $|x|\leq\sqrt{1+t}$ and $||x|-\lambda t|\leq\sqrt{1+t}$, it holds that
\bma
&\quad L^{\alpha,\beta}(t,x;0,t;\lambda,\lambda,\lambda,D)\nnm\\
&\leq\int_0^t(1+t-s)^{-\alpha}(1+s)^{-\beta}\int_{\R^3}e^{-\frac{(|x-y|-\lambda(t-s))^2}{D(1+t-s)}}B_{2}(s,|y|-\lambda s)dyds\nnm\\
&\leq C\((1+t)^{-\alpha}\Gamma_{\beta-\frac{5}{2}}(t)+(1+t)^{-\beta}\Gamma_{\alpha-\frac{5}{2}}(t)\). \label{rbepw6021}
\ema

For $\sqrt{1+t}\leq|x|\leq\lambda t-\sqrt{1+t}$, we have
\bma
&\quad L^{\alpha,\beta}(t,x;0,t;\lambda,\lambda,D)\nnm\\
&=\(\int_0^{\frac{\lambda t-|x|}{4\lambda}}+\int^{t-\frac{\lambda t-|x|}{4\lambda}}_{\frac{\lambda t-|x|}{4\lambda}}+\int_{t-\frac{\lambda t-|x|}{4\lambda}}^t\)\int_{\{|y|\leq\lambda s\}}\frac{e^{-\frac{(|x-y|-\lambda(t-s))^2}{D(1+t-s)}}}{(1+t-s)^{\alpha}}\frac{B_{2}(s,|y|-\lambda s)}{(1+s)^{\beta}}dyds\nnm\\
&=:I_4+I_5+I_6.   \label{rbepw6022}
\ema

For $I_4$, we split $y$ into $|y|\leq\frac{\lambda t-|x|}{2}$ and $|y|\geq\frac{\lambda t-|x|}{2}$. If $|y|\geq\frac{\lambda t-|x|}{2}$ and $s\leq\frac{\lambda t-|x|}{4\lambda}$, then it follows that $|y|-\lambda s\geq\frac{\lambda t-|x|}{4}$. If $|y|\leq\frac{\lambda t-|x|}{2}$ and $s\leq\frac{\lambda t-|x|}{4\lambda}$, then it holds that $\lambda(t-s)-|x-y|\geq\lambda t-\lambda s-|x|-|y|\geq\frac{\lambda t-|x|}{4}$. Thus,
\bma
I_4
&\leq C(1+t)^{-\frac{5}{2}}e^{-\frac{(|x|-\lambda t)^2}{8D(1+t)}}\int_0^{\frac{\lambda t-|x|}{4\lambda}}(1+t-s)^{-\alpha+\frac{5}{2}}(1+s)^{-\beta+\frac{5}{2}}ds\nnm\\
&\quad+C(1+t)^{-2}B_{2}(t,|x|-\lambda t)\int_0^{\frac{\lambda t-|x|}{4\lambda}}(1+t-s)^{-\alpha+\frac{5}{2}}(1+s)^{-\beta+2}ds\nnm\\
&\leq C(1+t)^{-\alpha+\frac{1}{2}}\Gamma_{\beta-2}(t)B_{2}(t,|x|-\lambda t).\label{rbepw6023}
\ema

For $I_6$, we split $y$ into $|x-y|\leq\frac{\lambda t-|x|}{2}$ and $|x-y|\geq\frac{\lambda t-|x|}{2}$. If $|x-y|\leq\frac{\lambda t-|x|}{2}$ and $t-s\leq\frac{\lambda t-|x|}{4\lambda}$, then we have $\lambda s-|y|\geq\lambda t-|x|-|x-y|-\lambda(t-s)\geq\frac{\lambda t-|x|}{4}$. If $|x-y|\geq\frac{\lambda t-|x|}{2}$ and $t-s\leq\frac{\lambda t-|x|}{4\lambda}$, then it holds that $|x-y|-\lambda(t-s)\geq\frac{\lambda t-|x|}{4}$. Thus,
\bma
I_6
&\leq C(1+t)^{-2}B_{2}(t,|x|-\lambda t)\int_{t-\frac{\lambda t-|x|}{4\lambda}}^t(1+t-s)^{-\alpha+\frac{5}{2}}(1+s)^{-\beta+2}ds\nnm\\
&\quad+C(1+t)^{-\frac{5}{2}}e^{-\frac{(|x|-\lambda t)^2}{16D(1+t)}}\int_{t-\frac{\lambda t-|x|}{4\lambda}}^t(1+t-s)^{-\alpha+\frac{5}{2}}(1+s)^{-\beta+\frac{5}{2}}ds\nnm\\
&\leq C(1+t)^{-\beta}\Gamma_{\alpha-\frac{5}{2}}B_{2}(t,|x|-\lambda t).\label{rbepw6024}
\ema

For $I_5$, it holds that
\bma
I_5&\leq\(\int^{\frac{t}{2}}_{\frac{\lambda t-|x|}{4\lambda}}+\int_{\frac{t}{2}}^{t-\frac{\lambda t-|x|}{4\lambda}}\)\int_{\R^3}\frac{e^{-\frac{(|x-y|-\lambda(t-s))^2}{D(1+t-s)}}}{(1+t-s)^{\alpha}}\frac{B_{2}(s,|y|-\lambda s)}{(1+s)^{\beta}}dyds\nnm\\
&\leq C(1+t)^{-\alpha}\int^{\frac{t}{2}}_{\frac{\lambda t-|x|}{4\lambda}}(1+s)^{-\beta}\int_{\R^3}B_{2}(s,|y|-\lambda s)dyds\nnm\\
&\quad+C(1+t)^{-\beta}\int_{\frac{t}{2}}^{t-\frac{\lambda t-|x|}{4\lambda}}(1+t-s)^{-\alpha}\int_{\R^3}e^{-\frac{(|x-y|-\lambda(t-s))^2}{D(1+t-s)}}dyds\nnm\\
&\leq C(1+t)^{-\alpha+1}\(1+\lambda t-|x|\)^{-\beta+\frac{5}{2}}+C(1+t)^{-\beta+1}\(1+\lambda t-|x|\)^{-\alpha+\frac{5}{2}},\label{rbepw6025}
\ema
where we have used Lemma \ref{rbepw-Ap-2}.

For $|x|\geq\lambda t+\sqrt{1+t}$ and $|y|\leq\lambda s$, we have $|x-y|-\lambda(t-s)\geq|x|-\lambda t-(|y|-\lambda s)\geq|x|-\lambda t$. Thus,
\bma
&\quad L^{\alpha,\beta}(t,x;0,t;\lambda,\lambda,\lambda,D)\nnm\\
&\leq C(1+t)^{-\frac{5}{2}}e^{-\frac{(|x|-\lambda t)^2}{D(1+t)}}\int_0^t(1+t-s)^{-\alpha+\frac{5}{2}}(1+s)^{-\beta+\frac{5}{2}}ds\nnm\\
&\leq C\((1+t)^{-\alpha}\Gamma_{\beta-\frac{5}{2}}(t)+(1+t)^{-\beta}\Gamma_{\alpha-\frac{5}{2}}(t)\)e^{-\frac{3(|x|-\lambda t)^2}{2D_1(1+t)}},\label{rbepw6027}
\ema
where  $D_1\ge \frac{3D}{2}$. By combining \eqref{rbepw6021}--\eqref{rbepw6027}, we obtain \eqref{rbepw602}.
\end{proof}


\begin{proof}[\textbf{The proof of Lemma \ref{rbepw7}}]
First, we prove \eqref{rbepw701}.  We split $y$ into $|y|\leq\frac{7|x|}{8}$ and $|y|\geq\frac{7|x|}{8}$. Thus,
\bma
&\quad M^{\alpha}(t,x;0,t;0,D,D_1)\nnm\\
&=\int_0^t\(\int_{\{|y|\leq\frac{7|x|}{8}\}}+\int_{\{|y|\geq\frac{7|x|}{8}\}}\)e^{-\frac{|x-y|+t-s}{D}}(1+s)^{-\alpha}e^{-\frac{2|y|^2}{D_1(1+s)}}dyds\nnm\\
&\leq Ce^{-\frac{|x|}{16D}}\int_0^te^{-\frac{t-s}{D}}(1+s)^{-\alpha}\int_{\R^3}e^{-\frac{|x-y|}{2D}}dyds\nnm\\
&\quad+Ce^{-\frac{3|x|^2}{2D_1(1+t)}}\int_0^te^{-\frac{t-s}{D}}(1+s)^{-\alpha}\int_{\R^3}e^{-\frac{|x-y|}{D}}dyds\nnm\\
&\leq C(1+t)^{-\alpha}e^{-\frac{3|x|^2}{2D_1(1+t)}}+C(1+t)^{-\alpha}e^{-\frac{|x|}{16D}},\label{rbepw7012}
\ema
where we have used \eqref{rbepw204j1}, and $D_1\ge 48D\max\{1,\lambda\}$.  This proves \eqref{rbepw701}.

Then, we prove \eqref{rbepw801}. For $|x|\leq\lambda t$, we split $y$ into $|y|\leq\frac{|x|}{2}$ and $|y|\geq\frac{|x|}{2}$. Thus,
\bma
N^{\alpha}(t,x;0,t;\lambda,0,D)
&\leq C\(e^{-\frac{|x|}{4D}}+B_3(t,|x|)\)\int_0^te^{-\frac{t-s}{D}}(1+s)^{-\alpha}\int_{\R^3}e^{-\frac{|x-y|}{2D}}dy ds\nnm\\
&\leq C(1+t)^{-\alpha}B_{3}(t,|x|)+Ce^{-\frac{3(|x|+t)}{2D_1}},\label{rbepw8014-1}
\ema
where we have used \eqref{rbepw204j1}, and $D_1\ge 12D\max\{1,\lambda\}$.

For $|x|\geq\lambda t$ and $|y|\leq\lambda s$, we have $|x-y|\geq|x|-\lambda t$. Thus,
\bma
N^{\alpha}(t,x;0,t;\lambda,0,D)&\leq Ce^{-\frac{||x|-\lambda t|}{2D}}\int_0^te^{-\frac{t-s}{D}}(1+s)^{-\alpha}\int_{\R^3}e^{-\frac{|x-y|}{2D}}dy ds\nnm\\
&\leq C(1+t)^{-\alpha}e^{-\frac{3(|x|-\lambda t)^2}{2D_1(1+t)}}+Ce^{-\frac{3(|x|+t)}{2D_1}},\label{rbepw8014}
\ema
where we have used \eqref{rbepw204}, and $D_1\ge 6D\max\{1,\lambda\}$. By \eqref{rbepw8014} and \eqref{rbepw8014-1}, we obtain \eqref{rbepw801}.

Next, we prove \eqref{rbepw802}. For $|x|\leq\sqrt{1+t}$ and $||x|-\lambda t|\leq\sqrt{1+t}$, we have
\be
 M^{\alpha}(t,x;0,t;\lambda,D,D_1),\,N^{\alpha}(t,x;0,t;\lambda,\lambda,D)
\leq C(1+t)^{-\alpha}.\label{rbepw802-0}
\ee

Let
\be
Q^{\alpha} =\int_0^{t}\int_{\R^3}e^{-\frac{|x-y|+(t-s)}{D}}(1+s)^{-\alpha}B_{2}(s,|y|-\lambda s)dyds.\label{rbepw-mlm-0}
\ee
It holds that
\be
 M^{\alpha}(t,x;0,t;\lambda,D,D_1),\,N^{\alpha}(t,x;0,t;\lambda,\lambda,D)\leq Q^{\alpha} .\label{rbepw-mlm-1}
\ee

For $\sqrt{1+t}\leq|x|\leq\lambda t-\sqrt{1+t}$, since $ e^{-\frac{|x-y|+t-s}{2D}} \leq e^{-\frac{ ||x-y|-\lambda(t-s)|}{DE}} $ for $E=2\max\{1,\lambda\}$, it follows that
\bma
Q^{\alpha}&\leq\(\int_0^{\frac{\lambda t-|x|}{4\lambda}}+\int^{t-\frac{\lambda t-|x|}{4\lambda}}_{\frac{\lambda t-|x|}{4\lambda}}+\int_{t-\frac{\lambda t-|x|}{4\lambda}}^{t}\)\int_{\R^3}e^{-\frac{|x-y|+t-s}{2D}}\nnm\\
&\qquad\qquad\times(1+s)^{-\alpha}e^{-\frac{||x-y|-\lambda(t-s)|}{DE}}B_{2}(s,|y|-\lambda s)dyds\nnm\\
&=:I_1+I_2+I_3.\label{rbepw7021}
\ema

For $I_1$, we split $y$ into $|y|\leq\frac{\lambda t-|x|}{2}$ and $|y|\geq\frac{\lambda t-|x|}{2}$. If $|y|\geq\frac{\lambda t-|x|}{2}$ and $s\leq\frac{\lambda t-|x|}{4\lambda}$, then we have $|y|-\lambda s\geq\frac{\lambda t-|x|}{4}$. If $|y|\leq\frac{\lambda t-|x|}{2}$ and $s\leq\frac{\lambda t-|x|}{4\lambda}$, then we have $\lambda(t-s)-|x-y|\geq\lambda t-\lambda s-|x|-|y|\geq\frac{\lambda t-|x|}{4}$. Thus,
\bma
I_1&\leq C\(e^{-\frac{|\lambda t-|x||}{4DE}}+B_{2}(t,|x|-\lambda t)\)\int_0^{\frac{\lambda t-|x|}{4\lambda}}e^{-\frac{ t-s}{2D}}(1+s)^{-\alpha} ds\nnm\\
&\leq C(1+t)^{-\alpha}B_{2}(t,|x|-\lambda t).
\ema

For $I_3$, we split $y$ into $|x-y|\leq\frac{\lambda t-|x|}{2}$ and $|x-y|\geq\frac{\lambda t-|x|}{2}$. Since $|x-y|\leq\frac{\lambda t-|x|}{2}$ and $t-s\leq\frac{\lambda t-|x|}{4\lambda}$, then we have $\lambda s-|y|\geq\lambda t-|x|-|x-y|-\lambda(t-s)\geq\frac{\lambda t-|x|}{4}$. Since $|x-y|\geq\frac{\lambda t-|x|}{2}$ and $t-s\leq\frac{\lambda t-|x|}{4\lambda}$, then we have $|x-y|-\lambda(t-s)\geq\frac{\lambda t-|x|}{4}$. Thus,
\bma
I_3&\leq C\(B_{2}(t,|x|-\lambda t)+e^{-\frac{|\lambda t-|x||}{4DE}}\)\int_{t-\frac{\lambda t-|x|}{4\lambda}}^{t}e^{-\frac{ t-s}{2D}}(1+s)^{-\alpha} ds\nnm\\
&\leq C(1+t)^{-\alpha}B_{2}(t,|x|-\lambda t).
\ema

For $I_2$, it holds that
\bma
I_2&\leq \(\int^{\frac{t}{2}}_{\frac{\lambda t-|x|}{4\lambda}}+\int^{t-\frac{\lambda t-|x|}{4\lambda}}_{\frac{t}{2}}\)e^{-\frac{ t-s}{2D}}(1+s)^{-\alpha} ds\nnm\\
&\leq C\max\{1,(1+t)^{4-\alpha}\}te^{-\frac{t}{4D}}\(1+\frac{\lambda t-|x|}{4\lambda}\)^{-4}+C(1+t)^{-\alpha+1}e^{-\frac{(\lambda t-|x|)}{8\lambda D}}\nnm\\
&\leq C(1+t)^{-\alpha}B_{2}(t,|x|-\lambda t).
\ema

For $|x|\geq\lambda t$ and $|y|\leq\lambda s$, we have $|x-y|\geq|x|-\lambda t$. Thus, it holds that
\bma
 N^{\alpha}(t,x;0,t;\lambda,\lambda,D)
&\leq Ce^{-\frac{||x|-\lambda t|}{2D}}\int_0^te^{-\frac{t-s}{D}}(1+s)^{-\alpha}\int_{\R^3}e^{-\frac{|x-y|}{2D}}dyds\nnm\\
&\leq C(1+t)^{-\alpha}e^{-\frac{3(|x|-\lambda t)^2}{2D_1(1+t)}}+Ce^{-\frac{3(|x|+t)}{2D_1}},\label{rbepw8016}
\ema
where we have used \eqref{rbepw204} and $D_1\ge 6D\max\{1,\lambda\}$.

For $|x|\geq\lambda t$, we split $y$ into $||y|-\lambda s|\leq\frac{7(|x|-\lambda t)}{8}$ and $||y|-\lambda s|\geq\frac{7(|x|-\lambda t)}{8}$. Thus,
\bma
&\quad M^{\alpha}(t,x;0,t;\lambda,D,D_1)\nnm\\
&\leq C\(e^{-\frac{|x|-\lambda t}{8DE}}+e^{-\frac{3(|x|-\lambda t)^2}{2D_1(1+t)}}\)\int_0^te^{-\frac{t-s}{2D}}(1+s)^{-\alpha}\int_{\R^3}e^{-\frac{|x-y|}{2D}}dyds\nnm\\
&\leq C(1+t)^{-\alpha}e^{-\frac{3(|x|-\lambda t)^2}{2D_1(1+t)}}+Ce^{-\frac{3(|x|+t)}{2D_1}},\label{rbepw7023}
\ema
where we have used \eqref{rbepw204}, and $D_1\ge 24DE\max\{1,\lambda\}$. By combining \eqref{rbepw-mlm-0}--\eqref{rbepw7023}, we can obtain \eqref{rbepw802}.
\end{proof}

\medskip
\noindent {\bf Acknowledgements:} The research of this work was supported  by  the National Natural Science Foundation of China  grants (No.  12171104),  Guangxi Natural Science Foundation (No. 2019JJG110010).


\end{document}